\newcommand{\longsquiggly}{\xymatrix{{}\ar@{~>}[r]&{}}}
\newtheorem{lemdef}{Lemma/Definition}[section]{\bf}{\it}
{\bf}{\rmfamily}
{\bf}{\rmfamily}
{\bf}{\rmfamily}
{\bf}{\rmfamily}
\newtheorem{thm}{Theorem}[section]
\newtheorem{proposition}[thm]{Proposition}
\newtheorem{lemma}[thm]{Lemma}
\newtheorem{corollary}[thm]{Corollary}
\newtheorem{conjecture}[thm]{Conjecture}
\theoremstyle{definition}
\newtheorem{definition}[thm]{Definition}
\newtheorem{remark}[thm]{Remark}
\newtheorem{example}[thm]{Example}
\newcommand{\newword}[1]{\textbf{\emph{#1}}}
\newcommand{\sh}{\mathrm{sh}}
\newcommand{\rsh}{\mathrm{rsh}}
\newcommand{\rectify}{\mathrm{rect}}
\newcommand{\esh}{\mathrm{esh}}
\newcommand{\partesh}{\mathrm{esh}}
\newcommand{\partsh}{\mathrm{sh}}
\newcommand{\composedpsh}{\smash{\overline{\mathrm{sh}}}}
\newcommand{\first}{\mathrm{first}}
\newcommand{\eset}{\varnothing}
\newcommand{\rect}{{\scalebox{.3}{\yng(3,3)}}}
\newcommand{\stair}{{\scalebox{.22}{\young(\hfil\hfil\hfil,:\hfil\hfil,::\hfil)}}}
\newcommand{\ybox}{{\boxtimes}}
\newcommand{\ebox}{\scalebox{.5}{\yng(1)}}
\newcommand{\tinybox}{{\scalebox{.3}{\yng(1)}}}
\newcommand{\tyng}[1]{\scalebox{.5}{\yng(#1)}}
\newcommand{\LRyb}{\LR(\alpha,{\scalebox{.5}{\yng(1)}},\beta, \gamma)}
\newcommand{\LRby}{\LR(\alpha,\beta,{\scalebox{.5}{\yng(1)}}, \gamma)}
\newcommand{\LR}{\mathrm{LR}}
\newcommand{\DE}{\mathrm{DE}}
\newcommand{\DEyb}{\DE(\alpha,{\scalebox{.5}{\yng(1)}},\beta, \gamma)}
\newcommand{\DEby}{\DE(\alpha,\beta,{\scalebox{.5}{\yng(1)}}, \gamma)}
\newcommand{\K}{\mathrm{K}}
\newcommand{\Gr}{\mathrm{Gr}}
\newcommand{\Grkn}{\Gr(k, \mathbb{C}^n)}
\newcommand{\OG}{\mathrm{OG}}
\newcommand{\OGn}{\OG(n, \mathbb{C}^{2n+1})}
\newcommand{\defn}[1]{{\bf #1}}
\newcommand{\east}[1]{\ensuremath{\xrightarrow{\ #1\ }}}
\newcommand{\west}[1]{\ensuremath{\xleftarrow{\ #1\ }}}
\newcommand{\north}[1]{\ensuremath{\big \uparrow \!\text{\raisebox{.1ex}{\scriptsize $#1$}}}}
\newcommand{\south}[1]{\ensuremath{\big \downarrow \!\text{\raisebox{.1ex}{\scriptsize $#1$}}}}
\newcommand{\stepnorth}[2]{\vector(0,1){.92}\put(-.23,.4){\scriptsize$#1$}\put(0,1){#2}}
\newcommand{\stepnorthshiftW}[2]{\put(-.08,0){\vector(0,1){.95}\put(-.23,.4){\scriptsize$#1$}}\put(0,1){#2}}
\newcommand{\stepsouth}[2]{\vector(0,-1){.92}\put(.05,-.6){\scriptsize$#1$}\put(0,-1){#2}}
\newcommand{\stepeast}[2]{\vector(1,0){.92}\put(.4,-.25){\scriptsize$#1$}\put(1,0){#2}}
\newcommand{\stepeastA}[2]{\vector(1,0){.92}\put(.4,.05){\scriptsize$#1$}\put(1,0){#2}}
\newcommand{\stepwest}[2]{\vector(-1,0){.92}\put(-.5,.05){\scriptsize$#1$}\put(-1,0){#2}}
\newcommand{\stepwestshiftS}[2]{\put(0,-.08){\vector(-1,0){.92}\put(-.5,-.25){\scriptsize$#1$}}\put(-1,0){#2}}
\newcommand{\tensor}{\otimes}
\newcommand{\GL}{\mathrm{GL}}
\newcommand{\Wr}{\mathrm{Wr}}
\newcommand{\ltst}{\prec}
\newcommand{\leqst}{\preceq}
\title{Schubert curves in the orthogonal Grassmannian}
\author{Maria Gillespie, Jake Levinson, Kevin Purbhoo}
\begin{document}
\maketitle{}

\begin{abstract}
We develop a combinatorial rule to compute the real geometry of type B \textit{Schubert curves} $S(\lambda_\bullet)$ in the orthogonal Grassmannian $\OGn$, which are one-dimensional Schubert problems defined with respect to orthogonal flags osculating the rational normal curve.  Our results are natural analogs of results previously known only in type A \cite{bib:GillespieLevinson}.  

First, using the type B Wronski map studied in \cite{bib:Pur10}, we show that the \textit{real} locus of the Schubert curve has a natural covering map to $\mathbb{RP}^1$, with monodromy operator $\omega$ defined as the commutator of jeu de taquin rectification and promotion on skew shifted semistandard tableaux.  We then introduce two different algorithms to compute $\omega$ without rectifying the skew tableau. The first uses the crystal operators introduced in \cite{bib:GLP}, while the second uses local switches much like jeu de taquin. The switching algorithm further computes the K-theory coefficient of the Schubert curve: its nonadjacent switches precisely enumerate Pechenik and Yong's \emph{shifted genomic tableaux}. The connection to K-theory also gives rise to a partial understanding of the \textit{complex} geometry of these curves.
\end{abstract}

\section{Introduction}

A \emph{Schubert curve} is a certain one-dimensional intersection
of Schubert varieties whose flags are maximally tangent to the
rational normal curve.  Concretely, the \emph{rational
normal curve} is the image of the Veronese embedding 
$\mathbb{P}^1
\hookrightarrow \mathbb{P}^{n-1} = \mathbb{P}(\mathbb{C}^n)$, 
defined by 
\[t \mapsto [1 : t : t^2 : \cdots : t^{n-1}] .\] 
The \emph{osculating} or \emph{maximally tangent flag} to this
curve at $t \in \mathbb{P}^1$,  is the complete flag $\mathcal{F}_t$
in $\mathbb{C}^n$ formed by the iterated derivatives of this map;
hence the $i$-th part of
the flag is spanned by the top $i$ rows of the matrix 
\begin{equation*}
\label{eqn:flag-matrix} \begin{bmatrix}
\big(\frac{d}{dt}\big)^{i-1}(t^{j-1}) \end{bmatrix} = \begin{bmatrix}
1 & t & t^2 & \cdots & t^{n-1} \\ 0 & 1 & 2t & \cdots & (n-1) t^{n-2}
\\ 0 & 0 & 2 & \cdots & (n-1)(n-2) t^{n-3} \\ \vdots & \vdots &
\vdots &\ddots & \vdots \\ 0 & 0 & 0 & \cdots & (n-1)!  \end{bmatrix}.
\end{equation*} 

Associated to the osculating flag $\mathcal{F}_t$ and a partition $\lambda$, 
we have a Schubert variety
$X_\lambda(\mathcal{F}_t)$ inside the Grassmannian $\Grkn$.
Schubert varieties with respect to osculating flags have been studied 
extensively in the context of degenerations of curves 
\cite{bib:Chan} \cite{bib:EH86} \cite{bib:Oss06}, Schubert calculus and 
the Shapiro-Shapiro Conjecture 
\cite{bib:MTV09} \cite{bib:Pur13} \cite{bib:Sot10}, and the geometry of 
the moduli space $\overline{M_{0,r}}(\mathbb{R})$ \cite{bib:Speyer}. 
They satisfy unusually strong transversality properties, particularly 
under the hypothesis that the osculation points $t$ are 
real; Mukhin, Tarasov and Varchenko \cite{bib:MTV09} showed
that every zero-dimensional intersection of the form
\begin{equation}
\label{eqn:schubertintersection}
  X_{\lambda^{(1)}} (\mathcal{F}_{t_1})
   \cap \cdots \cap X_{\lambda^{(r)}} (\mathcal{F}_{t_r}),
\end{equation}
with $t_1 < \dots < t_r$ real, is a transverse intersection.
The points in such an intersection are enumerated by certain chains of
Littlewood-Richardson tableaux.  Moreover,
the behaviour of such intersections under monodromy and degeneration
(as the osculating points collide) has a remarkable description in terms 
combinatorial operations on these tableaux
\cite{bib:Chan} \cite{bib:Levinson} \cite{bib:Pur10} \cite{bib:Speyer}.
As such, zero-dimensional intersections of the 
form~\eqref{eqn:schubertintersection} exhibit much deeper connections 
with tableau combinatorics and the Littlewood-Richardson rule than one 
finds using general flags.

Schubert curves are one-dimensional intersections of the form
\eqref{eqn:schubertintersection}.
More precisely, a \newword{Schubert curve} in $\Grkn$ is 
defined to be the intersection 
\[
S = S(\lambda^{(1)}, \ldots, \lambda^{(r)}) 
= X_{\lambda^{(1)}}(\mathcal{F}_{t_1})
\cap \cdots \cap X_{\lambda^{(r)}}(\mathcal{F}_{t_r}),
\]
where the osculation points $t_i$ are real numbers with $0 = t_1 <
t_2 < \cdots < t_r = \infty$, and $\lambda^{(1)},\ldots,\lambda^{(r)}$
are partitions for which $\sum |\lambda^{(i)}|=k(n-k)-1$.
Every such intersection is one-dimensional (if nonempty) and reduced 
\cite{bib:Levinson}, but not necessarily irreducible.  Moreover,
the \emph{real} connected components of $S$ can be described by combinatorial 
operations, related to jeu de taquin and Sch\"{u}tzenberger's promotion 
and evacuation, on chains of skew Young tableaux.  

In \cite{bib:GillespieLevinson}, a fast, local algorithm called \emph{evacuation-shuffling} was introduced to compute the real topology of $S$. Moreover, the particular (local) combinatorial structure of evacuation-shuffling resulted in new bijective connections to the structure coefficients arising in computing the K-class of the Schubert curve in the K-theory of the Grassmannian.  In this sense, these connections serve as a higher dimensional analog of the Littlewood-Richardson rule, enabling a combinatorial understanding of one-dimensional (rather than zero-dimensional) intersections of Schubert varieties via tableaux.

The purpose of this paper is to extend this story of Schubert curves
to the type B setting, specifically
to the orthogonal Grassmannian $\OGn$,
the variety of $n$-dimensional \emph{isotropic} subspaces 
of $\mathbb{C}^{2n+1}$ with respect to a fixed nondegenerate 
symmetric bilinear form.  
For both geometric and combinatorial reasons,
this is a natural place to extend the results of
\cite{bib:Levinson} and \cite{bib:GillespieLevinson}.
Like the Grassmannian, $\OGn$ is a minuscule flag
variety; in fact, apart from the simple family of even-dimensional quadrics,
this is essentially the only%
\footnote{Strictly speaking, the classification of minuscule flag varieties 
includes two other infinite families: $\mathbb{P}^{2n-1}$ in type C, and 
$\OG(n+1,\mathbb{C}^{2n+2})$ in type D. As varieties, however, these are already accounted for: 
$\mathbb{P}^{2n-1}$ is a Grassmannian, and
$\OG(n+1, \mathbb{C}^{2n+2})$ is isomorphic to $\OGn$.
The difference in Lie type is important in some contexts, but
largely irrelevant to the work in this paper.}
other infinite minuscule family.
For all minuscule flag varieties, there is a combinatorial Schubert
calculus, based on Young tableaux, which computes cohomology 
and K-theory \cite{bib:ThomasYongMinuscule}.  In the case of 
$\OGn$, Schubert varieties are indexed 
by \newword{shifted partitions} $\mu=(\mu_1>\mu_2>\mu_3>\cdots)$,
with $\mu_1 \leq n$, whose diagram is obtained by shifting
the $i$th row $i$ units to the right.  For instance, the diagram 
for the shifted partition $(6,4,2,1)$ is shown below.
\begin{center}
\includegraphics{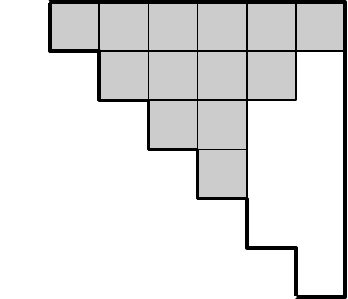}
\end{center}
There is well-established theory of shifted tableaux and Littlewood-Richardson
tableaux for these shapes, analogous to the theory in type A.

Schubert varieties $\Omega_\mu(\mathcal{F})$ in $\OGn$
are only defined with respect to \emph{orthogonal flags}, i.e. flags
for which 
the $i$-th part of the flag is orthogonal to the $(2n+1-i)$-th part for 
all $i$.  Fortunately, there is
a choice of symmetric bilinear form on $\mathbb{C}^{2n+1}$ such that
the flags $\mathcal{F}_t$
are in fact orthogonal flags.  We may therefore consider
Schubert varieties with respect to osculating flags in 
$\OGn$.   As in type A, intersections of such Schubert 
varieties have strong transversality properties:
any zero-dimensional intersection of the
form
\[\Omega_{\mu^{(1)}}(\mathcal{F}_{t_1}) \cap \cdots \cap \Omega_{\mu^{(r)}}(\mathcal{F}_{t_r})
\]
with $t_1 < \dots < t_r$, is a transverse intersection \cite{bib:Pur09},
and there are again connections between the geometric properties of
intersection points and the tableaux that enumerate them \cite{bib:Pur10B}.
It is therefore natural to wonder whether the geometric and combinatorial
properties of one-dimensional intersections 
also extend to $\OGn$. Accordingly, we define \newword{type B Schubert curves} 
\[S = S(\mu^{(1)}, \ldots, \mu^{(r)})
= \Omega_{\mu^{(1)}}(\mathcal{F}_{t_1}) \cap \cdots \cap \Omega_{\mu^{(r)}}(\mathcal{F}_{t_r}),
\]
where the osculation points $t_i$ are real numbers with 
$0 = t_1 < t_2 < \cdots < t_r = \infty$. Here $\mu^{(1)},\ldots,\mu^{(r)}$ 
are shifted partitions for which $\sum |\mu^{(i)}|=\frac{n(n-1)}{2}-1$.  In this paper, we study 
the geometry and associated combinatorics of these curves.

\subsection{Main results}

We describe the basic geometric properties of type B Schubert 
curves in Section~\ref{sec:geometry}.
As expected, the results we obtain here are precisely analogous the those 
in type A, and many of the results are proved either similarly, or 
deduced from the type A results.
We show that a type B Schubert curve $S$ is one-dimensional (if non-empty) 
and reduced.
A key step is the construction of a natural branched covering map $S \to \mathbb{P}^1$, whose fibers are in bijection with chains of skew, shifted \textit{Littlewood-Richardson} tableaux.  We write $\mathrm{LR}(\mu^{(1)},\ldots,\mu^{(r)})$ to denote the set of sequences $(T_1, \ldots, T_r)$ of skew shifted Littlewood-Richardson tableaux, filling the $n\times n$ triangle, such that the shape of $T_i$ extends that of $T_{i-1}$ and $T_i$ has content $\mu^{(i)}$ for all $i$. (The tableaux $T_1$ and $T_r$ are uniquely determined and may be omitted.)

\begin{figure}[t]
\begin{center}
\includegraphics[width=10cm]{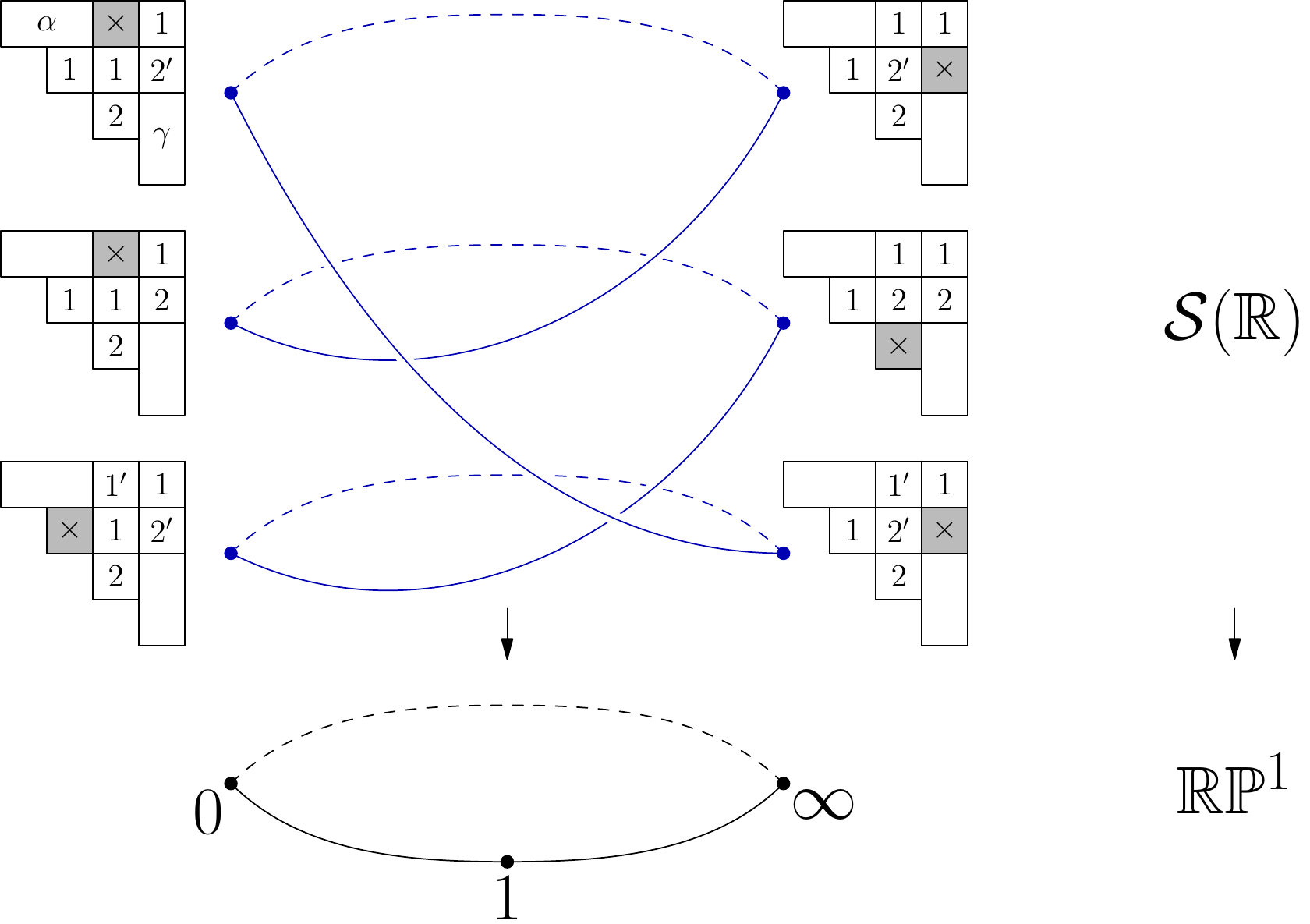}
\end{center}
\caption{\label{fig:covering-space} The covering map $S(\mathbb{R})\to \mathbb{RP}^1$, along with the canonical labeling of the fibers over $0$ and $\infty$.}
\end{figure}

For simplicity, we mainly consider Schubert curves
$S = S(\alpha, \beta, \gamma)$, which are intersections of only three Schubert
varieties, though the results of this paper extend to the general case
without difficulty.

Our main geometric result, which we prove in Section \ref{sec:geometry}, describes the topology of $S(\alpha,\beta,\gamma)(\mathbb{R})$ in terms of shifted Littlewood-Richardson tableaux.

\begin{thm}\label{thm:intro-2}
There is a map $S \to \mathbb{P}^1$ that makes the real locus $S(\mathbb{R})$ a smooth covering of the circle $\mathbb{RP}^1$. The fibers over $0$ and $\infty$ are in canonical bijection with, respectively, $\LRyb$ and $\LRby$. Under this identification, the arcs of $S(\mathbb{R})$ covering $\mathbb{R}_-$ induce the \emph{jeu de taquin bijection}
\[\sh : \LRby \to \LRyb,\]
and the arcs covering $\mathbb{R}_+$ induce a different bijection $\esh$, called \emph{(shifted) evacuation-shuffling}. The monodromy operator $\omega$ is, therefore, given by $\omega = \sh \circ \esh.$
\end{thm}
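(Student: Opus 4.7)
The plan is to mirror the type A argument of \cite{bib:GillespieLevinson}, substituting the type B Wronski map of \cite{bib:Pur10} for its classical counterpart. First I would realize the map $S\to \mathbb{P}^1$ via the Wronski map: $\Wr:\OGn \to \mathbb{P}^N$ sends an isotropic subspace to a square root of its Wronskian polynomial, the prescribed Schubert conditions at $t_1,\ldots,t_r$ account for all but one root of this polynomial, and the remaining free root defines a morphism $\pi:S\to\mathbb{P}^1$.

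Next I would analyze the fibers of $\pi$. Over a real $s\in\mathbb{R}\setminus\{t_1,\ldots,t_r\}$, the fiber is the zero-dimensional intersection
\[\Omega_{\mu^{(1)}}(\mathcal{F}_{t_1})\cap \cdots \cap \Omega_{\mu^{(r)}}(\mathcal{F}_{t_r}) \cap \Omega_{\tinybox}(\mathcal{F}_s),\]
which by \cite{bib:Pur09,bib:Pur10B} is transverse and whose points are canonically labeled by shifted Littlewood--Richardson chains. Letting $s\to 0^+$ merges the extra box into the first Schubert condition, identifying the fiber over $0$ with $\LRyb$; symmetrically, $s\to\infty$ yields $\LRby$. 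The strong real transversality of \cite{bib:Pur09} moreover implies no real critical values, so $\pi|_{S(\mathbb{R})}$ is a smooth unbranched covering of $\mathbb{RP}^1$ of degree $|\LR(\alpha,\beta,\gamma)|$.

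To read off the arc over $\mathbb{R}_-$, I would track a real solution as $s$ slides along the negative real axis from $\infty$ to $0$: since $s$ never collides with any $t_i$, the bijection between endpoint fibers is geometrically the sliding of the extra box past the shape of $\beta$. I would identify this with shifted jeu de taquin $\sh:\LRby\to\LRyb$ by matching each endpoint's Wronski-map labeling with the tableau-theoretic definition, in direct analogy with the type A identification carried out in \cite{bib:Levinson} and using the shifted Littlewood--Richardson rule of \cite{bib:ThomasYongMinuscule}.

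The main obstacle is the arc over $\mathbb{R}_+$, where $s$ must pass through each of the interior osculating points $t_2,\ldots,t_{r-1}$. Each such collision is a branch of the complex Wronski map, and I expect this step to break into (i) a local analytic model near $s=t_i$ that identifies the local monodromy with a shifted analog of Sch\"utzenberger promotion acting on the tableau labeling the colliding intersection, and (ii) gluing these local monodromies into a single coherent global bijection between $\LRyb$ and $\LRby$, which by construction serves as the \emph{definition} of shifted evacuation-shuffling $\esh$. Composing the resulting arc bijections then produces the monodromy formula $\omega=\sh\circ\esh$ asserted in the theorem.
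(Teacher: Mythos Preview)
Your overall architecture matches the paper's: define the map via the Wronski morphism, invoke the type B transversality and reality results to get a smooth real covering, and use the tableau labeling from \cite{bib:Pur10B} to identify the fibers. The $\mathbb{R}_-$ arc identification with $\sh$ is essentially as in the paper, which deduces it from the monodromy statement in the type B tableau-labeling theorem (Theorem~\ref{thm:tableau-labels-type-B}) rather than from \cite{bib:ThomasYongMinuscule}.

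Where you diverge from the paper is the $\mathbb{R}_+$ arc. You propose a ``local analytic model'' near each collision $s=t_i$ and then glue. The paper avoids any local analysis of the collision: instead it observes (Theorem~\ref{thm:esh-type-B}) that the path through $t_i$ is homotopic, within the locus where the real covering is unramified, to a path that first rotates all marked points through $\mathbb{R}_-$ so that $(\ybox,D_i)$ becomes rectified, then performs the collision there, then rotates back. At the rectified shape the Pieri rule forces a unique factorization, so the collision must agree with ordinary tableau-switching; conjugating by the rectification gives exactly the combinatorial $\esh$. This homotopy-plus-uniqueness argument is both the key idea and the reason no local model is needed.

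There is also a conceptual point worth sharpening. You write that the composite bijection ``by construction serves as the definition'' of $\esh$. In the paper $\esh$ has an independent combinatorial definition (rectify, then slide $\ybox$ through, then un-rectify), and the content of the theorem is that the geometric arc bijection \emph{equals} this operation. If you define $\esh$ to be whatever the geometry produces, the $\mathbb{R}_+$ assertion becomes tautological and you lose the substance of the result; your step (i) would then have to carry all the weight, and as stated it is not specific enough to do so.
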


See Figure \ref{fig:covering-space} for an illustration. This theorem is the type B analog of \cite[Corollary 4.9]{bib:Levinson} in type A. In particular, the (real) topology of $S(\mathbb{R})$ is implicitly described
by the combinatorial operations $\sh$ and $\esh$.

\begin{remark}\label{rmk:modulispace}
The work in \cite{bib:Levinson} also describes stable degenerations of $S(\alpha, \beta, \gamma)$, but since we are primarily concerned with the Schubert curve itself, we take a simpler approach using the third author's work \cite{bib:Pur10} on the geometry of the type B Wronski map $\Wr: \OGn \to \mathbb{P}^{n(n-1)/2}$, restricted to $S \subset \OG$. Analogous results to \cite[Theorems 1.1-1.6]{bib:Speyer} and \cite[Theorem 4.7]{bib:Levinson} do hold for stable degenerations in type B, yielding fiber spaces of orthogonal Schubert problems over $\overline{M}_{0,r}$. In the interest of brevity (and since the techniques are essentially unchanged), we omit these further constructions.
\end{remark}

 While shifted jeu de taquin is very well-studied, the operation $\esh$ is less well understood, particularly in type B. The basic definition is the same as in type A: $\esh$ is (informally) the conjugate of $\sh$ by the operation of rectification, and behaves very differently from $\sh$. The rectification step makes $\esh$ computationally and conceptually difficult to study, so our next objective is to give a ``local'' algorithm for $\esh$, which instead moves the box directly through the tableau via certain local moves. Here the details diverge from the type A story, as we delve into the peculiarities of shifted tableaux.
 
 A key property of $\esh$, manifest for both geometric and combinatorial reasons, is that it commutes with all sequences of jeu de taquin slides. This property is called \emph{coplacticity}, and in type A many coplactic operators are known, notably Kashiwara's crystal operators. For type B, such operators are less well-known, and we rely heavily on results in \cite{bib:GLP}, which establishes a crystal-like structure on shifted tableaux with operators that are coplactic for shifted jeu de taquin. (In fact, those operators were discovered as a result of the preliminary investigations of this paper.) We review these more fundamental coplactic operators in Section \ref{sec:coplactic}, and we initiate our study of $\esh$ by expressing it in terms of them (Theorem \ref{thm:crystal-algorithm}). We then unravel the description further to obtain the desired direct combinatorial algorithm for computing $\esh$, and hence $\omega$, based on local moves 
(Theorem \ref{thm:step-by-step-algorithm}).

In certain respects, our local algorithm resembles the algorithm given in \cite{bib:GillespieLevinson} for type A evacuation-shuffling, such as having a natural division into two `phases' that follow different rules. However, there are important structural differences, which we discuss in Section \ref{subsec:phase1-vs-phase2}, including a negative result (Proposition \ref{prop:phase2-indecomposable}) that prevents an analysis similar to that in \cite{bib:GillespieLevinson}.

Finally, we study the K-theory class $[\mathcal{O}_S] \in \K(\OGn)$, which encodes much of the topological information about the curve 
$S = S(\alpha,\beta,\gamma)$, in particular its degree and holomorphic 
Euler characteristic. It can be computed combinatorially in terms of
\textit{shifted genomic tableaux}, defined by Pechenik and Yong
\cite{bib:Pechenik}.
For Schubert curves in $\Grkn$, there is a strong connection 
between the combinatorial algorithm for $\omega$ and the K-theory class 
$[\mathcal{O}_S]$: the individual steps of the algorithm correspond to
the genomic tableaux that compute this class
\cite{bib:GillespieLevinson}.
Although the combinatorics of shifted genomic tableaux and shifted evacation-shuffling are considerably 
different, we find, surprisingly, that a similar result holds for 
type B Schubert curves.

\begin{thm}\label{thm:intro-ktheory}
There is a two-to-one correspondence between non-adjacent steps of the 
algorithm in Theorem~\ref{thm:step-by-step-algorithm} for computing 
$\omega$ on $S(\alpha, \beta, \gamma)$, and the set of all 
shifted ballot genomic tableaux $\K(\gamma^c/\alpha;\beta)$.
\end{thm}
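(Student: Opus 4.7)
The plan is to imitate the structure of the type A correspondence from \cite{bib:GillespieLevinson}, but to account for the two-to-one phenomenon unique to the shifted setting. I would first unpack the algorithm of Theorem \ref{thm:step-by-step-algorithm}, which carries an inner box through the tableau via a sequence of local switches organized into the two phases discussed in Section \ref{subsec:phase1-vs-phase2}. Each step in the trajectory either swaps the inner box with an \emph{adjacent} cell (in the sense of that section) or with a \emph{non-adjacent} cell; the goal is to assign a shifted ballot genomic tableau of content $\beta$ on shape $\gamma^c/\alpha$ to each non-adjacent step, and to verify that the resulting assignment is exactly two-to-one onto $\K(\gamma^c/\alpha;\beta)$.

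The first step is to build the forward map. To each non-adjacent switch I would record a \emph{gene datum}: the cell(s) involved, the entries (including primed/unprimed decoration) that are exchanged, and the label of the gene being formed. Stringing these data together over the course of the algorithm produces a filling of $\gamma^c/\alpha$ in which cells are grouped into genes of a common numerical value; the non-adjacency of each step is precisely what ensures that the corresponding gene is \textbf{not} a single box (in contrast with the adjacent steps, which correspond to the ``degenerate'' genes already accounted for by the ordinary shifted Littlewood-Richardson rule). A careful check, using the coplactic description of $\esh$ from Theorem \ref{thm:crystal-algorithm} and the crystal operators of \cite{bib:GLP}, should then show that the resulting filling is a shifted \emph{ballot} genomic tableau in the sense of Pechenik-Yong \cite{bib:Pechenik}. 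Here the ballotness condition should translate directly into the rules governing which switches phase 1 and phase 2 are allowed to perform.

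The second step is to understand the fibers. I expect the factor of two to arise from the primed/unprimed symmetry inherent in shifted tableaux: a given non-adjacent switch modifies a pair of cells whose labels can decorate a gene in two essentially mirror-image ways, each realized by a distinct step of the algorithm (plausibly one from phase 1 and one from phase 2, or one from the $\sh$ portion and one from the $\esh$ portion of $\omega = \sh \circ \esh$). Concretely, I would set up an involution on non-adjacent steps that swaps these two realizations and verify that its orbits are exactly the preimages of genomic tableaux. Combined with the forward map, this proves the two-to-one claim.

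The main obstacle, as I see it, is precisely this two-to-one pairing: since the type A argument of \cite{bib:GillespieLevinson} is bijective, a direct transfer fails, and one must identify the additional shifted symmetry that doubles the count. Verifying that the involution is well-defined---i.e., that its image is always another legal non-adjacent step of the algorithm, and that together the two steps produce identical gene data---will likely require a case analysis through the local moves, leveraging the structural peculiarities discussed in Proposition \ref{prop:phase2-indecomposable}. Once this pairing is established, matching the total count against $|\K(\gamma^c/\alpha;\beta)|$ via the Pechenik-Yong formula for $[\mathcal{O}_S]$ and the geometric interpretation of the degree and Euler characteristic of $S$ closes the argument.
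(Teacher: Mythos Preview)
Your proposal has several genuine gaps.

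\textbf{The forward map is not what you describe.} You propose to ``string together'' gene data from all non-adjacent steps into a single filling. That is not how the correspondence works. Each individual non-adjacent step already produces a complete genomic tableau: if $\ybox$ switches with an entry $t$, you fill \emph{both} the before-position and the after-position of $\ybox$ with $t$ and put the result in canonical form. This yields a tableau of shape $\gamma^c/\alpha$ and content $\beta$ plus one extra $i$, with a single repeated gene $\{\ybox_1,\ybox_2\}$. Ballotness is immediate from Theorem~\ref{thm:phase12-ballotness}(1), which says the word omitting $\ybox$ is ballot at every step; the genomic conditions (consecutive in standardization, non-adjacent) come from the definition of a switch and the non-adjacency hypothesis. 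No accumulation across steps is involved.

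\textbf{The source of the factor of two is misidentified.} It is not a primed/unprimed symmetry, nor a Phase~1/Phase~2 pairing, nor a $\sh$/$\esh$ pairing. The two preimages of a genomic tableau $(T,\{\ybox_1,\ybox_2\})$ come from two \emph{different} starting tableaux in $\LRyb$: one whose $\esh$-computation traverses the pair via a hop (reverse standardization order), and one via an inverse hop (standardization order). The paper establishes this using the index decomposition $\esh = \partesh_{\ell(\beta)} \circ \cdots \circ \partesh_{1'}$ and the pause/resume bijections $\partesh_t : Z_{t-1} \to Z_t$ (Theorems~\ref{thm:pause-esh} and~\ref{thm:resume-esh}). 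Given the genomic tableau, one replaces $\ybox_2$ by $\ybox$ to land in a specific $Z_t$, reads off the unique preimage under $\partesh_t \circ \cdots \circ \partesh_{1'}$, and checks that the first step of $\partesh_t$ from there is the desired hop; the inverse-hop preimage is found analogously via $\partesh_t^{-1}$ (Theorem~\ref{thm:reverse-resume-esh}). Your proposed involution on steps does not capture this, and without the $Z_t$ machinery you have no mechanism to prove uniqueness of either preimage.

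\textbf{The counting closure is circular.} You suggest confirming the total count against $|\K(\gamma^c/\alpha;\beta)|$ via the Pechenik--Yong formula for $[\mathcal{O}_S]$. But in this paper the geometric consequences (Lemma~\ref{lem:chi-formula}, Theorem~\ref{thm:identity}, the inequality~\eqref{eqn:inequality}) are \emph{derived from} the combinatorial two-to-one correspondence, not the other way around. The proof must be purely combinatorial.
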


Informally, each genomic tableau is obtained once by a move in reading order and once in reverse reading order. We state this result more precisely, and discuss some of its geometric consequences, as well as other connections to K-theory in 
Section~\ref{sec:K-theory}. For example, it follows that the map $f: S(\mathbb{R}) \to \mathbb{RP}^1$ cannot be topologically trivial unless the map $S \to \mathbb{P}^1$ is algebraically trivial, that is, $S \cong \bigsqcup_{\deg f} \mathbb{P}^1$. This property is known in type A.

\subsection{Outline}

In Section~\ref{sec:tableaux}, we review combinatorial background
material on shifted tableaux necessary for the rest of the paper.  This
includes the shifted jeu de taquin, dual equivalence and the shifted 
Littlewood-Richardson rule, coplactic structure on shifted tableaux.
We provide background on the orthogonal Grassmannian 
$\OGn$, including facts about Schubert
varieties with respect to the osculating flags $\mathcal{F}_t$
in Section~\ref{sec:geometry}.
We also prove our foundational geometric results about type B Schubert
curves, including Theorem~\ref{thm:intro-2}.
The combinatorial algorithm for computing the monodromy map 
$\omega$ is developed in
Sections~\ref{sec:crystal-algorithm} and~\ref{sec:local-algorithm}.
Section~\ref{sec:crystal-algorithm} provides an version of the algorithm 
in terms of the coplactic operators discussed in 
Section~\ref{sec:coplactic},
and Section~\ref{sec:local-algorithm} then reformulates this, without
the use of these operators.  Since there are steps that are not
immediately clear how to invert, we also discuss the inverse of these 
algorithms.  
Finally, in Section~\ref{sec:K-theory}, we apply these results toward
understanding topological properties of complex type B Schubert curves,
and discuss the connections 
to K-theory of $\OGn$.


\subsection{Acknowledgements}

We thank Oliver Pechenik for helpful discussions on shifted genomic tableau and the K-theory of the orthogonal Grassmannian.  Thanks also to Nic Ford for pointing out an error in a previous version of one of our proofs.  Computations in Sage \cite{sage} were very helpful in conjecturing and verifying many of the results in this paper.

\section{Combinatorial notation and background}
\label{sec:tableaux}

A \newword{partition} is a weakly decreasing sequence $\lambda$ of nonnegative integers $\lambda_1 \geq  \cdots \geq \lambda_k \geq 0$.  Likewise, a \newword{strict partition} $\sigma$ is a strictly decreasing sequence of nonnegative integers.  The \textbf{Young diagram} of a partition $\lambda$ is the left-aligned partial grid of squares with $\lambda_i$ squares in the $i$-th row, while the \textbf{shifted Young diagram} of a strict partition $\sigma$ is defined similarly but aligned along a staircase as shown in Figure \ref{fig:shifted-reflect}.
 We say that $|\lambda|=\sum \lambda_i$ is the \textbf{size} of $\lambda$, and the entries $\lambda_i$ are its \textbf{parts}; likewise for $\sigma$.

We assume throughout that our partitions $\lambda$ fit in an $n \times (n+1)$ rectangle, that is, $\lambda$ has at most $n$ parts and $\lambda_i \leq n+1$ for each $i$.   We write $\lambda \subseteq \rect$. We similarly assume that strict partitions $\sigma$ fit in a height-$n$ staircase, that is, $\sigma_i \leq n+1-i$ for all $i$. We write $\sigma \subseteq \stair$.

Given a strict partition $\sigma$, we define an ordinary partition $\tilde \sigma$ by $\tilde{\sigma}_i = \sigma_i + \#\{j : j \leq i < j + \sigma_j\}$ (Figure \ref{fig:shifted-reflect}), obtained by ``unfolding'' $\sigma$ past the main diagonal. Note that $|\tilde\sigma| = 2|\sigma|$. If $\lambda = \tilde \sigma$ for some strict partition, we say $\lambda$ is {\bf symmetric}. If not, we define the {\bf symmetrization} of $\lambda$ as the smallest symmetric partition containing $\lambda$.

\begin{figure}
\begin{center}
  \includegraphics{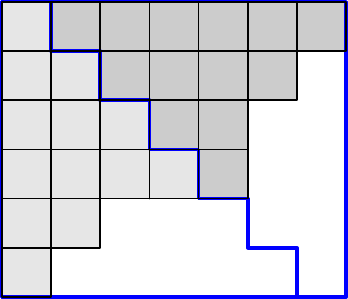}
  \end{center}
\caption{\label{fig:shifted-reflect} A strict partition $\sigma=(6,4,2,1)$ shown above the diagonal. Its associated symmetric partition $\tilde\sigma=(7,6,5,5,2,1)$ is the union of the two shaded regions shown.}
\end{figure}

A \textbf{standard Young tableau} of partition shape $\lambda$ or shifted shape $\sigma$ is a filling of the boxes of its diagram with the numbers $1,\ldots,n$, where $n$ is the size of the partition, such that the entries are increasing across rows and down columns.  We write $\mathrm{SYT}(\lambda)$ or $\mathrm{SYT}(\sigma)$ to denote the set of all standard Young tableaux of shape $\lambda$ (resp.\ $\sigma$).

\subsection{Canonical form and representatives}

Let $w = w_1w_2 \dots w_n$ be a string in symbols $\{1',1,2',2,3',3,\ldots\}$. 
\begin{definition}
  Let $w$ be a string in symbols $\{1',1,2',2,3',3,\ldots\}$.  The \textit{first $i$ or $i'$} of $w$ is the leftmost entry which is either equal to $i$ or $i'$. We denote this entry by $\first(i,w)$ or $\first(i',w)$, whichever is more convenient; we emphasize that both refer to the same entry. When $w$ is clear from context we suppress it and write $\first(i)$ or $\first(i')$.
The \textbf{canonical form} of $w$ is the string formed by replacing $\first(i,w)$ (if it exists) with $i$ for all $i\in \{1,2,3,\ldots\}$.  We say two strings $w$ and $v$ are \textbf{equivalent} if they have the same canonical form; note that this is an equivalence relation.
\end{definition}

\begin{definition}
 A \textbf{word} is an equivalence class $\hat{w}$ of the strings $v$ equivalent to $w$.  If $w$ is in canonical form, we say that $w$ is the \textbf{canonical representative} of the word $\hat{w}$.  We often call the other words in $\hat{w}$ \textbf{representatives} of $\hat{w}$ or of $w$.  The {\bf weight} of $w$ is the vector $\mathrm{wt}(w) = (n_1, n_2, \ldots ),$ where $n_i$ is the total number of $(i)$s and $(i')$s in $w$.
\end{definition}

\begin{example}
  The canonical form of the word $1'1'2'112'$ is $11'2112'$.  The set of all representatives of $11'2112'$ is $\{1'1'2'112',11'2'112',1'1'2112',11'2112'\}$.
\end{example}

All the enumerative results in this setting count (canonical forms of) words and tableaux. In later sections, however, we will find that some of the algorithms require using different representatives.

\subsection{Skew shapes, semistandard tableaux, and jeu de taquin}

A \textbf{(shifted) skew shape} is a difference $\sigma/\rho$ of two partition diagrams, formed by removing the squares of $\rho$ from the diagram of $\sigma$, if $\rho$ is contained in $\sigma$ (written $\rho\subseteq \sigma$).  For instance, in Figure \ref{fig:ssyt} the shape shown is $(6,5,2,1)/(3,2)$. If $\rho = \eset$, we may refer to the shape as a \newword{(shifted) straight shape} for emphasis.

\begin{figure}[b]
  \begin{center}
    \includegraphics{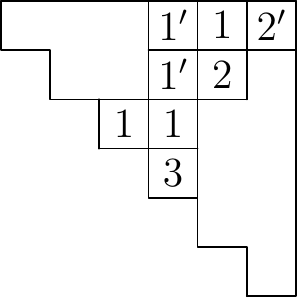}
  \end{center}
\caption{\label{fig:ssyt} A skew shifted semistandard Young tableau in canonical form.}
\end{figure}
  
  A \textbf{shifted semistandard Young tableau} (shifted SSYT) is a filling of the boxes with entries from the alphabet $\{1'<1<2'<2<3'<3<\cdots \}$ such that the entries are weakly increasing down columns and across rows, and such that primed entries can only repeat in columns, and unprimed only in rows. The \textbf{(row) reading word} of such a tableau is the word formed by concatenating the rows from bottom to top (in Figure \ref{fig:ssyt}, the reading word is $3111'21'12'$).  The {\bf weight} of $T$ is the vector $\mathrm{wt}(T) = (n_1, n_2, \ldots)$, where $n_i$ is the total number of $(i)$s and $(i')$s in $T$.

We use the notions of inner and outer \textbf{jeu de taquin} (JDT) slides defined by Sagan and Worley \cite{Sagan,Worley}.  Inner and outer slides are defined as usual (see e.g. \cite{bib:GLP} for a more detailed description) but with two exceptions to the sliding rules: if an outer slide moves an $i$ down into the diagonal and then another $i$ to the right on top of it, that $i$ becomes primed (and vice versa for the corresponding inner slide), as shown below.
\begin{center}
 \includegraphics{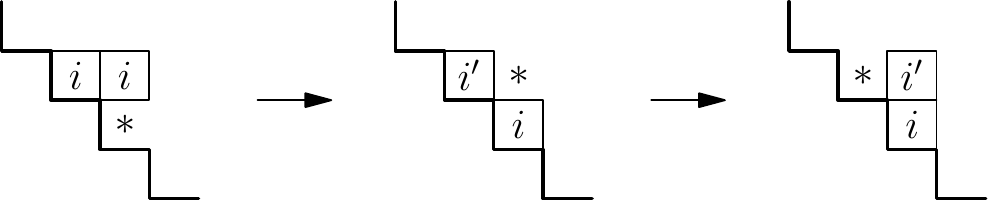}
\end{center}
Similarly, if an outer slide moves an $i$ down into the diagonal, then moves an $i'$ to the right on top of it, the $i$ becomes primed.
\begin{center}
 \includegraphics{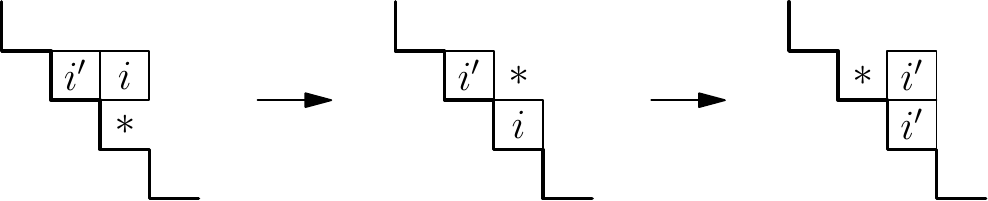}
\end{center}

Note that a tableau in canonical form remains in canonical form after applying any JDT slide.

We write $\rectify(T)$ or $\rectify(w)$ to denote the jeu de taquin \textbf{rectification} of any shifted semistandard tableau $T$ with reading word $w$.  We say that $T$ is {\bf Littlewood-Richardson}, and $w$ is {\bf ballot}, if for every $i$, the $i$-th row of $\rectify(T)$ consists entirely of $(i)$s.

\begin{definition}
  The \textbf{standardization} of a word $w$ (or tableau $T$) is the word $\mathrm{std}(w)$ (or tableau $\mathrm{std}(T)$) formed by replacing the letters in order with $1,2,\ldots,n$ from least to greatest, breaking ties by reading order for unprimed entries and by reverse reading order for primed entries. The resulting total order on the boxes is called \textbf{standardization order}.  We use the symbols $\ltst$ and $\leqst$ to compare letters in this ordering.
\end{definition}

\begin{example}
  The standardization of the word $1121'22'1'11$ is the word $348297156$.
\end{example}

The following is well-known (see \cite{bib:GLP} for a proof).

\begin{proposition}\label{prop:standardization}
  A filling $T$ of a shifted skew shape is semistandard (and not necessarily canonical) if and only if $\mathrm{std}(T)$ is a standard shifted tableau.
\end{proposition}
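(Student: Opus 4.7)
The plan is to reduce the equivalence to a purely local check on pairs of adjacent boxes. For any two boxes $A,B$ of the shifted skew shape that are either horizontally adjacent (with $A$ just left of $B$) or vertically adjacent (with $A$ just above $B$), the semistandard condition on $T$ amounts to the conjunction, over all such pairs, of the weak-increase inequality $T(A) \leq T(B)$ together with the repetition constraint (primed entries may only repeat in columns, unprimed entries only in rows). Likewise, $\mathrm{std}(T)$ is standard exactly when $\mathrm{std}(A) < \mathrm{std}(B)$ for every such pair. So it suffices to verify the equivalence of these local conditions.

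For the forward direction, assume $T$ is semistandard and fix an adjacent pair $A,B$. If $T(A) \neq T(B)$, strict increase of $\mathrm{std}$ follows because standardization is monotone on the alphabet $\{1'<1<2'<2<\cdots\}$: all instances of the smaller letter are assigned a block of integers strictly below that of the larger. If $T(A) = T(B)$, the semistandard repetition rule forces this common letter to be unprimed when $A,B$ lie in a common row, and primed when they lie in a common column. In the row case, unprimed letters are tie-broken by reading order, which within any one row is left-to-right, so $\mathrm{std}(A) < \mathrm{std}(B)$. In the column case, primed letters are tie-broken by \emph{reverse} reading order; since the shifted reading word is formed bottom-to-top, within any one column the bottom box comes first in reading order, so the top box $A$ precedes the bottom box $B$ in reverse reading order, again giving $\mathrm{std}(A) < \mathrm{std}(B)$.

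Conversely, assume $\mathrm{std}(T)$ is standard and fix an adjacent pair $A,B$. The weak inequality $T(A) \leq T(B)$ is immediate from monotonicity of standardization on the alphabet. The repetition constraint is then the only thing remaining, and we argue by contradiction: a primed repeat $T(A)=T(B)=i'$ in a row would, by the reverse-reading-order tie-break, place $\mathrm{std}(B)<\mathrm{std}(A)$, and an unprimed repeat $T(A)=T(B)=i$ in a column would, by the reading-order tie-break together with the bottom-to-top reading convention, likewise place $\mathrm{std}(B)<\mathrm{std}(A)$. Both contradict the standardness of $\mathrm{std}(T)$.

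The main point to be careful about is matching the reading-order conventions (shifted reading words are bottom-to-top, so lower rows come earlier) with the primed-vs.-unprimed tie-breaking rules, which are calibrated precisely to convert these into the geometric directions of strict increase (left-to-right across rows, top-to-bottom down columns). Once this bookkeeping is in place, no further machinery is needed; in particular the argument is entirely local and never invokes jeu de taquin, rectification, or the full reading word beyond extracting the relative tie-breaking order of entries with the same letter.
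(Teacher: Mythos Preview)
Your argument is correct. The paper does not actually prove this proposition; it simply cites \cite{bib:GLP} with the remark that the result is well-known. Your proof is the natural elementary one: reduce both conditions to local constraints on adjacent pairs and check that the tie-breaking conventions for primed versus unprimed entries are calibrated exactly so that the two sets of constraints coincide. The bookkeeping you flag---that the bottom-to-top reading convention makes the lower box of a vertical pair come earlier in reading order, hence later in reverse reading order---is indeed the only point requiring care, and you have it right.
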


\subsection{Dual equivalence and Littlewood-Richardson tableaux}

Two \textit{standard} skew shifted tableaux of the same shape are said to be (shifted) \textbf{dual equivalent} if their shapes transform the same way under any sequence of jeu de taquin slides. We extend this notion to \textit{semistandard} (shifted) tableaux by defining two tableaux to be \textbf{dual equivalent} if and only if their standardizations are, as in \cite{SaganBook}. The word `dual' refers to the recording tableau under the shifted Schensted correspondence \cite{Sagan, Worley}. See \cite{bib:Haiman} for more in-depth discussions of dual equivalence.

A \textbf{dual equivalence class} is an equivalence class of Young tableaux under dual equivalence.  Given two dual equivalence classes $D,D'$ of skew shapes $\lambda/\mu$ and $\rho/\lambda$ that extend one another (note $\lambda$ is in common to the shapes' constructions), we can perform \textbf{tableau switching} as follows.  Choose any standard representatives $T,T'$ of $D$ and $D'$, and perform successive inner jeu de taquin slides on $T'$ in the order specified by the labels on the squares of $T$ from largest to smallest (recall that $T$ and $T'$ are both standard Young tableaux, determining a total ordering on their squares).  Let $\tilde{T'}$ be the resulting tableau and let $\tilde{T}$ be the tableau formed by the empty squares after these slides, labeled in reverse order.  Then the dual equivalence classes $\tilde{D'}$ and $\tilde{D}$ of $\tilde{T'}$ and $\tilde{T}$ are independent of the choices of representatives $T$ and $T'$, and we say $(\tilde{D'},\tilde{D})$ is the \textbf{switch} of $(D,D')$.

\begin{remark}
  The tableau switching above can equivalently be defined by performing successive \textit{outer} jeu de taquin slides on $T'$ according to the ordering on the squares of $T$ (see \cite{bib:Haiman}).
\end{remark}

By the above facts, we may speak of the \newword{rectification shape of a dual equivalence class} $\rsh(D)$.  This is the shape of any rectification of any representative of the class $D$.  It is well-known (see \cite{bib:Haiman} or \cite{bib:GillespieLevinson}) that the dual equivalence classes of a given skew shape and rectification shape (in both the shifted and unshifted settings) are counted by a Littlewood-Richardson coefficient.

\begin{lemdef} \label{lem:dual-LRcoeff}
Let $\lambda/\mu$ be a (shifted or unshifted) skew shape and let 
\[\DE_\mu^\lambda(\beta) = \{\text{dual equivalence classes } D \text{ with } \sh(D) = \lambda/\mu \text{ and } \rsh(D) = \beta \}.\]
Then in the unshifted setting, $|\DE_\mu^\lambda(\beta)| = c_{\mu \beta}^\lambda$, and in the shifted setting, $|\DE_\mu^\lambda(\beta)| = f_{\mu \beta}^\lambda.$
\end{lemdef}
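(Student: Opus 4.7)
My plan is to prove both the unshifted and shifted statements simultaneously by constructing a bijection between $\DE_\mu^\lambda(\beta)$ and the set of (shifted) Littlewood-Richardson tableaux of skew shape $\lambda/\mu$ and content $\beta$; the latter set has cardinality $c_{\mu\beta}^\lambda$ (resp.\ $f_{\mu\beta}^\lambda$) by the Littlewood-Richardson rule, which the paper effectively takes as the combinatorial definition of these coefficients.

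The crucial input is the theorem of Haiman on dual equivalence, with the shifted analogue appearing in \cite{bib:Haiman} and reviewed in \cite{bib:GillespieLevinson}: for any standard dual equivalence class $D$ with $\sh(D) = \lambda/\mu$ and $\rsh(D) = \beta$, the rectification map restricts to a bijection $D \to \mathrm{SYT}(\beta)$. I would either cite this directly or sketch its proof via Haiman's elementary dual equivalence moves, checking that the special diagonal slide rules in the shifted setting interact correctly with these moves so that dual equivalence is preserved under every slide.

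Using this, let $U_\beta$ be the standardization of the superstandard tableau of straight shape $\beta$ (with $(i)$'s in row $i$). Given $D \in \DE_\mu^\lambda(\beta)$, take the unique representative $S \in D$ with $\rectify(S) = U_\beta$, then destandardize $S$ to content $\beta$ in canonical form, replacing its smallest $\beta_1$ entries in standardization order by $1$'s, the next $\beta_2$ by $2$'s, and so on (with primes forced by semistandardness). Because rectification commutes with standardization (Proposition~\ref{prop:standardization}) and canonical form is preserved under jeu de taquin, the resulting semistandard tableau $T$ rectifies to the superstandard tableau of shape $\beta$, hence is Littlewood-Richardson. Conversely, any LR tableau $T$ of shape $\lambda/\mu$ and content $\beta$ standardizes to a skew tableau rectifying to $U_\beta$, determining a unique class in $\DE_\mu^\lambda(\beta)$. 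These operations are mutually inverse, yielding the desired equalities.

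The main obstacle is Haiman's rectification bijection in the shifted case, where the collisions on the main diagonal (producing primed entries under the special slide rule) require confirming that dual equivalence classes behave correctly under all slides, and in particular that no class gets ``split'' by the presence of diagonal entries. Since this verification has already been carried out in the cited references, the proof in practice reduces to a clean appeal to the rectification bijection combined with the commutation of standardization and jeu de taquin.
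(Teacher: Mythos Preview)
Your proposal is correct and aligns with the paper's treatment: the paper does not give a proof for this Lemma/Definition, instead citing \cite{bib:Haiman} and \cite{bib:GillespieLevinson} for the well-known fact, and then in the subsequent paragraphs makes exactly the bijection $\DE_\mu^\lambda(\beta)\leftrightarrow \LR_\mu^\lambda(\beta)$ via the unique highest-weight representative that you describe. Your sketch simply unpacks this citation, with Haiman's rectification bijection $D\to\mathrm{SYT}(\beta)$ as the key input and the standardization/destandardization step matching the paper's highest-weight representative construction.
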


Recall (see, e.g., \cite{bib:Fulton}) that the Littlewood-Richardson coefficient $c_{\mu\beta}^\lambda$ is also the coefficient of the Schur function $s_\lambda$ in the Schur expansion of the product $s_\mu s_\beta$, and is the structure coefficient of the Schubert class $[X_\lambda]$ in the product $[X_\mu] \cdot [X_\beta]$ in the cohomology ring $H^\ast(\Grkn)$.  In the shifted setting, the Littlewood-Richardson coefficient $f_{\mu\beta}^\lambda$ is the coefficient of the Schur $P$-function $P_\lambda$ in the product $P_\mu P_\beta$, or equivalently the coefficient of the Schur $Q$-function $Q_{\beta}$ in the skew Schur $Q$-function $Q_{\lambda/\mu}$.  (For more details on Schur $P$ and $Q$-functions, see \cite{Stembridge} or \cite{bib:GLP}.)  Finally, the Littlewood-Richardson coefficients $f_{\mu\beta}^\lambda$ are also the structure constants in the cohomology ring of the orthogonal Grassmannian, $H^\ast(\OGn)$. It is also convenient to define the generalized Littlewood-Richardson coefficient $c_{\mu^{\bullet}}^\lambda$ (resp. $f_{\mu^{\bullet}}^\lambda$) as the coefficient of $[X_\lambda]$ in $[X_{\mu^{(1)}}] \cdots [X_{\mu^{(r)}}]$ (resp., the coefficient of $[\Omega_\lambda]$ in $[\Omega_{\mu^{(1)}}] \cdots [\Omega_{\mu^{(r)}}]$).

\subsubsection{Connection to Littlewood-Richardson tableaux}

For a straight shape $\beta$, the \textbf{highest-weight} tableau of shape $\beta$ is the tableau having its $i$-th row filled entirely with the letter $i$ (in both the shifted and unshifted setting).  For skew shapes, a dual equivalence class $D$ of rectification shape $\beta$ has a unique \newword{highest-weight representative}, that is, the unique tableau $T$ dual equivalent to $D$ that rectifies to the highest-weight tableau of shape $\beta$.  Note also that if $S,T$ are highest-weight skew tableaux with $T$ extending $S$, then the tableaux formed by switching them, $(T',S')$, are also highest-weight, since switching does not change their rectification. 

We can therefore work entirely with Littlewood-Richardson tableaux in place of their dual equivalence classes for the purposes of tableaux switching and enumeration of Littlewood-Richardson coefficients. 

\begin{definition}
  For a (shifted or unshifted) skew shape $\lambda/\mu$ we define
\[\LR_\mu^\lambda(\beta) = \{\text{Littlewood-Richardson tableaux } T \text{ with } \sh(T) = \lambda/\mu \text{ and } \rsh(T) = \beta \}.\]
\end{definition}

Note that we have a natural bijection $$\DE_\mu^\lambda(\beta)\leftrightarrow \LR_\mu^\lambda(\beta).$$
A tableau is Littlewood-Richardson if and only if its reading word is \textbf{ballot}, a local condition defined in \cite{Stembridge} in the shifted case.  Here we \textit{define} a reading word to be ballot if its tableau is Littlewood-Richardson, and we discuss various ballotness criterion in Section \ref{sec:coplactic} below.

\subsection{Chains of tableaux}

A \textbf{chain} of (shifted) skew shapes is a sequence of shapes of the form $$\lambda^{(2)}/\lambda^{(1)},\,\, \lambda^{(3)}/\lambda^{(2)},\,\,\ldots,\,\, \lambda^{(r+1)}/\lambda^{(r)}$$
for some nested sequence of (shifted) partitions $$\lambda^{(1)}\subseteq \lambda^{(2)}\subseteq \cdots \subseteq \lambda^{(r+1)}.$$
We say that each skew shape $\lambda^{(i+1)}/\lambda^{(i)}$ \textbf{extends} the previous shape $\lambda^{(i)}/\lambda^{(i-1)}$ in the chain.  We define a \textit{chain of dual equivalence classes} or \textit{chain of Littlewood-Richardson tableaux} to be an assignment of a class or tableau to each shape in a chain of shapes.  We write 
$$\DE(\alpha^{(1)},\alpha^{(2)},\ldots,\alpha^{(n)})\hspace{0.5cm}\text{and}\hspace{0.5cm} \LR(\alpha^{(1)},\alpha^{(2)},\ldots,\alpha^{(n)}),$$ respectively, to denote the set of all chains of dual equivalence classes (respectively, Littlewood-Richardson tableaux) whose rectification shapes (respectively, weights) are given by the tuples $\alpha^{(1)},\ldots,\alpha^{(n)}$ in order.  Note that the weights $\alpha^{(i)}$ generally do not uniquely determine the shapes $\lambda^{(i)}$.

\subsection{Coplactic operators on shifted tableaux}\label{sec:coplactic}

We now briefly review the combinatorial notation introduced in detail in \cite{bib:GLP}, regarding the weight raising and lowering operators $E_i',E_i,F_i',F_i$ on shifted SSYT's.  These operators have the property of being compatible with jeu de taquin:

\begin{definition}
An operation on canonical shifted tableaux (or on their reading words) is \textbf{coplactic} if it commutes with all shifted jeu de taquin slides.
\end{definition}

When we study the geometry of the Schubert curve $S(\alpha, \beta, \gamma)$, we will encounter certain coplactic operations on tableaux. In Section \ref{sec:crystal-algorithm}, we characterize those operations in terms of the natural operators $E_i',E_i,F_i',F_i$.

Throughout this section, we consider words consisting only of the letters $\{1',1,2',2\}$, and define only the operators $E'_1,E_1,F_1',F_1$.  For general words $w$, $E'_i$ and $F'_i$ are defined on the subword containing the letters $\{i',i,i+1',i+1\}$, treating $i$ as $1$ and $i+1$ as $2$.  To further simplify our notation in this subsection we use the following shorthands.

\begin{definition}
Define $E' = E'_1$, $F' = F'_1$, $E=E_1$, $F=F_1$.
\end{definition}

\subsubsection{Primed operators}

\begin{definition} \label{def:primed-operators}
We define $E'(w)$ to be the unique word such that
\[\mathrm{std}(E'(w)) = \mathrm{std}(w) \hspace{0.5cm}\text{ and }\hspace{0.5cm} \mathrm{wt}(E'(w)) = \mathrm{wt}(w) + (1,-1).\]
if such a word exists; otherwise, $E'(w) = \varnothing$. We define $F'(w)$ analogously using $-(1,-1)$.
\end{definition}

\begin{proposition} \label{prop:primed-properties}
  The maps $E'$ and $F'$ have the following properties.  \cite{bib:GLP}
 \begin{enumerate}
   \item \label{prop:primed-partial-inverses} They are partial inverses of each other, that is, $E'(w) = v$ if and only if $w = F'(v)$.
   \item \label{prop:primed-semistandard}
The maps $E'$ and $F'$ are well-defined on skew shifted semistandard tableaux.
   \item  \label{prop:primed-coplactic}
The operations are coplactic, that is, they commute with all jeu de taquin slides.
   \item \label{prop:explicit-definitions-primed}
To compute $F'(w)$, consider all representatives of $w$. If all representatives have the property that the last $1$ is left of the last $2'$ then $F'(w) = \varnothing$.  If there exists a representative such that the last $1$ is right of the last $2'$ then $F'(w)$ is obtained, using this representative, by changing the last $1$ to a $2'$.

The word $E'(w)$ is defined similarly with the roles of $1$ and $2'$ reversed: if the last $2'$ is right of the last $1$ in some representative, change it to a $1$ (in that representative). Otherwise $E'(w) = \varnothing$.
  \end{enumerate}
\end{proposition}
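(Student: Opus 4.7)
The plan is to establish the four properties in sequence, using the explicit description in (4) as the main tool.

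I would first verify that the abstract Definition~\ref{def:primed-operators} is well-posed, which reduces to the uniqueness claim that a canonical word is determined by the pair $(\mathrm{std}(w), \mathrm{wt}(w))$. The argument unpacks the definition of standardization: $\mathrm{wt}$ determines how many positions carry each letter-value $i$, and $\mathrm{std}$ identifies which positions those are. Within each letter-value, the primed entries occupy the positions with smaller standardization values (ordered by reverse reading order), and the canonical-form constraint forces the leftmost occurrence to be unprimed, pinning down the primed subset.

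Next I would prove property (4). For $F'$, given a representative of $w$ in which the last $1$ lies strictly to the right of the last $2'$, I would toggle that $1$ to a $2'$ and verify that the resulting canonical word $v$ satisfies $\mathrm{std}(v) = \mathrm{std}(w)$ and $\mathrm{wt}(v) = \mathrm{wt}(w) + (-1,1)$. The weight claim is immediate. For standardization, the rank of the toggled position in $w$ equals $a + k$, where $a$ is the number of $1'$s and $k$ is the rank of the position among the unprimed $1$s in reading order; after toggling, the rank becomes $a + (b-1) + j$, where $b$ is the original count of $1$s and $j$ is the rank of the new $2'$ among $2'$s in reverse reading order. A direct count shows these agree precisely when the toggled position is the last $1$ \emph{and} no $2'$s lie strictly to its right, which is exactly the condition in (4). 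Combined with uniqueness, this also shows that no other toggle can produce a word with the required $(\mathrm{std},\mathrm{wt})$ data, so $F'(w) = \varnothing$ whenever no representative meets the condition. The argument for $E'$ is symmetric.

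Properties (1) and (2) then follow. For (1), the explicit $F'$ and $E'$ constructions visibly invert each other on the designated letter, and uniqueness of the output closes the argument. For (2), I would translate the "last $1$ right of last $2'$" condition into a statement about the box of the tableau $T$ carrying the toggled letter and verify that its neighbors are compatible with replacing a $1$ by a $2'$: the "last" condition rules out $2'$s appearing in positions that could cause a row or column violation, and the remaining primed-repetition rules are local. The main obstacle is property (3), coplacticity. Here I would follow the strategy of \cite{bib:GLP}: for each flavor of jeu de taquin slide (inner, outer, and the two special primed-diagonal cases), trace how the slide interacts with the designated box for toggling, and verify case-by-case that $F'$ (respectively $E'$) commutes with the slide. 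The large number of subcases produced by the primed-alphabet slide rules is what makes this step technically involved, and it is where one relies most heavily on the detailed case analysis worked out in \cite{bib:GLP}.
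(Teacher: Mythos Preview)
The paper does not provide its own proof of this proposition; it is stated with a citation to \cite{bib:GLP}, where these operators are introduced and the properties established. Your outline is essentially the argument one finds there: establish injectivity of the map from canonical words to the pair $(\mathrm{std}, \mathrm{wt})$, verify that the toggle in (4) preserves standardization while shifting weight by $\pm(1,-1)$, and deduce (1) and (2) as corollaries. One small improvement for (2): rather than a neighbor-by-neighbor check, you can invoke Proposition~\ref{prop:standardization} directly, since $F'$ preserves standardization and $\mathrm{std}(T)$ is already a standard shifted tableau, so is $\mathrm{std}(F'(T))$, hence $F'(T)$ is semistandard. Your assessment of (3) as the technically involved part, handled by a case analysis of slide interactions in \cite{bib:GLP}, is accurate.
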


\subsubsection{Lattice walks}

The first step in defining the more involved operators $F(w)$ and $E(w)$ is to associate, to each word $w$, a lattice walk in the first quadrant of the plane.  This walk is a sequence of points in $\mathbb{N} \times \mathbb{N}$, starting with $(0,0)$.  We specify the walk by assigning a step to each $w_i$, $i=1, \dots, n$. This step will be one of the four principal direction vectors:
\[
\east{~~} \ =\  (1,0)
\qquad \west{~~} \ =\  (-1,0)
\qquad \north{} \ =\  (0,1)
\qquad \south{} \ =\  (0,-1)\,.
\]
The $i$th point $(x_i,y_i)$ is the sum of the steps assigned to $w_1, \dots, w_i$. We define the walk inductively: suppose $i >0$, and we have assigned steps to $w_1, \dots, w_{i-1}$.  We assign the step to $w_i$ according to
Figure \ref{fig:directions}, with two cases based on whether or not the step from $(x_{i-1},y_{i-1})$ starts on one of the $x$ or $y$ axes.  We will generally write the label each step of the walk by the letter $w_i$, so as to represent both the word and its walk on the same diagram.

\begin{figure}
	    \begin{center}
	    	\begin{tabular}{|c|cccc|}
	    	\hline
	    	$x_iy_i=0$ & \east{1'} & \east{1} & \north{2'} & \north{2}   \\[.8ex]
	    	\hline
	    	 $x_iy_i\neq0$ &\east{1'} & \south{1} & \west{2'} & \north{2}   \\[.8ex]
	    	\hline
	        \end{tabular}
	    \end{center}
 \caption{\label{fig:directions} The directions assigned to each of the letters $w_i=1'$, $1$, $2'$, or $2$ according to whether the location of the walk just before $w_i$ starts on the axes ($x_iy_i=0$) or not.}
\end{figure}

\begin{example}
\label{ex:walk}
Here is the walk for $w = 1221'1'111'1'2'2222'2'11'1$.

\begin{center}
\begin{picture}(5,4.2)(0,0)
\multiput(0,0)(0,0.2){22}{\line(0,1){0.1}}
\multiput(0,0)(0.2,0){27}{\line(1,0){0.1}}
\put(0,0){\circle*{0.13}}
\put(4,2){\circle{0.13}}
\stepeast{1}{%
\stepnorth{2}{%
\stepnorth{2}{%
\stepeast{1'}{%
\stepeast{1'}{%
\stepsouth{1}{%
\stepsouth{1}{%
\stepeast{1'}{%
\stepeast{1'}{%
\stepnorth{2'}{%
\stepnorth{2}{%
\stepnorth{2}{%
\stepnorth{2}{%
\stepwest{2'}{%
\stepwest{2'}{%
\stepsouth{1}{%
\stepeast{1'}{%
\stepsouth{1}{%
}}}}}}}}}}}}}}}}}}
\end{picture}
\end{center}
\end{example}

\subsubsection{Critical substrings and definition of $E$ and $F$}\label{sec:EFdefinition}

\begin{definition}
If $w$ is a word and $u = w_k w_{k+1} \dots w_l$ is a substring of some representative of $w$, we say $u$ is a {\bf substring} of the word $w$. We say $(x,y) = (x_{k-1},y_{k-1})$ is the \defn{location} of $u$ in the walk of $w$.
\end{definition}

\begin{definition} We say that $u$ is an \defn{$F$-critical substring} if certain conditions on $u$ and its location are met. There are five types of $F$-critical substring. Each row of the first table in Figure \ref{fig:criticals} describes one type, and a transformation that can be performed on that type.
\end{definition} 

\begin{figure}
\begin{center}
\begin{tabular}{|c|c|c|c|c|}
\hline
\multirow{2}{*}{Type} 
& \multicolumn{3}{c|}{Conditions} & \multirow{2}{*}{Transformation}  \\
     & \multicolumn{1}{c}{Substring}&  \multicolumn{1}{c}{Steps} & Location &  
\\\hline
\hline
 \multirow{2}{*}{1F} & 
\multirow{2}{*}{$u = 1(1')^*2'$} &
\east{1} ~ \east{1'} ~ \north{2'} & 
$y=0$ & 
\multirow{2}{*}{$u \to 2'(1')^*2$} \\[.5ex]\cline{3-4}
& &  
\south{1} ~ \east{1'} ~ \north{2'} & 
$y=1$, $x \geq 1$ & 
\\[.5ex]\hline
\multirow{2}{*}{2F} &
\multirow{2}{*}{$u = 1(2)^*1'$} &
\east{1} ~ \north{2} ~ \east{1'} & 
$x = 0$ & 
\multirow{2}{*}{$u \to 2'(2)^*1$}  \\[.5ex]\cline{3-4}
&& 
\south{1} ~ \north{2} ~ \east{1'} &
$x = 1$, $y \geq 1$ & 
\\[.5ex]\hline
 3F & $u = 1$ & 
\east{1} & 
$y = 0$ & 
$u \to 2$ 
\\\hline
 4F & 
$u = 1'$ & 
\east{1'} & 
$x  = 0$ & 
$u \to 2'$ 
\\\hline
\multirow{2}{*}{5F} & $u = 1$ 
& \south{1}
& \multirow{2}{*}{$x=1$, $y \geq 1$} 
&
\multirow{2}{*}{undefined} \\[.5ex]\cline{2-3}
& $u=2'$ & \west{2'} &&
\\\hline
\end{tabular}
\end{center}
\vspace{0.5cm}

\begin{center}
\begin{tabular}{|c|c|c|c|c|}
\hline
\multirow{2}{*}{Type} 
& \multicolumn{3}{c|}{Conditions} & \multirow{2}{*}{Transformation}  \\
     & \multicolumn{1}{c}{Substring}&  \multicolumn{1}{c}{Steps} & Location &  
\\\hline
\hline
 \multirow{2}{*}{1E} & 
\multirow{2}{*}{$u = 2'(2)^*1$} &
\north{2'} ~ \north{2} ~ \east{1} & 
$x=0$ & 
\multirow{2}{*}{$u \to 1(2)^*1'$} \\[.5ex]\cline{3-4}
& &  
\west{2'} ~ \north{2} ~ \east{1} & 
$x=1$, $y \geq 1$ & 
\\[.5ex]\hline
\multirow{2}{*}{2E} &
\multirow{2}{*}{$u = 2'(1')^*2$} &
\north{2'} ~ \east{1'} ~ \north{2} & 
$y = 0$ & 
\multirow{2}{*}{$u \to 1(1')^*2'$}  \\[.5ex]\cline{3-4}
&& 
\west{2'} ~ \east{1'} ~ \north{2} &
$y = 1$, $x \geq 1$ & 
\\[.5ex]\hline
 3E & $u = 2'$ & 
\north{2'} & 
$x = 0$ & 
$u \to 1'$ 
\\[.5ex]\hline
 4E & 
$u = 2$ & 
\north{2} & 
$y  = 0$ & 
$u \to 1$ 
\\[.5ex]\hline
\multirow{2}{*}{5E} & $u = 1$ 
& \south{1}
& \multirow{2}{*}{$y=1$, $x \geq 1$} 
&
\multirow{2}{*}{undefined} \\[.5ex]\cline{2-3}
& $u=2'$ & \west{2'} &&
\\\hline
\end{tabular}
\end{center}

\caption{\label{fig:criticals} Above, the table of $F$-critical substrings and their transformations.  Below, the table of $E$-critical substrings and their transformations.  Here $a(b)^*c$ means any string of the form $abb \dots bc$, including $ac$, $abc$, $abbc$, etc.}
\end{figure}

We define the {\bf final} $F$-critical substring $u$ of $w$ to be the $F$-critical substring with the highest (rightmost) possible ending index. 

\begin{definition}\label{def:F}
We define the word $F(w)$ as follows. We fix a representative $v$ containing the \textbf{final} critical substring $u$, and transform $u$ (in $v$) according to its type (and then canonicalize the result if necessary). If the type is 5F, or if $w$ has no $F$-critical substrings, then $F(w)$ is undefined and we write $F(w) = \varnothing$.

We define $E$ similarly using the corresponding notion of $E$-critical substrings, and transformation rules for each, given by the second table in Figure \ref{fig:criticals}. 
\end{definition}

The relevant properties of $E, F$ are as follows.

\begin{proposition}\label{prop:unprimed-properties}
   The operators $E$ and $F$ satisfy the following properties.
   \begin{enumerate}
     \item They are partial inverses of each other, that is, $E(w)=v$ if and only if $w=F(v)$.
     \item They are well-defined on skew shifted semistandard tableaux (by applying the maps to the reading word).
     \item The maps $E$ and $F$ are coplactic, that is, they commute with all jeu de taquin slides.
     \item For any fixed $i$, the operators $F_i,F_i',E_i,E_i'$ commute with each other when defined.
   \end{enumerate}
\end{proposition}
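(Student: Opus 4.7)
The plan is to establish each of the four properties in turn by case analysis over the critical substring types in Figure \ref{fig:criticals}, keeping the lattice walk as the central bookkeeping device.

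For property (1), I would match each $F$-critical substring type with its corresponding $E$-critical type: Type 1F pairs with Type 2E, Type 2F with Type 1E, Type 3F with Type 4E, and Type 4F with Type 3E. In each case the $F$-transformation rewrites $u$ in a way that reverses its walk steps (e.g.\ $1(1')^*2' \to 2'(1')^*2$ converts the steps at $u$ into exactly those characterizing a 2E-critical substring at the same location). Since the transformation only alters letters inside $u$, the walk to the left and to the right of $u$ is unchanged; this means the new substring is still in the correct axis/interior configuration, and no new critical substrings appear to its right, so it remains the \emph{final} critical substring. Applying $E$ then inverts the transformation.

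For property (2), I would check that each transformation preserves semistandardness when applied to the canonical reading word of a skew shifted SSYT. The critical-substring conditions, combined with semistandardness, constrain the surrounding letters in each row and column enough that the local changes (a single $1\to 2$, $1'\to 2'$, or one of the longer substring rewrites) cannot violate row-weak-increase, column-weak-increase, or the restrictions on repeated primed/unprimed entries. A short case-by-case check per type suffices.

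Property (3) is the main technical obstacle. I would reduce coplacticity to commutation with a single inner or outer JDT slide, since arbitrary slide sequences are generated by these. Writing the reading word $w'$ after a slide as a controlled local modification of $w$, I compare the final $F$-critical substrings of $w$ and $w'$. The cases split according to whether the slide acts inside, strictly left of, or strictly right of $u$, and according to the axis configuration of the walk in each region. Most cases are immediate from locality; the delicate ones are those in which the slide meets the boundary of $u$ or changes whether the walk crosses the $x$- or $y$-axis near $u$, and these must be traced carefully using Figure \ref{fig:directions}. Together with Proposition \ref{prop:primed-properties}(\ref{prop:primed-coplactic}) this gives the result.

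Finally, for property (4), $E_i F_i = \mathrm{id}$ and $E_i' F_i' = \mathrm{id}$ wherever defined by property (1) and Proposition \ref{prop:primed-properties}(\ref{prop:primed-partial-inverses}), so only the four cross-commutations such as $F_iF_i' = F_i'F_i$ need proof. Using the explicit characterization of $F_i'$ (Proposition \ref{prop:primed-properties}(\ref{prop:explicit-definitions-primed})) as switching a particular last-$i$ to an $(i+1)'$ in some representative, I would locate this letter relative to the final $F_i$-critical substring and verify in each type that the two operators act on disjoint positions (after canonicalization), so they commute. The hardest portion of the entire proposition remains property (3), because the primed/unprimed interactions and the axis-dependent step rules produce substantially more subcases than in the type A crystal setting.
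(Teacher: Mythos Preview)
The paper does not prove this proposition at all: it is stated as background material, with the sentence ``See \cite{bib:GLP} for examples and further discussion regarding these operators'' immediately following. All four properties are established in \cite{bib:GLP}, where these operators are first defined; in particular, property (3) is essentially the main theorem of that paper and its proof occupies a substantial portion of it. So there is nothing in the present paper to compare your argument against, and your attempt to prove these properties from scratch is not what is expected here.

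That said, your sketch for property (1) contains a concrete error worth flagging. You write that ``the walk to the left and to the right of $u$ is unchanged'' after the transformation. The walk to the left of $u$ is indeed unchanged, but the walk to the right is \emph{not}: each $F$-transformation shifts the endpoint of $u$ by $(-1,+1)$ (for instance, in type 1F at $y=0$, the substring $1(1')^k2'$ walks from $(x,0)$ to $(x{+}k{+}1,1)$, whereas $2'(1')^k2$ walks from $(x,0)$ to $(x{+}k,2)$). Since the direction assigned to subsequent letters depends on whether the current point lies on an axis (Figure~\ref{fig:directions}), this shift can alter the entire tail of the walk. Showing that the transformed substring is nonetheless the \emph{final} $E$-critical substring of $F(w)$, and that no new critical substrings are created to its right, is precisely the delicate part of the argument in \cite{bib:GLP}, and it does not follow from locality alone.

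Your outline for property (3) is in the right spirit but drastically underestimates the work involved. The interaction between a single JDT slide and the axis-dependent walk rules generates a large case analysis, and several of the ``delicate'' cases you allude to require auxiliary structural lemmas about the walk (analogous to the bounded-error and tail-error lemmas used later in this paper). The proof in \cite{bib:GLP} is organized around such lemmas rather than a direct slide-versus-critical-substring comparison.
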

See \cite{bib:GLP} for examples and further discussion regarding these operators.

\subsubsection{Ballotness criteria}

The lattice walks and coplactic operators give rise to criteria for a tableau to be Littlewood-Richardson, or equivalently for its word to be ballot.

\begin{thm} \label{cor:ballot-walk-criterion} 
A word $w$ is ballot if and only if either of the following equivalent
criteria hold.
\begin{enumerate}
\item
For all $i$, the lattice walk of the subword $w_i$ consisting of the letters $i,i',i+1,i+1'$ has $y_n = 0$, that is, ends on the $x$-axis.
\item
$E_i(w)=E_i'(w)=\varnothing$ for all $i$.
\end{enumerate}
\end{thm}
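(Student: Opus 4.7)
The plan is to establish ballotness $\Leftrightarrow$ (2) via coplacticity, and (1) $\Leftrightarrow$ (2) via direct analysis of the walk against the critical-substring definitions. For each fixed $i$, we work with the subword $w_i$ in letters $\{i, i', i+1, i+1'\}$, which after relabeling as $\{1, 1', 2, 2'\}$ matches the two-letter alphabet used throughout Section~\ref{sec:coplactic}.

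\medskip

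For ballotness $\Leftrightarrow$ (2): by the coplacticity of $E_i, E_i'$ (Propositions~\ref{prop:primed-properties}(3) and~\ref{prop:unprimed-properties}(3)), condition (2) depends only on $\rectify(w)$. Since $w$ is ballot iff $\rectify(w)$ is the highest-weight tableau $T_\mu$ of some strict partition $\mu$, I would reduce to checking (2) on such $T_\mu$. On $T_\mu$ the subword in $\{i, i', i+1, i+1'\}$ is $(i+1)^{\mu_{i+1}} i^{\mu_i}$: Proposition~\ref{prop:primed-properties}(4) immediately yields $E_i' = \varnothing$ (no $(i+1)'$'s present), while the associated walk traces $(0,0) \to (0, \mu_{i+1}) \to (1, \mu_{i+1}) \to (1, 0) \to (\mu_i - \mu_{i+1}, 0)$, and inspection of Figure~\ref{fig:criticals} shows that the final $E$-critical substring is the $\south{1}$ step from $(1,1)$ to $(1,0)$, which is of type 5E and hence undefined, giving $E_i = \varnothing$. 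For the converse, the vanishing of all raising operators on a straight-shape SSYT forces the tableau to be highest-weight; this characterization follows from the crystal structure established in~\cite{bib:GLP}.

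\medskip

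For (1) $\Leftrightarrow$ (2), the key observation is that the axis conditions in Figure~\ref{fig:criticals} pick out type 5E (undefined) precisely for $\south{1}$ at $y = 1$, $x \geq 1$ (and its $\west{2'}$ counterpart), which are exactly the walk steps bringing $y$ from $1$ down to $0$. For (1) $\Rightarrow$ (2): if $y_n = 0$, then either the walk never leaves the $x$-axis (so no $E$-critical substrings exist) or else the final $\south{1}$ step returning to the axis is itself of type 5E and has the largest ending index among all $E$-critical substrings; in either case $E_i = \varnothing$. A parallel argument using Proposition~\ref{prop:primed-properties}(4) yields $E_i' = \varnothing$: any representative with a $2'$ to the right of the last $1$ would contribute an uncanceled upward or leftward walk displacement incompatible with $y_n = 0$. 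For (2) $\Rightarrow$ (1), I would argue the contrapositive: if $y_n \geq 1$, locating the last $\north{}$ step and inspecting the subsequent letters exhibits either a defined $E_i$ transformation of type 1E--4E or a suitable $E_i'$ configuration.

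\medskip

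The main obstacle lies in the (2) $\Rightarrow$ (1) direction, which requires explicitly constructing a defined raising operation from the single hypothesis $y_n \geq 1$; this demands careful case analysis to distinguish type 5E from 1E--4E via the axis location constraints. A simplifying alternative would be to prove coplactic invariance of the walk endpoint as a separate lemma, reducing both directions to the highest-weight case above; verifying this invariance against the shifted Sagan--Worley slide rules is itself a finite but intricate case check.
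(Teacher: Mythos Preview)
The paper does not actually prove this theorem; it is stated in Section~\ref{sec:coplactic} as background material imported from~\cite{bib:GLP}, where the coplactic operators and their properties (including this ballotness characterization) are developed in full. So there is no ``paper's own proof'' to compare against.

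Evaluating your proposal on its own merits: the architecture is sound, and your $(1)\Rightarrow(2)$ argument is essentially correct. The point that the last return to the $x$-axis must be a $\south{1}$ step at $y=1,\,x\geq 1$ (hence type 5E) is right, and after that step only $1,1'$ letters can follow, so no later $E$-critical substring exists. Your $E'$ argument also works once one notes that the walk is representative-independent.

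There are two real gaps. First, your $(2)\Rightarrow$ ballot direction is circular: you invoke ``the crystal structure established in~\cite{bib:GLP}'' to conclude that vanishing of all raising operators on a straight-shape tableau forces highest weight, but that is precisely the content being asserted here. You need an independent argument --- for instance, showing directly that if a straight-shape tableau $T$ is not the highest-weight filling, then some $E_i$ or $E_i'$ is defined. Second, as you yourself flag, $(2)\Rightarrow(1)$ is left as a case analysis you have not carried out.

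Your alternative suggestion --- proving that the walk endpoint is invariant under jeu de taquin slides --- is in fact the cleaner route, and reduces everything to the trivially-checked rectified case. That invariance is exactly the kind of result established in~\cite{bib:GLP}, and would subsume both incomplete directions at once. If you want a self-contained proof, that is the lemma to prove; the slide-by-slide verification is tedious but finite.
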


We will also need Stembridge's original definition 
ballotness \cite{Stembridge}, which we recall here as a lemma.

\begin{lemma}[Stembridge] \label{lem:stembridge}
   Let $w=w_1\cdots w_n$ be a word. For each $i$ and for $1\le j\le 2n$, define $$m_i(j)=\begin{cases}\#\{k\ge n-j+1: w_k=i\} & \text{if }j\le n \\ m_i(n)+\#\{k\le j-n: w_{k}=i'\} & \text{if }j\ge n+1\end{cases}.$$
   Then $w$ is ballot if and only if the following two conditions hold for all $i$:
   \begin{enumerate}
     \item[\textbf{S1.}] If $j< n$ and $m_i(j)=m_{i+1}(j)$, then $w_{n-j}$ is not equal to $i+1$ or $(i+1)'$,
     \item[\textbf{S2.}] If $j\ge n$ and $m_i(j)=m_{i+1}(j)$, then $w_{j-n+1}$ is not equal to $i$ or $(i+1)'$.
   \end{enumerate}
\end{lemma}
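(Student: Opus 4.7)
The plan is to establish this equivalence by invoking Theorem \ref{cor:ballot-walk-criterion}, which already characterizes ballotness in terms of the lattice walks of subwords in $\{i, i', i+1, (i+1)'\}$ ending on the $x$-axis (or equivalently, $E_i(w) = E_i'(w) = \varnothing$ for all $i$). Since both Stembridge's conditions \textbf{S1} and \textbf{S2} and the walk criterion decompose over the index $i$, I may fix $i$ once and for all and restrict attention to the subword $w^{(i)}$ of $w$ consisting only of the letters $\{i, i', i+1, (i+1)'\}$, since primed/unprimed letters outside this alphabet affect neither $m_i(j) - m_{i+1}(j)$ nor the $i$-th lattice walk.

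For this fixed $i$, I would interpret the counter $m_i(j) - m_{i+1}(j)$ as follows: for $j \le n$ we read $w$ from right to left accumulating unprimed letters, and for $j \ge n$ we read $w$ from left to right accumulating primed letters. The quantity $m_i(j) - m_{i+1}(j)$ traces a signed path that, after the appropriate reindexing, corresponds to a coordinate of the lattice walk of $w^{(i)}$. Instants where $m_i(j) = m_{i+1}(j)$ are precisely the moments the walk touches the $x$-axis, and conditions \textbf{S1} and \textbf{S2} are exactly the forbidden-step conditions that keep the walk from subsequently dropping below the axis: condition \textbf{S1} forbids an unprimed $i+1$ or primed $(i+1)'$ step downward from the axis during the first (right-to-left) phase, and \textbf{S2} forbids an unprimed $i$ or primed $(i+1)'$ step during the second (left-to-right) phase.

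The main obstacle is the reparametrization needed to match Stembridge's two-phase index $j \in \{0, 1, \ldots, 2n\}$ with the forward-in-time, left-to-right walk used in the paper: Stembridge's first phase reads unprimed letters right-to-left and then his second phase reads primed letters left-to-right, while the lattice walk reads $w$ purely left-to-right with axis-dependent step rules (Figure \ref{fig:directions}). Reconciling the two demands careful bookkeeping at the transition $j = n$ and at each return to the axis. The case analysis is small, however (four letter types and two axis regimes), and must be performed uniformly for \textbf{S1} and \textbf{S2}; I expect this to be the most technical but mechanical step of the proof.

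An alternative route, closer to Stembridge's original argument in \cite{Stembridge}, is to verify directly that conditions \textbf{S1} and \textbf{S2} are preserved under all shifted jeu de taquin slides, and then observe that the reading word of the highest-weight tableau of any straight shape vacuously satisfies them (since in that word, each row of $i$'s is read in its entirety before any higher-indexed rows appear). This avoids the reparametrization bookkeeping but duplicates more of Stembridge's argument. I would favor the walk-based reduction as more tightly integrated with the machinery of this paper.
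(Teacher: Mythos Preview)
The paper does not prove this lemma: it is introduced with the words ``We will also need Stembridge's original definition [of] ballotness \cite{Stembridge}, which we recall here as a lemma,'' and no argument is given beyond the citation. So there is no paper proof to compare against; you are proposing to supply a proof where the authors simply defer to the literature.

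On the substance of your first route, the key assertion --- that the instants where $m_i(j)=m_{i+1}(j)$ are ``precisely the moments the walk touches the $x$-axis'' --- is not correct as stated, and this is where the argument would break down. Stembridge's counters are computed by a two-pass sweep (right-to-left over unprimed letters, then left-to-right over primed letters), whereas the lattice walk of Section~\ref{sec:coplactic} reads $w$ once, left to right, with \emph{state-dependent} step rules (Figure~\ref{fig:directions}): a $1$ goes east on an axis but south off-axis, and a $2'$ goes north on an axis but west off-axis. Consequently the $y$-coordinate of the walk is not a reparametrization of $m_{i+1}(j)-m_i(j)$; it depends on the entire history of axis visits, not just on running letter counts. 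The ``small case analysis'' you anticipate is therefore not the obstacle --- the two statistics genuinely track different things, and a direct identification of zero-loci will fail on simple examples. Any proof along these lines would need an intermediate invariant relating Stembridge's suffix/prefix counts to the walk's axis returns, which is real work (and is essentially what is done in \cite{bib:GLP} to establish Theorem~\ref{cor:ballot-walk-criterion} in the first place).

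Your alternative route --- showing \textbf{S1}/\textbf{S2} are preserved under shifted jeu de taquin slides and checking them on straight-shape highest-weight tableaux --- is precisely Stembridge's original argument, and is the honest way to supply a self-contained proof here. If you want something closer to this paper's machinery, a cleaner strategy than your first route is to show directly that \textbf{S1}/\textbf{S2} hold if and only if $E_i(w)=E_i'(w)=\varnothing$, by analyzing which $E$-critical substrings can occur; but that, too, is not a mere reindexing.
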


Note that the quantity $m_i(j)$ can be computed by reading through the word once backwards and then once forwards.  We increment $m_i(j)$ if the $j$-th letter we read in this way is $i$ on the first pass, or $i'$ on the second pass.   It is also evident from the definition that $m_i(j)\ge m_{i+1}(j)$ for all $i$ and $j$ in a ballot word.

An immediate consequence of this is the following sufficient condition,
which we will use more often.

\begin{corollary}\label{cor:ballotness-preserving}
Let $w$ be a ballot word in letters $1',1,2',2$.  
Any word obtained from $w$ by moving any $1'$ to a position earlier in the word is ballot.
The word obtained by moving any $1$ to the end of $w$ is also ballot.
\end{corollary}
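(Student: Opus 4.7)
The plan is to apply Stembridge's criterion (Lemma~\ref{lem:stembridge}) directly in both cases, tracking how the counts $m_i(j)$ and the positions indexed by S1 and S2 change under the prescribed move. In each case the verification reduces to a bounded case analysis at a few specific indices, together with a small amount of local bookkeeping on which letters have shifted.

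For the first statement, moving a $1'$ to an earlier position, I first reduce by induction on the distance moved to the base case of swapping the $1'$ at position $p+1$ with its immediately preceding letter $\ell$ at position $p$. Aside from the trivial case $\ell = 1'$, the cases are $\ell \in \{1, 2, 2'\}$. Writing $\tilde m_i$ for the new $m_i$-profile, direct computation shows $\tilde m_i(j) = m_i(j)$ for every $j$ except $j = n-p$, where $\tilde m_1 - m_1 = [\ell=1]$ and $\tilde m_2 - m_2 = [\ell=2]$, and $j = n+p$, where $\tilde m_1 - m_1 = 1$ and $\tilde m_2 - m_2 = -[\ell=2']$; the only changed letters are $w^{\mathrm{new}}_p = 1'$ and $w^{\mathrm{new}}_{p+1} = \ell$. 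Checking S1 and S2 for $w^{\mathrm{new}}$ then splits into a handful of subcases: indices at which both the $m_i$-profile and the relevant position are unchanged inherit the condition from the ballotness of $w$; the ``new'' letter $1'$ at position $p$ trivially avoids both forbidden sets $\{2,2'\}$ and $\{1,2'\}$; and when the probed position lands at $p+1$ (carrying the letter $\ell$), the borderline subcases $\ell \in \{2, 2'\}$ are ruled out by applying S1 for $w$ itself at $j = n-p$, which under the equality hypothesis would forbid $\ell \in \{2, 2'\}$, forcing the strict inequality $m_1(n-p) > m_2(n-p)$ in $w$; this strict inequality propagates to $j = n-p-1$ because $w_{p+1} = 1'$ advances neither $m_1$ nor $m_2$.

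For the second statement, moving a $1$ at position $p$ to the end, I proceed directly, not through adjacent swaps. A straightforward computation yields
\[
\tilde m_i(j) =
\begin{cases}
m_i(j-1) + [i=1] & \text{for } 1 \le j \le n-p, \\
m_i(j) & \text{for } n-p+1 \le j \le n+p-1, \\
m_i(j+1) & \text{for } n+p \le j \le 2n-1, \\
m_i(2n) & \text{for } j = 2n,
\end{cases}
\]
and the positions $n-j$ and $j-n+1$ shift in the corresponding predictable way. Checking S1 and S2 range by range, every potential obstruction is inherited from $w$ except at the boundary $j = 2n-1$, where $w^{\mathrm{new}}_n = 1$ would violate the S2 letter constraint under an equality hypothesis. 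The saving observation is that $\tilde m_i(2n-1) = m_i(2n)$ equals the total weight of $w$ in letter $i$; since the weight of any ballot word is a strict partition, $m_1(2n) > m_2(2n)$ strictly, so the S2 equality hypothesis is never met and the case is vacuous. The main obstacle in both claims is structurally the same --- the sign of some $\Delta m_i$ threatens either the nonnegativity of $\tilde m_1 - \tilde m_2$ or an S1/S2 letter condition at a specific index --- and the rescue in each case comes from either a local one-step look-ahead applied to $w$'s own Stembridge conditions (Claim~1) or the global strict-partition property of ballot weights (Claim~2).
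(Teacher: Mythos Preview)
Your approach is correct and matches the paper's intent: the corollary is stated there merely as an immediate consequence of Stembridge's criterion, with no further detail, so your explicit S1/S2 verification is exactly the right expansion. Two small boundary cases are glossed over, each fixable in a line.

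In Claim~1, your treatment of ``probed position $p+1$'' handles only the S1 forbidden set $\{2,2'\}$. For S2 at $j = n+p$ the forbidden set is $\{1,2'\}$, and your argument does not cover $\ell = 1$. The fix: from your own formulas, $\tilde m_1(n+p) - \tilde m_2(n+p) = (m_1(n+p)+1) - (m_2(n+p) - [\ell=2']) \ge 1$ since $m_1 \ge m_2$ for the ballot word $w$, so the S2 equality hypothesis is never met and the case is vacuous.

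In Claim~2, the S2 case $j = n+p-1$ (for $p < n$) is not directly ``inherited from $w$'': here $\tilde m_i(j) = m_i(j)$ but the probed letter is $w^{\mathrm{new}}_p = w_{p+1}$, not $w_p$. Since $w_p = 1$ lies in the S2 forbidden set, condition S2 for $w$ itself at this same index forces $m_1(n+p-1) > m_2(n+p-1)$, so again the hypothesis fails and the case is vacuous --- the same trick you already use at $j = 2n-1$.

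With these two observations added, the proof is complete.
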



\section{Geometry of type B Schubert curves}\label{sec:geometry}

We now develop the main geometric results, using the Wronski map studied extensively in \cite{bib:Pur10} and \cite{bib:Pur10B}.

\subsection{Ordinary and orthogonal Grassmannians}\label{sec:sym}

If $W$ is a vector space, we write $\Gr(k,W)$ for the Grassmannian of $k$-dimensional subspaces of $W$. Given a partition $\lambda$ and a flag $\mathscr{F}$, we write the corresponding Schubert variety as
\[X_\lambda(\mathscr{F}) := \{U \in \Gr(k,W) : \dim U \cap \mathscr{F}_{n-k+i-\lambda_i} \geq i \text{ for all } i\},\]
which has codimension $|\lambda|$.

If $W$ has dimension $2n+1$ and is equipped with a nondegenerate symmetric bilinear form $\langle-,-\rangle$, a subspace $U \subset W$ is {\bf isotropic} if $\langle u, u' \rangle = 0$ for all $u, u' \in W$. We write $\OG(n,W) \subset \Gr(n,W)$ for the orthogonal Grassmannian of maximal isotropic subspaces. A flag $\mathscr{F}$ is {\bf orthogonal} if $\langle u, u' \rangle = 0$ whenever $u \in \mathscr{F}_i$ and $u' \in \mathscr{F}_{2n+1-i}$ for some $i$. (The lower half of such a flag consists of isotropic subspaces, while the upper half consists of their duals under $\langle -, - \rangle$.) Given a strict partition $\sigma$ and an orthogonal flag $\mathscr{F}$, we define the orthogonal Schubert variety:
\begin{align*}
\Omega_\sigma(\mathscr{F}) &:= X_{\tilde\sigma}(\mathscr{F}) \cap \OG(n,W) \\
&= \{U \in \OG(n,W) : \dim U \cap \mathscr{F}_{n+1+i-\tilde\sigma_i} \geq i \text{ for all } i\}.
\end{align*}
This has codimension $|\sigma|$ in $\OG(n,W)$.

In what follows, we let $V \cong \mathbb{C}^2$ be a two-dimensional vector space. We will describe several $GL_2$-equivariant constructions on $\mathbb{P}^1 = \mathbb{P}(V)$ and related spaces. We note that these constructions are coordinate-free on $\mathbb{P}^1$ and can be extended to $\overline{M}_{0,r}$ (see Remark \ref{rmk:modulispace}).

\subsubsection{A coordinate-free bilinear form on $\mathrm{Sym}^d(\mathbb{C}^2)$} \label{sec:bilinear}

There is a unique (up to scaling) $\mathrm{GL}(V)$-equivariant bilinear form
\begin{align} \label{eqn:bilinear-form}
\langle-,-\rangle &: \mathrm{Sym}^d(V) \otimes \mathrm{Sym}^d(V) \to \det(V)^{\otimes d}, \\
\nonumber \langle v_1 \cdots v_d, w_1 \cdots w_d \rangle &= \frac{1}{d!}\sum_{\pi \in S_d} (v_1 \wedge w_{\pi(1)})\tensor \cdots \tensor (v_d \wedge w_{\pi(d)}).
\end{align}
This form is symmetric if $d$ is even and alternating if $d$ is odd. It follows from $\mathrm{GL}(V)$-equivariance that the form is nondegenerate for $d > 1$. Uniqueness follows from the Pieri rule in $\mathrm{GL}$-representation theory; the $\mathrm{GL}(V)$ representation $\det(V)^{\otimes d}=\bigwedge^2(V)^{\otimes d}$ is the irreducible representation associated to the partition $(d,d)$, and each copy of $\mathrm{Sym}^d(V)$ is an irreducible representation with partition $(d)$.  There is therefore only one copy of $\det(V)^{\otimes d}$ in  $\mathrm{Sym}^d(V)\otimes \mathrm{Sym}^d(V)$.  (See, for instance, \cite{bib:Fulton}, Chapter 8 for details on the Pieri rule.)

In coordinates, we may think of $\mathrm{Sym}^d(\mathbb{C}^2)$ as the vector space $\mathbb{C}[z]_{\leq d}$ of degree-at-most-$d$ polynomials. Then the form is given by
\[\langle z^a, z^b \rangle = \begin{cases} 0 & \text{if } a+b \ne d, \\ \frac{(-1)^b}{\binom{d}{b}} & \text{if } a+b = d.\end{cases}\]
Note that this is a scalar multiple (by a factor of $1/d!$) of the form described in \cite{bib:Pur10B}.

\begin{remark}
From now on, we write $\Gr := \Gr(k,\mathrm{Sym}^d(\mathbb{C}^2))$ and $\OG := \OG(n,\mathrm{Sym}^{2n}(\mathbb{C}^2)).$
\end{remark}

\subsubsection{Osculating flags and Schubert varieties}\label{sec:isotropic}

We now define the \emph{osculating flag} $\mathscr{F}(p) \subset \mathrm{Sym}^d(\mathbb{C}^2)$ for $p \in \mathbb{P}^1$. In terms of representations, $p$ gives a line $\langle v \rangle \subset \mathbb{C}^2$, and the codimension-$i$ part of $\mathscr{F}(p)$ is 
\[\mathscr{F}(p)^i = \{u \in \mathrm{Sym}^d(\mathbb{C}^2) : u = v^i w \text{ for some } w \in \mathrm{Sym}^{d-i}(\mathbb{C}^2)\}.\]
In terms of polynomials, $\mathscr{F}(p)^i \subset \mathbb{C}[z]_{\leq d}$ consists of the polynomials vanishing at $p$ to order at least $i$, that is, $(z-p)^i$ divides $f(z)$. (If $p = \infty$, we interpret this as meaning $\deg(f) \leq d-i$.) 

\begin{proposition}
The flags $\mathscr{F}(p)$ are orthogonal with respect to the form $\langle-,-\rangle$.
\end{proposition}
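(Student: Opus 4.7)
The plan is to reduce to a single point using $\mathrm{GL}(V)$-equivariance, then verify the orthogonality at that chosen point using the coordinate formula for the form. The statement to prove is that $\mathscr{F}(p)^i$ is orthogonal to $\mathscr{F}(p)^{d+1-i}$ for every $i$ (so that, in particular, when $d = 2n$, the middle piece $\mathscr{F}(p)^n$ is maximal isotropic of dimension $n+1$... actually rather, $\mathscr{F}(p)_n$, the $n$-dimensional piece, is isotropic).

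First, I would observe that both the construction $p \mapsto \mathscr{F}(p)$ and the bilinear form $\langle-,-\rangle$ are $\mathrm{GL}(V)$-equivariant: for any $g \in \mathrm{GL}(V)$ we have $g \cdot \mathscr{F}(p) = \mathscr{F}(g \cdot p)$ (from the representation-theoretic description $\mathscr{F}(p)^i = v^i \cdot \mathrm{Sym}^{d-i}(V)$ where $\langle v \rangle$ corresponds to $p$), and $\langle g \cdot u, g \cdot u' \rangle = (\det g)^{\otimes d} \langle u, u' \rangle$. The scalar $(\det g)^{\otimes d}$ never vanishes, so orthogonality is preserved by the group action. Since $\mathrm{GL}(V)$ acts transitively on $\mathbb{P}^1 = \mathbb{P}(V)$, it suffices to verify the orthogonality of $\mathscr{F}(p)$ at a single convenient point $p$.

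Second, I would take $p = 0$ in the polynomial model $\mathrm{Sym}^d(\mathbb{C}^2) \cong \mathbb{C}[z]_{\leq d}$. Then $\mathscr{F}(0)^i$ is precisely the span of $\{z^i, z^{i+1}, \ldots, z^d\}$, the polynomials vanishing at $z=0$ to order $\geq i$. For basis vectors $z^a \in \mathscr{F}(0)^i$ and $z^b \in \mathscr{F}(0)^{d+1-i}$ we have $a \geq i$ and $b \geq d+1-i$, hence $a+b \geq d+1 > d$. The coordinate formula $\langle z^a, z^b \rangle = \frac{(-1)^b}{\binom{d}{b}}$ when $a+b=d$ and $0$ otherwise then immediately gives $\langle z^a, z^b \rangle = 0$. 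By bilinearity this extends to $\mathscr{F}(0)^i \perp \mathscr{F}(0)^{d+1-i}$, and by transitivity the same holds at every $p$.

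There is no real obstacle here; the only mildly subtle point is checking that the $\mathrm{GL}(V)$-equivariance of the form (which lands in $\det(V)^{\otimes d}$ rather than $\mathbb{C}$) still transports the vanishing condition correctly. This is immediate because the transformation factor $(\det g)^{\otimes d}$ is a nonzero scalar acting on a one-dimensional space, so $\langle u, u'\rangle = 0$ is equivalent to $\langle g u, g u'\rangle = 0$.
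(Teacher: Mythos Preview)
Your proof is correct but takes a different route from the paper's. The paper argues directly from the coordinate-free formula for $\langle-,-\rangle$: if $u = v^a w \in \mathscr{F}(p)^a$ and $\tilde u = v^b \tilde{w} \in \mathscr{F}(p)^b$ with $a+b > d$, then every term in the sum $\sum_{\pi \in S_d}$ defining $\langle u, \tilde u\rangle$ contains at least one factor $v \wedge v = 0$ (by pigeonhole, since there are more than $d$ copies of $v$ among the $2d$ vectors being paired), so the pairing vanishes. Your approach instead uses $\mathrm{GL}(V)$-equivariance to reduce to the single point $p=0$, where the explicit coordinate formula $\langle z^a, z^b\rangle = 0$ for $a+b \neq d$ immediately gives the result. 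Both arguments are short; the paper's is entirely coordinate-free and works uniformly at every $p$, while yours trades the pigeonhole observation for the already-recorded coordinate formula plus a transitivity step.
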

\begin{proof}
Let $u \in \mathscr{F}(p)^a$ and $\tilde u \in \mathscr{F}(p)^b$. If $p$ corresponds to $\langle v \rangle \subset \mathbb{C}^2$, we have factorizations $u = v^a w$ and $u' = v^b \tilde w$, where $w, \tilde w$ have degrees $d-a$ and $d-b$.

Then, if $a+b > d$, we see directly (regardless of what $w, \tilde w$ are) that in the sum for $\langle u,\tilde u\rangle$, every choice of $\pi \in S_d$ gives at least one factor $(v \wedge v) = 0$. Therefore $\langle u, \tilde u\rangle = 0$.
\end{proof}

We will only consider Schubert conditions in $\Gr$ and $\OG$ using osculating flags. Thus, for $p \in \mathbb{P}^1$, we put
\begin{equation}
X_\lambda(p) := X_\lambda(\mathscr{F}(p)), \qquad \Omega_\sigma(p) := \Omega_\sigma(\mathscr{F}(p)),
\end{equation}
for any partition $\lambda$ or strict partition $\sigma$.

These Schubert varieties have unusually strong transversality properties. The key facts about them are as follows. Let $\lambda^{(1)}, \ldots, \lambda^{(r)}$ be partitions and $\sum |\lambda^{(i)}| = \dim(\Gr)$. Let $p_1, \ldots, p_r \in \mathbb{P}^1$ be distinct points. Consider the intersection
\[X = X_{\lambda^{(1)}}(p_1) \cap \cdots \cap X_{\lambda^{(r)}}(p_r).\]

\begin{thm}[\cite{bib:EH86},\cite{bib:Pur09}] \label{thm:eisenbud-harris}
The intersection $X$ is dimensionally transverse; that is, $\dim(X) = 0$ and the multiplicity of $X$ is given by the corresponding product of Schubert classes, $c_{\lambda_\bullet}^\rect$.

The analogous statement holds in $\OG$, replacing partitions $\lambda$ by strict partitions $\sigma$ such that $\sum |\sigma^{(i)}| = \dim(\OG)$, and replacing Schubert varieties $X_\lambda(p)$ by $\Omega_\sigma(p)$ and $c_{\lambda_\bullet}^{\rect}$ by $f_{\sigma_\bullet}^{\stair}$.
\end{thm}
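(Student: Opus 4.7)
The plan is to combine a dimension estimate with a cohomological computation of intersection multiplicity; the $\OG$ statement will follow by the same strategy applied to the orthogonal Grassmannian, using that the osculating flags are orthogonal by the proposition above. I focus on the type A statement.

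Consider the family
\[
\mathcal{X} = \{(U, p_1, \dots, p_r) \in \Gr \times (\mathbb{P}^1)^r : p_i \text{ distinct and } U \in X_{\lambda^{(i)}}(p_i)\},
\]
fibered over the configuration space of distinct $r$-tuples in $\mathbb{P}^1$. The assumption $\sum |\lambda^{(i)}| = \dim \Gr$ forces the expected fiber dimension to be zero. The first and main task is to show that the actual fiber dimension equals zero over every such configuration. Once this is known, the multiplicity is immediate from cohomology: each class $[X_{\lambda^{(i)}}]$ is independent of the osculation point in $H^*(\Gr)$, so by Schubert calculus
\[
\deg X = [X_{\lambda^{(1)}}] \cdots [X_{\lambda^{(r)}}] = c_{\lambda_\bullet}^{\rect},
\]
and by the same reasoning applied in $H^*(\OG)$ the orthogonal multiplicity is $f_{\sigma_\bullet}^{\stair}$. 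This uses only the fact that a zero-dimensional scheme has length equal to the product of the corresponding Chow classes.

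The hard part, and the actual content of \cite{bib:EH86,bib:Pur09}, is the dimension bound. The standard approach is via limit linear series: one allows several of the $p_i$ to collide in a controlled order, tracks the jet behavior of a potential $k$-plane (respectively isotropic $n$-plane) at each accumulation point, and verifies that the limiting scheme remains dimensionally proper. Combined with $\mathrm{PGL}_2$-equivariance (which permutes osculating flags among themselves) and upper semicontinuity of fiber dimension, this establishes the bound uniformly across all distinct configurations. The genuinely new contribution of \cite{bib:Pur09} is to verify that these controlled limits are compatible with the symmetric bilinear form on $\Sym^{2n}(\mathbb{C}^2)$, so that the isotropic condition cutting out $\OG \subset \Gr$ is preserved in the limit and the argument transfers to the orthogonal setting.
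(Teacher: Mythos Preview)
The paper does not prove this theorem: it is stated as a known result, with citations to \cite{bib:EH86} (for $\Gr$) and \cite{bib:Pur09} (for $\OG$), and no proof appears. Your proposal is therefore not competing with any argument in the paper itself; it is a sketch of how the cited literature establishes the result. As such it is broadly accurate for the type A part: once the dimension bound is known, the multiplicity follows from the cohomological product, and the dimension bound is indeed the substance of the Eisenbud--Harris limit linear series analysis.

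One caution: your description of \cite{bib:Pur09} as simply ``verifying that the controlled limits are compatible with the bilinear form'' undersells and somewhat mischaracterizes that paper. The argument there is not a direct transport of limit linear series to the isotropic setting; rather, it proceeds via the Wronski map and a more algebraic analysis specific to $\OG$. If you intend this as a genuine proof outline (as opposed to a citation), you would need to engage with the actual technique of \cite{bib:Pur09} rather than asserting that the type A degeneration argument goes through once isotropy is checked.
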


When all the osculation points are real, the statement is even stronger:

\begin{thm}[\cite{bib:MTV09},\cite{bib:Pur09}] \label{thm:MTV-purbhoo}
Suppose $p_i \in \mathbb{RP}^1$ for all $i$. Then $X$ is reduced and consists entirely of real points.
The same statement holds in $\OG$, with modifications as above.
\end{thm}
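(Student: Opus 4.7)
The plan is to deduce Theorem~\ref{thm:MTV-purbhoo} directly from the two cited works: the Shapiro--Shapiro-type reality theorem of Mukhin--Tarasov--Varchenko \cite{bib:MTV09} in type A, and its orthogonal analog due to the third author \cite{bib:Pur09} in type B. Since Theorem~\ref{thm:eisenbud-harris} already guarantees that $X$ is a zero-dimensional scheme whose length equals $c_{\lambda_\bullet}^{\rect}$ (respectively $f_{\sigma_\bullet}^{\stair}$), what remains is (i) to show every geometric point of $X$ is defined over $\mathbb{R}$, and (ii) to show $X$ is reduced as a scheme, i.e., that each of these points is scheme-theoretically simple.

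For type A, I would translate the intersection into a fiber of the Wronski map $\Wr : \Gr \to \mathbb{P}^{k(n-k)}$ over the polynomial with roots $\{p_i\}$ of prescribed multiplicities dictated by the partitions $\lambda^{(i)}$. The MTV theorem then identifies points of this fiber with eigenlines of a commuting family of Gaudin Hamiltonians acting on a suitable tensor product of highest-weight $\mathfrak{gl}_N$-modules, where the parameters of the Hamiltonians are precisely the $p_i$. When the $p_i$ are real, these operators are self-adjoint with respect to a positive-definite Hermitian form (the tensor Shapovalov form), so their joint spectrum is real and each eigenline corresponds to a real point of $X$. Combined with the length count, this accounts for every geometric point of $X$ and forces the joint spectrum to be simple, yielding reducedness.

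For type B, I would invoke \cite{bib:Pur09}, which establishes the analogous reality and reducedness statements in $\OG$. One natural route is to combine the type A result with the observation that the orthogonality condition cutting $\OG$ out of $\Gr(n,\mathbb{C}^{2n+1})$ is defined over $\mathbb{R}$, so the real structure on the ambient Schubert intersection restricts cleanly. Closer to \cite{bib:Pur09}, one instead replaces the $\mathfrak{gl}_N$-Gaudin Hamiltonians by their $\mathfrak{so}_{2n+1}$-analogs and repeats the self-adjointness argument. Either way, reducedness is then deduced by matching the number of real eigenlines produced with the length $f_{\sigma_\bullet}^{\stair}$ supplied by Theorem~\ref{thm:eisenbud-harris}.

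The main obstacle, and the technical heart of both \cite{bib:MTV09} and \cite{bib:Pur09}, is precisely the upgrade from reality to reducedness: one must exclude coincidences between Bethe eigenvectors, equivalently show that the joint spectrum of the Gaudin Hamiltonians is simple on the relevant weight space. This amounts to nondegeneracy of the Shapovalov form on that space, a genuinely nontrivial input that I would not attempt to reprove here. The proof therefore reduces to a careful bookkeeping exercise: verifying that the hypotheses of the cited reality theorems apply verbatim to our configurations $0 = t_1 < t_2 < \cdots < t_r = \infty$, and translating their conclusions into the statement that $X$ is reduced with all geometric points real.
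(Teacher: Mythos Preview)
The paper does not supply its own proof of this theorem: it is stated with attribution to \cite{bib:MTV09} and \cite{bib:Pur09} and invoked as a black box thereafter. So there is nothing to compare against on the paper's side; your proposal is already more than the paper offers.

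That said, your sketch of the cited arguments is broadly accurate in spirit for type A (Bethe ansatz/Gaudin Hamiltonians, self-adjointness with respect to the tensor Shapovalov form, simplicity of the joint spectrum forcing reducedness). One small inaccuracy: the intersection $X$ is not literally a fiber of the Wronski map but rather one component of the disjoint union in equation~\eqref{eqn:wronski-fiber-type-A}; the MTV argument handles the full fiber, and each piece inherits reality and reducedness. For type B, the actual argument in \cite{bib:Pur09} does not introduce $\mathfrak{so}_{2n+1}$-Gaudin Hamiltonians; it instead deduces the orthogonal statement from the type A theorem by showing that the involution on $\Gr(n,\mathbb{C}^{2n+1})$ whose fixed locus is $\OG$ is compatible with the real structure and the Wronski map, so real reduced points of the ambient type A intersection that happen to lie in $\OG$ give exactly what is needed. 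Your first ``natural route'' is thus closer to the truth than your second.
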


\subsection{The Wronski map}

The \emph{Wronski map} 
\[\Wr: \Gr(k,\mathrm{Sym}^{n-1}(\mathbb{C}^2)) \to \mathbb{P}^{k(n-k)}= \mathbb{P}(\mathrm{Sym}^{k(n-k)}\mathbb{C}^2)\]
is defined as follows. Thinking of elements $f_1, \ldots, f_k \in \mathrm{Sym}^{n-1}(\mathbb{C}^2)$ as polynomials in $z$, we consider the determinant
\[\Wr(f_1, \ldots, f_k; z) := \det \begin{bmatrix} f_1 & \cdots & f_k \\ f_1' & \cdots & f_k' \\ \vdots &&\vdots \\ f_1^{(k-1)} & \cdots & f_k^{(k-1)}\end{bmatrix} \in \mathrm{Sym}^{k(n-k)}(\mathbb{C}^2).\]
This determinant depends only on the subspace $U = \langle f_1, \ldots, f_k\rangle \in \Gr(k,\mathrm{Sym}^{n-1}(\mathbb{C}^2))$, so we put $\Wr(U;z) := \Wr(f_1, \ldots, f_k)$. We often factor the Wronskian as $\prod (z-p_i)^{d_i}$ and think of it as a multiset of points ${\bf m} = \{(p_\bullet,d_\bullet)\}$.  

\begin{remark}
Note that technically the points $p_i$ are on $\mathbb{P}^1$ and the Wronskian factorization $\prod (z-p_i)^{d_i}$ should be written as a two-variable homogeneous polynomial $\prod (p_{i_1}z_1-p_{i_2}z_2)^{d_i}$ where the points involved have homogeneous coordinates $(p_{i_1}:p_{i_2})$.  In terms of the local coordinate $z$, then, if one of the points $p_i$ is at $\infty=(0:1)$, then instead of having a factor of $(z-p_i)^{d_i}$, we simply lower the degree of the Wronskian polynomial by $d_i$ and only multiply together the remaining factors.
\end{remark}

We note that $\Wr$ has a coordinate-free description similar to equation \eqref{eqn:bilinear-form}, induced by the unique $\GL_2$-equivariant linear map
\[\bigwedge^k(\mathrm{Sym}^{n-1}(\mathbb{C}^2)) \to \mathrm{Sym}^{k(n-k)}(\mathbb{C}^2) \otimes \det(\mathbb{C}^2)^{\otimes \binom{k}{2}},\]
projectivized and restricted to $\Gr$ in the Pl\"{u}cker embedding. We will not need this; for us, the key fact is that $\Wr$ is flat and finite, of degree $|\mathrm{SYT}(\rect)|$, and its fibers have the following property.

\begin{lemma}[{\cite[Theorem 2.5]{bib:Pur10}}] \label{lem:wronski-divisibility-type-A}
Let $U \in \Gr(k,\mathrm{Sym}^{n-1}(\mathbb{C}^2))$ and $p \in \mathbb{P}^1$. Then $\Wr(U;z)$ is divisible by $(z-p)^{d}$ if and only if there is some partition $\lambda \vdash d$ so that $U \in X_\lambda(\mathscr{F}(p))$.
\end{lemma}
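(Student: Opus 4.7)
The plan is to reduce the lemma to a direct computation of the order of vanishing of $\Wr(U;z)$ at $p$. First I note that every $U \in \Gr$ lies in a unique open Schubert cell $X_\lambda^\circ(\mathscr{F}(p)) \subset X_\lambda(\mathscr{F}(p))$, determined by the \emph{ramification sequence} of $U$ at $p$: the distinct orders of vanishing $a_1 > a_2 > \cdots > a_k \geq 0$ realized by nonzero $f \in U$. Using $\mathscr{F}(p)_j = \{f : \mathrm{ord}_p(f) \geq n - j\}$ in the definition of $X_\lambda$, one checks that $U \in X_\lambda^\circ(\mathscr{F}(p))$ iff $a_i = k - i + \lambda_i$ for each $i$, and more generally $U \in X_\mu(\mathscr{F}(p))$ iff $\mu \subseteq \lambda$. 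It therefore suffices to show $\mathrm{ord}_p \Wr(U;z) = |\lambda|$: given this, $(z-p)^d \mid \Wr(U;z)$ iff $d \leq |\lambda|$, iff there exists $\mu \vdash d$ with $\mu \subseteq \lambda$ (just delete boxes from $\lambda$), iff $U \in X_\mu(\mathscr{F}(p))$ for some $\mu \vdash d$.

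After translating so $p = 0$, the heart of the argument is a Wronskian calculation. I would choose a basis $f_1, \ldots, f_k$ of $U$ with $f_i = c_i z^{a_i} + O(z^{a_i+1})$, $c_i \neq 0$, and expand $\Wr(f_1, \ldots, f_k)$ using multilinearity in the columns as a sum, over tuples $(b_1, \ldots, b_k)$ with $b_i \geq a_i$, of scalar multiples of $\Wr(z^{b_1}, \ldots, z^{b_k})$. For pure powers, $(z^b)^{(j-1)} = (b)_{j-1}\,z^{b-j+1}$, and every term in the determinant expansion carries the same $z$-exponent $\sum_i(b_i - i + 1) = \sum b_i - \binom{k}{2}$, giving
\[\Wr(z^{b_1}, \ldots, z^{b_k}) = z^{\sum b_i - \binom{k}{2}}\, \det\bigl[(b_i)_{j-1}\bigr]_{j,i},\]
and the remaining determinant equals the classical Vandermonde $\prod_{i<i'}(b_{i'} - b_i)$ via a unipotent change of basis from falling factorials to powers, which is nonzero iff the $b_i$ are distinct. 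Thus the summand with $(b_i) = (a_i)$ contributes a nonzero multiple of $z^{|\lambda|}$, while any other summand either vanishes (some $b_i$ coincide) or carries a strictly larger $z$-exponent (since $\sum b_i > \sum a_i$). In particular no cancellation is possible, and $\mathrm{ord}_0 \Wr(U;z) = |\lambda|$ exactly.

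The main technical concern is precisely this ruling-out of cancellation between the leading Vandermonde term and higher-order contributions from the nonleading parts of the $f_i$; the multilinear expansion above resolves it cleanly by sorting contributions by $z$-exponent. An alternative would be to first produce a reduced basis $f_i = z^{a_i} + \sum_{b > a_i,\ b \notin \{a_j\}} c_{i,b} z^b$ via elementary row operations within $U$, so that the nonleading exponents lie outside the ramification sequence and every nontrivial contribution to the expansion automatically has $\sum b_i > \sum a_i$. Once the identity $\mathrm{ord}_p \Wr(U;z) = |\lambda|$ is established, the claimed equivalence follows immediately from the containment structure of Schubert varieties under the osculating flag.
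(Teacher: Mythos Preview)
Your argument is correct and is essentially the classical computation relating the vanishing order of the Wronskian to the ramification sequence of a linear series (as in Eisenbud--Harris).  Note, however, that the paper itself does not give a proof of this lemma: it is quoted verbatim from \cite[Theorem~2.5]{bib:Pur10} and used as background.  So there is no ``paper's proof'' to compare against here; you have supplied a self-contained proof where the authors chose to cite one.

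A few minor remarks.  First, your reduction ``translate so $p=0$'' handles all finite $p$; for $p=\infty$ one either invokes the $\GL_2$-equivariance of $\Wr$ noted just before the lemma, or repeats the calculation after the substitution $z\mapsto 1/z$.  Second, in verifying that $d\le |\lambda|$ is equivalent to the existence of $\mu\vdash d$ with $\mu\subseteq\lambda$, one should observe that the resulting $\mu$ automatically fits in the $k\times(n-k)$ box (since $\lambda$ does), so $X_\mu(\mathscr{F}(p))$ is defined.  Third, your ``alternative'' reduced-basis argument is not actually needed: the multilinear expansion already sorts all contributions by total $z$-exponent, so there is no possibility of cancellation against the leading $(b_i)=(a_i)$ term regardless of which basis one starts from.
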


Consequently, the fiber of the Wronski map at a point, say $\Wr(U;z) = \prod (z-p_i)^{d_i}$, is a union of intersections of Schubert varieties:
\begin{equation}\label{eqn:wronski-fiber-type-A}
\Wr^{-1} \bigg( \prod_i (z-p_i)^{d_i} \bigg)= \bigsqcup_{\lambda^{(i)} \vdash d_i} \bigcap_i X_{\lambda^{(i)}}(p_i).
\end{equation}
Here, the union is over tuples $\lambda^{(1)}, \ldots, \lambda^{(r)}$ with $\lambda^{(i)} \vdash d_i$ for each $i$.

In type B, the statements are analogous. The restriction of $\Wr$ to $\OG$ is again flat and finite, of degree $|\mathrm{SYT}(\stair)|$, and the fibers can be described based on square factors of $\Wr(U;z)$.

\begin{lemma}[{\cite[Theorem 5]{bib:Pur09}}] \label{lem:wronski-divisibility-type-B}
Let $U \in \OG(n,\mathrm{Sym}^{2n}(\mathbb{C}^2))$ and $p \in \mathbb{P}^1$. Then $Wr(U;z)$ is a perfect square. Moreover, it is divisible by $(z-p)^{2d}$ if and only if there is some strict partition $\sigma \vdash d$ so that $U \in \Omega_\sigma(\mathscr{F}(p))$.
\end{lemma}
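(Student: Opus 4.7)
The plan is to reduce both assertions to a single structural fact, which I will call the \emph{symmetry lemma}: for $U \in \OG$ and $\mathscr{F}$ an orthogonal flag, the minimal partition $\lambda$ with $U \in X_\lambda(\mathscr{F})$ is of the form $\lambda = \tilde\sigma$ for some strict partition $\sigma$, and in that case $U \in \Omega_\sigma(\mathscr{F})$. Given this, Part~(1) is immediate: at every $p \in \mathbb{P}^1$, the order of vanishing of $\Wr(U;z)$ at $p$ equals $|\lambda| = |\tilde\sigma| = 2|\sigma|$ by Lemma~\ref{lem:wronski-divisibility-type-A}, hence is always even, so $\Wr(U;z)$ is a perfect square. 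Part~(2) then follows: $(z-p)^{2d}$ divides $\Wr(U;z)$ if and only if $U \in X_\lambda(\mathscr{F}(p))$ for some $\lambda$ with $|\lambda| \geq 2d$ (by Lemma~\ref{lem:wronski-divisibility-type-A}), which by the symmetry lemma happens if and only if $U \in \Omega_{\sigma_0}(\mathscr{F}(p))$ for some $\sigma_0$ with $|\sigma_0| \geq d$, which in turn is equivalent to the existence of a strict $\sigma \vdash d$ with $U \in \Omega_\sigma(\mathscr{F}(p))$ (by taking $\sigma \subseteq \sigma_0$ of size $d$).

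To prove the symmetry lemma, I would set $a_i := \dim(U \cap \mathscr{F}_i)$. The jump positions $j_1 < \cdots < j_n$ in $\{1, \ldots, 2n+1\}$ at which $a_{j_k} - a_{j_k - 1} = 1$ determine $\lambda$ via $\lambda_k = n + 1 + k - j_k$. Applying the standard pairing identity
\[
\dim U - \dim(U \cap \mathscr{F}_i^\perp) = \dim \mathscr{F}_i - \dim(\mathscr{F}_i \cap U^\perp),
\]
together with $\mathscr{F}_i^\perp = \mathscr{F}_{2n+1-i}$ (orthogonality of the flag) and the fact that $U \subsetneq U^\perp$ with codimension one (since $U$ is maximal isotropic in the odd-dimensional space $\mathrm{Sym}^{2n}(\mathbb{C}^2)$), I would derive the palindromic identity
\[
a_{2n+1-i} - a_i = (n - i) + \epsilon_i,
\]
where $\epsilon_i \in \{0,1\}$ is a non-decreasing step function with $\epsilon_0 = 0$ and $\epsilon_{2n+1} = 1$. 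This constrains the jump set to satisfy a complementary-symmetry condition about the midpoint, which translates precisely into the characterization $\lambda = \tilde\sigma$ for some strict $\sigma$. The unique transition point of $\epsilon_i$ is what encodes the diagonal doubling built into the operation $\sigma \mapsto \tilde\sigma$.

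The main obstacle will be the combinatorial translation from the palindromic identity on the jump dimensions to the precise characterization $\lambda = \tilde\sigma$. One useful sanity check is that the admissible jump patterns should number exactly $2^n$, matching the $2^n$ strict partitions $\sigma \subseteq (n, n-1, \ldots, 1)$ that index Schubert cells of $\OG$, so the correspondence cannot fail for cardinality reasons. Once this identification is verified, the concluding step $U \in \Omega_\sigma(\mathscr{F})$ is automatic from $U \in X_{\tilde\sigma}(\mathscr{F}) \cap \OG = \Omega_\sigma(\mathscr{F})$.
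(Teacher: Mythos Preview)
The paper does not actually prove this lemma; it is quoted verbatim from \cite{bib:Pur09} with no argument given here. So there is no ``paper's own proof'' to compare against.

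That said, your outline is essentially the standard argument and is sound. A couple of small points. First, a terminological slip: you want the \emph{maximal} partition $\lambda$ with $U \in X_\lambda(\mathscr{F})$ (equivalently, the partition indexing the Schubert \emph{cell} containing $U$), not the minimal one, since $U \in X_\varnothing$ always. Second, your palindromic identity is correct, and once you unwind it for $0 \leq i \leq n$ you will find that $\epsilon_i = 0$ throughout this range, hence $n{+}1$ is never a jump and each complementary pair $\{i{+}1,\,2n{+}1{-}i\}$ contains exactly one jump; this is precisely the description of the $2^n$ Schubert cells of $\OG$ in terms of jump sequences, and translating it through $\lambda_k = n{+}1{+}k - j_k$ gives $\lambda = \tilde\sigma$. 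The reductions you sketch from the symmetry lemma to both assertions (via Lemma~\ref{lem:wronski-divisibility-type-A}, then choosing $\sigma \subseteq \sigma_0$ of the right size) go through without difficulty.
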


Thus, the fibers of the Wronski map on $\OG$ have an analogous description in terms of intersections of orthogonal Schubert varieties:
\begin{equation}\label{eqn:wronski-fiber-type-B}
\Wr^{-1} \bigg( \prod_i (z-p_i)^{2d_i} \bigg) \cap \OG= \bigsqcup_{\sigma^{(i)} \vdash d_i} \bigcap_i \Omega_{\sigma^{(i)}}(p_i).
\end{equation}

\subsubsection{Tableau labels, type A}

By Theorems \ref{thm:eisenbud-harris} and \ref{thm:MTV-purbhoo}, if the points $p_i$ are all real, the set
\[X_{\lambda^{(1)}}(p_1) \cap \cdots \cap X_{\lambda^{(r)}}(p_r)\]
consists of reduced real points, enumerated by the appropriate Littlewood-Richardson coefficient. In fact, more is true: it was shown in \cite{bib:Pur10} that one may label these points individually by Young tableaux, viewing them as points in a fiber of the Wronski map, as in Equation \eqref{eqn:wronski-fiber-type-A}.

Let ${\bf d} = (d_1, \ldots, d_r)$ be nonnegative integers with $\sum d_i = k(n-k)$. Let $P_{{\bf d}} \subset \mathbb{P}^{k(n-k)}$ be the locus where the Wronskian factors as a product $\prod (z-p_i)^{d_i}$ for some $p_i$. Let $\overline{P_{{\bf d}}}$ be the closure of $P_{{\bf d}}$. By the theorems above, the restriction of the Wronski map to the real points of $P_{{\bf d}}$ is a covering map. The approach in \cite{bib:Pur10} is then to vary the points $p_i$ and label them based on asymptotic behavior. The result is as follows, in slightly different language:

\begin{thm}[Tableau labels, type A \cite{bib:Pur10}] \label{thm:tableau-labels-type-A}
Suppose $p_1 < p_2 < \ldots < p_r < \infty$. Let ${\bf m} = \{(p_i,d_i)\}$ be the corresponding multiset of points on $\mathbb{RP}^1$. There is a bijection between points of $\Wr^{-1}({\bf m})$ lying in $X_{\lambda^{(1)}}(p_1) \cap \cdots \cap X_{\lambda^{(r)}}(p_r)$ and sequences of dual equivalence classes $(D_1, \ldots, D_r)$ of type $(\lambda^{(1)}, \ldots, \lambda^{(r)})$.

Moreover, this bijection changes only when a point $p_i$ crosses $\infty$.
\end{thm}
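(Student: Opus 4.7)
The plan is to set up an asymptotic/degeneration argument along the lines of \cite{bib:Pur10}. First, I would verify that the two sides have the same cardinality: iterating Lemma/Definition \ref{lem:dual-LRcoeff} shows that the number of chains of dual equivalence classes of type $(\lambda^{(1)}, \ldots, \lambda^{(r)})$ inside the $k \times (n-k)$ rectangle is the generalized Littlewood-Richardson coefficient $c_{\lambda_\bullet}^\rect$, which by Theorems \ref{thm:eisenbud-harris} and \ref{thm:MTV-purbhoo} equals the number of (reduced, real) points in $X_{\lambda^{(1)}}(p_1) \cap \cdots \cap X_{\lambda^{(r)}}(p_r)$. So some bijection exists; the real content is to produce a canonical one.

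To construct the canonical bijection, I would view $P_{{\bf d}}(\mathbb{R})$ as a configuration space of ordered $r$-tuples of real points of $\mathbb{RP}^1$ with prescribed multiplicities $d_i$, and regard $\Wr$ as a finite, unramified covering of $P_{{\bf d}}(\mathbb{R})$ by Theorem \ref{thm:MTV-purbhoo}. Fix the connected component of ordered tuples $p_1 < \cdots < p_r < \infty$. The labeling is defined by a limiting procedure: degenerate the configuration by collapsing the $p_i$'s one at a time (say by letting $p_{i+1}$ approach $p_i$), and use Lemma \ref{lem:wronski-divisibility-type-A} to record the Schubert conditions satisfied by the limit. At each collision, the Wronskian acquires a higher-order zero, and Theorem \ref{thm:eisenbud-harris} pins down, for each point of the intersection, a chain of partition shapes $\mu^{(0)} \subset \mu^{(1)} \subset \cdots \subset \mu^{(r)} = \rect$ with $|\mu^{(i)}/\mu^{(i-1)}| = d_i$. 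Tableau switching then translates this chain of shapes into a chain of dual equivalence classes in $\DE(\lambda^{(1)}, \ldots, \lambda^{(r)})$.

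For the second assertion -- that the bijection does not change unless some $p_i$ crosses $\infty$ -- I would argue by monodromy. The covering $\Wr^{-1}(P_{{\bf d}}(\mathbb{R})) \to P_{{\bf d}}(\mathbb{R})$ is locally trivial, so the labels are constant on each connected component of the configuration space. The cell cut out by $p_1 < \cdots < p_r < \infty$ is a product of open intervals, hence contractible, so the labels are globally constant on it. The labeling can only permute when one of the $p_i$ passes through a wall at $\infty$, which alters the cyclic order on $\mathbb{RP}^1$ and genuinely changes the intersection being parametrized.

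The main obstacle is verifying that the degeneration labeling is \emph{well-defined}: that the chain of dual equivalence classes produced in the limit is independent of the order in which points are collapsed, and that the combinatorial identification with $\DE(\lambda^{(1)}, \ldots, \lambda^{(r)})$ genuinely matches Haiman's tableau switching. This requires the local analysis of the Wronski map performed in \cite[Theorem 2.5]{bib:Pur10}, together with a careful matching between limits of Schubert intersections and the switching of adjacent dual equivalence classes; one must show that as successive collisions are undone, the ``growth'' of the chain of partitions is exactly governed by jeu de taquin slides, so that the inverse of the degeneration recovers an honest chain of dual equivalence classes rather than merely a nested sequence of shapes.
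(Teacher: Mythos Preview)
This theorem is cited from \cite{bib:Pur10} and the paper does not give its own proof; it only sketches the construction of the bijection.  That sketch, however, differs from yours in an essential way, and your proposal has a genuine gap.

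Your plan is to collide the marked points one at a time, $p_{i+1} \to p_i$, and record the Schubert condition on the limit.  But this procedure only produces a chain of \emph{shapes} $\mu^{(0)} \subset \mu^{(1)} \subset \cdots \subset \mu^{(r)} = \rect$, not a chain of dual equivalence classes.  The map from fiber points to such chains is not injective: for a fixed skew shape $\mu^{(i)}/\mu^{(i-1)}$ and rectification shape $\lambda^{(i)}$, there are $c_{\mu^{(i-1)}\lambda^{(i)}}^{\mu^{(i)}}$ distinct dual equivalence classes, and these cannot be distinguished by the collision data alone.  Indeed, Theorem~\ref{thm:collisions-typeA} later in the paper says precisely that colliding $p_i$ with $p_{i+1}$ replaces the pair $(D_i, D_{i+1})$ by the concatenated class $D_i \cup D_{i+1}$, and different pairs can give the same concatenation.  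Your sentence ``tableau switching then translates this chain of shapes into a chain of dual equivalence classes'' does not make sense as stated: tableau switching acts on pairs of classes, not on shapes, so there is nothing for it to act on.  You seem to be aware of this in your final paragraph, but the proposed fix (``growth governed by jeu de taquin slides'') is not a mechanism that recovers the missing information.

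The construction actually used in \cite{bib:Pur10}, as sketched in the paper, goes in the \emph{opposite} direction: rather than colliding points, one first perturbs each $p_i$ into $d_i$ distinct simple points.  In the resulting multiplicity-one problem, the chain of shapes grows one box at a time and therefore \emph{is} a standard Young tableau; this tableau is then read off from the asymptotic orders of vanishing of the Pl\"ucker coordinates along the degeneration $p_i \to 0$ with $p_i \approx z^{k(n-k)+1-i}$.  The nontrivial fact (\cite[Theorem~6.4]{bib:Pur10}) is that different lifts of the perturbation give standard tableaux in the same chain of dual equivalence classes, so the label is well-defined.  Your monodromy argument for the second assertion (constancy on the contractible cell $p_1 < \cdots < p_r < \infty$) is fine and matches the paper's remark that the labeling changes only when a point crosses $\infty$.
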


We briefly describe these labels in the case where all the multiplicities are 1, that is, the Wronskian factors as $\Wr(z) = \prod (z-p_i)$. In this case, $\Wr^{-1}({\bf m})$ is in bijection with the set of standard tableaux of rectangular shape, $\mathrm{SYT}(\rect)$. For complete details, see \cite[Section 4]{bib:Pur10}. By translating all points upwards along $\mathbb{R}$, we assume $0 < p_1 < \cdots < p_r < \infty$.

Let $x \in \Wr^{-1}({\bf m})$ be a point in the fiber. Viewing $x$ as the row span of a $k \times n$ matrix, we write $p_\lambda(x)$ for the Pl\"{u}cker coordinate corresponding to the tuple of columns
\[\lambda + (k, k-1, \ldots, 1) = (\lambda_1 + k, \cdots, \lambda_k + 1).\] We consider the degeneration where we send $p_i \to 0$ for all $i$, with $p_i \approx z^{k(n-k)+1-i}$. This gives a path in $\mathbb{P}^{k(n-k)}$, staying in the locus $P_{\bf 1,\ldots,1}$, where $Wr$ is a covering map. We lift this path to $x$ and examine the rates of convergence of the Pl\"{u}cker coordinates $p_\lambda(x)$ as $z \to 0$. 

It turns out that these rates are determined by a uniquely defined standard tableau $T = T(x)$, as follows: for each $\lambda$, let $\mathrm{val}(T|_\lambda)$ be the sum of the entries of $T$ in the squares $\lambda$. Then
\[p_\lambda(x;z) \approx z^{\mathrm{val}(T|_\lambda)} \text{ as } z \to 0.\]

\begin{example}
In $\Gr(2,4)$, consider the intersection of four divisorial Schubert classes. The path described above lifts to the Schubert cell complementary to $X_\ybox(\infty)$, with coordinates $\begin{bmatrix}0 & 1 & p_{\tyng{1}} & p_{\tyng{2}} \\ 1 & 0 & p_{\tyng{1,1}} & p_{\tyng{2,1}}\end{bmatrix}$. Note that, up to sign, $p_{\tyng{2,2}}$ is the determinant of the last two columns and $p_{\varnothing} = 1$, the determinant of the first two columns. Then, as $z \to 0$, the two points in $\Wr^{-1}({\bf m})$ are of the form
\[\young(12,34) : \begin{bmatrix}0 & 1 & O(z) & O(z^3) \\ 1 & 0 & O(z^4) & O(z^6)\end{bmatrix}, \qquad 
\young(13,24) : \begin{bmatrix}0 & 1 & O(z) & O(z^4) \\ 1 & 0 & O(z^3) & O(z^6)\end{bmatrix},
\]
and some cancellation gives $p_{\tyng{2,2}} = O(z^{10})$ in both cases.
\end{example}
The assignment $x \mapsto T$ is bijective. By construction, it changes only when the lowest or highest point $p_i$ crosses to $\infty$. Specifically, suppose $p_1$ decreases and crosses $-\infty$. Then the normalization described above (translating all points upwards to $\mathbb{R}_+$) changes, as does the tableau label. It turns out that the new tableau $T'$ is obtained from $T$ using Sch\"{u}tzenberger's \emph{tableau promotion} (see \cite{bib:StanleyEC2}).

In the case where the multiplicities ${\bf m} = \{(p_i, d_i)\}$ are arbitrary, a point $x \in \Wr^{-1}({\bf m})$ lies in some intersection of Schubert varieties $X_{\lambda^{(1)}}(p_1) \cap \cdots \cap X_{\lambda^{(r)}}(p_r)$, for some partitions $\lambda^{(i)} \vdash d_i$. To label $x$, we first perturb the base point ${\bf m}$ slightly, separating $p_i$ into $d_i$ separate points. We lift this perturbation to $x$, then obtain a standard tableau $T$ by the construction above. Different choices of lift will give different standard tableaux, but the data of the sequence of dual equivalence classes $(D_1, \ldots, D_r)$ remains well-defined (\cite[Theorem 6.4]{bib:Pur10}), and the statement about Sch\"{u}tzenberger promotion generalizes to Theorem \ref{thm:collisions-typeA} below. Concretely, if $T \in SYT(\rect)$ is such a tableau, then $D_i$ is the dual equivalence class of the subtableau containing the entries $r_i + 1, \ldots, r_i + |\lambda_i|$, where $r_i = \sum_{j < i} |\lambda_j|$. The class $D_i$ will have type $\lambda^{(i)}$.

\subsubsection{Collisions and monodromy}

When a marked point crosses $\infty$, the normalization step (moving all points to $\mathbb{R}_+$) changes, so the tableau label changes. It turns out that the new tableau label is obtained by tableau switching.

\begin{thm}[{\cite[Theorem 3.5]{bib:Pur10}}]
Consider a path $\gamma : [0,1] \to \mathbb{P}^{k(n-k)}$ in which $p_1$ crosses $-\infty$ and the others remain constant. Let $x \in \Wr^{-1}(\gamma(0))$ and let $x' \in \Wr^{-1}(\gamma(1))$ be the result of lifting $\gamma$ to $x$.

If $x$ corresponds to the chain of dual equivalence classes $(D_1, \ldots, D_r)$, then $x'$ corresponds to $(D'_2, D'_3, \ldots, D'_r, D'_1)$, obtained by successively tableau-switching $D_1$ past $D_2, D_3, \ldots, D_r$.
\end{thm}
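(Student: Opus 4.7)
The plan is to reduce to the case of simple Wronski multiplicities and then decompose the effect of $\gamma$ into a sequence of elementary monodromies, each realizing a single tableau switch. As a first step, I would reduce to the case where all Wronski roots are distinct (each $\lambda^{(i)}$ a single box), so that the fiber $\Wr^{-1}(\gamma(0))$ is labeled by $\mathrm{SYT}(\rect)$ via Theorem~\ref{thm:tableau-labels-type-A}. The general case then follows by perturbing each osculation point $p_i$ into $|\lambda^{(i)}|$ nearby simple points, tracking the assembled dual equivalence class structure. Since tableau switching is intrinsically a dual-equivalence operation, it behaves well under this limiting procedure.

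Second, I would track the tableau label along $\gamma$ geometrically. The label in Theorem~\ref{thm:tableau-labels-type-A} depends on a fixed normalization placing the osculation points in $\mathbb{R}_+$ in increasing order. As $p_1$ traverses $\gamma$, it exits the leftmost position through $-\infty$, reappears at $+\infty$, and ends in the rightmost position; after renormalizing $\gamma(1)$ back to the standard form, the net effect is as though $p_1$ had been cyclically slid past each of $p_2, \ldots, p_r$ on $\mathbb{RP}^1$. I would then, after renormalization, homotope the resulting path in the configuration space of points on $\mathbb{RP}^1$ into a concatenation $\gamma_2 \cdot \gamma_3 \cdots \gamma_r$, where $\gamma_j$ is a small loop around the discriminant locus $\{p_1 = p_j\}$.

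The main technical content is the analysis of each elementary monodromy $\gamma_j$. When $p_1$ approaches $p_j$, the Wronski fiber decomposes locally, via the description in equation~\eqref{eqn:wronski-fiber-type-A}, as a union of smaller Wronski problems indexed by the partitions appearing in the fusion of $X_{\lambda^{(1)}}(p_1)$ and $X_{\lambda^{(j)}}(p_j)$. A careful comparison of Pl\"{u}cker asymptotics on the two sides of the collision --- exactly the analysis that produces the tableau labels in Theorem~\ref{thm:tableau-labels-type-A} in the first place --- shows that the monodromy of $\gamma_j$ exchanges the dual equivalence classes $D_1$ and $D_j$ by a single tableau switch. Composing the monodromies of $\gamma_2, \gamma_3, \ldots, \gamma_r$ in order then yields the claimed final chain $(D_2', D_3', \ldots, D_r', D_1')$, since tableau switching is defined precisely as the effect on chains of classes of such pairwise exchanges.

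The hardest step, I expect, is justifying the homotopy identification in the second paragraph: the path $\gamma$ does not literally cross any $p_j$, so the reduction to a concatenation of elementary collisions is a combination of a genuine homotopy in the configuration space and the change of normalization of the endpoint label. Making this bookkeeping precise --- in particular verifying that the global monodromy factors as exactly the stated product of switches, and ruling out spurious combinatorial operations such as promotion or evacuation showing up in the answer --- requires carefully matching the initial and final normalizations and tracking the relative orientation of the arc along which $p_1$ is transported.
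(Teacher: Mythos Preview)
The paper does not give its own proof of this statement: it is quoted verbatim as \cite[Theorem 3.5]{bib:Pur10} and used as background. So there is no in-paper argument to compare your proposal against. The only hint the paper gives about the underlying mechanism is the paragraph preceding the theorem, which says that in the simple-multiplicity case the effect of $p_1$ crossing $-\infty$ is Sch\"utzenberger promotion on the standard tableau label. Since promotion of a standard tableau is precisely ``slide the entry $1$ out past everything else,'' the general statement then follows by grouping entries into blocks and passing to dual equivalence classes. That is the shape of the argument in \cite{bib:Pur10}: a direct asymptotic analysis of Pl\"ucker coordinates identifying the relabeling as promotion, not a decomposition into collision monodromies.

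Your proposed route has a genuine topological gap. You want to homotope the path ``$p_1$ goes around through $-\infty$'' to a concatenation of small loops $\gamma_j$ around the diagonals $\{p_1 = p_j\}$. But each $\gamma_j$ is a \emph{loop}: it returns $p_1$ to the same side of $p_j$ it started on. A concatenation of such loops is again a loop based at the original configuration, whereas the path you are trying to model ends at a genuinely different ordered configuration (with $p_1$ now rightmost). These are not homotopic rel endpoints in the configuration space of distinct ordered points, so the factorization you describe cannot literally compute the monodromy of $\gamma$. Relatedly, you assert that the monodromy of $\gamma_j$ is a tableau switch of $D_1$ with $D_j$, but nothing in this paper supports that; Theorem~\ref{thm:collisions-typeA} only describes the \emph{degeneration} at a collision (concatenation of classes), not the monodromy of a loop around it. The actual argument in \cite{bib:Pur10} avoids all of this by going through promotion directly, which is exactly the operation you flag at the end as something to be ``ruled out'' --- in fact it is the whole point.
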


Finally, we give a partial description of how the tableau labels change when the points $p_i$ collide. In general, the map becomes a branched cover. 

\begin{thm} \label{thm:collisions-typeA}
Let $\gamma : [0,1] \to \mathbb{P}^{k(n-k)}$ be the path in which $p_i$ collides with $p_{i+1}$ and the other points stay constant. Let $x \in \Wr^{-1}(\gamma(0))$ and let $x' \in \Wr^{-1}(\gamma(1))$ be the result of lifting $\gamma$ to $x$.

Suppose $x$ corresponds to the chain of dual equivalence classes $(D_1, \ldots, D_r)$. Let $\bar D = D_i \cup D_{i+1}$, the class obtained by concatenating tableaux. Then $x'$ is labeled by $(D_1, \ldots, D_{i-1},  \bar D, D_{i+2}, \ldots, D_r)$.
\end{thm}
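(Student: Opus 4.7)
The plan is to reduce to the fully-separated case, where $\Wr$ is an unramified covering, by perturbing each multiple base point into several nearby distinct real sub-points, and then to connect the two endpoints of $\gamma$ by an auxiliary path in $P_{\bf 1}$ whose lift determines a single standard tableau throughout. Concretely, I would first perturb $\gamma(0)$ and $\gamma(1)$ to obtain $\tilde\gamma(0), \tilde\gamma(1) \in P_{\bf 1}$: at $\gamma(0)$, replace each multiplicity-$d_j$ point $p_j$ by $d_j$ distinct real sub-points clustered near $p_j$ in their natural order; at $\gamma(1)$, perform the analogous perturbation, including splitting the collided point (of multiplicity $d_i+d_{i+1}$) into $d_i+d_{i+1}$ distinct real sub-points nearby. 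By the perturbation construction of tableau labels following Theorem \ref{thm:tableau-labels-type-A}, $x$ and $x'$ are limits of lifts $\tilde x_0 \in \Wr^{-1}(\tilde\gamma(0))$ and $\tilde x_1 \in \Wr^{-1}(\tilde\gamma(1))$ carrying standard tableaux $T_0, T_1 \in \mathrm{SYT}(\rect)$. The restriction of $T_0$ to the entry range $\{r_j+1, \ldots, r_j+|\lambda^{(j)}|\}$, where $r_j = \sum_{l<j}|\lambda^{(l)}|$, represents $D_j$; and the dual equivalence class at the collision location in $\gamma(1)$ is by definition the class of the restriction of $T_1$ to the combined range $\{r_i+1,\ldots,r_i+d_i+d_{i+1}\}$.

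Next, I would interpolate between the two perturbations by a path $\tilde\gamma : [0,1] \to P_{\bf 1}$ mimicking $\gamma$ at the sub-point level: the $d_i$ sub-points near $p_i$ and the $d_{i+1}$ sub-points near $p_{i+1}$ slide together to form a single cluster of $d_i+d_{i+1}$ distinct sub-points at the collision site, while all other sub-points remain fixed. All sub-points stay real and distinct, so $\tilde\gamma$ lies in the real locus of $P_{\bf 1}$, where $\Wr$ is an unramified covering by Theorems \ref{thm:eisenbud-harris} and \ref{thm:MTV-purbhoo}. Lifting $\tilde\gamma$ starting at $\tilde x_0$ gives a continuous path $\tilde x$, and since tableau labels are locally constant under the covering and no sub-point crosses $\infty$, the label $T(\tilde x(s))$ is a constant tableau $T$ throughout; in particular $T = T_0 = T_1$. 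The restriction of $T$ to the combined range is then precisely the concatenation of the sub-tableaux representing $D_i$ and $D_{i+1}$, hence represents $D_i \cup D_{i+1}$, giving the claimed label; the other positions in the chain are visibly unchanged.

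The main obstacle is the endpoint compatibility: one must verify that $\tilde x(1)$ converges to $x'$ (rather than to some other element of $\Wr^{-1}(\gamma(1))$) as the perturbation shrinks. This is a path-lifting uniqueness statement for the flat, finite morphism $\Wr$. The perturbation construction identifies the (possibly reducible) fiber $\Wr^{-1}(\gamma(1))$ with limits of points of the nearby unramified fibers, parameterized by tableau labels; matching the perturbation direction to the incoming path $\gamma$ forces $\tilde x(1) \to x'$. Once this compatibility is in place, the remainder is a direct assembly of the definitions with the covering-map property on $P_{\bf 1}$.
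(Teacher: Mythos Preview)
Your approach is correct and is essentially the same as the paper's, which simply states that the result ``follows directly from the limiting procedure in Theorem~\ref{thm:tableau-labels-type-A} for constructing the tableau labels.'' You have unpacked that sentence in detail: the labels at both endpoints are \emph{defined} by perturbing into $P_{\mathbf{1}}$ and reading off dual equivalence classes of sub-tableaux, and your interpolating path $\tilde\gamma$ in $P_{\mathbf{1}}$ is precisely one such perturbation, constant in tableau label since no sub-point crosses $\infty$. The endpoint-compatibility issue you flag is the only genuine content, and it is absorbed by the well-definedness statement for the labels (different perturbations of the same multiset yield the same chain of dual equivalence classes, \cite[Theorem~6.4]{bib:Pur10}) together with finiteness of $\Wr$, so no separate argument is needed.
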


This statement follows directly from the limiting procedure in Theorem \ref{thm:tableau-labels-type-A} for constructing the tableau labels. Note that, in general, the same class $\bar D$ may arise from different pairs $(D_i,D_{i+1})$, even if the shapes $\lambda^{(i)}, \lambda^{(i+1)}$ are fixed.

\subsubsection{Tableau labels, type B}

In type B, the story is analogous. Let $p_1 < p_2 < \ldots < p_r < \infty$, and let ${\bf m} = \{(p_i,d_i)\}$ be a multiset, where $\sum d_i = \dim(\OG)$. For each $i$, let $\sigma^{(i)} \vdash d_i$ be a strict partition.

\begin{thm}[Tableau labels, type B \cite{bib:Pur10B}] \label{thm:tableau-labels-type-B}
There is a bijection between points of $\Wr^{-1}({\bf m})$ lying in $\Omega_{\sigma^{(1)}}(p_1) \cap \cdots \cap \Omega_{\sigma^{(r)}}(p_r)$ and chains of \emph{shifted} dual equivalence classes $(D_1, \ldots, D_r)$ of type $(\sigma^{(1)}, \ldots, \sigma^{(r)})$.

This bijection changes only when a point $p_i$ crosses $\infty$ or collides with another point:
\begin{itemize}
\item If $p_1$ crosses $-\infty$, the label $(D_1, \ldots, D_r)$ changes to $(D'_2, \ldots, D'_r, D'_1)$, obtained by shifted-tableau-switching $D_1$ past $D_2, \ldots, D_r$ in order.
\item If $p_i$ collides with $p_j$, the label $(D_1, \ldots, D_r)$ changes to $(D_1, \ldots, D_{i-1},  \bar D, D_{i+2}, \ldots, D_r)$, where $\bar D = D_i \cup D_{i+1}$ is the class obtained by concatenation.
\end{itemize}
\end{thm}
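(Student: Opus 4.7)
The plan is to follow the approach to Theorem~\ref{thm:tableau-labels-type-A}, adapted to the orthogonal setting; many ingredients carry over verbatim. The restricted Wronski map $\Wr : \OG \to \mathbb{P}^{n(n-1)/2}$ is flat and finite of degree $|\mathrm{SYT}(\stair)|$, its fibers decompose as in~\eqref{eqn:wronski-fiber-type-B}, and over real base points with distinct real roots the fibers consist of reduced real points by Theorems~\ref{thm:eisenbud-harris} and~\ref{thm:MTV-purbhoo}.

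I would first handle the \emph{simple} case, where all multiplicities $d_i$ equal $1$. Here, by Lemma~\ref{lem:wronski-divisibility-type-B}, each $U \in \Wr^{-1}({\bf m})$ lies in a unique intersection $\bigcap_i \Omega_{\sigma^{(i)}}(p_i)$ with each $\sigma^{(i)}$ of size $1$, so the task reduces to canonically labeling the fiber (of size $|\mathrm{SYT}(\stair)|$) by elements of $\mathrm{SYT}(\stair)$. Mimicking the argument of \cite{bib:Pur10}, I would arrange $0 < p_1 < \cdots < p_r < \infty$ and degenerate along a real path with $p_i \approx z^{n(n-1)/2 + 1 - i}$ as $z \to 0$, lift the path to each $U$ in the fiber, and analyze the orders of vanishing of the orthogonal Pl\"ucker coordinates of $U$. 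The central claim, parallel to the type A result, is that for each $U$ there is a unique $T \in \mathrm{SYT}(\stair)$ such that $p_{\tilde\sigma}(U;z) \sim z^{\mathrm{val}(T|_\sigma)}$ for every $\sigma \subseteq \stair$; this defines the assignment $U \mapsto T$. For arbitrary multiplicities, one perturbs ${\bf m}$ to split each $p_i$ into $d_i$ nearby distinct points, lifts $U$ to obtain a standard tableau $T$ of staircase shape, and verifies that while $T$ depends on the perturbation, the induced chain of shifted dual equivalence classes $(D_1, \ldots, D_r)$ of the corresponding consecutive subtableaux does not, yielding the bijection in general.

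The monodromy and collision statements then follow by inspecting the construction. When $p_1$ crosses $-\infty$, the normalization placing all points in $\mathbb{R}_+$ must be rearranged to put $p_1$ last in the ordering; each elementary swap of $p_1$ past an adjacent $p_j$ in the reordering induces shifted tableau switching of $D_1$ past $D_j$, by the type B analogue of \cite[Theorem~3.5]{bib:Pur10}, so iterating gives $(D'_2, \ldots, D'_r, D'_1)$. When $p_i$ collides with $p_{i+1}$, the perturbation argument above shows immediately that the subtableau associated with the merged point is exactly the concatenation of the two previous subtableaux, whose dual equivalence class is $\bar D = D_i \cup D_{i+1}$.

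The main obstacle is verifying the asymptotic claim in the orthogonal setting: the Pl\"ucker coordinates of $U \in \OG$ must single out a \emph{shifted} standard tableau of staircase shape rather than a general SYT. This hinges on the isotropy constraint, which forces the Pl\"ucker coordinates $p_\lambda(U)$ indexed by non-symmetric $\lambda$ to vanish or be related by the orthogonal Pl\"ucker relations to those indexed by symmetric $\lambda = \tilde\sigma$; the surviving coordinates are indexed exactly by shifted shapes $\sigma \subseteq \stair$, and their orders of vanishing must assemble into a shifted SYT of shape $\stair$.
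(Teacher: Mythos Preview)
Your strategy is reasonable but takes a different route from the paper. The paper (following \cite{bib:Pur10B}) does not redo the asymptotic Pl\"ucker analysis intrinsically in $\OG$. Instead it uses the inclusion $\OG \subset \Gr(n,\mathbb{C}^{2n+1})$: a point $x \in \OG$ with Wronskian $\prod (z-p_i)^2$ already carries a \emph{type A} label from Theorem~\ref{thm:tableau-labels-type-A}, namely a chain of unshifted dual equivalence classes each of type $\tyng{2}$ or $\tyng{1,1}$. The key input from \cite{bib:Pur10B} (recorded here as Theorem~\ref{thm:compatibility-BA} and the discussion preceding it) is that $x \in \OG$ if and only if every class is of type $\tyng{2}$ and the resulting standard tableau is \emph{symmetric} about the diagonal; one then folds to obtain a shifted standard tableau in $\mathrm{SYT}(\stair)$. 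The monodromy and collision statements are inherited directly from their type A counterparts, since tableau switching and concatenation of symmetric unshifted tableaux correspond, under folding, to shifted tableau switching and concatenation.

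Your intrinsic approach via orthogonal Pl\"ucker coordinates could in principle be made to work, but the ``main obstacle'' you identify is precisely the content the paper's route sidesteps: rather than proving from scratch that the asymptotics of $p_{\tilde\sigma}(U;z)$ assemble into a shifted SYT, one appeals to the already-established type A asymptotics together with a separate characterization of which type A labels arise from isotropic subspaces. The payoff of the paper's approach is economy---everything reduces to \cite{bib:Pur10} plus a symmetry statement---while your approach, if carried out, would yield a self-contained type B story independent of the ambient Grassmannian.
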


The approach and method are identical; the main connection is the labels in type B are compatible with those in type A in the following sense. In the case where the multiplicities are all $1$, the Wronskian factors as $\Wr(z) = \prod (z-p_i)^2$, so a point $x \in \Wr^{-1}({\bf m})$ lies in some intersection of codimension-2 Schubert varieties in $\Gr(n,W)$,
\[X_{\tyng{2} \text{ or } \tyng{1,1}}(p_1) \cap \cdots \cap X_{\tyng{2} \text{ or } \tyng{1,1}}(p_N).\]
Thus $x$ is labeled by a chain of dual equivalence classes $(D_1, \ldots, D_N)$ of type $(\tyng{2} \text{ or } \tyng{1,1}, \ldots, \tyng{2} \text{ or } \tyng{1,1})$. Note that each $D_i$ is uniquely determined by the data of the skew shape it occupies and, if the two boxes are non-adjacent, by whether the filling should be ``horizontal-type'' (${\tiny \young(:2,1)}$) or ``vertical-type'' (${\tiny \young(:1,2)}$). By \cite[Theorem 1]{bib:Pur10B}, it turns out that $x \in \OG$ if and only if:
\begin{itemize}
\item[(i)] All the Schubert conditions are type $\tyng{2}$, and
\item[(ii)] The tableau $T$ is \emph{symmetric}, that is, each successive pair of boxes occupies the locations $(i,j)$ and $(j,i+1)$ for some $i,j$.
\end{itemize}
Note that condition (i) is necessary since $X_{\tyng{2}} \cap \OG = \Omega_{\tyng{1}}$, whereas $X_{\tyng{1,1}} \cap \OG = X_{\tyng{3,1}} \cap \OG = \Omega_{\tyng{2}}$ since $\tyng{3,1}$ is the symmetrization of $\tyng{1,1}$. Thus, by dimension-counting, the intersection in $\OG$ will be empty (by Theorem \ref{thm:eisenbud-harris}) unless condition (i) holds.

Given condition (ii), it is natural to record only the ``upper half'' of the entries, thereby obtaining a shifted standard tableau:
\[\young(124,356) \quad \leadsto \ \ \young(\hfil24,\hfil\hfil6)\quad = \quad \young(12,:3)\ .\]
When the multiplicities are larger, the approach is analogous to that used in type $A$: we perturb a multiple point into a collection of distinct points, corresponding to square factors $(z-p_i)^2$ of the Wronskian, and we lift to $\OG$. We thereby obtain a standard tableau, which is symmetric if and only if $x \in \OG$. As in type A, the resulting tableaux are well-defined only up to shifted dual equivalence.

\begin{thm}[\cite{bib:Pur10B}] \label{thm:compatibility-BA}
Let $x \in \Wr^{-1}(\mathbf{m}) \cap \OG$. The symmetrical and shifted labels of $x$, according to Theorems \ref{thm:tableau-labels-type-A} and \ref{thm:tableau-labels-type-B}, agree under the above bijection. \end{thm}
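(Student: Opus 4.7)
The plan is to show that both tableau labels are built by the same limiting procedure on the Wronski map, and that the type B construction factors through (or is a symmetric quotient of) the type A construction once we restrict to $\OG$. The statement is then essentially a self-consistency of the construction of the type B label described in the discussion preceding the theorem.

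First I would handle the base case where all type B multiplicities $d_i$ equal $1$, so $\mathbf{m} = \{(p_i,1)\}_{i=1}^N$ with $N = \dim \OG$. In this case the type A label is constructed by further perturbing each $(p_i,1)$ — which in type A is a double point contributing $(z-p_i)^2$ to the Wronskian — into two distinct simple type A points $(q_{i,1},1)$ and $(q_{i,2},1)$, lifting $x$ to $x_\epsilon$ along this perturbation, and reading off the standard rectangular tableau $T$ from the rates of convergence of the Plücker coordinates as the $q_{i,j}\to 0$ with the prescribed asymptotic speeds. The cited \cite[Theorem 1]{bib:Pur10B} says that $x_\epsilon \in \OG$ forces $T$ to be symmetric (and all Schubert conditions to be of type $\tyng{2}$), and the construction of the type B label described in the excerpt is precisely that $T'$ is the upper half of $T$. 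So the compatibility is immediate in the base case.

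For general multiplicities I would perturb $\mathbf{m}$ to a nearby multiset $\mathbf{m}_B$ with all type B multiplicities equal to $1$ (splitting each $(p_i,d_i)$ into $d_i$ type B simple points $(q_{i,j},1)$) and lift $x$ to $x_B \in \Wr^{-1}(\mathbf{m}_B)\cap \OG$, which is possible since $\OG$ is closed in $\Gr$ and the restriction of $\Wr$ to $\OG$ is flat and finite. Applying the base case to $x_B$ gives a symmetric rectangular $T(x_B)$ whose upper half is the shifted $T'(x_B)$. The chains of dual equivalence classes labeling $x$ in the two theories are then recovered by grouping the boxes of $T(x_B)$ and $T'(x_B)$ according to which $(p_i,d_i)$ they descend from; because each type B block of $d_i$ boxes is, by construction, exactly the upper half of the corresponding symmetric type A block of $2d_i$ boxes, the box-level compatibility propagates to a chain-level compatibility.

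The main obstacle I expect is verifying that the upper-half assignment is well-defined on \emph{dual equivalence classes}, i.e., that symmetric rectangular dual equivalence classes biject with shifted staircase dual equivalence classes under upper-half extraction. Concretely, one must check that every symmetric sequence of jeu de taquin slides on a symmetric rectangular tableau induces a valid sequence of shifted slides on its upper half, and vice versa; this is what makes the block-by-block description in the previous paragraph independent of the choices of representatives. One can approach this either directly from the pairing condition "$(i,j)$ with $(j,i+1)$" in condition (ii) of \cite[Theorem 1]{bib:Pur10B}, by observing that a symmetric slide across the diagonal restricts to a shifted slide above it, or alternatively by invoking the compatibility between ordinary and shifted Schensted insertion on symmetric words. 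With this compatibility established, the theorem follows from the box-level identification above.
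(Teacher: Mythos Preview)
The paper does not give its own proof of this theorem; it is stated as a citation to \cite{bib:Pur10B} and is being imported as background. So there is nothing in the present paper to compare your argument against.

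That said, your outline is essentially the argument one would expect, and it matches how the paper has set things up. In fact, the way the type B label is described in the paragraphs preceding the theorem already makes the multiplicity-one case close to definitional: the shifted label is \emph{introduced} as the upper half of the symmetric type A label, so once \cite[Theorem~1]{bib:Pur10B} guarantees symmetry, there is nothing further to check at that level. Your reduction of the general case to the multiplicity-one case via a two-stage perturbation (first split each $(p_i,d_i)$ into $d_i$ type B simple points while staying in $\OG$, then invoke the base case) is sound, since the type A label is well-defined up to dual equivalence regardless of which perturbation path is used.

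You are right that the real content lies in checking that ``take the upper half'' descends to dual equivalence classes, i.e., that symmetric jeu de taquin on rectangular tableaux is compatible with shifted jeu de taquin on staircase tableaux. This is exactly the compatibility established in \cite{bib:Pur10B} (building on Haiman's and Worley's theory of shifted dual equivalence), and is not reproved here. Your sketch of how to verify it directly from the pairing condition $(i,j)\leftrightarrow(j,i+1)$ is the right idea, though carrying it out carefully for the special diagonal slides is where the work in \cite{bib:Pur10B} actually lies.
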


\subsubsection{Schubert curves and evacuation shuffling}

Suppose $\sigma^{(1)}, \ldots, \sigma^{(r)}$ are partitions with $\sum |\sigma^{(i)}| = \dim(\OG) - 1$, and $p_1, \ldots, p_r \in \mathbb{P}^1$ are distinct points. We define the {\bf Schubert curve}
\[S(\sigma^\bullet) = \bigcap_{i=1}^r \Omega_{\sigma^{(i)}}(\mathscr{F}(p_i)).\]
Given $x \in S$, we see by Lemma \ref{lem:wronski-divisibility-type-B} and degree-counting that $\Wr(x;z)$ must have the form
\[\Wr(x;z) = \prod_{i=1}^r (z-p_i)^{2|\sigma^{(i)}|} \cdot (z-t)^2,\]
where the $t$ factor varies depending on $x$. The set of polynomials of this form, for all $t$, is naturally identified with $\mathbb{P}^1$, yielding a map $\Wr: S(\sigma^\bullet) \to \mathbb{P}^1.$
We describe the geometry of this map. Note that the analogous statements in type A are already known \cite[Corollary 2.9]{bib:Levinson} via the $\overline{M}_{0,n}$ construction.

\begin{lemma} \label{lem:geometry-schubert-curves-type-B}
The Schubert curve $S(\sigma^\bullet)$ is reduced, has smooth real points, and $\Wr: S(\sigma^\bullet) \to \mathbb{P}^1$ is flat. Over $\mathbb{RP}^1$, the map is smooth, and moreover $\Wr^{-1}(\mathbb{RP}^1)$ consists entirely of real points. In particular, $S(\mathbb{R}) \to \mathbb{RP}^1$ is a smooth covering map of degree $f_{\sigma_\bullet,\ybox}^{\stair}$.
\end{lemma}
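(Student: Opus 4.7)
The plan is to construct $\Wr|_S : S \to \mathbb{P}^1$ directly from Lemma \ref{lem:wronski-divisibility-type-B}, use the transversality results of Theorems \ref{thm:eisenbud-harris} and \ref{thm:MTV-purbhoo} to analyze the generic fiber, and then upgrade to the full covering-space structure over $\mathbb{RP}^1$ by a specialization argument.

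First, for any $x \in S$, the Wronskian $\Wr(x;z)$ has degree $n(n+1) = 2\dim(\OG)$, is a perfect square by Lemma \ref{lem:wronski-divisibility-type-B}, and is divisible by $\prod_i (z-p_i)^{2|\sigma^{(i)}|}$, whose degree is $2(\dim(\OG)-1)$; the remaining degree-$2$ factor must therefore be a square $(z-t)^2$, defining the morphism $\Wr|_S : x \mapsto t$.  To establish flatness, I would consider the line $P \cong \mathbb{P}^1$ in $\mathbb{P}^{\dim(\OG)}$ parametrizing polynomials of the above form: since $\Wr : \OG \to \mathbb{P}^{\dim(\OG)}$ is finite and flat, the base change $Z := \Wr^{-1}(P) \cap \OG \to P$ is again finite and flat. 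By Lemma \ref{lem:wronski-divisibility-type-B}, each point of $Z$ lies in $\bigcap_i \Omega_{\tau^{(i)}}(p_i)$ for some tuple of strict partitions $\tau^{(i)} \vdash |\sigma^{(i)}|$; on the open dense locus where $t \ne p_1,\dots,p_r,\infty$, the tuple is uniquely determined by the open Schubert cells containing $x$ at each $p_i$, and $S$ is precisely the closure of the stratum with $\tau^{(i)} = \sigma^{(i)}$ for all $i$. Thus $\Wr|_S$ is a component of $\Wr|_Z$ and is itself finite and flat over $\mathbb{P}^1$.

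Next I would analyze the generic fiber: for $t \notin \{p_1,\ldots,p_r,\infty\}$, the scheme-theoretic fiber is $\Omega_\ybox(t) \cap \bigcap_i \Omega_{\sigma^{(i)}}(p_i)$, which by Theorem \ref{thm:eisenbud-harris} is zero-dimensional and transverse of cardinality $f^{\stair}_{\sigma_\bullet,\ybox}$, and by Theorem \ref{thm:MTV-purbhoo} is reduced and entirely real whenever $t \in \mathbb{RP}^1$. From this, $S$ is pure of dimension $1$ and reduced (as a union of components of the flat Cohen--Macaulay family $Z$ with reduced generic fiber), and smoothness at each real point follows from flatness together with the smoothness of $\mathbb{P}^1$ and the reduced zero-dimensional fibers.

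The main obstacle is extending reality from the generic locus to all of $\mathbb{RP}^1$: a priori, the fibers of $\Wr|_S$ over the collision values $t = p_j$ or $t = \infty$ could contain non-real points even when nearby generic fibers are fully real. For this I would argue by specialization. The real locus $S(\mathbb{R})$ is closed in $S$, and by Theorem \ref{thm:MTV-purbhoo} it already contains $\Wr^{-1}(\mathbb{RP}^1 \setminus \{p_1,\ldots,p_r,\infty\})$, which is dense in $\Wr^{-1}(\mathbb{RP}^1)$. If $x$ were a non-real point in the fiber over a real collision value $t_0$, complex conjugation would yield a distinct point $\bar x$ in the same fiber; perturbing $t_0$ slightly along $\mathbb{RP}^1$, the flat finite map $\Wr|_S$ would produce two nearby complex-conjugate points in the neighboring generic real fiber, contradicting its reality. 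Hence $\Wr^{-1}(\mathbb{RP}^1) \subseteq S(\mathbb{R})$, and combined with the flatness, smoothness, and finiteness already established, $S(\mathbb{R}) \to \mathbb{RP}^1$ is a smooth covering of degree $f^{\stair}_{\sigma_\bullet,\ybox}$.
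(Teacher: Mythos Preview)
Your approach has the right shape but contains a genuine gap at the special fibers over $t = p_j$.

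First, the flatness argument is incomplete. You claim that $S$, with its scheme structure as $\bigcap_i \Omega_{\sigma^{(i)}}(p_i)$, is a union of connected components of $Z = \Wr^{-1}(P) \cap \OG$ and hence inherits flatness from $Z \to P$. This holds over the open locus $t \ne p_j$, where the stratification of $Z$ by tuples $(\tau^{(i)})$ separates it into disjoint closed pieces. But over $t = p_j$ the stratification changes, and you have not shown that the Schubert-intersection scheme structure on $S$ has no embedded component supported there, nor that it agrees with the closure-of-stratum structure. The paper sidesteps this by computing the fiber length directly at every point: over $t = p_j$ the fiber of $S$ is
\[
\bigsqcup_{\sigma \in \sigma^{(j)}_+} \Omega_\sigma(p_j) \cap \bigcap_{i \ne j} \Omega_{\sigma^{(i)}}(p_i),
\]
where $\sigma^{(j)}_+$ denotes the strict partitions obtained by adding a box to $\sigma^{(j)}$. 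Each summand is dimensionally transverse by Theorem~\ref{thm:eisenbud-harris}, and the Pieri rule shows the total length equals $f^{\stair}_{\sigma_\bullet, \ybox}$, the same as the generic fiber. Constant fiber length over a smooth curve then gives flatness directly.

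More seriously, even granting flatness, your argument does not establish that the special fibers over real $p_j$ are \emph{reduced}, and hence does not give smoothness of the map over $\mathbb{RP}^1$ at those points. Your specialization argument correctly shows that every point of such a fiber is a limit of nearby real points and therefore real; but it cannot rule out two real points in a nearby generic fiber colliding to a single real point of multiplicity two. In that scenario $S$ would be singular there and $S(\mathbb{R}) \to \mathbb{RP}^1$ would fail to be a covering. The paper handles this by applying Theorem~\ref{thm:MTV-purbhoo} directly to the displayed intersection above: since all the $p_i$ are real, each piece is reduced (and real), so the special fiber itself is reduced and the map is \'etale over all of $\mathbb{RP}^1$. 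Without this direct analysis of the collision fibers, the covering-map conclusion does not follow.
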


\begin{proof}
First note that the fibers for $p \in \mathbb{P}^1 - \{p_1, \ldots, p_r\}$ are of the form $S(\sigma^\bullet) \cap \Omega_{\ybox}(\mathscr{F}(p))$. For the other fibers, if $p = p_i$ for some $i$, the fiber is the disjoint union
\[\Wr^{-1}(p_i) = \bigsqcup_{\sigma \in \sigma^{(i)}_+} \Omega_\sigma(\mathscr{F}(p_i)) \cap \bigcap_{j \ne i} \Omega_{\sigma^{(j)}}(\mathscr{F}(p_j)),\]
where $\sigma^{(i)}_+$ is the set of strict partitions obtained by adding a box to $\sigma^{(i)}$.

By Theorem \ref{thm:eisenbud-harris}, all the fibers are dimensionally transverse intersections, so each fiber's multiplicity is given by the corresponding product of Schubert classes. By Theorem \ref{thm:MTV-purbhoo}, all the fibers over $\mathbb{RP}^1 \setminus \{p_i\}$ are reduced and real.
By the Pieri rule, the set-theoretic multiplicity does not change at the fibers $p=p_i$, and these fibers are also reduced and real, by Theorem \ref{thm:MTV-purbhoo}.

We deduce that $S \to \mathbb{P}^1$ is flat, $S$ is generically reduced and its real points are all smooth. By flatness, it follows that $S$ is reduced.
\end{proof}

Notably, the real points of $S$ are smooth even in the case where $t = p_i$ for some $i$. The tableau label will change at this collision; we describe the resulting monodromy using the \textbf{evacuation shuffle} operator $\esh$ defined below. 

\begin{thm} \label{thm:esh-type-B}
Consider the path $\gamma : [0,1] \to [p_i - \epsilon, p_i + \epsilon] \subset \mathbb{RP}^1$. Let $x \in \Wr^{-1}(\gamma(0))$ and let $x' \in \Wr^{-1}(\gamma(1))$ be obtained by lifting $\gamma$ to $x$.

Let $x$ be labeled by $(D_1, \ldots, \ybox, D_i, \ldots D_r)$. Then, for $x'$, the pair $(\ybox, D_i)$ is replaced by $(D'_i, \ybox) = \esh(\ybox,D_i)$, and the other classes are unchanged.
\end{thm}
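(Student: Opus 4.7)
The plan is to combine the smooth covering property of $S(\mathbb{R}) \to \mathbb{RP}^1$ from Lemma~\ref{lem:geometry-schubert-curves-type-B} with the collision rule of Theorem~\ref{thm:tableau-labels-type-B}. Since $S(\mathbb{R}) \to \mathbb{RP}^1$ is a smooth covering map of finite degree and $\gamma$ is a path in $\mathbb{RP}^1$, there is a unique lift $\tilde\gamma : [0,1] \to S(\mathbb{R})$ with $\tilde\gamma(0) = x$; set $x' := \tilde\gamma(1)$. Because $\gamma$ stays in a neighborhood of $p_i$ that avoids every $p_j$ with $j \ne i$, the Schubert conditions at those other points are unchanged along $\tilde\gamma$, and the labels $D_j$ ($j \ne i$) remain constant by the stability of the tableau labeling over $\mathbb{RP}^1 \setminus \{p_1, \dots, p_r\}$ asserted in Theorem~\ref{thm:tableau-labels-type-B}. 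Hence only the pair $(\ybox, D_i)$ bracketing the crossing can change along $\tilde\gamma$.

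The key step is the analysis at the unique time $t_0 \in (0,1)$ with $\gamma(t_0) = p_i$. At this instant the free root $t$ collides with the fixed root $p_i$, so the Wronskian of $\tilde\gamma(t_0)$ contains the factor $(z-p_i)^{2(|\sigma^{(i)}|+1)}$ in place of $(z-p_i)^{2|\sigma^{(i)}|}(z-t)^2$. By the collision clause of Theorem~\ref{thm:tableau-labels-type-B}, the tableau label at $\tilde\gamma(t_0)$ is obtained by concatenating the two classes involved in the collision into a single class $\bar D$ on the combined skew shape. Computing this from both sides: approaching $t_0$ from $s < t_0$, where the label is $(D_1,\dots,\ybox, D_i,\dots,D_r)$, gives $\bar D = \ybox \cup D_i$; approaching from $s > t_0$, where the label is $(D_1,\dots, D'_i, \ybox,\dots, D_r)$, gives $\bar D = D'_i \cup \ybox$. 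Since $\tilde\gamma(t_0)$ is a single point carrying a single well-defined label, these agree, yielding the equation
\[
\ybox \cup D_i \;=\; \bar D \;=\; D'_i \cup \ybox,
\]
which determines $D'_i$ from $(\ybox, D_i)$.

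The main obstacle I anticipate is identifying this geometrically defined operation $(\ybox, D_i) \mapsto (D'_i, \ybox)$ with the combinatorial evacuation-shuffle $\esh$, described informally in the introduction as the conjugate of shifted jeu de taquin $\sh$ by rectification. The identification rests on the compatibility of rectification with tableau switching on shifted dual equivalence classes: rectifying the concatenation $\bar D$ produces a straight-shape class in which the two splittings $(\ybox, D_i)$ and $(D'_i, \ybox)$ correspond to one another under the standard shuffle of a box past a rectified tableau, and unrectifying recovers $(D'_i, \ybox) = \esh(\ybox, D_i)$. Existence and uniqueness of $D'_i$ then follow either from the geometry (reducedness of $S$ and the smoothness of the covering over $\mathbb{RP}^1$) or from the bijectivity of shifted tableau switching recalled in Section~\ref{sec:tableaux}, completing the argument.
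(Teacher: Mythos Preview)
Your argument is correct and arrives at the same conclusion, but by a different route than the paper's own proof.

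The paper establishes the identification $(D'_i,\ybox)=\esh(\ybox,D_i)$ \emph{geometrically}: rather than analyzing the collision at $p_i$ directly in the skew position, it replaces $\gamma$ by a homotopic path $\gamma'$ in which all marked points are first rotated through $\mathbb{R}_-$ (so that $(t,p_i)$ become the two leftmost points), then $t$ crosses $p_i$, then one rotates back. The rotation steps correspond combinatorially to tableau-switching $(\ybox,D_i)$ inward to a rectified shape and back out; the crossing step, now in straight shape, is identified with tableau switching via Pieri uniqueness. Lifting the homotopy from $\gamma$ to $\gamma'$ is justified by the smoothness of $S(\mathbb{R})\to\mathbb{RP}^1$ at the collision fiber. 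Thus the paper literally exhibits $\esh$ (rectify, shuffle, unrectify) as the monodromy of an explicit path.

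Your approach instead stays at the skew level: from the collision clause you extract the equation $\ybox\cup D_i=\bar D=D'_i\cup\ybox$ of dual equivalence classes, and then argue combinatorially that this determines $(D'_i,\ybox)$ uniquely and that $\esh(\ybox,D_i)$ satisfies the same equation. Both arguments ultimately reduce to the Pieri rule in straight shape; the difference is that the paper makes the reduction geometric (via the homotopy), whereas you make it combinatorial (via the switching bijection on chains of dual equivalence classes). Your version is more economical in that it avoids constructing and lifting the homotopy; the paper's version is more transparent in that $\esh$ appears visibly as the monodromy of $\gamma'$. One point worth tightening in your write-up: the sentence ``rectifying the concatenation $\bar D$ produces a straight-shape class in which the two splittings \dots\ correspond'' should really say that you rectify the \emph{pairs} $(\ybox,D_i)$ and $(D'_i,\ybox)$ by switching past an auxiliary tableau, observe that both land on the unique Pieri factorizations of $\bar D^{\mathrm{rect}}$, and conclude by bijectivity of switching; as written it could be read as rectifying $\bar D$ first and then splitting, which is not well-defined.
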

The evacuation shuffle operation $\esh : (D,E) \mapsto (E',D')$ is uniquely determined by two properties:
\begin{itemize}
\item[(i)] If $D$ is a straight shape, $\esh(D,E)$ is the same as tableau-switching $(D,E) \sim (\tilde E,\tilde D)$, and
\item[(ii)] The operation $\esh$ is coplactic, that is, if $(D,E,F) \sim (\tilde F,\tilde D, \tilde E)$ is obtained by successively switching $D$ and $E$ past $F$, then $(\esh(D,E),F) \sim (\tilde F, \esh(\tilde D,\tilde E))$.
\end{itemize}
In particular, $\esh(\ybox,D_i)$ may be computed by first rectifying $(\ybox,D_i)$ to a straight shape, then tableau-switching $\ybox$ past $D_i$, then un-rectifying. We show that this procedure must compute the monodromy of the path $\gamma$.
\begin{proof}[Proof of Theorem \ref{thm:esh-type-B}]
First, the collisions statement in Theorem \ref{thm:tableau-labels-type-B} shows that the labels $D_1, \ldots, D_{i-1}, D_{i+1}, \ldots, D_r$ do not change from $x$ to $x'$. Next, consider the path $\gamma'$, where we first rotate all the $p_i$ through $\mathbb{R}_-$,
\[ (p_1, \ldots, p_{i-1}, t, p_i, \ldots p_r) \xrightarrow{\text{ rotate }} (t, p_i, \ldots, p_r, p_1, \ldots, p_{i-1}),\]
then switch $t$ and $p_i$, then rotate back. On $\mathbb{P}^N$, there is a homotopy from $\gamma$ to $\gamma'$. By Lemma \ref{lem:geometry-schubert-curves-type-B}, the fibers of the Wronski map do not collide when $t$ collides with $p_i$, so we may lift the homotopy to $\OG$ and replace $\gamma$ by $\gamma'$.

Combinatorially, $\gamma'$ changes the tableau label by first switching the pair $(\ybox, D_i)$ inwards to a rectified shape, then allowing the collision, then tableau-switching back. It therefore suffices to show that, for rectified shapes, the collision step is the same as tableau-switching $(\ybox,D_i)$.

Thus we reduce to the case $i=1$, and we show $(\ybox, D_1) \sim (D'_1, \ybox)$.

At the collision, the label on $x$ changes from $(\ybox, D_i)$ to $\ybox \sqcup D_i$, and the label on $x'$ changes from $(D'_i, \ybox)$ to $D'_i \sqcup \ybox$, and these classes agree:
\[\ybox \sqcup D_i = D'_i \sqcup \ybox = D_{\sigma^{(i)}_+},\]
the unique dual equivalence class of straight shape $\sigma_+^{(i)}$, for some strict partition extending $\sigma^{(i)}$ by a box. By the Pieri rule, there is only one factorization of $D_{\sigma^{(i)}_+}$ into pairs of the given forms. Tableau switching $(\ybox, D_i)$ gives such a pair, so it must yield $(D'_i,\ybox)$.
\end{proof}

\subsection{The monodromy operator $\omega$}\label{sec:esh}

We now describe the \textbf{monodromy operator} $\omega$, that is, the permutation on any fiber of the Wronski map, say $\Wr^{-1}(0)$ (assuming without loss of generality that $p_i\neq 0$ for all $i$), induced by the smooth covering $\Wr:S(\mathbb{R})\to \mathbb{RP}^1$.

By repeatedly applying Theorem \ref{thm:esh-type-B} to move a point $x$ past each point $p_i$ to make a full loop above $\mathbb{RP}^1$, we see that can canonically label the points of this fiber by the chains of dual equivalence classes in the set
\[\DE(\lambda^{(1)}, \ebox, \lambda^{(2)}, \ldots, \lambda^{(r)}),\]
such that the monodromy operator $\omega$ is given by the composition of shuffles and evacu-shuffles
\[\omega = \sh^{(2)} \circ \cdots \circ \sh^{(r-1)} \circ \esh^{(r-1)} \circ \cdots \circ \esh^{(2)},\]
where $\esh^{(i)}$ and $\sh^{(i)}$ act on the $i$-th and $(i+1)$-th tableaux in the chain.
In the sections that follow, our local description of $\esh$ will apply to each of the above $\esh^{(i)}$ operations.  Therefore, our main results, in the case of three marked points $p_1,p_2,p_3$, generalize without difficulty to the general case.  Thus, for simplicity, we restrict for the remainder of the paper to the case of three partitions $\alpha, \beta, \gamma$, i.e. we study the operator
\[\omega = \sh^{(2)} \circ \esh^{(2)}\]
on the sets $\DEyb$ and  $\DEby$, or equivalently
\[\LRyb \text{ and } \LRby.\]

For the remainder of the paper we will primarily work with $\LRyb$ and $\LRby$ rather than the corresponding dual equivalence chains.  Furthermore, since we mostly work only with $\sh^{(2)}$ and $\esh^{(2)}$, we often simply abbreviate them as $\sh$ and $\esh$.

 Since the straight shape $\alpha$ and anti straight shape $\gamma^c$ each have only one Littlewood-Richardson tableau, an element of $\LRyb$ can be thought of as a pair $(\ybox,T)$, with $T$ a Littlewood-Richardson tableau of rectification shape (and content) $\beta$, and $\ybox$ an inner co-corner of $T$, such that the shape of $\ybox \sqcup T$ is $\gamma^c/\alpha$. We represent elements of $\LRby$ similarly, with $\ybox$ as an outer co-corner. We will occasionally refer to the element as $T$ if the position of the $\ybox$ is understood.

Combinatorially, $\omega$ can be thought of as a commutator of well-known operations on shifted Young tableaux.  Computing $\esh(\ybox,T)$ is equivalent to the following steps:
\begin{itemize}
  \item \textbf{Rectification.} Treat the $\ybox$ as having value $0$ and being part of a semistandard tableau $\widetilde{T}=\ybox \sqcup T$.  Rectify, i.e. shuffle $(S,\widetilde{T})$ to $(\widetilde{T}',S')$, where $S$ is an arbitrary straight-shape tableau.
  \item \textbf{Promotion (see \cite{bib:StanleyEC2}).} Delete the $0$ of $\widetilde{T}'$ and rectify the remaining tableau.  Label the resulting empty outer corner with the number $\ell(\beta)+1$.
  \item \textbf{Un-rectification.} Un-rectify the new tableau by shuffling once more with $S'$.  Replace the $\ell(\beta)+1$ by $\ybox$.
\end{itemize}
Note that the promotion step corresponds to shuffling the $\ybox$ past the rest of the rectified tableau. 

For computational purposes, this description is inefficient and opaque -- it is difficult to predict $\esh(\ybox, T)$ even though (it turns out) very few entries are changed! Thus, our next goal is to describe this algorithm `locally', in a way that does not involve rectifying the tableau.

\section{Shifted Evacuation-shuffling via coplactic operators}\label{sec:crystal-algorithm}

We now give a first local algorithm to compute evacuation-shuffling on skew shifted tableaux,
\[\mathrm{esh} : (\ybox, T) \mapsto (T', \ybox).\]
as a certain composition of the coplactic operators (see Section \ref{sec:coplactic}) $E_i, E'_i, F_i, F'_i$ for various $i$ (Theorem \ref{thm:crystal-algorithm}). 

We begin with an observation about the `promotion' step of the rectify-unrectify process described above.

\begin{lemma}\label{lem:straight-promotion}
  Suppose $\ybox\sqcup T$ is a straight shape tableau with $T$ a Littlewood-Richardson tableau and $\ybox$ in the (unique) inner corner of $T$.  Then shuffling $\ybox$ past $T$ consists of two phases:
\begin{itemize}
  \item \textbf{Phase 1:} If the entry $y$ east of $\ybox$ is a primed letter, slide $\ybox$ past $y$ and then south past the entry below it.  Repeat until there is not a primed entry east of $\ybox$, and then go to Phase 2.
  \item \textbf{Phase 2:} Slide the $\ybox$ horizontally to the end of its row.
\end{itemize}
\end{lemma}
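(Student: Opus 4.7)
My plan is to analyze the inner jeu de taquin slide of $\ybox$ from $(1,1)$ through $T$ directly, combining the hypothesis that $\ybox\sqcup T$ is a straight-shape tableau with the Littlewood--Richardson property of $T$ to control the entries along the slide.  The argument proceeds by induction on the size of $T$, reducing the situation at a diagonal position $(k,k)$ to the analogous question on a smaller sub-tableau with shifted alphabet.

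The first structural step constrains row~$1$ of $T$: by Stembridge's ballot criterion (Lemma~\ref{lem:stembridge}) applied to the suffix of the reading word corresponding to row~$1$, every entry of row~$1$ must lie in $\{1',1\}$, since any larger letter in that suffix would violate $m_1\ge m_2$.  Combined with column-$2$ strictness in $\ybox\sqcup T$ and the canonical-form convention, this leaves exactly two cases.  Either $T(1,2)=1$ (unprimed), in which case Phase~$2$ applies immediately: at each position $(1,j)$ the east entry equals~$1$ and the south entry (whenever it exists) is at least~$2'$ by column strictness of unprimed entries, so the slide always chooses east and $\ybox$ reaches $(1,\mu_1)$.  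Or $T(1,2)=1'$ (primed), in which case $T(2,2)=1$ is forced: no row below row~$2$ can contain a $1$ or $1'$ by column strictness, so $T(2,2)$ is the first $1/1'$ in reading order and canonical form demands it be unprimed.  Phase~$1$ then applies: the slide at $(1,1)$ chooses east since $1'<1$, and at $(1,2)$ the east and south entries both equal~$1$ (unprimed), with the unprimed tie-breaking rule (an east slide would put two $1$'s in column~$2$) forcing south, landing $\ybox$ at $(2,2)$.

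The sub-tableau of $T$ consisting of rows~$\ge 2$, together with $\ybox$ at $(2,2)$, inherits the lemma's hypotheses after re-indexing rows by $k\mapsto k-1$ and values by $i\mapsto i-1$.  The inductive hypothesis then handles the remainder of the slide, producing further Phase~$1$ steps along the extended diagonal as long as $T(k,k+1)$ is primed, and switching to Phase~$2$ as soon as $T(k,k+1)$ is unprimed.

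The main obstacle will be making this inductive reduction rigorous -- verifying that each sub-tableau remains canonical, LR in the shifted alphabet, and of the correct straight-minus-corner shape.  The key input is the ``primed--unprimed pairing'' $T(k,k+1)=v'$ with $T(k+1,k+1)=v$ at each Phase~$1$ step, which I expect to follow from a localized application of Stembridge's S$1$/S$2$ conditions to the $\{v',v\}$ letters, or equivalently from the lattice-walk characterization (Theorem~\ref{cor:ballot-walk-criterion}) restricted to the letters $\{v',v,(v+1)',v+1\}$.  A smaller point to check is that Phase~$2$ terminates exactly at the end of row~$k$ and is not diverted south into row~$k+1$, which follows directly from column strictness of unprimed entries once the relevant values are known.
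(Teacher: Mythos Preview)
Your inductive approach works and is genuinely different from the paper's. The paper observes that since $T$ is highest weight of shape $\lambda/(1)$, it arises from the highest-weight straight-shape tableau $H_\beta$ (row $i$ filled with $i$'s) by a \emph{single outer slide}; shuffling $\ybox$ past $T$ is then simply the inverse of that outer slide, and one just inspects what an outer slide does to $H_\beta$ --- it moves left along a row to the diagonal, then zig-zags up the staircase (priming one entry per row via the special diagonal rule), which is exactly the reverse of Phase~1 followed by Phase~2. This bypasses induction and all ballotness arguments. Your direct analysis is more explicit about why each slide step goes the way it does, at the cost of length.

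There is one real gap in your outline. You say Phase~2 terminates at the end of row~$k$ ``by column strictness of unprimed entries once the relevant values are known,'' but column strictness only says the south neighbour (if present) is $\ge (k{+}1)'$; it does not prevent the slide from turning south once there is no east neighbour. What you actually need is that the end of row~$k$ is an outer co-corner of $\lambda$, i.e.\ $\lambda_k > \lambda_{k+1}+1$. This follows from your own setup: in the unprimed case row~$k$ of the sub-tableau consists of $\lambda_k-1$ copies of $k$ and there are no $k$'s below, so $\lambda_k-1=\beta_k$; since the sub-tableau rectifies in one slide to shape $(\beta_k,\beta_{k+1},\dots)$, the extra box lies in its first row and $\lambda_{k+1}=\beta_{k+1}<\beta_k=\lambda_k-1$. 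A similar remark applies to your claim that rows $\ge 3$ contain no $1$ or $1'$: this needs the row constraint (no repeated $1'$ in row~1 forces $T(1,3)=1$, then column strictness at column~3 pushes $T(2,3),T(3,3)\ge 2'$), not column strictness alone.
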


\begin{proof}
  Since $T$ is highest weight, it can be formed by a single outer slide applied to a highest weight straight shape tableau.  It is clear by inspection that such an outer slide is the reverse of the two-phase process described above.  (See Figure \ref{fig:rectified-esh}.) 
\end{proof}

\begin{figure}
\begin{center}
\includegraphics{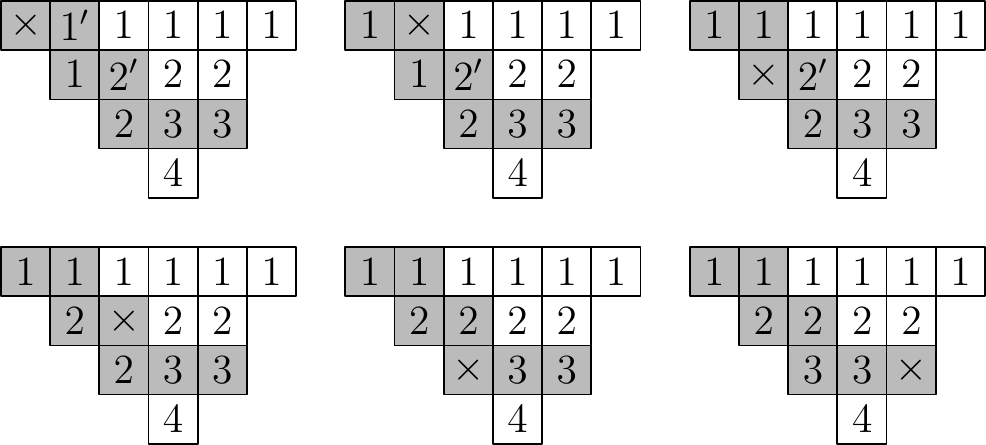}
\end{center}
\caption{\label{fig:rectified-esh} Shuffling the $\ybox$ past a highest weight tableau; the path of the $\ybox$ is shaded.}
\end{figure}

\subsection{The coplactic algorithm}
By expressing each phase described in 
Lemma~\ref{lem:straight-promotion} in
terms of the operators $E_i, E_i', F_i, F_i'$, we obtain a local algorithm 
for $\esh$.

\begin{thm}[Coplactic algorithm for $\esh$]\label{thm:crystal-algorithm}
  Let $(\ybox,T)\in \LRyb$.  Then $\esh(\ybox,T)$ can be computed as follows. Start with step $i=1$ and in Phase 1, and replace $\ybox$ by $0$.
  \begin{itemize}
    \item \textbf{Phase 1:} 
    If $E_{i-1}'(T)\neq E_{i-1}(T)$, apply $E_{i-1}'\circ E_{i-1}^{\mathrm{wt}(T)_i-2}$. Increment $i$ and repeat Phase 1.
    
    Otherwise, replace the (unique) $i{-}1$ by $\ybox$, add $1$ to all values less than $i{-}1$, and go to Phase 2.
    
    \item \textbf{Phase 2:} Set $\ybox=i'$ and apply the composition of operators $$F_r \circ \cdots \circ F_{i+1} \circ F_{i}$$ to the resulting tableau, where $r=\ell(\beta)$.  Replace the only $r+1$ by $\ybox$. 
  \end{itemize} 
\end{thm}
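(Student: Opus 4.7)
The plan is to prove Theorem \ref{thm:crystal-algorithm} by reducing to the rectified case via the coplactic property of the operators $E_i, E_i', F_i, F_i'$, and then verifying that the algorithm matches the two-phase shuffling process described in Lemma \ref{lem:straight-promotion}. By Propositions \ref{prop:primed-properties} and \ref{prop:unprimed-properties}, these operators commute with all jeu de taquin slides, so any composite formed from them also commutes with rectification $R$. Since $\esh$ is defined by ``rectify, promote, un-rectify'' (and un-rectification is the inverse of rectification once a recording tableau is fixed), it suffices to check that the coplactic composite $\Phi$ prescribed by the algorithm agrees with promotion $P$ on rectified tableaux: the coplactic identity $R \circ \Phi = \Phi \circ R$ then yields $\Phi = \esh$ on arbitrary skew tableaux.

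First I would set up the rectified picture: after rectifying $\widetilde{T} = \ybox \sqcup T$ with $\ybox$ treated as the value $0$, the $\ybox$ sits at position $(1,1)$ of a straight-shape tableau whose remaining filling is determined by the dual equivalence class of $T$. In this setting, Lemma \ref{lem:straight-promotion} decomposes the promotion of $\ybox$ into Phase 1 (iterated diagonal slides past primed entries) and Phase 2 (a single horizontal slide through the current row). The remaining task is to match each phase of the algorithm to the corresponding phase of the lemma.

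The main technical work is in Phase 1. For iteration $i$, I would show that applying $E'_{i-1} \circ E_{i-1}^{\mathrm{wt}(T)_i - 2}$ to the (rectified) tableau performs exactly one diagonal step of Lemma \ref{lem:straight-promotion}: the repeated $E_{i-1}$ strips all but two of the letters in $\{i,i'\}$, converting them to letters in $\{i{-}1,(i{-}1)'\}$, and the final $E'_{i-1}$ effects the priming swap corresponding to moving $\ybox$ east past a primed entry and then south past the entry below. The condition $E'_{i-1}(T) \neq E_{i-1}(T)$ detects precisely when such a primed neighbor is present; once it fails, the algorithm transitions to Phase 2. The heart of the verification is a careful analysis of the critical-substring transformations of Figure \ref{fig:criticals} applied iteratively, showing that the structure of the rectified tableau is compatible with this sequence of operators (none of them evaluates to $\varnothing$ prematurely) and that the resulting tableau matches the expected one after each diagonal step.

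Once Phase 1 terminates, the tableau is in a position where no further diagonal shuffling occurs; relabeling $\ybox$ as $i'$ and applying $F_r \circ \cdots \circ F_{i+1} \circ F_i$ in sequence advances this label through $(i+1)', (i+2)', \ldots$ and finally to $r+1$ at the outer corner, realizing the horizontal slide of Phase 2. The final replacement by $\ybox$ finishes the algorithm. The main obstacle I anticipate is the Phase 1 verification: the interplay between the shifted priming conventions and the coplactic operators near the diagonal requires delicate bookkeeping of critical substrings, and the exponent $\mathrm{wt}(T)_i - 2$ must be justified as precisely the right number of $E_{i-1}$ applications to leave behind the two ``$i$-type'' letters needed for the final $E'_{i-1}$ swap.
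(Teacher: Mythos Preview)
Your approach is essentially the same as the paper's: reduce to the rectified case by coplacticity, then verify agreement with Lemma~\ref{lem:straight-promotion}. There is one subtlety you have overlooked, however. You assert that $\Phi$ is coplactic because it is a composite of $E_i, E_i', F_i, F_i'$, but the algorithm also contains relabeling steps (replacing $\ybox$ by $0$, adding $1$ to small entries, setting $\ybox = i'$ at the start of Phase~2, replacing $r{+}1$ by $\ybox$) which are \emph{not} built from these operators. For the coplacticity argument to go through, each relabeling must itself commute with jeu de taquin. Most of them trivially preserve standardization and hence are coplactic, but the substitution $\ybox = i'$ at the Phase~1/Phase~2 transition is not obviously standardization-preserving: you must check that the unique $i{-}1$ being replaced is last in standardization order among $\{(i{-}1)', i{-}1, i', i\}$, i.e.\ that $F'_{i-1}$ is defined at that moment. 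This is a coplactic condition and can be verified in the rectified case, but it needs to be said. The paper's proof isolates exactly this step; your outline should too.
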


\begin{proof}
It suffices to show that this local algorithm is correct in the rectified setting (i.e. it agrees with Lemma~\ref{lem:straight-promotion}), and all operations are coplactic.  

In the rectified setting, it is straightforward to check that when we are in phase 1, $E_{i-1}' \circ E_{i-1}^{\mathrm{wt}(T)_i-2}$ has the same effect (up to relabelling entries) as sliding $\ybox$ through the $i$-entries of $T$.  The transition from Phase 1 to Phase 2 can be detected by whether or not $E_i'$ and $E_i$ agree, a coplactic condition; and in Phase 2, $F_r \circ \cdots \circ F_{i+1} \circ F_{i}$ has the same effect as sliding the $\ybox$ horizontally to the end of the row.  Thus the algorithm is correct on rectified tableaux.

The operations $E_i,F_i,E_i',F_i'$ are all coplactic, so it remains to check that all ``relabelling'' steps (adding $1$ to entries, replacing $\ybox$ with a numerical value, etc.) are also coplactic.  We claim that all of these replacements preserve the standardization of the tableau.  This clear everywhere except for the replacement $\ybox = i'$ at the start of Phase 2.  For this step, we need to show that at the end of Phase 1, before we do any of the relabelling, we can replace the unique $i-1$ with an $i'$ and this preserves standardization.  By definition, this is equivalent to saying that $F'_{i-1}$ is defined at this stage.  But since $F'_{i-1}$ is coplactic, it is sufficient to verify that this is true in the rectified case, and this is again straightforward.
\end{proof}

Similarly we can express $\esh^{-1}$ in terms of coplactic operators.
The main detail requiring explanation here is how to detect and perform the 
Phase 1/Phase 2 transition while running the algorithm in reverse.

\begin{thm}\label{thm:reverse-coplactic}
  Let $(T,\ybox)\in \LRby$.  Then $\mathrm{esh}^{-1}(T,\ybox)$ can be computed as follows.  Replace $\ybox$ with $r+1$ where $r$ is the largest entry appearing in $T$, and let $T'$ be the union of $T$ with this entry $r+1$. 
  \begin{itemize}
     \item \textbf{Reverse Phase 2:} Apply $E_{r},E_{r-1},E_{r-2}$, and so on to $T'$ until reaching an index $i$ for which $E_{i-1}$ is not defined (or $i=1$).  Replace the first $i$ or $i'$ in standardization order by $\ybox$, and call the resulting tableau $T''$.  Proceed to Reverse Phase 1.
     \item \textbf{Reverse Phase 1:} Substract $1$ from all entries less 
than $i$ in $T''$.  Replace $\ybox$ with $i-1$ and apply the composition
\[
     F_0^{\mathrm{wt}(T)_0-2} \circ F'_0 \circ F_1^{\mathrm{wt}(T)_1-2}\circ F'_1 \circ \cdots \circ F_{i-2}^{\mathrm{wt}(T)_{i-2}-2} \circ F'_{i-2}
\]
to the resulting tableau.  Replace the only $0$ by $\ybox$.
  \end{itemize}
\end{thm}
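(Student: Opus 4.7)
The plan is to verify that the reverse algorithm precisely inverts each step of the forward algorithm from Theorem \ref{thm:crystal-algorithm}. Since each operator $E_j, E'_j, F_j, F'_j$ is a coplactic partial bijection with its counterpart (Propositions \ref{prop:primed-properties} and \ref{prop:unprimed-properties}), and each relabeling in the forward algorithm is an invertible bookkeeping step, $\esh$ is a bijection. The inversion then reduces to carefully reversing the sequence of coplactic operators and relabelings.

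First I would reverse the final operations of Phase 2. The forward Phase 2 consists of setting $\ybox = i'$, applying $F_r \circ \cdots \circ F_i$, and then replacing the unique $r{+}1$ produced by these operators with $\ybox$. Accordingly, the reverse algorithm begins by replacing $\ybox$ with $r{+}1$ and then applies $E_r, E_{r-1}, \ldots$ in sequence; their composition $E_i \circ \cdots \circ E_r$ is the inverse of $F_r \circ \cdots \circ F_i$. The stopping criterion ``$E_{i-1}$ undefined'' must correctly detect the Phase~2 starting index. By coplacticity, this reduces to the rectified (highest-weight, straight-shape) case: there Phase 1 is trivial, and the state just before Phase 2 is a highest-weight tableau with $\ybox = i'$ placed at a specific inner corner, on which $E_{i-1}$ is undefined because the subword at levels $\{i-1,i\}$ cannot be raised further. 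Reversing Phase 2 restores this state, so the stopping criterion triggers exactly when $i$ equals the forward Phase-2 starting index.

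Having located $i$, I would identify the original $\ybox$-turned-$i'$ as the first $i$ or $i'$ in standardization order. Again reducing to the rectified case, this entry is the $i'$ at the innermost position of its row, consistent with where $\ybox = i'$ was placed in the forward algorithm. I would then carry out the reverse Phase 1 by inverting each iteration of the forward Phase 1 via $F_{j-1}^{\mathrm{wt}(T)_j - 2} \circ F'_{j-1}$, run from $j=i-1$ down to $j=1$. After the ``subtract $1$ from entries less than $i$'' relabeling, the effective levels shift by one, which reindexes the composition as $F_0^{\mathrm{wt}(T)_0 - 2} \circ F'_0 \circ \cdots \circ F_{i-2}^{\mathrm{wt}(T)_{i-2} - 2} \circ F'_{i-2}$, matching the stated formula. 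The final replacement of the unique $0$ by $\ybox$ is the direct inverse of the forward algorithm's initial step of ``replacing $\ybox$ by $0$''.

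The main obstacle is justifying the stopping criterion for the reverse Phase 2 and the identification of the $\ybox$-turned-$i'$. Both rest on pinning down the precise structure of the tableau at the Phase 1 / Phase 2 boundary. My approach is to use the coplacticity of all operators to reduce these claims to the rectified case, where they follow directly from Lemma \ref{lem:straight-promotion}: in that setting the $\ybox$ moves only horizontally during Phase 2, so after inverting Phase 2 the $i'$ sits at the original inner corner and everything at level $\geq i-1$ is still highest weight. Once these key claims are established, the remainder of the verification is a mechanical check that the reverse operators and relabelings invert their forward counterparts in the correct order.
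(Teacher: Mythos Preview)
Your proposal is correct and follows essentially the same approach that the paper indicates: reduce to the rectified setting via coplacticity and verify there (exactly as in the proof of Theorem~\ref{thm:crystal-algorithm}, which the paper explicitly says is the idea). One small caution: your phrase ``there Phase 1 is trivial'' is misleading, since in the rectified case Phase 1 can certainly be nontrivial (see Figure~\ref{fig:rectified-esh}); what you actually need, and what your argument uses, is only that the \emph{state at the Phase 1/Phase 2 transition} in the rectified case is easy to describe directly from Lemma~\ref{lem:straight-promotion}, so that the stopping criterion $E_{i-1}=\varnothing$ and the identification of $\ybox$ as the first $i/i'$ in standardization order can be checked by inspection there.
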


The proof uses essentially the same idea as the proof of 
Theorem~\ref{thm:crystal-algorithm}, and we omit it.

\subsection{Phase 1 vs. Phase 2 in types A and B} \label{subsec:phase1-vs-phase2}
The fact that the algorithm in Theorem~\ref{thm:crystal-algorithm} has two phases with two 
completely different descriptions is, at first glance, a bit surprising --- in contrast to $\sh(\ybox, T)$, which does not require a two-phase algorithm. Lemma~\ref{lem:straight-promotion} provides some motivation for this fact, but doesn't really address the question of why this division exists. Here, we discuss some facts about evacuation-shuffling that help explain this structure.

A similar phenomenon occurs in type A: evacuation-shuffling divides into two phases with seemingly different rules. In establishing the algorithm via local moves, however, a key simplifying step is to understand Phase 2 as a `dualized' Phase 1 \cite[Lemmas 4.14-15]{bib:GillespieLevinson}. In type B, we show below that no comparable duality exists, i.e. Phase 2 is \emph{not} simply a disguised version of Phase 1.

Suppose $T$ is a shifted or unshifted tableau, and let $\ybox$ be an inner corner of $T$.  
Write $T = T_{<s} \sqcup T_{\geq s}$, where $T_{< s}$ and $T_{\geq s}$ consist of all (primed or unprimed) entries of $T$ less than $s$ and greater than or equal to $s$, respectively. A key property of the jeu de taquin bijection is that we can compute $\sh(T,\ybox)$ in stages, by sliding $\ybox$ past $T_{\geq s}$, and then past $T_{< s}$. In general evacuation shuffling does not have this property, but there is a special case in which it does.

Let $\mu$, $\nu$ and $\lambda$ denote the rectification shapes of $T_{< s}$, $T_{\geq  s}$ and $\ybox \sqcup T$, treating $\ybox$ as $0$. Let $c^\lambda_{\tinybox, \mu, \nu}$ or $f^\lambda_{\tinybox, \mu, \nu}$ denote the appropriate generalized Littlewood-Richardson coefficient. We call $T= T_{< s} \sqcup T_{\geq s}$ a \newword{simple decomposition} if $c^\lambda_{\tinybox, \mu, \nu}$ or $f^\lambda_{\tinybox, \mu, \nu} = 1$. Note that this definition depends not only on $T$, but also on the choice of inner corner $\ybox$. The decomposition is \newword{non-trivial} if $T_{< s}$ and $T_{\geq s}$ are both nonempty.  

\begin{thm}
\label{thm:eshdecomp}
If $T= T_{< s} \sqcup T_{\geq s}$ is a simple decomposition, then
\[
\esh(\ybox,T) = (T'_{< s} \sqcup T'_{\geq s}, \ybox)
\]
where 
$(T'_{< s}, T'_{\geq s}, \ybox) 
= \esh^{(2)} \circ \esh^{(1)}(\ybox, T_{< s}, T_{\geq s})$.
\end{thm}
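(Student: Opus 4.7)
The plan is to reduce the claim to the rectified case via coplacticity, where both sides become the same tableau-switching computation, and to use the simple decomposition hypothesis to ensure that the rectified chain is uniquely determined.

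First, I would observe that both sides of the asserted equation are coplactic operations on the chain $(\ybox, T_{<s}, T_{\geq s})$. The left-hand side $\esh(\ybox,T)$ is coplactic by property (ii) of $\esh$ in the characterization following Theorem \ref{thm:esh-type-B}, and $\esh^{(2)} \circ \esh^{(1)}$ is coplactic as a composition of coplactic operations. Moreover, the decomposition $T = T_{<s} \sqcup T_{\geq s}$ is preserved by every jeu de taquin slide, because slides preserve standardization order and therefore do not mix entries across the threshold $s$. Consequently, it suffices to verify the equation after rectifying the entire chain, i.e.\ in the case where $\ybox$ sits at the innermost corner, $\ybox \sqcup \widetilde T_{<s}$ has straight shape, and the total shape $\ybox \sqcup \widetilde T_{<s} \sqcup \widetilde T_{\geq s}$ equals a straight shape $\lambda$.

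Next, I would invoke the simple decomposition hypothesis $f^\lambda_{\tinybox, \mu, \nu} = 1$ (or its unshifted analog $c^\lambda_{\tinybox, \mu, \nu} = 1$). By Lemma/Definition \ref{lem:dual-LRcoeff} and the discussion of chains of dual equivalence classes, this forces the rectified chain $(\ybox, \widetilde T_{<s}, \widetilde T_{\geq s})$ to be uniquely determined (up to dual equivalence) by the tuple of rectification shapes $(\ybox, \mu, \nu)$ and total shape $\lambda$. This is precisely what guarantees that rectifying $(\ybox, T)$ as a whole, as required by the definition of $\esh$, is consistent with first rectifying $(\ybox, T_{<s})$ and then $(\ybox, T_{\geq s})$ inside the chain, as done in succession by $\esh^{(1)}$ and $\esh^{(2)}$.

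In the rectified case, both sides unwind to tableau switching. By property (i) of $\esh$, the left-hand side $\esh(\ybox, \widetilde T)$ equals tableau switching of $\ybox$ past $\widetilde T = \widetilde T_{<s} \sqcup \widetilde T_{\geq s}$. For the right-hand side, $\esh^{(1)}(\ybox, \widetilde T_{<s})$ is tableau switching since this pair is already in straight-shape position; the chain then reads $(\widetilde T'_{<s}, \ybox, \widetilde T_{\geq s})$, and one further coplactic reduction (sliding $\widetilde T'_{<s}$ aside) turns $\esh^{(2)}(\ybox, \widetilde T_{\geq s})$ into tableau switching of $\ybox$ past $\widetilde T_{\geq s}$. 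The equality of the two sides is then the standard \emph{stages property} of tableau switching: successively switching $\ybox$ past $\widetilde T_{<s}$ and then past $\widetilde T_{\geq s}$ yields the same result as switching $\ybox$ past the combined tableau $\widetilde T_{<s} \sqcup \widetilde T_{\geq s}$. The main obstacle I anticipate lies in this last intermediate step, where $\ybox$ sits at an inner corner of $\widetilde T_{\geq s}$ but not at the origin; handling this requires a careful appeal to the coplacticity of $\esh^{(2)}$, and the simple decomposition hypothesis is exactly the ingredient that makes the resulting reduction unambiguous.
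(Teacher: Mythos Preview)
Your overall structure is right: reduce to the rectified case by coplacticity of $\esh$, then compare both sides there. The gap is in the final step. After rectifying and applying $\esh^{(1)}$, you have the chain $(\widetilde T'_{<s}, \ybox, \widetilde T_{\geq s})$, and you must compute $\esh^{(2)}(\ybox, \widetilde T_{\geq s})$. You propose to ``slide $\widetilde T'_{<s}$ aside'' so that $\esh^{(2)}$ becomes tableau switching, and then invoke the stages property of switching. But these two moves pull in opposite directions. The stages property decomposes the \emph{left-hand side} as $\sh^{(2)} \circ \sh^{(1)}$; the coplactic reduction you describe computes the \emph{right-hand side} as $\esh^{(2)}$, i.e.\ rectify--switch--unrectify. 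So what you actually need at this point is $\esh^{(2)}(\ybox,\widetilde T_{\geq s}) = \sh^{(2)}(\ybox,\widetilde T_{\geq s})$ in the skew position with inner shape $\mu$. This is false in general (if it held universally, $\omega$ would be the identity), and your sentence ``the simple decomposition hypothesis is exactly the ingredient that makes the resulting reduction unambiguous'' does not explain how the hypothesis forces it here.

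The paper closes exactly this gap, and does so without ever comparing $\esh^{(2)}$ to $\sh^{(2)}$ directly. After reducing to the rectified case, it simply observes that both $\esh(\ybox, \widetilde T) = (\hat T'_{<s} \sqcup \hat T'_{\geq s}, \ybox)$ and $\esh^{(2)}\circ\esh^{(1)}(\ybox, \widetilde T_{<s}, \widetilde T_{\geq s}) = (T'_{<s}, T'_{\geq s}, \ybox)$ are chains of Littlewood--Richardson tableaux of total shape $\lambda$ with rectification shapes $(\mu,\nu,\tinybox)$, hence elements of a set of cardinality $f^\lambda_{\mu,\nu,\tinybox} = f^\lambda_{\tinybox,\mu,\nu} = 1$. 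The pigeonhole forces equality. This is where the simple decomposition hypothesis is used, and it replaces your attempted direct computation entirely. If you want to salvage your approach, the cleanest fix is precisely this: note that both $\sh^{(2)}$ and $\esh^{(2)}$ send $(\ybox,\widetilde T_{\geq s})$ into the set of $(\nu,\tinybox)$-chains on $\lambda/\mu$, which is a singleton by hypothesis.
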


\begin{proof}
Write $\esh(\ybox,T) = (\hat T',\ybox)$.  We must show that
$\hat T'_{< s} = T'_{< s}$ and $\hat T'_{\geq  s} = T'_{\geq  s}$.
Since evacuation shuffling is coplactic, and well-defined on dual
equivalence classes, it suffices to prove this in the case where
$\ybox \sqcup T$ is rectified, and $T_{< s}$ and $T_{\geq  s}$
are both Littlewood-Richardson tableaux. (Technically, here we mean
$T_{\geq s}$ is a Littlewood-Richardson tableau after decrementing
all its entries by $s-1$.)  
Note that $(T'_{< s}, T'_{\geq  s}, \ybox)$ and 
$(\hat T'_{< s}, \hat T'_{\geq  s}, \ybox)$ are both
chains of Littlewood-Richardson tableaux contributing to the
coefficient $f^\lambda_{\mu,\nu,\tinybox}$ 
($c^\lambda_{\mu,\nu, \tinybox}$ for unshifted tableaux).  
But since $f^\lambda_{\mu,\nu,\tinybox} = 1$, we must have
$(T'_{< s}, T'_{\geq  s}, \ybox)= (\hat T'_{< s}, \hat T'_{\geq  s}, \ybox)$,
as required.
\end{proof}

\begin{remark}
Theorem~\ref{thm:eshdecomp} provides a sufficient, but not a 
necessary condition to conclude $\esh(\ybox, T)
= (T'_{< s} \sqcup T'_{\geq s}, \ybox)$.  
In the case where $\ybox \sqcup T$
is rectified, the conclusion can be reformulated as
$\esh(\ybox, T_{\geq  s}) = \sh(\ybox, T_{\geq s}$,
or equivalently $\omega(\ybox, T_{\geq  s}) = (\ybox, T_{\geq  s})$.
Examples of tableaux with this property are discussed in
\cite[Proposition 7.5]{bib:GillespieLevinson}.
\end{remark}

For our purposes, the main example is the Phase 1/Phase 2 decomposition
in computing $\esh(\ybox, T)$.
If the transition from Phase 1 to Phase 2 occurs
when $i=s$, then $T = T_{< s} \sqcup T_{\geq s}$ is a simple 
decomposition.
This can be seen directly using the Littlewood-Richardson rule:
$(\ybox, T_{< s}, T_{\geq  s})$ is the unique chain of 
Littlewood-Richardson tableaux contributing to $f^\lambda_{\tinybox, \mu, \nu}$.
This explains why the Phase 1/Phase 2 decomposition, suggested intuitively by 
Figure~\ref{fig:rectified-esh} and Lemma~\ref{lem:straight-promotion}
in the rectified case,
actually leads to a natural division of $T$ into two parts for 
computing $\esh(\ybox, T)$ in general.

To address the question of why the algorithm is so different on 
each part, first note that this is not the only simple
decomposition of $T$.
In fact, by the same argument as above, 
$T = T_{< i} \sqcup T_{\geq  i}$ is a simple decomposition for all
$i < s$.
This tells us that Phase 1 can be computed by evacu-shuffling
$\ybox$ past the $1$- and $1'$-entries in $T$, then past the 
$2$- and $2'$-entries, and so forth, up until the transition point.  
Thus, Phase 1 is completely determined
by the Pieri case, i.e. by how $\esh$ behaves on tableaux in which 
all entries are equal to $1$ or $1'$. 
We record this fact for future reference.

\begin{lemma}\label{lem:partial-esh}
  Phase 1 of $\esh$ can be computed by applying $\esh$ to move the $\ybox$ past just the $1'/1$-ribbon, then separately past the $2'/2$-ribbon, and so on.
\end{lemma}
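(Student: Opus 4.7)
The plan is to iterate Theorem~\ref{thm:eshdecomp}, peeling off one ribbon of entries at a time.

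The first step is to extend the simple-decomposition argument from the paragraph preceding the lemma to multi-part decompositions. That paragraph establishes that $T = T_{<i} \sqcup T_{\geq i}$ is a simple decomposition for every $i \leq s$, where $s$ is the Phase~1/Phase~2 transition index. The same reasoning should yield that for any $2 = i_1 < i_2 < \cdots < i_k \leq s$, the chain $(\ybox, T_{<i_1}, T_{[i_1,i_2)}, \ldots, T_{\geq i_k})$ is the unique chain of shifted Littlewood--Richardson tableaux contributing to the appropriate generalized LR coefficient $f^\lambda_{\tinybox, \mu_1, \ldots, \mu_k}$, because in the rectified configuration each ribbon of entries is rigidly determined by its content. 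Granting this, if I regard $(\ybox, T_{\geq i_j})$ as a reduced sub-problem, its decomposition $T_{\geq i_j} = T_{[i_j, i_{j+1})} \sqcup T_{\geq i_{j+1}}$ is itself simple: any alternative LR chain for the sub-problem could be concatenated with the fixed $T_{<i_j}$ to violate the multi-part uniqueness just asserted.

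Next, I would apply Theorem~\ref{thm:eshdecomp} iteratively. At $i=2$ it factors $\esh(\ybox, T)$ as $\esh^{(1)}$ acting on $(\ybox, T_{<2})$, which moves $\ybox$ past the $1'/1$-ribbon, followed by $\esh^{(2)}$ acting on the reduced pair $(\ybox, T_{\geq 2})$. By the sub-decomposition remark above, the theorem applies again at $i=3$ to the reduced problem, separating off the $2'/2$-ribbon next, and so on. The iteration terminates exactly at $i=s$, where the decomposition ceases to be simple; this is the Phase~1/Phase~2 transition, so everything up to that point constitutes Phase~1 and has been decomposed into one $\esh$ per ribbon, as the lemma claims.

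The main obstacle will be verifying the multi-part uniqueness of the LR chain. I expect to prove this by induction on the number of parts, using coplacticity of $\esh$ to reduce to the rectified setting, where the ribbons of a shifted LR tableau are determined by content together with the rigidity of the shifted Pieri rule.
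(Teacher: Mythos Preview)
Your proposal is correct and follows essentially the same route as the paper. The paper does not give a separate proof of this lemma; it is recorded as a consequence of the paragraph immediately preceding it, which observes that $T = T_{<i} \sqcup T_{\geq i}$ is a simple decomposition for every $i \leq s$ and hence Theorem~\ref{thm:eshdecomp} may be applied repeatedly. Your write-up is more explicit about the iteration step, in particular about why the sub-decomposition $T_{\geq i} = T_{[i,i+1)} \sqcup T_{\geq i+1}$ (with the \emph{moved} $\ybox$ as inner corner) remains simple. The multi-part uniqueness argument you outline is a valid way to see this, though a slightly shorter path is to note that simplicity depends only on rectification shapes, reduce by coplacticity to the rectified picture, and observe there that after $\ybox$ has passed rows $1,\dots,i-1$ the remaining configuration is again a straight-shape highest-weight tableau with $\ybox$ in its inner corner, so the same Littlewood--Richardson count applies. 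The paper also remarks, just after the lemma, that the same conclusion is visible directly from the coplactic algorithm (each Phase~1 step touches only one ribbon plus the placeholder), which gives an independent confirmation.
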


Note that this fact is also made clear by the coplactic algorithm in Theorem \ref{thm:crystal-algorithm}: step $i$ of Phase $1$ only involves the entries $i'/i$ and a single $i{-}1$, a placeholder for $\ybox$. In Phase 2, however, the $F_i$ step involves the $i'/i$ and $i{+}1'/i{+}1$ strips, both of which may be large.

However, for $r \geq i \geq s$, $T = T_{< i} \sqcup T_{\geq  i}$ is not
a simple decomposition, and thus Phase 2 cannot be further decomposed
in the same way.  However, this analysis is specifically about Littlewood-Richarson tableaux.  It is reasonable to wonder whether we could somehow decompose Phase 2, if we replace $T$ by some other tableau in its dual equivalence class. For unshifted tableaux, this is precisely what happens. The \emph{$s$-decomposition} defined in \cite{bib:GillespieLevinson} is equivalent to choosing a different representative for the dual equivalence class, yielding a simple decomposition.  From this point of view, Phase 2 in the unshifted algorithm behaves like a dual version of Phase 1. However, the following theorem shows that Phase 2 for shifted tableaux is not simply decomposable, in general. Thus for shifted tableaux, Phase 2 seems to be fundamentally different from Phase 1.

\begin{proposition} \label{prop:phase2-indecomposable}
Let $T$ be a Littlewood-Richardson tableau of weight $\beta$, and 
let $\ybox$ be an inner corner of $T$.  Suppose $\esh(\ybox, T)$
is computed entirely in Phase 2, and $\beta_1 = \beta_2+1$. Then
there is no tableau in the dual equivalence class of $T$ 
with a non-trivial simple decomposition.
\end{proposition}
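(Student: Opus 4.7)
The plan is to proceed by contradiction: suppose $T' \sim T$ admits a non-trivial simple decomposition $T' = T'_{<s} \sqcup T'_{\geq s}$ with $2 \leq s \leq r$. First I observe that both $T'_{<s}$ and $T'_{\geq s}$ are themselves shifted Littlewood--Richardson tableaux, since the sub-word of a shifted ballot word on its letters below (or, after relabeling, above) a fixed threshold remains ballot by a direct verification of Stembridge's criterion (Lemma~\ref{lem:stembridge}). This forces the rectification shapes $\mu = (\beta_1, \ldots, \beta_{s-1})$ and $\nu = (\beta_s, \ldots, \beta_r)$ independently of the representative $T'$, and the rectification shape of $\ybox \sqcup T'$ is always $\lambda = (\beta_1 + 1, \beta_2, \ldots) = (\beta_2 + 2, \beta_2, \beta_3, \ldots)$ under the hypothesis $\beta_1 = \beta_2 + 1$. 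The simple-decomposition hypothesis then becomes $f^\lambda_{\tinybox, \mu, \nu} = 1$.

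The main step is a shifted Pieri computation: the rule $P_\tinybox \cdot P_\mu = \sum_\rho P_\rho$ sums over strict partitions $\rho$ obtained from $\mu$ by adding one box. Under the hypothesis $\beta_1 = \beta_2 + 1$, I identify two $\rho$ that always contribute: the row-$1$ extension $\rho_1 = (\beta_1 + 1, \beta_2, \ldots, \beta_{s-1})$, valid since $\beta_1 + 1 > \beta_2$, and the new-row extension $\rho_s = (\beta_1, \ldots, \beta_{s-1}, 1)$, valid since $\beta_{s-1} \geq 2$ from $\beta$ being strict together with $s \leq r$. Crucially, the would-be row-$2$ extension $(\beta_1, \beta_2 + 1, \beta_3, \ldots)$ fails to be strict exactly because $\beta_1 = \beta_2 + 1$, which is where the hypothesis is used decisively. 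For each of $\rho_1, \rho_s$ I exhibit an explicit LR filling of $\lambda/\rho$ with content $\nu$: the shape $\lambda/\rho_1$ is the tail of $\lambda$ from row $s$ onward and admits the unique highest-weight LR filling; the shape $\lambda/\rho_s$ has one extra cell at $(1, \beta_1 + 1)$, no cells in rows $2$ through $s-1$, one fewer cell in row $s$, and the remaining rows unchanged, and placing an unprimed $1$ in the extra cell together with the natural highest-weight filling elsewhere produces a valid LR tableau. This yields
\[
f^\lambda_{\tinybox, \mu, \nu} \geq f^\lambda_{\rho_1, \nu} + f^\lambda_{\rho_s, \nu} \geq 1 + 1 = 2,
\]
contradicting the simple-decomposition hypothesis.

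The main technical obstacle is verifying ballotness for the second LR filling in general. The reading word, obtained by concatenating the highest-weight readings of rows $r, r-1, \ldots, s$ with a final ``$1$'' coming from the row-$1$ extra cell, simplifies to the standard highest-weight reading of $\nu$ precisely because rows $2$ through $s-1$ contribute nothing; this makes the ballot verification via Stembridge's criterion essentially automatic once one traces through the $m_i(j)$ counts. The ``entirely Phase 2'' hypothesis plays a supporting role: it anchors the $\ybox$ to the $(1,1)$-corner in the rectified $\ybox \sqcup T'$, so that the Pieri expansion above applies in the stated form without further bookkeeping.
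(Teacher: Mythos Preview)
Your argument has a genuine gap at the very first step. You assert that for any $T'$ in the dual equivalence class of $T$, the pieces $T'_{<s}$ and $T'_{\geq s}$ are themselves Littlewood--Richardson tableaux, and hence that their rectification shapes are $\mu = (\beta_1,\ldots,\beta_{s-1})$ and $\nu = (\beta_s,\ldots,\beta_r)$. This is false: $T$ is the \emph{unique} highest-weight (ballot) representative of its dual equivalence class, and a general $T'\sim T$ has a different weight and a non-ballot reading word. The rectification shapes $\mu,\nu$ of $T'_{<s}$, $T'_{\geq s}$ are constrained only by $f^\beta_{\mu\nu}\ge 1$, and are not determined by $\beta$ and $s$. (For a concrete illustration: with $\beta=(2,1)$ the highest-weight tableau has word $211$, but the dual-equivalent tableau with word $22'1$ splits at $s=2$ into pieces of rectification shapes $(1)$ and $(2)$, not $(2)$ and $(1)$.)

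Your Pieri computation then relies essentially on this specific form of $\mu$. The second contributing shape $\rho_s = (\beta_1,\ldots,\beta_{s-1},1)$ depends on $\mu$ having no part equal to $1$; for a general strict $\mu$ this extension need not be valid, and your second LR filling does not generalize. The paper's proof confronts exactly this issue: it works with arbitrary $\mu,\nu$ satisfying $f^\beta_{\mu\nu}\ge 1$, passes to the unique LR chain $(T_1,T_2,\ybox')$ of type $(\mu,\nu,\tinybox)$ filling $\lambda$, pins down $\ybox'$ in row~$1$ via the Phase~2 hypothesis, and then uses the inequalities $\nu_1\ge\beta_1-\mu_1$, $\nu_1+\nu_2\ge\beta_1+\beta_2-\mu_1-\mu_2$ together with $\beta_1=\beta_2+1$ and strictness of $\mu$ to locate the first $1$ of $T_2$ outside row~$1$. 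Moving that $1$ to the end of the word (preserving ballotness by Corollary~\ref{cor:ballotness-preserving}) produces a second, distinct LR chain and hence $f^\lambda_{\mu,\nu,\tinybox}\ge 2$. Your overall strategy---exhibit two LR chains---matches the paper's, but the construction must be carried out for general $\mu,\nu$, not the special ones you have assumed.
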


\begin{proof}
Suppose to the contrary that such a tableau, say $T^*$, exists.
Then $T^*$ is in the dual equivalence class of $T$, for
for some $t$,
$T^*_{< t}$, $T^*_{\geq  t}$, $T^*$, and  $\ybox \sqcup T^*$ have 
rectification shape $\mu, \nu, \beta$, and $\lambda$ respectively, 
and $f^\lambda_{\mu, \nu, \tinybox} = 1$.
Let $(T_1, T_2,\ybox')$
be the unique chain of Littlewood-Richardson tableaux such 
such that $T_1$ has weight $\mu$, $T_2$ has weight $\nu$, 
contributing to $f^\lambda_{\mu, \nu,\tinybox}$.

We first claim that $\ybox'$ must be in the first row of $\lambda$.
Since $T^*_{< t}$ and $T^*_{>t}$ are obtained from a decomposition
of $T^*$, we necessarily have $f^\beta_{\mu,\nu} \geq 1$.  Moreover, 
since 
\[
f^\lambda_{\mu, \nu \tinybox} = \sum_\alpha 
f^\lambda_{\alpha, \tinybox} f^\alpha_{\mu\nu},
\]
we see that
$\alpha = \beta$ is the only term contributing
to this sum, and $f^\beta_{\mu\nu} =1$.
It follows that $(T_1, T_2)$ must be the unique chain of 
Littlewood-Richardson tableaux
contributing to $f^\beta_{\mu\nu}$, and hence $\ybox' = \lambda/\beta$.
Since $T^*$ is in the dual equivalence class of $T$, $\lambda$ is also
the rectification shape of $T' \sqcup \ybox$.  By 
Lemma~\ref{lem:straight-promotion}, $\ybox' = \lambda/\beta$ ends
up in row $s$ if the transition from Phase 1 to Phase 2 occurs at
$i=s$.  Since we assumed $\esh(\ybox,T)$
is computed entirely in Phase 2, the transition here is at $s=1$, 
which proves the claim.

Let $b$ denote the first $1$ in $T_2$ in reading order.  We next claim that
$b$ cannot be in the first row of $\lambda$.  Recall that
the entries in row $i$ of $T_2$ are primed or unprimed numbers which
are at most $i$.  Thus, we have the following inequalities:
\begin{align*}
\nu_1 &\geq \beta_1 - \mu_1, \\
\nu_2+\nu_1 &\geq \beta_1 +\beta_2 - \mu_1 - \mu_2.
\end{align*}
If $b$ is in the first row of $\lambda$, then the first of these is
an equality.  Together with $\beta_1 = \beta_2+1$ and $\mu_1 \geq \mu_2+1$,
we deduce that $\nu_1 \leq \nu_2$.  This is impossible, since $\nu$ is a 
strict partition, which proves our second claim.

We produce a new chain of Littlewood-Richardson tableaux 
$(T_1, T_2', \ybox'')$ as follows.
Beginning with $T_2 \sqcup \ybox'$, replace $\ybox'$ with a $1$, and 
replace $b$
with $\ybox''$; then slide $\ybox''$ out.  The effect of 
these replacements on the word of $T_2$ is to move the first $1$ to the 
end of the word, which maintains ballotness 
(by Corollary~\ref{cor:ballotness-preserving}), as does sliding.  
Therefore the resulting
tableau $T_2'$ is also a Littlewood-Richardson tableau of weight $\nu$.
Thus $(T_1, T'_2, \ybox'')$ is a chain of Littlewood-Richardson 
tableaux contributing to the coefficient $f^\lambda_{\mu, \nu, \tinybox}$.
Finally, note that $\ybox'$ is in the first row of $\lambda$, but
$\ybox''$ cannot be in the first row of $\lambda$, so
$(T_1, T_2, \ybox') \neq (T_1, T'_2, \ybox'')$.
Thus $f^\lambda_{\mu, \nu, \tinybox} \geq 2$, for a contradiction.
\end{proof}


\section{Shifted Evacuation shuffling via switches}\label{sec:local-algorithm}

In this section, we
reinterpret the algorithm of Theorem \ref{thm:crystal-algorithm} as a
sequence of \emph{switches} which move the $\ybox$ across the tableau.
This formulation describes $\esh$ as a process that more closely
resembles jeu de taquin, and we show
that the algorithm has a number of the same properties as jeu de taquin.
These properties will allow us to make connections with K-theory
in Section~\ref{sec:K-theory}.  
Sections~\ref{sec:switches}--\ref{sec:reverse-algorithm} introduce
the concepts and state the main theorems that are needed for these 
applications. The proofs are given in
Section~\ref{sec:proofs}.
 The reader who is not interested in the technical details of the proofs may wish to proceed directly from 
Section~\ref{sec:reverse-algorithm} to Section~\ref{sec:K-theory},
with some assurance from the authors that these results have also been
extensively verified by computer calculations.

\subsection{Switches, hops, and inverse hops}
\label{sec:switches}

We introduce some terminology that will be
useful for stating, analyzing and applying the algorithm.
Throughout, we will implicitly assume that we are operating on a tableau 
containing a single $\ybox$, whose reading word (not including $\ybox$) is
ballot.  In examples, we will only display the reading word of
the tableau under consideration.

\begin{definition}\label{defn:switch}
A \newword{switch} means that we interchange the positions of $\ybox$ 
and another entry $t$, subject to the condition that 
between $\ybox$ and $t$ in reading order, 
there are no other occurrences of the symbol $t$.  (The entry $t$ switching
with $\ybox$ with may come
either before or after $\ybox$ in reading order.)
\end{definition}

Notice that this definition may depend on the choice of representative for the tableau. For example, $11\ybox \leadsto \ybox11$ is not a switch, but $1'1\ybox \leadsto \ybox11'$ is a switch.  For a variety of reasons, we are forced to work with representatives when talking about switches,  and the question of which representative to use will be a recurring issue. To help keep things straight, we adopt the convention that a switch \emph{never} implicitly changes whether an entry is primed: if the  entry is an $i'$ before the switch, it will be an $i'$ after the switch. When we need to change representatives, this will be done explicitly and separately from switching.

\begin{definition}\label{defn:valid}
We say $\ybox$ \newword{can switch with} an entry $t$ in a tableau, or the switch is \newword{valid}, if, after switching $\ybox$ with $t$, the reading word of the resulting tableau (not including $\ybox$) is ballot.  
\end{definition}

\begin{definition}\label{defn:hop}
A valid switch is called a \newword{hop}, if $\ybox$ moves backward in standardization order ($\ltst$).  That is, if $t$ is unprimed, then $\ybox$ moves forward in reading order, and if $t$ is primed then $\ybox$ moves backward in reading order. If a valid switch is not a hop, then its inverse is a hop, and we refer to such switches as \newword{inverse hops}.  We also say that $\ybox$ \newword{hops across} a $t$ or \newword{inverse hops across} a $t$ respectively.
\end{definition}

\begin{remark}\label{rmk:exceptional}
In the case where $t$ is the first $i$ or $i'$, this definition depends on the choice of representative.  For example both $1\ybox1 \leadsto \ybox11$ and $1'\ybox1 \leadsto \ybox1'1$ are valid switches, but the former is a hop and the latter is an inverse hop. Even though, in this case, the result of switching is well-defined on equivalence classes, these should be regarded as two different valid switches.  
\end{remark}

\subsection{The switching algorithm}

We now describe $\esh$ as a sequence of switches.

\begin{thm}[Switching algorithm for $\esh$]\label{thm:step-by-step-algorithm}
  Let $(\ybox,T)\in \LRyb$.  Then $\mathrm{esh}(\ybox,T)$ can be computed as follows, 
starting at $i=1$, and stopping when we reach $i=\ell(\beta)+1$.  
  \begin{itemize}
     \item \textbf{Phase 1:} 
	Begin with $T$ in canonical form. 	
    
    If there is an $i'$ after $\ybox$ in reading order, hop $\ybox$ across an $i'$, then hop across an $i$.  Increment $i$ by $1$ and repeat.
    
    If no such $i'$ exists, go to Phase 2.
 
     \item \textbf{Phase 2:} 
If $\ybox$ most recently switched with an entry earlier in reading order, or $\ybox$ has not yet moved, enter Phase 2(a) below; otherwise, skip to Phase 2(b). 
 \begin{itemize}
   \item \textbf{Phase 2(a):} If $\ybox$ precedes all $i$ and $i'$ entries in reading order, skip to Phase 2(b).
   
   Change $\first(i)$ to $i'$, then perform as many valid inverse hops across $i'$ as possible.

If the $\ybox$ now precedes all $i$ and $i'$ entries in reading order, go to Phase 2(b).  

Otherwise, if the $i,i+1$-reading word has the form $\ldots i(i+1)^\ast \ybox \ldots$, hop $\ybox$ across the $i$. 

Increment $i$ by $1$ and repeat Phase 2.
 
  \item \textbf{Phase 2(b):}  Perform as many valid inverse hops across $i$ as possible.
 
If the $i,i+1$-reading word has the form $\ldots\ybox (i')^\ast(i+1')\ldots$, hop $\ybox$ across the $i+1'$.  

Increment $i$ by $1$ and repeat Phase 2. 
  
 \end{itemize}
  \end{itemize}
\end{thm}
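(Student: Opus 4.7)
The plan is to prove Theorem~\ref{thm:step-by-step-algorithm} by matching the switching algorithm step-by-step with the coplactic algorithm of Theorem~\ref{thm:crystal-algorithm}, which is already known to compute $\esh$. Throughout, I will exploit the explicit descriptions of $E_i, E'_i, F_i$ via lattice walks and critical substrings (Figure~\ref{fig:criticals}), as well as the fact that these operators preserve ballotness. The latter property ensures that the hops and inverse hops performed by the switching algorithm are \emph{valid} in the sense of Definition~\ref{defn:valid}, which is itself a nontrivial part of the claim. The proof will proceed by induction on the step index $i$, at each stage showing that one iteration of the switching algorithm (together with its implicit canonicalization) has the same effect on the tableau as the corresponding operator composition in the coplactic algorithm.

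For Phase 1 step $i$, the coplactic algorithm applies $E'_{i-1} \circ E_{i-1}^{\mathrm{wt}(T)_i - 2}$ with $\ybox$ treated as the unique $i{-}1$, so the relevant letters are $\{i{-}1, i', i\}$. I will carry out a case analysis on the lattice walk of this subword to show that the composition has two cumulative effects: it moves $\ybox$ past exactly one $i'$-entry (realizing the hop across $i'$) and past exactly one $i$-entry (realizing the hop across $i$). The final $E'_{i-1}$, which by Proposition~\ref{prop:primed-properties}(\ref{prop:explicit-definitions-primed}) changes the last $i'$ whose position is right of the last $i{-}1$, corresponds directly to the hop across $i'$; the preceding $E_{i-1}^{\mathrm{wt}(T)_i - 2}$, acting through critical substrings of types 3E and 4E in succession, collectively effects the hop across $i$. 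The Phase 1/Phase 2 transition condition $E'_{i-1}(T) = E_{i-1}(T)$ translates via the same proposition to the absence of any $i'$ after $\ybox$ in reading order, which matches the switching algorithm's transition criterion exactly.

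Phase 2 is the main obstacle. The coplactic algorithm applies the composition $F_r \circ \cdots \circ F_i$; by induction, it suffices to match each individual $F_j$ with one iteration of Phase 2 in the switching algorithm (either sub-phase 2(a) or 2(b)). The branching on whether the previous switch moved $\ybox$ backward or forward in reading order precisely distinguishes between two families of $F$-critical substrings: those with $\ybox$ sitting as an internal letter of the substring (corresponding to types 2F and 4F) versus those whose transformation involves an adjacent $(i{+}1)'$ (corresponding to types 1F and 3F). I will verify the correspondence case by case, showing that the sequence of inverse hops across $i'$s or $i$s prescribed by the switching algorithm realizes the bulk of the substring transformation from Figure~\ref{fig:criticals}, while the final hop across an $i$ or $(i{+}1)'$ completes it. The principal technical difficulty is the breadth of this case analysis: unlike Phase 1, Phase 2 cannot be reduced ribbon-by-ribbon (by Proposition~\ref{prop:phase2-indecomposable}), so the cases must be handled in full generality, and the direction-tracking between sub-phases 2(a) and 2(b) must be carefully managed to ensure consistency across iterations, particularly at the boundaries where the relabeling steps (e.g.\ changing $\first(i)$ to $i'$) interact with the crystal operators' effects on representatives.
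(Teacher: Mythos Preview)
Your overall strategy---matching the switching algorithm to the coplactic algorithm of Theorem~\ref{thm:crystal-algorithm} phase by phase---is the same as the paper's, and your identification of the 2(a)/2(b) split with the 2F/4F versus 1F/3F dichotomy of critical substrings is correct (this is the content of Corollary~\ref{prop:2a2b-coplactic}). However, your Phase~2 plan has a substantive gap.

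The switching algorithm tells us to ``perform as many valid inverse hops across $i'$ (or $i$) as possible.'' To match this with a single application of $F_i$, you must know \emph{exactly where this sequence of inverse hops terminates}, and that termination point is governed by ballotness (Definition~\ref{defn:valid}), not directly by the critical substring. Your proposal asserts that the inverse hops ``realize the bulk of the substring transformation,'' but this is not what happens: the $\ybox$ may pass through many $i'$ or $i$ entries that are nowhere near the final $F_i$-critical substring before stopping precisely at one of its endpoints. The paper's mechanism for controlling this is a sequence of \emph{removability lemmas} (Lemmas~\ref{lem:break-point}, \ref{lem:upper-break-point}, \ref{lem:removable-pairs}, \ref{lem:removable-pairs-special}): an entry is $i$-removable if and only if it lies weakly before a certain \emph{break point} in standardization order, and that break point is determined by the final $F_i$-critical substring. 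This is what forces the last valid inverse hop to land exactly where $F_i$ acts, and also what certifies that every intermediate inverse hop along the way is valid (Theorem~\ref{thm:phase12-ballotness}(1)). Your appeal to ``$F_i$ preserves ballotness'' addresses only the endpoints of each Phase~2 iteration, not the intermediate tableaux; since the individual switches are not coplactic, coplacticity of $F_i$ does not help here. Without the break-point analysis, your case analysis has no way to pin down where ``as many as possible'' stops.

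A smaller point on Phase~1: the transition criterion $E'_{i-1}(T)=E_{i-1}(T)$ does not follow from Proposition~\ref{prop:primed-properties}(\ref{prop:explicit-definitions-primed}) alone; that proposition tells you when $E'$ is \emph{defined}, not when it coincides with $E$. The paper instead invokes the characterization from \cite{bib:GLP} that $E=E'$ if and only if the rectification shape has a single row, together with the walk criterion for one-row shapes. Also, your claim that $E_{i-1}^{\mathrm{wt}(T)_i-2}$ acts only through types 3E and 4E is not quite right: type~1E substrings appear as well (see the analysis around Figure~\ref{fig:E-walks}), and the commutativity of $E$ and $E'$ is used to interleave the applications.
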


We refer to a switch occurring in the algorithm above as a \newword{step} of
the algorithm.

\begin{example}\label{ex:algorithm}
  Let $(\ybox,T)$ have reading word 
\[
  3233'42'232'2'121\ybox 1'12111111.
\]
Then the algorithm runs as follows.
\begin{center}
\begin{tabular}{llll}
\textbf{Step}
& $i$
& \textbf{Word}
& \textbf{Choice of representative}
\\ \hline
Start      & $1$ & $3233'42'232'2'121\ybox 1'12111111$ & Begin with canonical form.\\
Phase 1    & $1$ & $3233'42'232'2'1211'\ybox 12111111$ \\
Phase 1    & $1$ & $3233'42'232'2'12\ybox 1'112111111$ \\
Phase 2(a) & $2$ & $32'33'42'232'\ybox 122'1'112111111$ & First $2$ changes to $2'$. \\
Phase 2(a) & $2$ & $32'33'42'23\ybox 2'122'1'112111111$ \\
2(a) (hop)   & $2$ & $32'33'42'\ybox 322'122'1'112111111$ \\
Phase 2(a) & $3$ & $3'2'3\ybox 42'3'322'1'22'1'112111111$ & First $3$ changes to $3'$. \\
Phase 2(a) & $3$ & $\ybox\  2'33'42'3'322'122'1'112111111$ \\
Phase 2(b) & $3$ & $32'\ybox 3'42'3'322'122'1'112111111$ \\
Phase 2(b) & $4$ & $32'43'\ybox 2'3'322'122'1'112111111$
\end{tabular}
\end{center}
\end{example}

We will show that every step in the switching algorithm is a valid switch.
This is not obvious, and is asserted by part (1) of the following theorem.  

\begin{thm}\label{thm:phase12-ballotness}
  Let $T$, including the $\ybox$, be a tableau that appears after some step of the computation of $\esh(\ybox,T_1)$ for some pair $(\ybox,T_1)\in \LRyb$.  Then:
  \begin{enumerate}
    \item[(1)] Omitting the $\ybox$, the reading word of $T$ is ballot.
    \item[(2)] Omitting the $\ybox$, the tableau is semistandard.
  \end{enumerate}
\end{thm}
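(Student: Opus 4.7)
The plan is to prove both parts simultaneously by induction on the number of switches performed by the algorithm. The base case is immediate, since $T_1$ is a Littlewood-Richardson tableau by hypothesis. For the inductive step, I would analyze each type of switch appearing in the algorithm---Phase 1 hops, Phase 2(a) and 2(b) inverse hops and hops---and verify that after each such switch, both ballotness (of the reading word omitting $\ybox$) and semistandardness (of the filling omitting $\ybox$) are preserved.

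For part (2), semistandardness can be verified by a local case analysis based on the positions of $\ybox$ and the entry $t$ involved in each switch. The ``no $t$ between $\ybox$ and $t$'' restriction from Definition~\ref{defn:switch} rules out the problematic configurations, ensuring that when $t$ moves into the cell $\ybox$ formerly occupied, no row or column monotonicity violation arises with the neighboring entries.

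For part (1), I would rely on Stembridge's criterion (Lemma~\ref{lem:stembridge}) together with Corollary~\ref{cor:ballotness-preserving}. Phase 1 hops are amenable to a relatively direct analysis: hopping $\ybox$ across an $i$ moves that $i$ earlier in reading order, and hopping across an $i'$ pushes that $i'$ later; in each case only a bounded segment of the reading word is affected and no intermediate $i$ or $i'$ is traversed, so Stembridge's conditions on $m_i(j)$ and $m_{i+1}(j)$ can be checked locally.

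The main obstacle I anticipate is Phase 2. Inverse hops move $\ybox$ forward in standardization order, typically shifting letters in directions that Corollary~\ref{cor:ballotness-preserving} does not cover. To manage this, I would invoke the equivalence of the switching algorithm with the coplactic algorithm (Theorem~\ref{thm:crystal-algorithm}): a complete iteration of Phase 2(a) or 2(b) implements a single $F$-critical transformation from Figure~\ref{fig:criticals}. Under this correspondence, the compound effect of all atomic switches in one iteration matches applying the appropriate $F_i$ operator, and I would verify ballotness at each intermediate state by tracking the local structure dictated by the critical substring type involved. The change of representative at the start of Phase 2(a) (replacing $\first(i)$ by $i'$) is essential, as it aligns the tableau with the critical substring's structure so that the prescribed inverse hops traverse exactly the predicted configuration. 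A parallel argument then handles Phase 2(b), and the final hop across $i$ in Phase 2(a) or across $i+1'$ in Phase 2(b) is governed by the transformation rule of the corresponding critical substring.
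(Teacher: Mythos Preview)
Your plan has real gaps in both phases.

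\textbf{Phase 1, part (1).} You have the directions reversed (hopping across $i'$ moves that $i'$ \emph{earlier} in reading order, and hopping across $i$ moves that $i$ \emph{later}), but more importantly, Corollary~\ref{cor:ballotness-preserving} only tells you that moving a $1'$ earlier, or a $1$ to the very end, preserves ballotness of a $1',1,2',2$-word. When $\ybox$ hops across an $i'$ and the $i'$ moves earlier, this is the $1'$ case for the $i,i{+}1$-subword and is fine; but for the $i{-}1,i$-subword the same $i'$ plays the role of $2'$, and moving a $2'$ earlier is \emph{not} ballot-preserving in general. The paper does not attempt a direct Stembridge check here: it instead uses coplacticity to reduce the $j=i$ case to a statement about rectification shapes, via Lemma~\ref{lem:oversight}. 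That lemma is the missing ingredient in your Phase~1 argument.

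\textbf{Phase 2.} The inverse hops in Phase~2(a)/2(b) are valid by definition, so (1) is automatic for them; the content is in the terminal hop. Your plan is to appeal to the equivalence between the switching and coplactic descriptions, but that equivalence (Theorem~\ref{thm:step-by-step-algorithm}, specifically the Phase~2 part proved as Theorem~\ref{thm:phase2-algorithm}) is not established independently of Theorem~\ref{thm:phase12-ballotness}: the two are proved simultaneously in Section~\ref{sec:proofs}, by an induction whose hypothesis includes removability of $\ybox$ at each step. The engine for that induction is the Removable Pairs machinery (Lemmas~\ref{lem:break-point}, \ref{lem:upper-break-point}, \ref{lem:removable-pairs}, \ref{lem:removable-pairs-special}), a substantial lattice-walk analysis that pins down exactly which $i/i'$ entries are removable. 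Without it you cannot identify where the sequence of inverse hops stops, cannot match the Phase~2 iteration to a specific $F_i$-critical type, and cannot verify that the special hop across $i$ (in 2(a)) or across $(i{+}1)'$ (in 2(b)) is valid. Saying you will ``track the local structure dictated by the critical substring type'' is precisely the work these lemmas do, and it is not short.

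\textbf{Part (2).} Switches act on reading-word positions, not tableau cells. When $\ybox_1$ and $\ybox_2$ are non-adjacent in the shape, the ``no $t$ between $\ybox$ and $t$ in reading order'' condition says nothing about the entries that border $t$'s new cell in the tableau, so a purely local case check does not suffice. The paper's argument instead shows that, at the appropriate pause points, replacing $\ybox$ by a specific numerical entry reconstructs a tableau obtained by applying a coplactic operator to a semistandard tableau (hence semistandard), and then observes that deleting one entry from a semistandard tableau keeps it semistandard.
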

Note that the same properties hold for the tableaux arising as intermediate steps during ordinary jeu de taquin.   In the next section, we will discuss two more properties of the switching algorithm that are analogues of jeu de taquin properties (Theorem~\ref{thm:pause-esh}).  Unfortunately, the individual steps of the switching algorithm are not coplactic operations in any meaningful sense.  This will make the proofs considerably more technical than the proof of Theorem~\ref{thm:crystal-algorithm}.  We will prove Theorems \ref{thm:step-by-step-algorithm} and \ref{thm:phase12-ballotness} in Section \ref{sec:proofs} below.  

For now, here is a brief outline of what we prove, to relate Theorem~\ref{thm:step-by-step-algorithm} to Theorem~\ref{thm:crystal-algorithm}.  
In Phase 1, the operator
$E_{i-1}' \circ E_{i-1}^{\mathrm{wt}(T)_i-2}$ decrements all but three of the entries in the $i-1',i-1,i',i$-subtableau; at the end of Phase 1, we increment all these entries, and the net result is that they are unchanged.  The other three entries correspond to the $\ybox, i,  i'$ identified in the switching description of Phase 1, and the operator changes these to $i',i,\ybox$ respectively.  We show that the transition from Phase 1 to Phase 2 occurs for the same $i$
in both algorithms.  Call this transition point $i=s$, and let $T'$ denote the tableau at the start of Phase 2 in the coplactic algorithm (after setting $\ybox = s'$).
At each Phase 2 step of the switching algorithm, the tableau is
essentially just $F_{i-1} \circ \dots \circ F_s(T')$, but with $\ybox$ 
replacing one of the entries (either an $i$ or an $i'$).  
The $\ybox$ moves as follows:
it first switches through the $i/i'$ subword in standardization order, until it reaches the final $F_i$-critical substring.
Then, the four possible ways Phase 2 can loop (either in Phase 2(a) or Phase 2(b) and with or without a hop) correspond to the four possible types for this critical substring, and in each case the movement of $\ybox$ and incrementing
of $i$ corresponds to applying the operator $F_i$.

\begin{remark}
The best practical way to compute $\esh(\ybox, T)$ is to use a hybrid
of the two algorithms.  The switching algorithm is easier and more efficient
for Phase 1, and the coplactic algorithm is better for Phase 2. For K-theoretic purposes, however, the switching algorithm connects to genomic tableaux (see Section \ref{sec:K-theory}).
\end{remark}

\subsection{Index decomposition}

In the switching algorithm, $\ybox$ first switches with $1'$ entries, then $1$ entries, then $2'$, then $2$, and so on.  We therefore wish to define the \emph{index} of a switch as the letter the $\ybox$ switches with, with one exceptional case for the situation described in Remark \ref{rmk:exceptional}. We first make the following definition in anticipation of this boundary case.

\begin{definition}\label{defn:exceptional}
A valid switch is \newword{exceptional} if $\ybox$ switches with 
$i' = \first(i')$, $\ybox$ moves backward in 
reading order, and there are no entries equal to $i$ between $i'$ and 
$\ybox$ in reading order.
\end{definition}
For example, the switch $1'\ybox1 \to \ybox1'1$ is exceptional, but $1\ybox1 \leadsto \ybox11$ and $1'1\ybox \leadsto \ybox11'$ are valid switches that are not exceptional.  Note in particular that exceptional switches are inverse hops, and inverses of exceptional switches are hops. 

\begin{definition}\label{defn:switchindex}
The \newword{index} of a switch between $\ybox$ and $t$ is defined to
be $t$, \emph{unless} $t=i'$ and the switch is exceptional or the
inverse of an exceptional switch, in which
case the index is defined to be $i$.
\end{definition}

We emphasize that although the exceptional switch has index $i$ instead of $i'$, it is still considered a switch with an $i'$ and the $i'$ involved in the switch remains an $i'$ after the switches.

This leads to the following definition, which plays a significant role in analyzing orbits of $\omega$, and thereby real connected components of Schubert curves (see Section \ref{sec:K-theory}).

\begin{definition}\label{def:index-decomposition}
For $t=1,1',2,2',\dots$, we define $\partesh_t$ to be the operation of perfoming
all steps of index $t$ in the switching algorithm.  This gives the 
\newword{index decomposition}.
\[
  \esh = \partesh_{\ell(\beta)} \circ \partesh_{\ell(\beta)'} \circ \dots \circ \partesh_2 \circ \partesh_{2'} \circ \partesh_1 \circ \partesh_{1'}.
\]
\end{definition}

Note that an exceptional switch of $\ybox$ with $i'$ must be the last switch involving $i'$, and is defined to have index $i$ (Definition \ref{defn:switchindex}). Grouping exceptional switches with $\partesh_i$ rather than $\partesh_{i'}$ turns out to be necessary for the index decomposition to consist of well-defined bijections.

We observe that for all symbols $t$, $\partesh_t$ does one of three things:
\begin{itemize}
\item perform a single hop, 
\item perform an exceptional switch followed by a sequence of zero 
or more inverse hops, or
\item perform a sequence of zero or more inverse hops, with no 
exceptional switches.
\end{itemize}

The most important property of the index decomposition is that it 
identifies points
at which we can ``pause and resume'' the algorithm.  Here,
\newword{pause} means that we stop partway through the algorithm, forget what 
phase of the algorithm we were in, and forget what representative we are
currently working with (e.g. by putting the tableau back into canonical
form).
In general we cannot pause the algorithm at an arbitrary point, and be able 
to resume, as the following example shows.

\begin{example}
Let $w = 22'1'12'\ybox11$ and $v = 22'1'1'2'\ybox11$, differing only in the fourth letter. For both words, $\esh$ begins in Phase 2(a), so we have shown the representative in which the first $1$ is primed.  If, for both words, we stop the
algorithm when we reach the end of Phase 2(a), we find that $w$ and $v$ have
transformed into equivalent words.
\[w \leadsto 22'\ybox12'1'11, \quad v \leadsto 22'\ybox1'2'1'11.\]
Thus $\esh$ cannot be paused and resumed from this point.

Note, however, that the last switch performed on $v$ is exceptional, so we have gone one step beyond computing $\partesh_{1'}(v)$.  If instead we
pause the algorithm after performing all steps of index $1'$ on $v$ and $w$,
the words we obtain are not equivalent.
\end{example}

To state this pause and resume property more precisely, we make the 
following definition.

\begin{definition}
Define $Z_{i}=Z_{i}(\alpha,\beta,\gamma)$ to be the set of all ballot semistandard tableaux of shape $\gamma^c/\alpha$ and content $\beta$, with $\ybox$ being an outer co-corner of the letters $1',1,\ldots,i$ and an inner co-corner of the remaining letters. Let $Z_{i'}$ denote the corresponding set in which the $\ybox$ is an outer co-corner of the letters $1', 1, \ldots, i{-}1, i'$, and an inner co-corner of the remaining letters, in canonical form (the first $i$ or $i'$ treated as unprimed).
\end{definition}

In particular $Z_0 =  \LR(\alpha, \ybox, \beta, \gamma)$, and 
$Z_{\ell(\beta)} = \LR(\alpha, \beta, \ybox, \gamma)$.

\begin{thm}\label{thm:pause-esh}
Let $t \in \{1',1,2',2, \dots\}$.
\begin{enumerate}
\item[(3)] 
For $T \in \LR(\alpha, \ybox, \beta, \gamma)$, we have
\[
  \partesh_t \circ \dots \circ \partesh_1 \circ \partesh_{1'}(T) \in Z_t(\alpha, \beta,\gamma).
\]
\item[(4)]
The map
\[
  \partesh_t \circ \dots \circ \partesh_1 \circ \partesh_{1'}: Z_0 \to Z_t
\]
is a bijection.
\end{enumerate}
\end{thm}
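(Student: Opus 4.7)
The plan is to prove parts (3) and (4) together by a single induction on $t$, processing indices in the order $1', 1, 2', 2, \ldots, \ell(\beta)$. The base case $t=0$ is trivial: $Z_0 = \LR(\alpha, \ybox, \beta, \gamma)$, and the empty composition is the identity. For the inductive step, I would first verify part (3) for the next $\partesh_t$, then deduce part (4) from it.

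For part (3), Theorem~\ref{thm:phase12-ballotness} already guarantees that every intermediate tableau is ballot and, away from the $\ybox$, semistandard, so all that remains is to pin down the co-corner position of $\ybox$ at the end of $\partesh_t$. The key observation is that every switch in $\partesh_t$ has, by Definition~\ref{defn:switchindex}, index exactly $t$, so it interchanges $\ybox$ only with letters of index $t$ (the exceptional switch with an $i'$ is still bundled into $\partesh_i$). Consequently, the relative position of $\ybox$ with respect to every letter of index different from $t$ is unchanged throughout $\partesh_t$, which by the inductive hypothesis leaves $\ybox$ an outer co-corner of letters of index $< t$ and an inner co-corner of letters of index $> t$. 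It remains to check that the termination conditions for Phases 1, 2(a), and 2(b) in Theorem~\ref{thm:step-by-step-algorithm} correspond exactly to $\ybox$ having become an outer co-corner of every letter of index $t$; this is a direct case analysis that distinguishes the hop and inverse-hop sub-cases using the definition of a valid switch.

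For part (4), injectivity is immediate. Theorem~\ref{thm:step-by-step-algorithm} establishes that the full map $\esh$ is a bijection on $\LR(\alpha, \ybox, \beta, \gamma)$, and by Definition~\ref{def:index-decomposition} it factors as $\esh = \partesh_{\ell(\beta)} \circ \cdots \circ \partesh_{1'}$, so any prefix of the composition is injective. Surjectivity is the substantive part: given $T' \in Z_t$, I would construct a preimage by running $\partesh_t$ in reverse and then invoking the inductive hypothesis. This requires being able to identify, from $T'$ alone, the phase in which the final switch of $\partesh_t$ occurred (Phase 1, Phase 2(a), or Phase 2(b)), and then undoing that switch. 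I would argue that the phase information is encoded in local features of $T'$ near the $\ybox$: whether $\ybox$ is immediately preceded (in canonical reading order) by a $t'$, whether the $t/t{+}1$ reading word exhibits an $F$- or $E$-critical substring pattern of a prescribed type in the vicinity of $\ybox$, and so on. Reversing the last switch reduces to a smaller instance of the same problem, and a short check shows the inverse and forward procedures compose to the identity.

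The main obstacle is exactly the case analysis just outlined. In general, the branching of the forward algorithm depends on the history of the computation, and the example preceding the theorem shows why pausing mid-index breaks down. The whole purpose of bundling the exceptional switch with $\partesh_t$ rather than $\partesh_{t'}$ is precisely to make the current phase and the correct representative recoverable from the tableau at index boundaries. Making this recovery explicit, and then verifying that it genuinely inverts each step of the switching algorithm on an \emph{arbitrary} element of $Z_t$ (not merely on the image of the forward map), is the technical heart of the proof; once it is done, part (4) follows by composing the per-index inverses along the inductive chain.
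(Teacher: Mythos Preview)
Your plan for part (4) is essentially the paper's: it deduces (4) from an explicit forward/backward recipe on each $Z_{t^-}\to Z_t$ (Theorems~\ref{thm:resume-esh} and~\ref{thm:reverse-resume-esh}), and you correctly flag the recovery of the phase from the paused tableau as the technical core. One shortcut you do not mention: the partial jeu de taquin maps $\partsh_t:Z_t\to Z_{t^-}$ are transparently bijections (inner versus outer slides), so all the $Z_t$ have the same cardinality; combined with your injectivity observation this already yields (4) without constructing the inverse. The paper still needs the explicit inverse later for the K-theory correspondence, so it does not take this shortcut.

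For part (3), however, there is a real gap. Your inductive step rests on two claims: that switches in $\partesh_t$ leave the squares of all non-$t$ letters fixed, and that the termination condition then forces the co-corner property for $t$-letters. The first claim is true (even for the exceptional switch, only the $\ybox$ and one $i'$ trade places), but it does \emph{not} by itself control where $\ybox$ lands relative to the fixed sub-shapes: the co-corner condition asks what lies in the squares \emph{adjacent to $\ybox$'s new position}, and those squares can contain anything consistent with semistandardness of the old tableau there. Your ``direct case analysis using the definition of a valid switch'' is where all the work hides. The termination argument (no further inverse hop possible, hence no $t$ immediately right/below) handles only the inverse-hop cases of Phase~2; it says nothing after a Phase~1 hop, a Phase~2(a)/2(b) hop, or an exceptional switch with zero subsequent inverse hops, where the last move is a hop and no maximality is available. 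The paper's proofs of (3) (inside Theorems~\ref{thm:phase1-ballotness} and~\ref{thm:phase2-ballotness}) do not attempt this case analysis directly; instead they use the coplactic algorithm (Theorem~\ref{thm:crystal-algorithm}) as a bridge. The key point is that at each pause, replacing $\ybox$ by a specific symbol ($i'$, $i$, or in Phase~1 the relabelled $i{-}1$) reproduces exactly an intermediate tableau in the sequence $F_{i-1}\circ\cdots\circ F_s(S)$ or $E_i^{\,k-1}(S^+)$, and such a tableau is genuinely semistandard on the full shape $\gamma^c/\alpha$. That full semistandardness, together with the no-further-switch observation, is what pins down the co-corner position of $\ybox$. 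Citing Theorem~\ref{thm:phase12-ballotness}(2) only gives semistandardness \emph{away from} $\ybox$, which is strictly weaker and insufficient for the adjacent-square argument you need.
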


Informally, statement (3) says that $Z_t$ is the set of all possible 
tableaux we could reach if we pause after performing all steps of index $t$.
Statement (4) says that the algorithm is uniquely reversible, and
hence resumable, after pausing at this point.  
In particular, this means we can regard $\partesh_{i'}$ and $\partesh_i$ as
well-defined bijections from one paused state to the next:
\[
  \partesh_{i'} : Z_{i-1} \to Z_{i'}\,, \qquad
  \partesh_{i} : Z_{i'} \to Z_i.
\]
We will prove (3) for Phase 1 and Phase 2 separately, along 
with (1) and (2) from Theorem~\ref{thm:phase12-ballotness}.
Statement (4) follows from the following theorem,
which tells us explicitly how to compute these
bijections starting from the paused state.

\begin{thm}\label{thm:resume-esh}
For $t=1',1,2',2,\dots$, let $T \in Z_{i-1}$ if $t=i'$, 
and $T \in Z_{i'}$ if $t=i$.
The following recipe computes $\partesh_t(T)$, reproducing the sequence 
of switches of index $t$ in the switching algorithm.
\begin{itemize}
\item 
\textbf{Case 1:}
Set $\first(t,T)$ to be primed.
If it is possible to perform an exceptional switch of index $t$, first perform
this switch; then perform as many inverse hops of index $t$ as possible, 
and stop.  (Note that this case can only occur if $t$ is unprimed.)

\item 
\textbf{Case 2:}
Otherwise, set $\first(t,T)$ to be unprimed.
If it is possible to perform a hop of index $t$, do so and stop.

\item 
\textbf{Case 3:}
Otherwise, set $\first(t,T) = t$. Perform as many inverse hops of index $t$ as 
possible.
\end{itemize}
\end{thm}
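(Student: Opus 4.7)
The plan is to prove Theorem~\ref{thm:resume-esh} by aligning its three cases with the three possible behaviors of $\partesh_t$ identified after Definition~\ref{def:index-decomposition}: either a single hop (matching Case~2 of the recipe), an exceptional switch followed by zero or more inverse hops of index $t$ (Case~1), or a sequence of zero or more inverse hops of index $t$ with no exceptional switch (Case~3). The key input is that the switching algorithm of Theorem~\ref{thm:step-by-step-algorithm}, restricted to switches of index $t$, always produces exactly one of these three patterns, depending on which phase of the algorithm contains those switches.

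I would first trace through the algorithm and identify, for each phase, which switches belong to $\partesh_t$. Phase~1 contributes a single hop to each of $\partesh_{i'}$ and $\partesh_i$, giving two instances of Case~2. Phase~2(a) primes $\first(i)$ and performs inverse hops across $i'$: the non-exceptional ones lie in $\partesh_{i'}$ (Case~3), while an exceptional switch together with any ensuing index-$i$ switches lies in $\partesh_i$ (Case~1); the grouping is justified by the remark following Definition~\ref{def:index-decomposition} that an exceptional switch must be the last switch involving $i'$. Phase~2(b) contributes only inverse hops of index $i$ to $\partesh_i$ (Case~3).

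Next, I would show that the three cases can be detected directly from the paused tableau $T \in Z_{t-1}$ or $Z_{i'}$. By Theorem~\ref{thm:pause-esh}(3), $T$ is ballot and semistandard away from $\ybox$ and has $\ybox$ positioned predictably with respect to entries of value at most $t$. After the representative change specified by the recipe (priming $\first(t,T)$ in Cases~1 and~3, or keeping it unprimed in Case~2), the three operations of interest --- a hop of index $t$, an exceptional switch of index $t$, or an ordinary inverse hop of index $t$ --- are defined by local ballotness conditions, and exactly one of them is available, pinning down the case. Moreover, the representative chosen by the recipe agrees with the one the switching algorithm implicitly uses on entering the corresponding phase, so the recipe begins with the same tableau that the algorithm does at the start of $\partesh_t$.

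The main obstacle lies in showing that the recipe's ``as many as possible'' stopping criterion coincides with the algorithm's phase-transition criterion: that an inverse hop of index $t$ becomes invalid, or an exceptional switch ceases to be available, precisely when the switching algorithm would move to the next value of $i$ or transition between Phases~2(a) and~2(b). This comes down to the same ballotness analysis underlying Theorems~\ref{thm:step-by-step-algorithm} and~\ref{thm:phase12-ballotness}, which is deferred to Section~\ref{sec:proofs}. Granting those results, the three local tests are mutually exclusive and jointly exhaustive, and each case's recipe faithfully reproduces the algorithm's full sequence of index-$t$ switches; Theorem~\ref{thm:resume-esh} then follows.
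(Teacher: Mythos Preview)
Your overall strategy---matching the three recipe cases to the trichotomy of behaviors of $\partesh_t$---is the same as the paper's. However, your phase-by-phase accounting is incomplete, and you are missing the key technical ingredient.

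First, your enumeration omits two sources of Case~2 behavior: the Phase~2(a) hop of index $i$ (after the $i'$ inverse hops), and the Phase~2(b) hop of index $(i+1)'$ (after the $i$ inverse hops). These are not Phase~1 moves, yet they are single hops and must be handled as Case~2. Detecting them from the paused state is precisely where the difficulty lies. The switching algorithm decides whether to perform these hops by inspecting a specific substring pattern ($i(i+1)^*\ybox$ or $\ybox(i')^*(i{+}1')$), whereas the recipe only asks whether a hop of index $t$ is \emph{valid}. That these two criteria coincide is the content of Proposition~\ref{prop:teleport-valid}, which requires its own Stembridge-style argument and is not a consequence of Theorems~\ref{thm:step-by-step-algorithm} or~\ref{thm:phase12-ballotness}.

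Second, and more subtly, you need to handle the situation where the algorithm's next step is a Phase~2 hop that \emph{does not occur} (a null step). Here the recipe must fall through to Case~3 and then do nothing at all---but this is not automatic. You must show that no inverse hop of index $t$ is valid either, which the paper establishes by a direct walk analysis invoking the Upper and Lower Break Point Lemmas (\ref{lem:break-point}, \ref{lem:upper-break-point}). Your claim that ``exactly one of them is available'' is the crux of the whole theorem, and it does not follow just from ballotness and semistandardness of the paused tableau; it requires knowing where the $F_i$-critical substring sits relative to $\ybox$.
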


The properties described in Theorem~\ref{thm:pause-esh} are also properties
of ordinary jeu de taquin.  We can decompose the jeu de taquin
map $\sh : \LRby \to \LRyb$ similarly.  Let $\partsh_i : Z_i \to Z_{i'}$ and
$\partsh_{i'} : Z_{i'} \to Z_{i-1}$ be the partial jeu de taquin bijections,
defined by sliding the box through the $i$-entries and $i'$-entries
respectively.  Then we have the decomposition
\[
  \sh = \partsh_{1'} \circ \partsh_1 \circ \partsh_{2'} \circ \partsh_2 \circ \dots \circ
   \partsh_{\ell(\beta)'} \circ \partsh_{\ell(\beta)}.
\]

Combining this decomposition with the index decomposition of $\esh$, we have
the following diagram in which each map is a bijection: 
\begin{equation}\label{eqn:factorization}
\xymatrix{
Z_{0} \ar@/_10pt/[r]_-{\partesh_{1'}} &
Z_{1'} \ar@/_10pt/[r]_-{\partesh_1} \ar@/_10pt/[l]_-{\partsh_{1'}}&
Z_{1} \ar@/_10pt/[r]_-{\partesh_{2'}} \ar@/_10pt/[l]_-{\partsh_{1}} &
Z_{2'} \ar@/_10pt/[r]_-{\partesh_{2}} \ar@/_10pt/[l]_-{\partsh_{2'}} &
\cdots
\ar@/_10pt/[r]_-{\partesh_{\ell(\beta)}} \ar@/_10pt/[l]_-{\partsh_{2}} &
Z_{\ell(\beta)} \ar@/_10pt/[l]_-{\partsh_{\ell(\beta)}}.
} 
\end{equation}
This diagram will play an important role in analyzing
the orbit structure of $\omega$, in Section~\ref{sec:K-theory}.

\subsection{Reversing the algorithm}
\label{sec:reverse-algorithm}

Since there are a few of subtle details in reversing the switching algorithm, we provide a complete statement (without proof) of the algorithm for $\esh^{-1}$ in terms of switches.  The main issue here is identifying the choice of representative to use at each reverse step, which does not always match the description in the forward algorithm.

\begin{thm}\label{thm:reverse-algorithm}
  Let $(\ybox,T)\in \LRyb$.  Then $\mathrm{esh}^{-1}(\ybox,T)$ can be computed as follows, starting with $i = \ell(\beta)$, and stopping when we reach $i=0$.

Begin with $T$ in canonical form. If the word formed by replacing $\ybox$ by $\ell(\beta)+1$ is ballot, go to Reverse Phase 1, and otherwise start in Reverse Phase 2(b) below.
  \begin{itemize}
     \item \textbf{Reverse Phase 2:}  If $\ybox$ precedes all $i$ and $i'$ entries in reading order, change $\first(i)$ to $i'$.
     
If $\ybox$ most recently switched with an entry earlier in reading order, or $\ybox$ has not yet moved, enter Reverse Phase 2(b) below; otherwise, skip to Reverse Phase 2(a).
 \begin{itemize}
   \item \textbf{Reverse Phase 2(b):} Perform as many valid hops across $i$ as possible.

If $\ybox$ now precedes all $i$ and $i'$ entries in reading order, go to Reverse Phase 2(a).

Then, if replacing $\ybox$ by $i$ results in a ballot $i-1,i$-subword, decrease $i$ by $1$ and go to Reverse Phase 1.

Otherwise, if the $i-1,i$-reading word has the form $\cdots (i')(i-1')^\ast \ybox \cdots$, inverse hop $\ybox$ across the $i'$.

Decrease $i$ by $1$ and repeat Reverse Phase 2.
 
  \item \textbf{Reverse Phase 2(a):}  Perform as many valid hops across $i'$ as possible.
  
     Then, if replacing $\ybox$ by $i'$ results in a ballot $i-1,i$-subword, decrease $i$ by $1$ and go to Reverse Phase 1.  
     
     Otherwise, if the $i-1,i$-reading word has the form $\cdots\ybox (i)^\ast i-1\cdots$, inverse hop $\ybox$ across the $i-1$. 

Decrease $i$ by $1$ and repeat Reverse Phase 2.
 \end{itemize}
     \item \textbf{Reverse Phase 1:} Change $\first(i)$ to $i'$.

Inverse hop $\ybox$ across an $i$, then inverse hop across an $i'$. Decrease $i$ by $1$ and repeat until $i=0$.
 \end{itemize}
\end{thm}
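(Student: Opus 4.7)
The plan is to derive the reverse algorithm by inverting the switching algorithm of Theorem~\ref{thm:step-by-step-algorithm} one step at a time, using the index decomposition $\esh = \partesh_{\ell(\beta)} \circ \cdots \circ \partesh_{1'}$ from Definition~\ref{def:index-decomposition}. Since each $\partesh_t$ is a bijection by Theorem~\ref{thm:pause-esh}, the inverse factors as $\esh^{-1} = \partesh_{1'}^{-1} \circ \partesh_{1}^{-1} \circ \cdots \circ \partesh_{\ell(\beta)}^{-1}$. It therefore suffices to establish a recipe for each $\partesh_t^{-1}$ analogous to Theorem~\ref{thm:resume-esh}, and then to verify that the ballotness and representative criteria in the statement correctly detect, from the current paused tableau alone, which recipe to apply at each value of $i$.

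First I would invert the three cases of Theorem~\ref{thm:resume-esh} individually. A hop is inverted by an inverse hop of the same index, and vice versa; an exceptional switch is reversed by a non-exceptional valid switch in which $\ybox$ moves forward in reading order past an $i'=\first(i')$. Consequently Case~1 of Theorem~\ref{thm:resume-esh} (exceptional switch followed by inverse hops) is reversed by first undoing the inverse hops by hops, then finally undoing the exceptional switch, which matches the pattern of Reverse Phase~2(b); Case~3 (inverse hops only) is reversed by hops only, matching Reverse Phase~2(a); the apparent swap of sub-phase labels is an artifact of this inversion. The transition from Reverse Phase~2 to Reverse Phase~1 at index $i$ must occur exactly when the forward algorithm transitioned from Phase~1 to Phase~2 at index $i$; via the lattice-walk description of $F'_{i-1}$ in Proposition~\ref{prop:primed-properties}(\ref{prop:explicit-definitions-primed}) and the ballotness criterion of Theorem~\ref{cor:ballot-walk-criterion}, this is precisely the stated test that replacing $\ybox$ by $i$ yields a ballot $(i{-}1,i)$-subword. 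The initial decision---starting in Reverse Phase~1 when $\ybox \mapsto \ell(\beta)+1$ is already ballot---is the special case where Phase~2 was entirely trivial in the forward run, and the decision between Reverse Phase~2(a) and Reverse Phase~2(b) mirrors the corresponding decision in Theorem~\ref{thm:step-by-step-algorithm} and is read off the tableau in the same way.

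The main obstacle is the bookkeeping around representatives, particularly at the boundaries where the forward algorithm silently changes canonical form. A forward exceptional switch moves $\ybox$ backward past an $i'=\first(i')$ and leaves behind a tableau whose canonical form may differ from the representative in which the switch was expressed; the reverse algorithm must recognize this from the canonical form and re-prime the appropriate letter before performing the inverse switch, which is why the statement instructs us to set $\first(i)=i'$ at the top of Reverse Phase~2. The cleanest way to verify that all such representative choices are correct is to check, case by case in each of Cases~1--3 of Theorem~\ref{thm:resume-esh}, that the sequence of intermediate words produced by the reverse algorithm is precisely the reverse of the sequence produced by Theorem~\ref{thm:step-by-step-algorithm} on the corresponding forward input; the theorem then follows by induction on the total number of steps. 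Theorem~\ref{thm:phase12-ballotness}(1) ensures that at every intermediate stage the hops and inverse hops called for are valid switches, so no existence issues arise beyond those already addressed by the forward analysis.
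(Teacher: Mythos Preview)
The paper explicitly omits the proof of this theorem, stating only that it ``provide[s] a complete statement (without proof) of the algorithm for $\esh^{-1}$ in terms of switches.'' Your overall strategy---factor $\esh^{-1}$ through the index decomposition and invert each $\partesh_t$ separately, using Theorem~\ref{thm:resume-esh} as the forward reference---is exactly the natural one, and is implicitly the route the paper intends: this is why the paper states Theorem~\ref{thm:reverse-resume-esh} immediately afterward, giving the inverse of each $\partesh_t$ as Cases~A/B/C.

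That said, a few details in your write-up are garbled and would need to be cleaned up to make this a proof. First, there is no ``swap of sub-phase labels'': Reverse Phase~2(a) undoes the forward Phase~2(a) steps (hops across $i'$ undo inverse hops across $i'$) and Reverse Phase~2(b) undoes Phase~2(b). Your mapping of Case~1/Case~3 of Theorem~\ref{thm:resume-esh} onto 2(b)/2(a) conflates the Case~1--3 trichotomy (which distinguishes hop / exceptional-then-inverse-hops / inverse-hops-only, and applies for every index $t$) with the 2(a)/2(b) dichotomy (which distinguishes primed vs.\ unprimed index). The paper's remark after Theorem~\ref{thm:reverse-resume-esh}---that Cases A/B/C do \emph{not} line up one-to-one with Cases~1/2/3, and that the precedence order differs---is precisely the subtlety you would need to work through. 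Second, your justification of the Phase~1/Phase~2 transition criterion cites $F'_{i-1}$, but the relevant condition is that $E_{i-1}$ is undefined on the tableau with $\ybox$ replaced by $i$; this is exactly the stopping condition in Theorem~\ref{thm:reverse-coplactic} (Reverse Phase~2), and is equivalent to the stated ballotness test by Theorem~\ref{cor:ballot-walk-criterion}. With those corrections, your inductive case-check against the forward sequence of intermediate words is the right shape for a proof.
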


We also state the inverse algorithm for the operations
$\partesh_i^{-1} : Z_i \to Z_{i'}$, and $\partesh_{i'}^{-1} : Z_{i'} \to Z_{i-1}$.

\begin{thm}
\label{thm:reverse-resume-esh}
For $t=1',1,2',2, \dots$, and $T \in Z_t$, the following recipe computes
$\partesh_t^{-1}(T)$, reproducing the inverse of the steps in 
Theorem~\ref{thm:resume-esh}.
\begin{itemize}
\item \textbf{Case A:}
Set $\first(t,T)$ to be primed.  If it possible to perform
an inverse hop of index $t$, then do so, giving priority to the non-exceptional switch if there is a choice; then stop.

\item \textbf{Case B:}
Otherwise, keep $\first(t,T)$ primed.
If it is possible to perform
an inverse exceptional switch of index $t$, then do so and stop.

\item \textbf{Case C:}
Otherwise, set $\first(t,T)$ to be unprimed.  
Perform as many hops of index $t$ as possible, and stop.  (If $t$ is unprimed, the last step here could be an inverse exceptional switch.)
\end{itemize}
\end{thm}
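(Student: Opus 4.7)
The plan is to verify that Theorem~\ref{thm:reverse-resume-esh} provides the correct step-by-step inverse of Theorem~\ref{thm:resume-esh}, by matching each case in the reverse to the inverse of a corresponding scenario in the forward algorithm. Recall that $\partesh_t$ performs exactly one of three patterns: (i) a single hop of index $t$, (ii) an exceptional switch of index $t$ followed by zero or more inverse hops of index $t$, or (iii) a sequence of zero or more inverse hops of index $t$ with no exceptional switch. Correspondingly, $\partesh_t^{-1}$ must perform (I) a single inverse hop, (II) zero or more hops followed by the inverse of the exceptional switch, or (III) a sequence of zero or more hops. The goal is to show that Case A captures scenario (I), Case B captures scenario (II) in the degenerate subcase of zero subsequent inverse hops, and Case C captures both scenario (III) and the non-degenerate part of scenario (II).

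First I would analyze how each forward scenario leaves the final tableau. The key observation is that setting $\first(t,T)$ primed versus unprimed changes which switches of index $t$ are currently possible, so this choice of representative serves as a ``probe'' to distinguish the scenarios. I would show that after scenario (i), priming $\first(t,T)$ makes a non-exceptional inverse hop available (triggering Case A); after scenario (ii) with no subsequent inverse hops, priming $\first(t,T)$ makes only the inverse exceptional switch available (triggering Case B); and otherwise, no inverse hop is available with $\first(t,T)$ primed, so Case C applies. The priority rule in Case A ensures that if both a non-exceptional inverse hop and an inverse exceptional switch happen to be simultaneously available, we follow the path corresponding to scenario (i).

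Next I would confirm that the operations performed by each case indeed reproduce the reverse of the forward steps. For Cases A and B this is immediate, as each performs a single switch that inverts the final forward step. For Case C, I would argue by induction on the number of inverse hops in the forward scenario: each hop in Case C undoes the last remaining inverse hop in the forward algorithm, preserving ballotness and semistandardness (by Theorem~\ref{thm:phase12-ballotness}) and staying within the allowed intermediate-paused states of the diagram~\eqref{eqn:factorization}. When Case C inverts scenario (ii), after all the non-exceptional hops are performed the only remaining valid move of index $t$ is the inverse exceptional switch, which is the final step ``absorbed'' into the greedy process as noted in the parenthetical remark of Case C.

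The main obstacle will be verifying that Case C terminates at the right moment --- in particular, that in scenario (ii) the inverse exceptional switch is neither skipped nor duplicated. This requires showing that after all the non-exceptional hops of index $t$ have been undone, the local configuration around $\ybox$ precisely matches the configuration just before the forward exceptional switch was applied, so that the inverse exceptional switch becomes the unique remaining hop of index $t$. The analysis will mirror the forward proof of Theorem~\ref{thm:resume-esh} and lean heavily on the bijectivity of $\partesh_t$ established in Theorem~\ref{thm:pause-esh}, together with a careful case analysis of how the relative positions of $\ybox$, $\first(t)$, and the $t$- and $t'$-entries change under each switch.
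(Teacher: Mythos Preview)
The paper does not actually prove Theorem~\ref{thm:reverse-resume-esh}: Section~\ref{sec:reverse-algorithm} explicitly states that the reverse algorithm is given ``without proof,'' and the only argument offered is the short remark following the theorem, which records how Cases A, B, C correspond to Cases 2, 1, 3 of Theorem~\ref{thm:resume-esh}. Your case-matching is exactly this correspondence (Case A $\leftrightarrow$ Case 2; Case B $\leftrightarrow$ Case 1 with only the exceptional step; Case C $\leftrightarrow$ Case 3, and also Case 1 with at least one subsequent inverse hop), so at the level of strategy you are aligned with the paper.

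Where you go beyond the paper is in sketching the actual verification: using the primed/unprimed choice of $\first(t,T)$ as a probe to detect which forward scenario produced $T$, and identifying Case C termination as the crux. This is the right instinct, but your outline is still a plan rather than a proof. The substantive gap is the claim that ``after scenario (i), priming $\first(t,T)$ makes a non-exceptional inverse hop available'' and the complementary claims for the other scenarios. These are not automatic: they require an analysis of how the forward hop or inverse-hop sequence positions $\ybox$ relative to the $t$- and $t'$-entries, parallel to (and at least as delicate as) the case analysis in the proof of Theorem~\ref{thm:resume-esh}, which in turn rests on Proposition~\ref{prop:teleport-valid} and the Break Point and Removable Pairs lemmas. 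In particular, to show Case C stops at the right moment you cannot simply appeal to bijectivity from Theorem~\ref{thm:pause-esh}; you need to rule out that the greedy hop process overshoots or undershoots, which means checking directly that after undoing all non-exceptional inverse hops the configuration forces the inverse exceptional switch (when in scenario (ii)) and that no spurious hop is available (when in scenario (iii)). Your plan names these obligations but does not yet discharge them.
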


Note that the three cases here do not quite correspond to the
three cases of Theorem~\ref{thm:resume-esh}, and the order of precedence is different. Case A of Theorem \ref{thm:reverse-resume-esh} (a single inverse hop) inverts Case 2 of Theorem \ref{thm:resume-esh} (a single hop). The sequence of hops of Case C inverts the sequence of inverse hops of Case 3 and also of Case 1, when when the latter produces more than just the exceptional step. Case B also inverts Case 1, when the latter produces only the exceptional step.

\begin{remark} \label{rmk:reverse-esh-lowest-weight}
It is also possible to compute $\esh^{-1}(T, \ybox)$ by replacing $T$ by the unique lowest-weight (anti-ballot) tableau in its dual equivalence class, then performing a `reflected' form of the algorithms given in this paper. Since we work entirely with highest-weight tableaux, we omit the precise statements.
\end{remark}

\subsection{Proof of the switching algorithm and its properties}\label{sec:proofs}

This section is devoted to proving Theorems \ref{thm:step-by-step-algorithm}, \ref{thm:phase12-ballotness}, \ref{thm:pause-esh}, and \ref{thm:resume-esh}.

\subsubsection{Phase 1 proofs}\label{sec:phase1-proof}

We now show that Phase 1 can be described as in Theorem \ref{thm:step-by-step-algorithm}.

\begin{thm}\label{thm:phase1-algorithm}
  Let $(\ybox,T)\in \LRyb$.  Then Phase 1 of the algorithm of Theorem \ref{thm:crystal-algorithm} for computing $\esh(\ybox,T)$ agrees with Phase 1 of the algorithm of Theorem \ref{thm:step-by-step-algorithm}.
\end{thm}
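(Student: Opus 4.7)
The plan is to verify the agreement on a rectified highest-weight representative of each dual equivalence class, and then invoke coplacticity. The coplactic algorithm is coplactic by construction. The switching algorithm's Phase 1 is coplactic as well: one can see this either directly, by checking that its steps commute with elementary jeu de taquin slides, or a posteriori, once the agreement with the coplactic algorithm has been established. Either way, it suffices to check that the algorithms agree on a single representative.

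On a rectified Littlewood--Richardson tableau, the proof of Theorem~\ref{thm:crystal-algorithm} already matches the coplactic algorithm with the explicit two-slide description of Phase 1 given by Lemma~\ref{lem:straight-promotion}: at each stage $i$, if the entry east of $\ybox$ is $i'$, then $\ybox$ slides east past $i'$ and then south past the $i$ directly beneath. The plan is to verify, by induction on $i$, that the switching algorithm also implements this two-slide description. At each stage, this requires three local checks: (i) the stopping criterion matches, i.e.\ there is an $i'$ after $\ybox$ in reading order if and only if an $i'$ sits directly east of $\ybox$ in the rectified shape; (ii) the two prescribed hops are valid switches---the ballotness of the intermediate reading words on the $(i-1,i)$-subword follows from Corollary~\ref{cor:ballotness-preserving} (moving the $i'$ earlier and the $i$ later preserves ballotness), and the non-repetition condition in Definition~\ref{defn:switch} is immediate from the layout of a rectified LR tableau; and (iii) the two hops realize, respectively, the east slide past $i'$ and the south slide past $i$, where the second claim uses that after the first hop, the unique $i$ with which $\ybox$ may switch is the one directly south in the rectified picture.

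The main obstacle is the representative-dependence of switching. To identify each hop with its corresponding jeu de taquin slide, we may need to change representatives between the two switches of a given stage, and then verify that the tableau obtained after both hops is again equivalent to the canonical form, so the induction may proceed. The ``first $i$ or $i'$'' convention underlying canonical form is especially delicate, because the identity of the first $i$ or $i'$ can shift under a hop, and the definition of hop versus inverse hop depends on such choices (cf.\ Remark~\ref{rmk:exceptional}). These subtleties are resolved by a case analysis on the position of $\ybox$ relative to the first $i'$ in the rectified LR tableau, parallel to the case analysis in the proof of Theorem~\ref{thm:crystal-algorithm}. Once the agreement is verified on the rectified representative, coplacticity of both algorithms extends the conclusion to arbitrary $(\ybox, T) \in \LRyb$, completing the proof.
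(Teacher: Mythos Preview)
Your strategy has a circularity problem. You claim that the switching algorithm's Phase~1 is coplactic, offering two justifications: either check directly that the hops commute with jeu de taquin slides, or deduce it \emph{a posteriori} from the agreement with the coplactic algorithm. The second option is circular: you cannot use the agreement to prove coplacticity of the switching algorithm if you need that coplacticity to extend the agreement from rectified tableaux to general ones. The first option is not carried out, and in fact the paper explicitly warns (just after Theorem~\ref{thm:phase12-ballotness}) that ``the individual steps of the switching algorithm are not coplactic operations in any meaningful sense.'' So your reduction to a single rectified representative is not justified, and the proposal as written does not close.

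The paper avoids this trap by a different reduction. Rather than rectifying, it invokes Lemma~\ref{lem:partial-esh} to reduce to the \emph{Pieri case}, where $T$ has only entries $1$ and $1'$. This reduction is legitimate because Lemma~\ref{lem:partial-esh} is a statement about $\esh$ (which is coplactic by construction), not about the switching algorithm. Then, in the Pieri case, the paper directly computes the effect of $E_1' \circ E_1^{m-2}$ on an \emph{arbitrary} skew Pieri word---not a rectified one---by tracking the lattice walk and identifying the final $E$-critical substrings at each stage. It shows that all $2'$s other than a distinguished entry $t$ become $1'$s, all $2$s other than a distinguished entry $u$ become $1$s, and $t$ becomes a $1$; this is precisely the effect of the two prescribed hops. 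No coplacticity of the switching side is ever invoked. (As a minor aside, your appeal to Corollary~\ref{cor:ballotness-preserving} for the second hop is also imprecise: that corollary only covers moving a $1$ to the \emph{end} of the word, not merely later.)
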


\begin{proof}
As noted in Lemma~\ref{lem:partial-esh}, it suffices to analyze Phase 1 in the
``Pieri case'', where all entries of $T$ are $1$ or $1'$.   Assume that
$T$ has this form. Let $m$ denote the number of entries in $T$,
and let $k$ denote the number of these entries equal to $1'$,
with $T$ in canonical form.
Let $T^+$ be the tableau obtained by replacing each 
$1/1'$ in $T$ by a corresponding $2/2'$, and replacing $\ybox$ by a $1'$.

We first show that the two algorithms agree on when to enter Phase 2.  
For this, we need to show that there exists a $1'$ after $\ybox$ in $T$ 
in reading order if and only if $E_1(T^+) \neq E_1'(T^+)$.  We use
\cite[Corollary 5.40]{bib:GLP}, which states that $E_1(T^+)=E_1'(T^+)$ if and
only if the rectification shape of $T^*$ has only one row, and
\cite[Theorem 4.3]{bib:GLP}, which tells us that $T^+$ has only one row 
if and only if all
steps in the walk point up or right.  Since $T^+$ has no $1$s,
its walk cannot have a down step, and the only way to have a left
step is to have a $2'$ after the $1'$, which corresponds to a $1'$ in $T$
after the $\ybox$.  Thus the two conditions for when to enter Phase 2
agree.

Now, assume that we do not immediately switch to Phase 2.  Then
Theorem~\ref{thm:crystal-algorithm} says that 
$\esh(\ybox, T)$ is obtained by computing $E_1' \circ E_1^{m-2}(T^+)$, and
replacing the only $2$ by $\ybox$.  We must show that this has
the same effect as switching $\ybox$ with the next $1'$ in $T$,
and then switching with the preceding $1$ in $T$.  Denote the corresponding
entries of $T^+$ by $t$ and $u$: $t$ is the first $2'$ after the unique $1'$ in
$T^+$ and $u$ is the first $2$ before $t$.  (See Figure \ref{fig:E-walks}.)
  
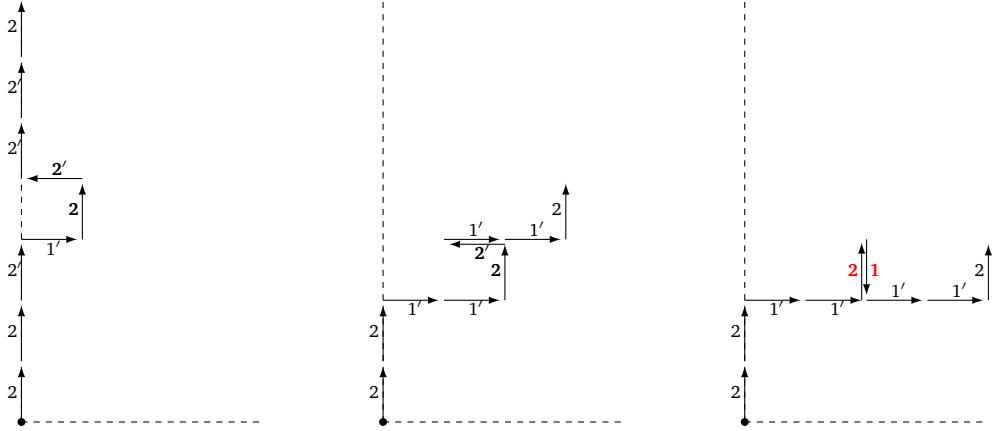
\begin{figure}
\begin{center}
\begin{picture}(2,8.2)(0,0)
\multiput(0,0)(0,0.2){35}{\line(0,1){0.1}}
\multiput(0,0)(0.2,0){20}{\line(1,0){0.1}}
\put(0,0){\circle*{0.13}}
\stepnorth{2}{%
\stepnorth{2}{%
\stepnorth{2'}{%
\stepeast{1'}{%
\stepnorth{\mathbf{2}}{%
\stepwest{\mathbf{2'}}{%
\stepnorth{2'}{%
\stepnorth{2'}{%
\stepnorth{2}{%
}}}}}}}}}
\end{picture} \hspace{3cm}
\begin{picture}(2,7.2)(0,0)
\multiput(0,0)(0,0.2){35}{\line(0,1){0.1}}
\multiput(0,0)(0.2,0){20}{\line(1,0){0.1}}
\put(0,0){\circle*{0.13}}
\stepnorth{2}{%
\stepnorth{2}{%
\stepeast{1'}{%
\stepeast{1'}{%
\stepnorth{\mathbf{2}}{%
\stepwestshiftS{\mathbf{2'}}{%
\stepeastA{1'}{%
\stepeastA{1'}{%
\stepnorth{2}{%
}}}}}}}}}
\end{picture} \hspace{3cm}
\begin{picture}(2,7.2)(0,0)
\multiput(0,0)(0,0.2){35}{\line(0,1){0.1}}
\multiput(0,0)(0.2,0){20}{\line(1,0){0.1}}
\put(0,0){\circle*{0.13}}
\stepnorth{2}{%
\stepnorth{2}{%
\stepeast{1'}{%
\stepeast{1'}{%
\stepnorthshiftW{\mathbf{\color{red} 2}}{%
\stepsouth{\mathbf{\color{red} 1}}{%
\stepeastA{1'}{%
\stepeastA{1'}{%
\stepnorth{2}{%
}}}}}}}}}
\end{picture} 
\end{center}
\caption{\label{fig:E-walks} From left to right, the walks of $T^+$, 
$E_1^{k-1}(T^+)$, and $E'_1 \circ E_1^{k-1}(T^+)$, where $T$ is a tableau with reading word $111'\ybox 11'1'1'1$.
The special entries, 
$t$ and $u$, are red and in boldface.}
\end{figure}

Suppose there are $a$ entries equal to $2'$ after $t=2'$.  Then the last such is the final $E$-critical substring by rule 3E.  So this changes to a $1'$ upon applying $E_1$.  By the same reasoning, applying $E_1$ again $a-1$ more times changes the remaining $2'$ entries after $t$ to $1'$, and the result is $E_1^a(T^+)$.  

Note that the tail of the walk after $t$ remains weakly above $t$ in $E_1^a(T^+)$ (and hence above the $x$-axis), since all $1'$ entries point east and all $2$ entries north.  Since the only type of critical substring starting at a $2$ rather than a $2'$ is type 4E and must start at the $x$-axis, if there are any $2'$ entries remaining on the $y$-axis then the final critical substring is not type 4E, and starts at a $2'$.

There are now $b=k-a-1$ entries equal to $2'$ before $t$.  If $b>0$, then since the first $2/2'$ is unprimed, there are at least two upward arrows before $t$.  So $t$ starts at $y\ge 2$ and therefore cannot be the start of an $E_1$-critical substring (since its only possible type is 2E).  Therefore, the final $E$-critical substring is begins at the highest $2'$ on the $y$-axis; it is either of type 3E, or 1E (ending at $\first(1)$, which in our chosen representative is $1'$). In either case, the transformation rule changes this $2'$ to a $1'$.  Repeating this argument, we find that $E_1^{k-1}(T^+)$ simply changes each $2'$ other than $t$ to $1'$.  (See the second diagram in Figure \ref{fig:E-walks}.)

Since $E_1'$ commutes with $E_1$, we may apply $E_1'$ at this point in the process rather than after all applications of $E_1$.  Note that applying $E_1'$ to $E_1^{k-1}(T^+)$ changes $t$ into a $1$, and there are then no $2'$ entries remaining in $E_1'(E_1^{k-1}(T^+))$.

To apply the remaining $E_1$ operators, we see a similar process on the $2$s.
Since there are no more $2'$s, the final $E_1$-critical substring begins with a $2$ starting on the $x$-axis, and is either of type 4E, or is of type 1E/2E, beginning with $\first(2)$.  In either case, the transformation changes the first $2$ to a $1$, excluding $u$, which can never be the final critical substring since it is followed by $(1')^*1$.  Thus, applying $E_1^{m-k-2}$ now changes all remaining $2$s, except for $u$, to $1$s.  Therefore, the total effect of the operator
$E_1' \circ E_1^{m-2} = E_1^{m-k-1}\circ E_1'\circ E_1^{k-1}$ on $T^+$
is to change every $2'$ to $1'$ except for $t$, which changes to $1$, and to change every $2$ to $1$ except for $u$, as desired.
\end{proof}

We now establish Theorem \ref{thm:phase12-ballotness} on ballotness and 
semistandardness of tableaux arising during Phase 1, as well as property (3) of Theorem \ref{thm:pause-esh} on the relative position of $\ybox$ with respect to the other entries at a step in the algorithm.  The argument relies on the following lemma.

\begin{lemma}\label{lem:oversight}
Let $T$ be a skew shifted semistandard tableaux in entries 
$1,1',2,2'$, and a single $\ybox$, which are ordered as either
$1' < 1 < \ybox < 2' < 2$, or
$1' < 1 < 2' < \ybox < 2$.
Let $\mu$ be the rectification shape $T$ (defined to be 
the rectification shape of its standardization).
If $\mu_1 = \mathrm{wt}(T)_1$, then
the reading word of $T$, omitting the $\ybox$, is ballot.
\end{lemma}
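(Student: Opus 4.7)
The plan is to use the structure of the rectification $\mathrm{rect}(T)$. Specifically, I would show that the $\ybox$ cannot lie in the first row of $\mathrm{rect}(T)$, and then deduce via coplacticity that removing the $\ybox$ does not shorten the first row of the rectification shape, so this shape is forced to the highest-weight form and the word becomes ballot.

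The key structural observation is that in \emph{any} straight shifted SSYT whose alphabet satisfies $1' < 1 < \ybox$, every entry equal to $1$ or $1'$ must lie in the first row. To prove this, suppose a cell $(i,j)$ with $i \geq 2$ (and hence $j \geq i \geq 2$) contains a $1$- or $1'$-entry. Column-weak-monotonicity together with the prohibition on unprimed repeats in columns forces each of $(1,j), (2,j), \ldots, (i-1,j)$ to equal $1'$; in particular $(1,j) = 1'$. Since $1'$ is the minimum of the alphabet, row-weak-monotonicity then forces $(1,1), \ldots, (1,j-1)$ all to equal $1'$ as well, contradicting the rule that primed entries cannot repeat in a row (since $j \geq 2$). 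Applied to $\mathrm{rect}(T)$ of shape $\mu$, this shows that all $n_1 = \mathrm{wt}(T)_1$ of the $1$-entries sit in row $1$, and the hypothesis $\mu_1 = n_1$ forces them to fill row $1$ completely, leaving $\ybox$ (and every $2$-entry) in some row $i \geq 2$.

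To conclude, I would invoke coplacticity: the rectification shape $\nu$ of the reading word of $T \setminus \ybox$ equals the shape obtained from $\mathrm{rect}(T)$ by deleting the $\ybox$-cell and applying inner JDT slides to slide the resulting gap to an outer corner — the standard ``JDT delete'' procedure on the standardization of $T$. Since the gap begins in row $\geq 2$ and inner slides propagate the empty cell only rightward or downward, the outer corner ultimately removed from $\mu$ also lies in row $\geq 2$, so $\nu_1 = \mu_1 = n_1$. For a word in $\{1', 1, 2', 2\}$ of content $(n_1, n_2)$, this forces $\nu = (n_1, n_2)$, the highest-weight strict shape, so the word is ballot.

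The main obstacle is the structural claim in the second paragraph; once it is in hand, everything else follows routinely from JDT theory. A minor technical point is the appeal in the final step to the compatibility of rectification with the deletion of a distinguished entry, which is a standard consequence of the confluence of shifted JDT slides on standardized tableaux.
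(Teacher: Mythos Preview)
Your proof is correct and reaches the same conclusion as the paper, but the final step is organized differently. Both arguments aim to show that the rectification shape $\nu$ of $T \setminus \ybox$ satisfies $\nu_1 = n_1$, from which ballotness follows since $\nu$ has at most two rows and hence must equal $(n_1,n_2)$.

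The paper's route differs in how it bounds $\nu_1$ from above. After reducing (as you could also do) to the case where all entries of $T$ lie in distinct rows and columns, so that deleting $\ybox$ yields an honest skew tableau $T'$, the paper invokes Worley's containment lemma (Lemma~6.3.9 of \cite{Worley}): the rectification shape of a subtableau is contained in that of the whole, hence $\nu \subseteq \mu$ and $\nu_1 \leq \mu_1 = n_1$. The reverse inequality $\nu_1 \geq n_1$ then follows from precisely your structural observation, applied to $\mathrm{rect}(T')$ rather than to $\mathrm{rect}(T)$.

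Your approach instead applies the structural observation to $\mathrm{rect}(T)$ to place $\ybox$ in row $\geq 2$, and then deduces $\nu_1 = \mu_1$ via the compatibility of single-entry deletion with rectification. This ``JDT delete commutes with rectification'' fact is correct and does follow from confluence of shifted jeu de taquin (equivalently, from tableau switching), but it is a slightly less off-the-shelf citation than Worley's containment lemma. The paper's version is marginally shorter; yours makes the row-$1$ structure more explicit and has the pleasant side effect of identifying exactly which outer corner of $\mu$ is removed.
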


\begin{proof}
Since ballotness depends only on the word of $T$, we
may assume that the entries of $T$ are in distinct rows and columns,
so that deleting $\ybox$ from $T$ results in a tableaux $T'$.
Let $\nu$ denote the rectification shape of $T'$.
Since $T'$ is obtained by deleting entries from $T$,
$\nu$ is contained in $\mu$. (See Lemma 6.3.9 of \cite{Worley}.)
But since $\nu_1 \geq \mathrm{wt}(T')_1 = \mathrm{wt}(T)_1 = \mu_1$,
we must have $\nu_1 = \mathrm{wt}(T')_1$.  Since $\nu$ has only two rows,
this means $\nu = \mathrm{wt}(T')$, i.e.
$T'$ is a Littlewood-Richardson tableau.
\end{proof}

\begin{thm}\label{thm:phase1-ballotness}
Properties (1)--(3) stated in Theorems \ref{thm:phase12-ballotness} and \ref{thm:pause-esh} hold after each Phase 1 switch.
\end{thm}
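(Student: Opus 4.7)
The plan is to reduce to the Pieri case via Lemma~\ref{lem:partial-esh}, and then verify properties (1)--(3) for each of the two switches that comprise a Phase~1 step for a fixed index $i$. By Lemma~\ref{lem:partial-esh} together with Theorem~\ref{thm:phase1-algorithm}, it suffices to analyze the $i',i$-ribbon of $T$ in isolation, since entries outside this ribbon are untouched and retain their original ballot/semistandard structure. A Phase~1 step for index $i$ then consists of two switches: first $\ybox$ swaps with the next $i'$ after it in reading order, and then $\ybox$ swaps with the first $i$ preceding its new position.

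Property (3) I would verify directly from the structure of the two switches: after both are complete, $\ybox$ occupies the cell originally holding the preceding $i$, which makes it an outer co-corner for all entries $\leq i$ and an inner co-corner for the rest---precisely the condition defining $Z_i$. After only the first switch, $\ybox$ is in the analogous intermediate configuration defining $Z_{i'}$. For property (2), I would argue that in the rectified setting of Lemma~\ref{lem:straight-promotion}, each of the two switches literally reverses one elementary step of the outer jeu de taquin slide used to produce a highest-weight tableau from a straight shape, so semistandardness (after removing $\ybox$) is preserved at each stage. The general skew case then follows by coplacticity, because jeu de taquin slides on the complement of the $i',i$-ribbon commute with these switches.

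The main obstacle is property (1), ballotness after the first (intermediate) switch, since this intermediate state is not the image of any coplactic operator applied to the original $T$. Ballotness of the $(i,i+1)$-pair of the resulting word follows from Corollary~\ref{cor:ballotness-preserving}: the moved $i'$ behaves as a $1'$ in the $(i,i+1)$-subword and is shifted to an earlier position. The harder case is the $(i-1,i)$-pair, where the same moved letter behaves as a $2'$; here the plan is to apply Lemma~\ref{lem:oversight} to the subtableau formed by the $(i-1)',(i-1),i',i$-entries together with $\ybox$, relabeled as $\{1',1,2',2\}$. The key verifications are (a) after the first switch, $\ybox$ sits in one of the two permitted standardization configurations $1'<1<\ybox<2'<2$ or $1'<1<2'<\ybox<2$, which follows from the local geometry of the switch and the canonical-form convention on $T$; and (b) the rectification-shape hypothesis $\mu_1=\mathrm{wt}(T)_1$ is satisfied, which reduces to the original Littlewood-Richardson property of $T$ restricted to the $(i-1)$-strip. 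Together these yield ballotness of the $(i-1,i)$-subword, completing the verification of (1).
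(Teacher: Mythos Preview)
Your overall plan matches the paper's: both use Corollary~\ref{cor:ballotness-preserving} for the $(i,i{+}1)$-subword and Lemma~\ref{lem:oversight} for the $(i{-}1,i)$-subword. The gap is in how you handle the \emph{intermediate} state $T''$ (after the first switch, before the second).

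Your argument for (2) says the skew case follows from the rectified case ``by coplacticity, because jeu de taquin slides on the complement of the $i',i$-ribbon commute with these switches.'' This is not justified: the individual switches are \emph{not} coplactic (the paper states this explicitly after Theorem~\ref{thm:phase12-ballotness}). The composite of the two switches is coplactic (that is Lemma~\ref{lem:partial-esh}), so your reduction works for the state $T'$ at the start of each Phase~1 step, but it does not reach $T''$. The paper fills this gap by a different mechanism: from the proof of Theorem~\ref{thm:phase1-algorithm} it extracts that the $(i{-}1,i)$-subtableau $R$ of $T''$ is, up to relabelling, exactly $E_2^{k-1}(S^+)$, where $S^+$ is built from the corresponding subtableau $S$ of $T'$. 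Since $E_2$ is a genuine coplactic operator on semistandard tableaux, this immediately gives (2) and (3) for $T''$, and it also shows that $R$ and $S$ have the same rectification shape $\mu$. That last point is precisely the hypothesis $\mu_1=\mathrm{wt}_1$ you need for Lemma~\ref{lem:oversight}; your sketch of (b) (``reduces to the original Littlewood--Richardson property of $T$ restricted to the $(i{-}1)$-strip'') does not supply it for $T''$.

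Your direct claim of (3) for $T''$ is likewise unsupported: there is no a priori reason the cell of the ``next $i'$ after $\ybox$ in reading order'' is an outer co-corner of the $\{1',1,\dots,i'\}$-entries in a general skew shape. In the paper this is a consequence of the semistandardness of $E_2^{k-1}(S^+)$, not a separate observation.
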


\begin{proof}
Let $T'$ and $T''$ denote the tableaux that occur in Phase 1 immediately before $\ybox$ switches with $i'$ and $i$, respectively.

From the coplactic algorithm it is clear the $T'$ satisfies
(2) and (3).
We show that $T'$ satisfies (1).  We need to show that for all $j$,
the subword consisting of $j-1',j-1,j',j$ is ballot.  For $j \neq i$, this 
follows from coplacticity, since this is true in the rectified case.
For $j=i$, consider the subtableau $S$ of $T_{i'}$ defined by entries
$i-1' < i-1 < \ybox < i' < i$, replacing these 
by $1'< 1< \ybox< 2'< 2$.  Let $\mu$ be the rectification shape
of $S$.  In the case where $\ybox \sqcup T$, we find that 
$\mu_1 = \mathrm{wt}_1(S)$, and by coplacticity, this must be true in general.
Therefore, by Lemma~\ref{lem:oversight}, we deduce that the subword 
for $j=i$ is ballot.

We now show that $T''$ satisfies (1). Again, we need to show ballotness 
of the $j-1',j-1,j',j$-subword for all $j$.  For $j<i$ and $j>i+1$ this 
subword is the same as that of $T'$, so this case is done.  Note that
the word of $T_{i}$ is obtained from the word of $T'$, by moving a 
single $i'$-entry earlier.   By Corollary~\ref{cor:ballotness-preserving},
the $j=i+1$ case also follows from the ballotness of $T'$.  
Finally, for $j=i$, consider the subtableau $R$ of $T_i$
defined by entries $i-1' < i-1 < i' < \ybox < i$, replacing these 
by $1'<1< 2'< \ybox<2$.  Let $\nu$ be the rectification shape of $R$.
Note that $R$ is the tableau obtained by applying a Phase 1 step 
to $S$.  Let $S^+$ be the tableau obtained by replacing each
$2'/2$ entry by a corresponding $3'/3$, and setting $\ybox=2$.
The proof of Theorem~\ref{thm:phase1-algorithm} 
shows that if $k$ is the number of $2'$ entries in $S$ (in canonical form), 
then $E_2^{k-1}(S^+)$ is the tableau obtained from $R$ by replacing
each $2$ by a $3$, and setting $\ybox = 3'$.  This immediately 
implies that $T''$ satisfies (2) and (3).  Moreover $E_2^{k-1}(S^+)$
has the same standardization as $R$.  Therefore $\nu$ 
is the rectification shape of $E_2^{k-1}(S^+)$, which 
(since $E_2$ is coplactic, and $S^+$ has the same standardization 
as $S$) is the rectification shape of $S$, i.e $\nu=\mu$.  
Therefore, by Lemma~\ref{lem:oversight}, 
we again conclude that subword for $j=i$ is ballot.
\end{proof}

\subsubsection{Removability Lemmas}

In order to prove Phase 2 of the switching algorithm, we first require several lemmas about removability of entries as defined below. We first require a local notion of ballotness.

\begin{definition}
  A word $w$ is \textbf{$i$-ballot} if $E_i(w)=E'_i(w)=0$, or equivalently, if its $i',i,(i+1)',i+1$-subword is ballot.
\end{definition}

We now define removability of entries.

\begin{definition}
  An entry of a word $w$ is \textbf{removable} if deleting this entry from $w$ results in a ballot word.  We also say an entry is \textbf{$i$-removable} if deleting the entry results in an $i$-ballot word.
\end{definition}

\begin{example}
  If $w=211'11'2'22'11$, then the last two $1$ entries are not removable, but the other $1$ and $1'$ entries are removable. 
\end{example}

The aim of this section is to study \emph{pairs} of removable entries, consecutive in standardization order ($\ltst$).  This allows us to consider certain switches of the algorithm as a pair of removable entries in a tableau, rather than as a moving $\ybox$.  The main result is that consecutive removable pairs always lie in a fixed interval in standardization order.  Furthermore, individual entries with value $i'$ or $i$, and $(i+1)'$ or $i+1$, that are $i$-removable \emph{almost} form an interval, except for two cases in which there is an isolated additional removable entry.

We begin with a technical lemma about ballot walks.  We recall the following proposition from \cite{bib:GLP}.

\begin{proposition}[Bounded error] \label{lem:bounded-error}
Let $w$ be a word in $\{i',i,(i+1)',(i+1)\}$ and consider the walk for $w$ beginning at an arbitrary starting point $(x_0,y_0)$, not necessarily the origin. 

If we shift the start by either $\west{}$ or $\north{}$, then the endpoint also shifts by \emph{either} $\west{}$ or $\north{}$.

Similarly, if we shift the start by $\east{}$ or $\south{}$, the endpoint also shifts by $\emph{either}$ $\east{}$ or $\south{}$.
\end{proposition}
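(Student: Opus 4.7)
The plan is to proceed by induction on the length $n$ of the word $w$. Write $P_k$ and $P_k'$ for the positions of the original and shifted walks after $k$ steps. The base case $k=0$ is immediate from the definition of the shift. For the inductive step, it suffices to show that if $P_{k-1}' - P_{k-1} \in \{\west{}, \north{}\}$, then $P_k' - P_k \in \{\west{}, \north{}\}$; the $\{\east{}, \south{}\}$ statement then follows immediately by reversing the roles of the two walks (a shift $\east{}$ from $P$ to $P'$ is the same as a shift $\west{}$ from $P'$ to $P$).

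For the inductive step I would run a case analysis on $w_k$ and on whether each of $P_{k-1}, P_{k-1}'$ lies on one of the axes or in the open first quadrant. Whenever the two positions are in the same region (both on an axis or both strictly interior), they apply the same step vector from Figure~\ref{fig:directions}, so the shift is preserved automatically. The only interesting cases are the \emph{straddling} ones: (i)~shift $\north{}$ with $P_{k-1}$ on the $x$-axis and $P_{k-1}'$ strictly interior, and (ii)~shift $\west{}$ with $P_{k-1}$ strictly interior and $P_{k-1}'$ on the $y$-axis. Each of these straddling cases splits into four subcases according to the letter $w_k \in \{i', i, (i+1)', (i+1)\}$, and in every one of the resulting eight subcases a direct two-line computation using the step-rule table shows that $P_k' - P_k$ again lies in $\{\west{}, \north{}\}$. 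For instance, in case~(i) with $w_k = i$, $P_{k-1}$ takes an east step (axis rule) while $P_{k-1}'$ takes a south step (off-axis rule), so the new shift is $\west{}$; with $w_k = (i+1)$ both walks take a north step, so the shift remains $\north{}$; and so on.

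The main obstacle is simply bookkeeping: there is no substantive content beyond the step-rule table, and the proof reduces to an eight-entry verification which is most cleanly arranged in a small table. A side benefit of this case analysis is that it also verifies that the shifted walk never exits the first quadrant. Indeed, the shifted walk could only attempt to reach $x=-1$ from a position $P_{k-1}' = (0,y)$ with $P_{k-1} = (1,y)$ after a letter taking $P_{k-1}$ west; but then $P_{k-1}'$ is on the $y$-axis and applies the axis rule, taking a north step, so the shift transitions from $\west{}$ to $\north{}$ exactly one step before trouble could occur, and symmetrically for the other three directions. Thus the invariant $P_k' - P_k \in \{\west{}, \north{}\}$ is well-defined at every step, and the induction goes through.
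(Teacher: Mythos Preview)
Your proof is correct. The paper does not actually prove this proposition; it is recalled verbatim from \cite{bib:GLP} (see the sentence immediately preceding the statement), so there is no argument in this paper to compare against. Your induction-plus-case-analysis is the natural approach and matches what one finds in \cite{bib:GLP}: the step rule for $1'$ and $2$ is the same on and off the axes, so only $1$ and $2'$ in the two straddling configurations need to be checked, and all eight subcases behave as you describe.

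One small comment: your final paragraph about the shifted walk staying in the first quadrant is more elaborate than necessary. It is immediate from the step table alone that any walk starting in the closed first quadrant stays there, since the only west step ($2'$ off-axis) and the only south step ($1$ off-axis) require $x\geq 1$ and $y\geq 1$ respectively. Once you know this, the invariant $P_k'-P_k\in\{\west{},\north{}\}$ automatically forces $P_k$ to have $x\geq 1$ whenever the shift is $\west{}$, so no separate ``one step before trouble'' argument is needed.
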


We will need the following variant of this proposition specifically for ballot words in the letters $i',i,(i+1)', i+1$.

\begin{lemma}[Tail errors]\label{lem:tail-errors}
  Suppose the final $F_i$-critical substring of a ballot word $w$ in $\{i',i,(i+1)',(i+1)\}$ ends at $w_j$.  Consider any tail of the walk starting after $w_j$.  If the start point of this tail is shifted one step left, then the entire tail shifts one step left.  If no $(i+1)'$ in the tail starts on the $x$-axis and the start point of the tail is shifted one step up, then the entire tail shifts one step up.
\end{lemma}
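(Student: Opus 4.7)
The plan is to proceed by induction on the position in the tail: at each step we verify that the original walk and the shifted walk assign the same direction to the current letter, so that the positional shift propagates forward. Since $i'$ and $i+1$ always yield $\east{i'}$ and $\north{i+1}$ steps regardless of location, the only possible source of disagreement is a letter $i$ or $(i+1)'$, whose assigned direction depends on whether its starting position lies on an axis.

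For claim (1) (shift left), suppose at some step in the tail the original walk sits at $(x,y)$ and the shifted walk sits at $(x-1,y)$. On-axis status disagrees only when $(x,y) = (1,y)$ with $y \geq 1$: the original is off axis while the shifted is on the $y$-axis. A letter $i$ at such a position would produce a $\south{i}$ step, and a letter $(i+1)'$ would produce a $\west{(i+1)'}$ step; by the table in Figure~\ref{fig:criticals}, each of these is a type 5F $F_i$-critical substring, whose ending index strictly exceeds $j$, contradicting the maximality of $j$. We also need the original walk to remain at $x \geq 1$ so the shifted walk stays in the first quadrant; entering $x=0$ would require a $\west{(i+1)'}$ step from $(1,y)$ with $y \geq 1$, again a forbidden type 5F.

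For claim (2) (shift up), disagreement between the two walks arises only when the original is at $(x,0)$ with $x \geq 1$ while the shifted is at $(x,1)$. A letter $i$ here is an $\east{i}$ step at $y=0$, which is a type 3F critical substring ending after $w_j$, contradiction; a letter $(i+1)'$ here is precisely an $(i+1)'$ starting on the $x$-axis, ruled out by the hypothesis of (2). At the remaining candidate positions (origin, or $x=0$ with $y \geq 1$), both walks remain on an axis and hence agree on direction.

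In both cases the inductive step carries through, so the entire tail shifts exactly as claimed. The main subtlety I expect is the careful case analysis comparing on-axis status between the original and shifted positions; once every exceptional configuration is matched to a type 3F or 5F critical substring (or excluded by the hypothesis of (2)), the finality of $w_j$ delivers the contradiction, and no further machinery beyond the definition of the walk and the tables in Figure~\ref{fig:criticals} is required.
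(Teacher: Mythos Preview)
Your proof is correct and follows essentially the same approach as the paper: both arguments use the finality of the $F_i$-critical substring at $w_j$ to rule out the problematic configurations (a letter $i$ or $(i+1)'$ at $x=1$, $y\geq 1$ via type 5F for the left shift, and a letter $i$ at $y=0$, $x\geq 1$ via type 3F for the up shift), from which the rigid propagation of the shift follows. The paper additionally records that no $i$ in the tail starts on the $y$-axis (using ballotness to force a later 2F or 5F), but as your argument shows, this fact is not actually needed for the lemma as stated.
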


\begin{proof}
  For simplicity, assume $i=1$.  In the walk, note that any $1$ that occurs after $w_j$ cannot start on the $x$-axis, for otherwise it would form a later type 3F critical substring.  Such a $1$ also cannot start on the $y$-axis, since the walk eventually reaches the $x$-axis again and therefore either a 2F or 5F critical substring would occur.   Furthermore, a $1$ or $2'$ after $w_j$ cannot start on the line $x=1$ since this would be a later 5F critical substring.  
  
  If we shift the start point of the tail one step left, then every $1'$ or $2$ does not change direction upon shifting their start point left, and by the above analysis any $2'$ or $1$ also does not change direction.  It follows that the entire tail shifts one step left.
  
  Now, suppose we shift the start point of the tail one step up and no $2'$ in this tail starts on the $x$-axis.   Then by the above reasoning the entire tail shifts one step up as well.
\end{proof}

We now establish the fact that $i$-removable entries $i$ or $i'$ in a ballot word, with one exception, all lie on one side of a ``break point'' in standardization order.  Recall that we use $\ltst$ to denote standardization order; we use this notation heavily in the next several lemmas.  

\begin{lemma}[Lower Break Point]\label{lem:break-point}
  Let $w$ be $i$-ballot.   If $F_i(w)$ is defined, let $w_j$ be the first (resp.\ last) letter of the final $F_i$-critical substring if the substring is type 1F or 3F (resp.\ 2F or 4F).  If the substring is type 2F, also let $w_a=i$ be the first letter of the critical substring.  
  
  Then an $i$ or $i'$ entry $w_k$ is $i$-removable in $w$ if and only if either:
  \begin{itemize}
   \item $w_k\leqst w_j$, or 
   \item $w_k=w_a$ in the 2F case and its arrow in the $i,i+1$-walk starts on the $y$-axis.  
   \end{itemize}
   If $F_i(w)$ is not defined, then no $i$ or $i'$ entry is $i$-removable.  
   
\end{lemma}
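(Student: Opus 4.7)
The plan is to translate $1$-removability into a condition on the lattice walks of Theorem \ref{cor:ballot-walk-criterion}. Without loss of generality we take $i=1$ and restrict to the $1', 1, 2', 2$-subword of $w$; then $w_k$ of value $1$ or $1'$ is $1$-removable iff the walk of $v := w \setminus w_k$ still ends on the $x$-axis. Removing $w_k$ leaves the walk unchanged on its first $k-1$ steps and then runs the tail $w_{k+1} \cdots w_n$ from $(x_{k-1}, y_{k-1})$ instead of $(x_k, y_k)$: this is a $\west{}$-shift of the tail's starting point when $w_k = 1'$ or $w_k = 1$ lies on an axis, and a $\north{}$-shift when $w_k = 1$ is off-axes. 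By Proposition \ref{lem:bounded-error} the endpoint then shifts by either $\west{}$ or $\north{}$, so $w_k$ is removable iff this shift is $\west{}$ (keeping $y_n = 0$).

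The argument splits on whether $k > j$ or $k \leq j$. For $k > j$ (Case A), Lemma \ref{lem:tail-errors} applies to the tail past $w_k$, provided one verifies that no $2'$ starts on the $x$-axis in the tail. To rule out such a $2'$, I trace back from it through a (possibly empty) run of $1'$-letters to a preceding $1$: the resulting substring $1(1')^* 2'$ has its opening $1$ at height $0$ or $1$, hence meets a 1F condition, giving an $F$-critical substring ending strictly after $w_j$ and contradicting finality. Tail-errors then propagates $\west{}$-shifts to $\west{}$ and $\north{}$-shifts to $\north{}$. Combined with the observation (from the proof of Lemma \ref{lem:tail-errors}) that no $1$ lies on an axis past $w_j$, the only removable entries with $k > j$ are the $1'$s, and the reverse-reading convention for primed letters gives $w_k \leqst w_j$ in all four types.

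For $k \leq j$ (Case B), tail-errors does not directly apply on the segment $[k, j]$, and the $\west{}$-vs-$\north{}$ propagation must be tracked by direct walk inspection. In types 3F and 4F the substring is a single letter and the only candidate is $w_k = w_j$ itself, whose removability follows by reduction to Case A. In type 1F with substring $1(1')^* 2'$, each $1$- or $1'$-letter in the opening run is removable: the $\west{}$-shift passes through the remaining $\east{}/\north{}$-steps of the substring harmlessly and then propagates past $w_j$ by Case A. Type 2F, with substring $1(2)^* 1'$, is the most delicate: $w_a = 1$ sits either on the $y$-axis (location $x=0$, $\east{}$-step) or at $(1, y \geq 1)$ ($\south{}$-step). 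In the former, removing $w_a$ merely translates the substring $\west{}$, since $(2)^*$-steps are $\north{}$ independently of axis and $1'$ still goes $\east{}$; the resulting $\west{}$-shift past $w_j$ propagates by Case A, giving the 2F exception. In the latter, the removal is a $\north{}$-shift which propagates analogously to a $\north{}$-endpoint shift, so $w_a$ is not removable.

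Finally, when $F_1(w)$ is undefined, no $1/1'$ is removable. If no $F$-critical substring exists at all, the axis constraints (no $1$ at $y=0$, no $1'$ at $x=0$, no 5F) are severe enough to force $w$ to have no $1$ or $1'$ (any attempt to place one leads to an $F$-critical substring by a trace similar to the proof of Lemma \ref{lem:tail-errors}), so the statement is vacuous. If the final critical substring is 5F, the obstructing $1$ or $2'$ at $(1, y \geq 1)$ persists under any single-letter $1/1'$ deletion; a case-by-case walk analysis, splitting on the position of the deleted letter relative to the 5F entry and tracking the induced axis reclassification, forces the endpoint shift to be $\north{}$. The main obstacle is the Case B analysis for type 2F: the shift must be propagated through the entire critical substring while tracking axis/off-axis classifications, which is sharper than Proposition \ref{lem:bounded-error} alone allows, so the cleanest approach is to write out the walk explicitly within each 2F sub-case.
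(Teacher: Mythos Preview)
Your framework is the same as the paper's: reduce to $i=1$, translate removability into a $\west{}$-versus-$\north{}$ shift of the walk's endpoint, and control the shift using Proposition~\ref{lem:bounded-error} and Lemma~\ref{lem:tail-errors}. The execution, however, has a genuine gap.

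Your Case B ($k \le j$) only treats entries \emph{inside} the critical substring; you never handle entries $w_k$ at positions strictly \emph{before} the substring. For types 3F and 4F your sentence ``the only candidate is $w_k = w_j$ itself'' is simply false: there can be many $1$'s and $1'$'s at positions $k < j$, and you must show that each such $1$ is removable (types 1F/3F) or not removable (types 2F/4F), and likewise for $1'$'s. This is exactly where the paper does its real work. The paper's argument for these positions is: removing $w_k$ shifts the start point of the final critical substring by $\west{}$ or $\north{}$ (Proposition~\ref{lem:bounded-error}); one then checks directly, using the explicit location constraints for each type, that the substring ``funnels'' either input shift into a $\west{}$-shift at its end (types 1F/3F) or a $\north{}$-shift at its end (types 2F/4F); finally Lemma~\ref{lem:tail-errors} propagates that shift to the endpoint. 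Without this funneling step you cannot conclude anything about entries before the substring, and the lemma does not follow.

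A smaller issue: in Case~A for type~1F, the $1'$'s at positions $j < k < b$ lie \emph{inside} the critical substring, so the tail past $w_k$ contains $w_b = 2'$, which does start on the $x$-axis; your trace-back argument then recovers the final critical substring itself, ending at $w_b$, not strictly after it, so no contradiction arises. Fortunately the $\west{}$-shift half of Lemma~\ref{lem:tail-errors} needs no hypothesis on $2'$'s, and a one-line check shows a $\west{}$-shift passes through the remaining $(1')^*2'$ portion of the substring unchanged; but you should say this rather than invoke the lemma for a tail to which it does not literally apply. Your treatment of the 5F case is only a sketch, though the paper itself does not spell this case out either.
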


\begin{proof}
  For simplicity we assume that $i=1$ and that $w$ only contains the letters $1', 1, 2', 2$, and we refer to removability rather than $1$-removability.
  
  \textbf{Case 1:} Suppose the final $F_1$-critical substring is type 1F or 3F, and let $w_j=1$ be its first letter.  Let $w_k$ be an arbitrary $1$ or $1'$ entry.
  
  First suppose $w_j\ltst w_k$.  Then $w_k=1$ and $j<k$. Note that a $2'$ occuring after $w_k$ cannot start on the $x$-axis, since the start point of $w_k$ is above the $x$-axis and so there is some $1$ (possibly $w_k$) that points down to the $x$-axis just before this $2'$, creating either a 1F or 3F critical substring later in the word.  By Lemma \ref{lem:tail-errors}, upon removing $w_k=1$, the  tail of the walk starting at $w_{k+1}$ shifts up one step.  Therefore it no longer ends on the $x$-axis, and $w_k$ is not removable.
  
  Now suppose $w_k\leqst w_j$.  If $w_k=1$ then $k\le j$.  Thus removing $w_k$ moves the start point of the 1F or 3F critical substring at $w_j$ either one step up or one step left by Lemma \ref{lem:bounded-error}.  A simple analysis of the possible locations for a 1F critical substring shows that this moves the endpoint after this critical substring one step left.  It follows from Lemma \ref{lem:tail-errors} that the entire tail moves one step left and the walk still ends on the $x$-axis.  So $w_k$ is removable in this case.   If instead $w_k=1'$, removing it moves the tail of the walk after it one step left.  Therefore, again by Lemma \ref{lem:tail-errors}, $w_k$ is removable in this case as well.
  
 \textbf{Case 2:} Suppose the final $F$-critical substring is type 2F or 4F, and let $w_j=1'$ be its last letter.  In the 2F case also let $w_a=1$ be the first entry.  Let $w_k$ be an arbitrary $1$ or $1'$ entry.
  
  First suppose $w_k\leqst w_j$.  Then $w_k=1'$ and $k\ge j$.   Removing $w_k$ shifts the tail of the walk after $w_k$ to the left one step, and again by Lemma \ref{lem:tail-errors} we find that $w_k$ is removable in this case.
  
  Next suppose the critical substring is type 2F with $w_a$ starting on the $y$-axis, and we remove $w_a$.  Then $w_a$ points right, so this also shifts the endpoint of the critical substring one step left, and we are done as above.
  
  Now, suppose $w_j\ltst w_k$.  If $k<j$ and $w_k\neq w_a$, then removing $w_k$ moves the start point of the final critical substring either up or left one step.  By analyzing the possible locations for the 4F and 2F critical substrings, it follows that the endpoint of this $F_1$-critical substring moves one step up.  Furthermore, if $w_k=w_a$ and $w_a$ starts off the $y$-axis then the endpoint shifts up as well, upon removing $w_k$.
  
  If the type 4F or 2F critical substring ends above the $x$-axis, then there cannot be a $2'$ after it that starts on the $x$-axis by the same argument as above.   In the unique case when it does end on the $x$-axis, namely a 2F critical substring $11'$ starting at $(1,1)$, then the only way a $2'$ can start on the $x$-axis afterwards is if $2'$ appears after some number of $1'$s following the 2F critical substring.  But then this creates a longer type 1F critical substring, a contradiction.   The tail therefore moves up by Lemma \ref{lem:tail-errors} and therefore the word is no longer ballot. 
  
  Finally, if $k\ge j$, then since $w_j\ltst w_k$ we have $w_k=1$, and removing $w_k$ moves the endpoint up by the same argument as in the 1F/3F case.  This completes the proof.
\end{proof}

We can now define the \emph{lower $i$-break point} of a ballot word for which $F_i$ is defined.

\begin{definition}\label{def:lower-break-point}
Let $w$ be an $i$-ballot word for which $F_i(w)$ is defined, and define $w_a$ and $w_j$ as in Lemma \ref{lem:break-point}.  The \textbf{lower $i$-break point} of $w$ is defined to be $w_a$ if the $F_i$-critical substring is type 2F and $w_a$ is adjacent to $w_j$ in standardization order, and it is defined to be $w_j$ otherwise.
\end{definition}

Note that the lower break point is $w_a$ only when $w_a$ is the first $i$ or $i'$ in reading order, and is therefore removable since it starts on the $y$-axis in the $i,i+1$-walk.  

We now similarly establish an upper break point for removability of $i+1$ and $(i+1)'$ entries in an $i',i,(i+1)',i+1$-subword that is ``nearly ballot'', namely the application of $F_i$ to a ballot word.

\begin{lemma}[Upper Break Point]\label{lem:upper-break-point}
  Let $w$ be an $i$-ballot word for which $F_i(w)$ is defined.    Let $w_j$ be the last (resp.\ first) letter of the final $F_i$-critical substring if the substring is type 1F or 3F (resp.\ 2F or 4F).  If the substring is type 1F, also let $w_a=i$ be the first letter of the critical substring.  Let $v=F_i(w)$ and let $v_j$ and $v_a$ be the transformed letters in $v$.
  
  Then an $i+1$ or $(i+1)'$ entry $v_k$ is $i$-removable in $v$ if and only if either:
  \begin{itemize}
  \item $v_j\leqst v_k$, or
  \item $v_k=v_a$ in the 1F case and its arrow in the $i,i+1$-walk starts on the $x$-axis.
  \end{itemize}
\end{lemma}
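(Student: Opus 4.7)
The plan is to closely mirror the proof of Lemma \ref{lem:break-point} (Lower Break Point), reducing without loss of generality to $i = 1$ and proceeding by a case analysis on the four types (1F, 2F, 3F, 4F) of the final $F_1$-critical substring of $w$. Write $v = F_1(w)$, and in each case identify the transformed entries: $v_j = 2$ in the 1F and 3F cases, $v_j = 2'$ in the 2F and 4F cases, and in the 1F case additionally $v_a = 2'$ (the transformed $w_a$). An orienting observation is that the walk of $v$ no longer ends on the $x$-axis (since $F_1$ has shifted the weight by $(-1,+1)$), so the aim is to delete an $(i{+}1)/(i{+}1)'$-entry whose removal compensates for this displacement and restores a walk ending on the $x$-axis.

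For the direction showing removability, I assume $v_j \leqst v_k$, or the exceptional case $v_k = v_a$ with $v_a$ starting on the $x$-axis (1F only). In each case I apply the bounded-error Lemma \ref{lem:bounded-error} to conclude that removing $v_k$ appropriately shifts the starting location of the tail of the walk past $v_k$, and then invoke the tail-error Lemma \ref{lem:tail-errors} to propagate this shift through the remainder of the walk back to the endpoint. As in the Lower Break Point proof, there is a small hypothesis to check before applying Lemma \ref{lem:tail-errors} (no $(i{+}1)'$ in the tail inadvertently starts on the $x$-axis), which follows by local inspection of the walk near the critical substring.

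For the direction showing non-removability, I assume $v_k \ltst v_j$ and that $v_k$ is not the exceptional $v_a$ described above. Removing such a $v_k$ shifts a portion of the walk strictly before the critical substring; Lemma \ref{lem:tail-errors}, together with a direct check of how the shift traverses the $F_1$-critical substring of $w$, will show that this shift propagates past the critical substring and leaves the endpoint off the $x$-axis. In a few boundary subcases one instead witnesses non-ballotness by producing a new $E_i$-critical substring in the modified tail.

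As in Lemma \ref{lem:break-point}, the main obstacle is the exceptional 1F subcase concerning $v_a$: I must show that $v_a$ is removable exactly when its walk arrow begins on the $x$-axis, and not otherwise. This mirrors the 2F exceptional $w_a$ of the Lower Break Point and is expected to be the most delicate piece of walk analysis, since it concerns a single removable entry strictly earlier in standardization order than $v_j$, and requires separately analyzing the behaviour of the suffix walk in each of the two subsubcases (start point on versus off the $x$-axis).
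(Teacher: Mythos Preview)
Your plan matches the paper's approach: the authors omit the proof, noting only that it is ``similar in nature to that of Lemma~\ref{lem:break-point},'' and your outline---reduce to $i=1$, do a case analysis on the type of the final $F_1$-critical substring, and propagate shifts through the tail via bounded-error arguments, with the 1F case requiring special handling of $v_a$---is exactly this.

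One technical caution: Lemma~\ref{lem:tail-errors} as stated governs \emph{left} and \emph{up} shifts of the tail of a \emph{ballot} word $w$ past its final $F$-critical substring. In the Upper Break Point setting you are working with $v=F_1(w)$, whose walk ends at $y=1$, and removing a $2$ or $2'$ from $v$ induces a \emph{down} or \emph{right} shift. The tail letters of $v$ after $v_j$ coincide with those of $w$, but the shift directions you need are dual to those covered by Lemma~\ref{lem:tail-errors}. The clean way to handle this (and what the paper's argument implicitly does) is to argue via the final $E$-critical substring of $v$: since $E_1(v)=w$ is defined, a later $1$ or $2'$ starting at $y=1$ in the tail of $v$ would be a later 5E critical substring, and a $2'$ or $1$ on the $y$-axis would yield a later 3E or 1E critical substring, which is impossible. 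This gives the needed control over down/right propagation directly. So when you carry out the details, either prove the $E$-dual of Lemma~\ref{lem:tail-errors} or argue ad hoc with $E$-critical substrings in $v$ rather than invoking Lemma~\ref{lem:tail-errors} verbatim.
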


The proof is similar in nature to that of Lemma \ref{lem:break-point} (Lower Break Point) and we omit it.

\begin{definition}\label{def:upper-break-point}
  Let $w$ be an $i$-ballot word for which $v=F_i(w)$ is defined, and define $v_j$ and $v_a$ as in Lemma \ref{lem:upper-break-point}.  Then the \textbf{upper $i$-break point} of $v$ is defined to be $v_a$ if the final $F_i$-critical substring is type 1F and $v_a$ is adjacent to $v_j$ in standardization order, and it is defined to be $v_j$ otherwise. 
\end{definition}

Note that the upper break point is $v_a$ only when $v_a$ is the first $i+1$ or $(i+1)'$ in reading order, and is therefore removable since it starts on the $x$-axis in the $i,i+1$-walk.

We now establish the main lemmas on removable pairs.

\begin{lemma}[Removable Pairs]\label{lem:removable-pairs}
  Let $w$ be a word that is both $i$-ballot and $i+1$-ballot.  Suppose $v=F_i(w)$ is defined and is $j$-ballot for all $j\neq i$.  Let $v_u$ be the upper $i$-break point of $v$, and let $w_{\ell}$ be the lower $i+1$-break point of $w$.  Then $v_u\leqst v_{\ell}$.  Moreover, a pair of entries $v_j,v_{k}$ with values $i+1$ or $(i+1)'$ that are consecutive in standardization order are both removable in $v$ iff $$v_u\leqst v_j\leqst v_{k}\leqst v_{\ell}.$$ 
\end{lemma}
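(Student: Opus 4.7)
The plan is to characterize removability of an entry $x$ in $v$ with value $i+1$ or $(i+1)'$ via two independent conditions: deletion must preserve both the $i$-ballotness and the $(i+1)$-ballotness of $v$, since $x$ appears only in the $i,i+1$-subword and the $(i+1),(i+2)$-subword and not in any other $j,j+1$-subword. For the first condition I would apply Lemma \ref{lem:upper-break-point} to $w$ at index $i$, obtaining that $x$ is $i$-removable from $v$ iff $v_u \leqst x$, with a possible isolated exception at $v_a$ in the type 1F case (and only when this $v_a$ has not already been absorbed into $v_u$ by the definition of the upper break point). For the second condition I would apply Lemma \ref{lem:break-point} directly to $v$ at index $i+1$, which is valid because $v$ is $(i+1)$-ballot by hypothesis, obtaining a lower $(i+1)$-break point $\tilde v_\ell$ of $v$, again with a possible isolated exception of its own in the type 2F case.

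The key technical step is to show that $\tilde v_\ell$ coincides with $w_\ell$, which justifies the notation $v_\ell$ in the statement. This requires case analysis on the four types of $F_i$-critical substring: in each case, $F_i$ either alters one $(i+1)/(i+1)'$-valued letter in place or inserts a new such letter at a position determined by the leading letter of the $F_i$-critical substring (e.g.\ the leading $i$ becomes an $(i+1)'$ in types 1F, 2F, and the new letter is an $(i+1)$ or $(i+1)'$ depending on type). I would verify that this modification neither shifts the final $F_{i+1}$-critical substring in $v$ nor introduces a later one, using the lattice-walk bounds from Lemma \ref{lem:bounded-error} and Lemma \ref{lem:tail-errors} together with the simultaneous $i$- and $(i+1)$-ballotness of $w$ to rule out the problematic configurations. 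Once $\tilde v_\ell = w_\ell$, the two characterizations combine to say that $x$ is removable in $v$ iff $v_u \leqst x \leqst v_\ell$, modulo the two isolated exceptional entries.

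For the ``consecutive pair'' statement, the exceptional cases in Lemmas \ref{lem:break-point} and \ref{lem:upper-break-point} each consist of a single removable entry that is isolated from the removable block in standardization order (whenever the exception has not already coincided with the break point). Hence a pair $v_j, v_k$ that is consecutive in standardization order cannot straddle an isolated-exception gap, and consequently both are removable in $v$ iff both lie in $[v_u, v_\ell]$. The inequality $v_u \leqst v_\ell$ would then follow either by exhibiting an explicit entry in this interval that is simultaneously $i$- and $(i+1)$-removable (forcing the interval to be nonempty), or as a direct corollary of the case analysis above showing that the $F_i$-critical substring of $w$ lies weakly before the $F_{i+1}$-critical substring of $w$ in standardization order.

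The main obstacle will be the case analysis establishing $\tilde v_\ell = w_\ell$, particularly for types 1F and 2F where $F_i$ inserts a new $(i+1)'$ near the critical substring; here one must verify carefully that the inserted letter neither creates nor shifts the final $F_{i+1}$-critical substring of $v$, drawing on both of the ballotness hypotheses on $w$ and on the tail-error bounds to exclude the bad configurations.
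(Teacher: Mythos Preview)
Your overall architecture is the same as the paper's: an $(i{+}1)$- or $(i{+}1)'$-entry of $v$ is removable iff it is both $i$-removable and $(i{+}1)$-removable, and these two conditions are governed respectively by the Upper Break Point lemma (applied to $v=F_i(w)$) and the Lower Break Point lemma (applied at index $i{+}1$); then the isolated exceptions cannot participate in a consecutive pair. That part is fine.

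Where you diverge is in what you identify as the ``key technical step.'' You propose to prove that the lower $(i{+}1)$-break point $\tilde v_\ell$ of $v$ equals the lower $(i{+}1)$-break point $w_\ell$ of $w$, and you flag this as the main obstacle. The paper never proves this, and does not need to: despite the statement introducing ``$w_\ell$'', the proof works entirely with the lower $(i{+}1)$-break point \emph{of $v$} (hence the notation $v_\ell$ in the conclusion). The discrepancy in the statement is a notational slip, not a mathematical assertion you are being asked to verify. So the case analysis you anticipate (tracking how the inserted $(i{+}1)'$ or $i{+}1$ might shift the final $F_{i{+}1}$-critical substring) is work you can avoid.

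What the paper actually does, and what your outline underweights, is a direct case analysis over the four types of final $F_i$-critical substring in $w$ to show that $v_u$ itself is removable in $v$ --- checking separately that removing it leaves both the $i,i{+}1$-subword and the $i{+}1,i{+}2$-subword ballot, using Corollary~\ref{cor:ballotness-preserving} and the tail-error lemmas. Crucially, once $v_u$ is known to be $(i{+}1)$-removable, the Lower Break Point lemma gives $v_u \leqst v_\ell$ \emph{only after} ruling out the possibility that $v_u$ is the isolated 2F-exceptional entry for $F_{i{+}1}$ (i.e.\ the first letter of a type-2F $F_{i{+}1}$-critical substring starting on the $y$-axis). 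The paper spends two paragraphs excluding this case, and that exclusion is what pins down $v_u \leqst v_\ell$; your sketch (``exhibiting an explicit entry in the interval'' or ``ordering of critical substrings'') does not engage with this point. That exclusion, rather than the $\tilde v_\ell = w_\ell$ question, is where the real difficulty lies.
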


\begin{remark}
 If $w$ is ballot, then $w$ and $v=F_i(w)$ satisfy the conditions of the Removable Pairs lemma. To see this, rectify $w$ and $v$ (note that the conditions are coplactic).
\end{remark}

\begin{proof}

 Since $v$ is $i-1$-ballot, an entry $i+1$ is removable in $v$ if and only if it is both $i$-removable and $i+1$-removable.  Thus, for simplicity of notation we may assume that $i=1$, and we consider $1$-removability and $2$-removability of entries.  As a shorthand, we will also simply write `$i,i+1$-subword' to denote the subword consisting of $i',i,(i+1)',(i+1)$ entries, for $i=1,2$.

  We next show that the upper $1$-break point $v_u$ is removable in $v$.  If the final $F_1$-critical substring in $w$ is type 3F or 4F, then $w_u$ is the critical substring, and so applying $F_1$ and removing $v_u$ has no effect on the $2,3$-subword.  In the $1,2$-subword, the effect is to remove a $1$ on the $x$-axis (type 3F) or a $1'$ on the $y$-axis (type 4F), which shifts the tail of the $1,2$-walk left one step, and so it remains ballot by Lemma \ref{lem:tail-errors}.  
  
  Now suppose the final $F_1$-critical substring is type 1F, of the form $1(1')^\ast 2'$, which changes to $2'(1')^\ast 2$.  Then applying $F_1$ and removing $v_u=2$ has the same net effect as removing the first $1$ in the final $F_1$-critical substring and moving the next $2'$ earlier in the word past the sequence of $1'$ letters.  The $2,3$-subword is therefore ballot by Corollary~\ref{cor:ballotness-preserving}.
  
  For the $1,2$-subword in the 1F case, note that if we remove the $1$ at the start of the $F_1$-critical substring, then the $1'$ entries after it do not start on the $y$-axis.  Thus moving a $2'$ to the left past these $1'$ entries does not change the endpoint of the walk after this sequence.  It follows that the effect on the endpoint of the walk is the same as simply removing this $1$ from $w$, which is a removable $1$ by Lemma \ref{lem:break-point}.  Thus the $1,2$-subword is still ballot.
  
  Finally suppose the final $F_1$-critical substring is type 2F, of the form $w_u\cdots w_t=1(2)^\ast 1'$, which transforms to $2'(2)^\ast 1$.  Then after applying $F_1$ to $w$ to form $v$ and removing $v_u=2'$, the net effect on the word is to remove $w_u=1$ and change $w_t=1'$ to $v_t=1$.  Note that the $2,3$-subword is unchanged by this process.  For the $1,2$-subword, removing the $1$ from $1(2)^\ast 1'$ shifts the start point of the $(2)^\ast$ in the $1,2$-walk either up or left by one, and when $w_t$ changes to $1$ the endpoint after this sequence shifts left by one.  By Lemma \ref{lem:tail-errors} the word is still ballot.
  
  We have shown that $v_u$ is removable, and hence by Lemma \ref{lem:break-point}, either the lower $2$-break point $v_{\ell}$ occurs weakly after $v_u$ in standardization order, or $v_u$ is the first $2$ of a final type 2F critical substring for $F_2$ starting on the $y$-axis.  We will show that the latter possibility cannot occur.   Indeed, assume the latter; then $v_u=2$ is the result of either a type 3F or type 1F change on the $1,2$-subword to form $v=F_1(w)$, and we can consider each case separately.
  
  If $v_u$ is the result of a type 3F change, then $w_u=1$, and since $v_u$ is the start of a type 2F critical substring in the $2,3$-subword, the next $2$ or $2'$ to the right of $w_u$ is a $2'$.  But this implies that there is a later type 1F critical substring in $w$ among the $1,2$-subword, a contradiction.
  
  If $v_u=2$ is the last entry of a transformed type 1F substring, say $w_t\cdots w_u=1(1')^\ast 2'$, then $v_t=2'$.  Note that in the $2,3$-walk, we may assume $v_u$ starts at $x=0$ and $y>1$, since if it starts at $(0,0)$ then $v_t$ is adjacent to $v_u$ in standardization order and so $v_u=v_{\ell}$.  Thus $v_t$ ends above the $x$-axis.  It follows that there cannot be a $3'$ after $v_t$ starting on the $x$-axis in the $2,3$-walk, since we would necessarily need a $2$ arrow to get down to the $x$-axis, and this would form a later type 1F critical substring with the $3'$.  
  
  Now, consider the process of forming the $2,3$-subword of $w$ from that of $v$.  We remove $v_t=2'$ and change $v_u$ from $2$ to $2'$.  Removing $v_t=2'$ moves the start point of the tail of the walk after it one step to the left, and so the start point of $v_u$ moves either one step left or one step up by Proposition \ref{lem:bounded-error}.  Since it is on the $y$-axis, it moves one step up, as does the rest of the 2F critical substring.  By Lemma \ref{lem:tail-errors}, since there is no $3'$ starting on the $x$-axis after $v_u$, the endpoint of the entire walk moves one step up.  Hence $w$ is not ballot, a contradiction.
  
  It follows that the lower $2$-break point, $v_{\ell}$, occurs weakly after $v_u$ in standardization order.  For the second part of the statement, let $v_j$ occur between $v_u$ and $v_{\ell}$ in standardization order.  Then by Lemmas \ref{lem:break-point} and \ref{lem:upper-break-point}, $v_j$ is both $1$-removable and $2$-removable, so it is removable.  No other $2$ or $2'$ is removable except possibly for the first $2'$ in the final $E_2$-critical substring in the case that the final $F_2$-critical substring in $w$ was type 1F, as in Lemma \ref{lem:upper-break-point}.  Since this extra entry is not adjacent to the others in standardization order (or it is itself already the upper $1$-break point), the result now follows.
\end{proof}

While Lemma \ref{lem:removable-pairs} is essential for understanding the intermediate steps after applying the coplactic operators $F_i$ in Phase 2 (as in Theorem \ref{thm:crystal-algorithm}), we also need to understand the removable pairs before the first application of an $F_i$ operator, that is, at the index in which Phase 2 begins.  We therefore require the following additional lemma.

\begin{lemma}[Initial Removable Pairs]\label{lem:removable-pairs-special}
  Let $T$ be a tableau appearing just after Phase 1 ends in the computation of $\esh(\ybox,T_0)$ for some $T_0$.  Let $T'$ be the tableau formed by replacing $\ybox$ by $s'$ in $T$ where $s$ is the first Phase 2 index.  Then $\ybox=s'$ is the first $s'$ in standardization order.  Moreover, if $v$ and $w$ are respectively the reading words of $T'$ and $E_s(T')$, then in $T'$, a pair of entries $v_j,v_k$ equal to $s$ or $s'$ that are consecutive in standardization order are both removable if and only if $$v_j\leqst v_k\leq v_{\ell}$$ where $\ell$ is the index for which $w_\ell$ is the lower $s$-break point of $T'$.  
\end{lemma}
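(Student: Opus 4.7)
The plan is to establish the two claims in order, using the termination condition of Phase 1 for Claim 1 and the Lower Break Point Lemma for Claim 2.

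For Claim 1, I would apply the termination condition of Phase 1 in the switching algorithm (Theorem~\ref{thm:step-by-step-algorithm}), reconciled with the coplactic version via Theorem~\ref{thm:phase1-algorithm}: when Phase 1 ends at index $s$, the canonical form of $T$ has no $s'$-entry after $\ybox$ in reading order. Consequently every $s'$-entry of $T$ lies before $\ybox$. Since primed entries are standardized by reverse reading order, replacing $\ybox$ by $s'$ produces an entry that is latest in reading order among all $s'$-entries, hence earliest in standardization order, establishing Claim 1.

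For Claim 2, I would first verify that $v = T'$ is itself ballot. By Theorem~\ref{thm:phase1-ballotness}, the word of $T$ with $\ybox$ omitted is ballot; inserting an $s'$ at $\ybox$'s position contributes exactly one step to the $(s{-}1),s$- and $s,s{+}1$-walks. Using Claim 1 to pin down this insertion as being after all $s'$-entries in reading order, together with the tail-error analysis of Lemma~\ref{lem:tail-errors}, the induced shift of the subsequent walk (east or west depending on whether the walk is on- or off-axis at that point) moves the endpoint along the $x$-axis in each subword. Thus $T'$ is ballot, and in particular $s$-ballot with $F_s$ defined (this I would verify by arguing that the final $F_s$-critical substring of $v$ is of type 1F--4F, inheriting its shape from the structure of $T$ and the fact that the only inserted letter is an $s'$ landing after all others). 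This legitimizes the lower $s$-break point $v_\ell$ of $v$ through Lemma~\ref{lem:break-point}, matching the intended meaning of the break point indexed by $w$ in the lemma statement.

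The characterization of removable pairs would then follow by combining Lemma~\ref{lem:break-point} applied to the $s,s{+}1$-subword (controlling $s$-removability of $s/s'$-entries) with Lemma~\ref{lem:upper-break-point} applied to the $(s{-}1),s$-subword, where $v$ itself plays the role of the post-$F$ word (controlling $(s{-}1)$-removability). Since entries of value $s$ or $s'$ only affect the $(s{-}1),s$- and $s,s{+}1$-subwords, full removability is equivalent to the conjunction of these two conditions, and since a pair $(v_j, v_k)$ consecutive in standardization order is both removable precisely when each is individually removable, the condition collapses to $v_k \leqst v_\ell$ (because $v_j \leqst v_k$ automatically). The main obstacle is showing that the upper-break-point condition on the $(s{-}1)$-side does not impose any stricter restriction than the lower-break-point condition on the $s$-side; I expect this to follow from Claim 1, which forces the inserted $s'$ to sit in the unique position that makes the upper $(s{-}1)$-break point lie at or before the lower $s$-break point, analogously to the alignment established in the proof of Lemma~\ref{lem:removable-pairs}.
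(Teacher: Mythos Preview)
Your argument for Claim 1 is fine and matches what the paper takes as evident from the Phase 1 termination condition.

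For Claim 2 your approach diverges from the paper's, and the divergence hides a real gap. You propose to handle $(s{-}1)$-removability by invoking Lemma~\ref{lem:upper-break-point}, casting $T'$ as $F_{s-1}(\tilde w)$ for some $(s{-}1)$-ballot $\tilde w$. But nothing you have written establishes that $T'$ lies in the image of $F_{s-1}$, i.e.\ that $E_{s-1}(T')$ is defined and $(s{-}1)$-ballot. Recall that $T'$ is produced from the ballot word of $T$ (with $\ybox$ deleted) by inserting a single $s'$; what is true (from the proof of Theorem~\ref{thm:crystal-algorithm}) is that this insertion realises $F'_{s-1}$, not $F_{s-1}$, on the appropriate predecessor. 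These are different operators, and the hypothesis of Lemma~\ref{lem:upper-break-point} specifically demands $v = F_i(w)$. You would need a separate argument that some $(s{-}1)$-ballot $\tilde w$ exists with $F_{s-1}(\tilde w)=T'$, and then a further argument identifying its upper $(s{-}1)$-break point with the inserted $s'$---which you flag as ``the main obstacle'' but do not actually address.

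The paper sidesteps all of this: for $(s{-}1)$-removability it never invokes Lemma~\ref{lem:upper-break-point}. Instead it argues directly. The inserted $s'$ is removable by Theorem~\ref{thm:phase1-ballotness}; switching $\ybox$ onward to the next $s'$ in standardization order is ballot by the same Phase~1 analysis; and once the $\ybox$ sits at an unprimed $s$, switching with the next $s$ only moves an $s$ earlier past some $(s{-}1)/(s{-}1)'$ entries, which is harmless for Stembridge's conditions S1 and S2 (Lemma~\ref{lem:stembridge}). This inductive chain shows every $s/s'$ entry up to the lower $s$-break point is $(s{-}1)$-removable, so the Lower Break Point condition alone characterises removable pairs. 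Your Upper-Break-Point route could perhaps be made to work, but as written it rests on an unproven structural claim about $T'$ that the paper's elementary argument avoids entirely.
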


\begin{proof}
  Note that the desired entries are removable for the $s,s+1$-subword, and are the only pairs of adjacent entries in standardization that are both removable, by the definition of the lower $s$-break point.  Thus it suffices to show that these entries are removable for the the $s-1,s$-subword as well.

  By Theorem \ref{thm:phase1-ballotness}, the entry $\ybox=s'$ is removable in $T'$.  Switching the $\ybox$ with the next $s'$ (which may be the first $s$) in standardization order, which is equivalent to removing the next $s'$ instead, is also ballot by the argument in Step (b) of Theorem \ref{thm:phase1-ballotness}.  
  
  Finally, we wish to show that if $\ybox=s$ is removable and we switch it with the next $s$ in standardization order, the resulting tableau is still ballot for the $s-1,s$-subword.  Indeed, this is equivalent to moving an $s$ earlier in the word past some $s-1$ or $(s-1)'$ entries, which can only increase the differences $m_{s-1}(j)-m_s(j)$ for $j\le n$, so Stembridge's condition (1) (in Lemma \ref{lem:stembridge}) is still satisfied.  It also does not change any of the values involved in condition (2), so the word is still ballot.
\end{proof}

\subsubsection{Phase 2 proofs}

We now prove the validity of Phase 2 of the switching algorithm.  We first require a lemma about ballotness at intermediate steps of the coplactic algorithm.

\begin{lemma}\label{lem:j-ballot}
  Let $(\ybox,T)\in \LRyb$.  In the computation of $\esh(\ybox,T)$, let $S$ be the tableau formed by replacing $\ybox$ by $s'$ after Phase 1 ends, where $s-1$ is the last Phase 1 index.  For $s-1\le i\le \ell(\beta)$, define $S_i=F_i\circ \cdots \circ F_{s+1}\circ F_s(S).$  Then $S_i$ is $j$-ballot for all $j\neq i$.
\end{lemma}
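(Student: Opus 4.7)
The proof proceeds by induction on $i$, with the convention $S_{s-1} := S$ for the empty composition. For the base case we must show $S$ is $j$-ballot for all $j \neq s-1$. By Theorem~\ref{thm:phase1-ballotness}, the reading word of the Phase~1 end tableau (omitting $\ybox$) is already ballot, hence $j$-ballot for every $j$. Passing to $S$ replaces $\ybox$ by $s'$, which does not touch the $(j,j+1)$-subword when $j \notin \{s-1,s\}$; for $j=s$ it merely inserts $s'$ as the smallest letter of that subword, which preserves ballotness by Stembridge's criterion (Lemma~\ref{lem:stembridge}). Since we do not require $(s-1)$-ballotness of $S$, this completes the base case.

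For the inductive step, we assume $S_{i-1}$ is $j$-ballot for all $j \neq i-1$ and show $S_i = F_i(S_{i-1})$ is $j$-ballot for all $j \neq i$. Since $F_i$ modifies only entries in $\{i,i',(i+1)',(i+1)\}$, for $j \notin \{i-1,i,i+1\}$ the $(j,j+1)$-subword is unchanged and $j$-ballotness is inherited. Two cases remain, $j = i+1$ and $j = i-1$, both handled by case analysis on the type of the final $F_i$-critical substring. For $j = i+1$: types 2F, 3F, and 4F adjoin at most one $(i+1)$ or $(i+1)'$ to the $(i+1,i+2)$-subword---the smallest letters of that subword---which preserves ballotness; type 1F changes an $(i+1)'$ to $(i+1)$, and the axis condition at the end of the critical substring ensures the $(i+1,i+2)$-walk still terminates on the $x$-axis.

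The remaining case $j = i-1$ is the main obstacle. If $i=s$, then $S$ is fully ballot by the base case, and $F_s$ only removes $s$- or $s'$-entries (the larger letters of the $(s-1,s)$-subword); a short case analysis on the critical substring type confirms the walk stays on the axis. If $i>s$, by induction $S_{i-1}$ is \emph{not} $(i-1)$-ballot: its $(i-1,i)$-walk ends at exactly $y=1$, owing to the previous application of $F_{i-1}$. The key claim is that $F_i$ removes precisely the right $i$- or $i'$-letter to bring this walk back to $y=0$. We plan to establish this by combining the Lower and Upper Break Point lemmas (Lemmas~\ref{lem:break-point} and~\ref{lem:upper-break-point}) with a tracking of how the entry introduced by $F_{i-1}$ determines the location of the final $F_i$-critical substring, so that the two successive perturbations of the $(i-1,i)$-subword cancel. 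Coplacticity of the $F_j$ operators can be invoked to reduce to a rectified configuration should the general walk analysis prove unwieldy.
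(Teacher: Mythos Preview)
Your approach—direct induction on $i$ with a case analysis on the type of the final $F_i$-critical substring—is quite different from the paper's, which is essentially two sentences. The paper observes that ``$j$-ballot'' means $E_j(w) = E_j'(w) = \varnothing$, a coplactic condition; since the operators $F_k$ are coplactic and the relabeling $\ybox \mapsto s'$ at the start of Phase~2 preserves standardization (verified in the proof of Theorem~\ref{thm:crystal-algorithm}), the entire assertion is invariant under jeu de taquin on $(\ybox, T)$. Hence it suffices to check the rectified case $\alpha = \varnothing$, where each $S_i$ is an explicit near-highest-weight straight-shape tableau and the claim is immediate by inspection.

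You actually mention this reduction in your last sentence, but only as a contingency ``should the general walk analysis prove unwieldy.'' It is the entire argument, not a fallback. Meanwhile your own proof has a genuine gap: the case $j = i-1$ with $i > s$ is left as a plan. You propose to use the Break Point lemmas to show that $F_i$ removes exactly the $i$- or $i'$-entry needed to restore $(i-1)$-ballotness, but those lemmas describe removability for the $(i,i+1)$-subword, not the $(i-1,i)$-subword whose walk you must bring back to the $x$-axis; bridging the two would require a nontrivial additional analysis that you have not supplied. The coplacticity reduction avoids all of this.
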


\begin{proof}
  Since the condition of being $j$-ballot is equivalent to being killed by the operators $E_j$ and $E_j'$, it is a coplactic condition.  It therefore suffices to check that this property is satisfied in the case that $\alpha$ is the empty partition.  This is straightforward to verify in this case.
\end{proof}

\begin{thm}\label{thm:phase2-algorithm}
  Let $(\ybox,T)\in \LRyb$.  Then Phase 2 of the algorithm in Theorem \ref{thm:crystal-algorithm} to compute $\mathrm{esh}(\ybox,T)$ agrees with Phase 2 of Theorem \ref{thm:step-by-step-algorithm}.
\end{thm}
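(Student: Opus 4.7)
The plan is to prove Phase 2 of the switching algorithm by induction on the Phase 2 index $i \geq s$, where $s$ is the first Phase 2 index. Let $T'$ denote the tableau after Phase 1 with $\ybox$ replaced by $s'$, and set $S_i := F_i \circ F_{i-1} \circ \cdots \circ F_s(T')$ as in the coplactic algorithm of Theorem~\ref{thm:crystal-algorithm}. The inductive hypothesis will be that after the $i$-th iteration of Phase 2 in the switching algorithm, the tableau (with the $\ybox$ filled in by the specific entry determined by the type of final $F_i$-critical substring applied at that step) agrees with $S_i$.

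For the base case $i = s$, I would invoke Lemma~\ref{lem:removable-pairs-special} to conclude that at the transition from Phase 1 to Phase 2 of the switching algorithm, $\ybox = s'$ occupies the position of the first $s'$ in $T'$ in standardization order, and that the pairs of $s$ or $s'$ entries that are $s$-removable are characterized by the lower $s$-break point. This exactly matches the starting configuration for applying $F_s$ in the coplactic algorithm.

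For the inductive step at index $i$, I would apply Lemma~\ref{lem:removable-pairs} to $S_{i-1}$, using Lemma~\ref{lem:j-ballot} to guarantee that $S_{i-1}$ is $j$-ballot for all $j \neq i-1$. This identifies the interval $[v_u, v_\ell]$ between the upper $(i-1)$-break point and the lower $i$-break point, within which consecutive pairs of $i$ or $i'$ entries are both removable, and hence pinpoints exactly which switches of index $i$ are valid. I would then case-analyze the type of the final $F_i$-critical substring in $S_{i-1}$, matching each type with a case of the switching algorithm as follows. Types 3F ($i \to i+1$) and 4F ($i' \to (i+1)'$), which change only a single entry, correspond to Phase 2 terminating at index $i$ without a hop, with $\ybox$ merely traversing the removable interval by inverse hops. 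Type 1F ($i(i')^*(i+1)' \to (i+1)'(i')^*(i+1)$) corresponds to Phase 2(b) concluding with a hop across the $(i+1)'$; the transformed substring accounts for the final position of $\ybox$ and the reordering of the $i$ and $(i+1)'$ entries. Type 2F ($i(i+1)^* i' \to (i+1)'(i+1)^* i$) corresponds to Phase 2(a), where the initial ``change $\first(i)$ to $i'$'' step reflects the renaming of the $i$ at the start of the critical substring, followed by inverse hops across $i'$ and a concluding hop across an $i$. Whether step $i$ enters Phase 2(a) or 2(b) is controlled by the direction of the last switch at step $i-1$, which is itself dictated by the type of the previous critical substring, closing the induction.

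The hardest part will be tracking the choice of representative at each step and matching it to the choice implicit in the coplactic algorithm, particularly the interaction between the ``change $\first(i)$ to $i'$'' step and exceptional switches (Definition~\ref{defn:exceptional}), as well as verifying that after the concluding hop of each Phase 2(a) or 2(b) step the resulting tableau agrees with $S_i$ on the nose. The Removable Pairs and Initial Removable Pairs lemmas reduce the validity of any individual switch to a combinatorial check about break points, but weaving these into the full correspondence requires careful case-by-case bookkeeping of the locations of the critical substrings relative to the current position of $\ybox$.
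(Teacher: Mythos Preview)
Your proposal is correct and follows essentially the same approach as the paper's proof: induction on the Phase 2 index, with the Removable Pairs lemma (via Lemma~\ref{lem:j-ballot}) driving the inductive step and the Initial Removable Pairs lemma handling the base case, together with a case analysis on the type (1F/2F/3F/4F) of the final $F_i$-critical substring matching each type to the corresponding Phase 2(a) or 2(b) behavior.

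One refinement worth flagging: the paper strengthens the inductive hypothesis to carry three claims simultaneously, namely (i) $S_i = T_i$, (ii) the precise identification of which transformed entry of the critical substring becomes $t_i$ (the $\ybox$ position), and (iii) that $t_i$ is removable in $T_i$. Your stated hypothesis is only (i), but to apply Removable Pairs at the next index you need to know that the current $\ybox$ position lies weakly before the upper break point in standardization order---this is exactly what (ii) and (iii) supply. You implicitly use this when you say the lemma ``pinpoints exactly which switches of index $i$ are valid,'' so be sure to build it into the induction explicitly. Also note the paper's separate handling of the 2F subcase where $w_a$ (rather than $w_b$) is the lower break point; this is the situation interacting with exceptional switches that you correctly anticipate as delicate.
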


\begin{proof}
  Define $S$ and $S_i$ as in Lemma \ref{lem:j-ballot}.  Notice that Lemma \ref{lem:j-ballot} shows that setting $w$ to be the reading word of $S_{i-1}$ and $v$ the reading word of $S_i=F_i(S_{i-1})$ satisfies the conditions of the Removable Pairs lemma (Lemma \ref{lem:removable-pairs}).  We use this fact implicitly throughout, freely applying Lemma \ref{lem:removable-pairs} without reference to Lemma \ref{lem:j-ballot}.
  
  Define $T_i$ to be the tableau formed by replacing the $\ybox$ with $(i+1)'$ (resp.\ $i+1$) after the index $i$ steps (resp.\ the index $i+1'$ steps) of the algorithm in Phase 2(a) (resp.\ Phase 2(b)).  We wish to show that $S_i=T_i$ for all $i$.  We proceed by induction on $i$, by simultaneously showing the following three statements:
  \begin{enumerate}
      \item $S_i=T_i$ for all $i\ge s-1$.
      \item Let $t_i$ be the $i+1$ or $i+1'$ entry in $T_i$ that replaced $\ybox$.  Then if $i\ge s$, the entry $t_i$ is the $F_i$-transformation of the last (resp.\ first) entry of the $F_i$-critical substring in $S_{i-1}$ if it is type 2F or 4F (resp.\ 1F or 3F).
      \item The entry $t_i$ is removable in $T_i$.
  \end{enumerate}

  We first show the base cases, $i=s-1$ and $i=s$.  The case $i=s-1$ simply refers to the tableau $S$ with no $F$ operators applied, so the first two claims hold trivially, and the third by Lemma \ref{lem:removable-pairs-special} (Initial Removable Pairs).  
  
  Now consider the case $i=s$.  Let $w_a\cdots w_b$ be the final $F_s$-critical substring of the reading word $w$ of $S_{i-1}$.  If it is type 4F, $w_a=w_b$ is the last removable $s'$ in standardization order in $S$ and changes to $(s+1)'$, and by the definition of the switching algorithm this entry is also $t_s$.  Thus the first two claims hold, and the third holds by the definition of a valid switch.   
  
  If $w_a\cdots w_b=s(s+1)^\ast s'$ is type 2F, first suppose the lower $s$-break point is $w_b$ (as opposed to the special case in which it can be $w_a$).  Then $F_s$ changes the substring to $(s+1)'(s+1)^\ast s$ to form $S_s$.  To form $T_s$, by Removable Pairs (Lemma \ref{lem:removable-pairs}) and the definition of the switching algorithm, the $\ybox$ inverse hops past each $s'$ until switching with $w_b$, forming $s(s+1)^\ast \ybox$, and then switches with $w_a=s$ via the special Phase 2(a)-hop step to form the substring $\ybox (s+1)^\ast s$.  Replacing $\ybox$ with $(s+1)'$ forms $T_s$ and therefore matches with $S_s$.  The first two claims therefore hold, and for the third, note that $\ybox$ was removable just before the Phase 2(a)-hop step.  The effect of the hop step on the reading word, ignoring $\ybox$, is to move an $s$ past some number of $s+1$ entries in reading order.  By the Stembridge ballotness condition, the word is still $s$-ballot, and since the hop did not affect the $s+1,s+2$-subword, the new position of $\ybox$ is $s+1$-removable as well.  Finally, since $T_s=S_s$, we must have $s-1$-ballotness at this step. It follows that $t_s$ is removable.
  
  If instead $w_a$ is the lower break point, then $w_a=s$ and $w_b=s'$ are consecutive in standardization order, and so these must be the first $s$ and $s'$ entries in reading order.  It follows that, switching $\ybox$ with each $s'$ and then the first $s$, as dictated by Phase 2(a), followed by interpreting $\ybox$ as $(s+1)'$, is equivalent to applying $F_s$ as well, and the argument follows as above.
  
  Finally, if $w_a\cdots w_b=s(s')^\ast (s+1)'$ is type 1F or 3F, then by Removable Pairs, the $\ybox$ completes Phase 2(a) and enters Phase 2(b), inverse hopping past each $s$ until switching with $w_a$.  In the 3F case we are done by a similar argument to 4F.  In the 1F case, $\ybox$ then further hops past $w_b=(s+1)'$, and since $F_s$ transforms the string to $(s+1)'(s')^\ast (s+1)$, interpreting $\ybox$ as $s+1$ yields the same tableau and we have $T_s=S_s$ as desired.  The entry $t_s$ is as described in the second claim, and removability follows from a similar argument as in the type 2F case.
  
  For the induction step, let $i\ge s$ and assume the claims hold for $S_{i-1}:=F_{i-1}\circ \cdots \circ F_s(S)$.  Then since the position of $\ybox$ that forms $T_{i-1}$ is removable, by the Removable Pairs lemma, the $F_i$-critical substring occurs later in standardization order.  An identical argument using the same cases as above now shows that the claims hold for $S_i$, and the induction is complete.
\end{proof}

Note that, in the above proof, the switch from Phase 2(a) to Phase 2(b) coincides with the first 1F or 3F critical substring.  In particular, it follows that the coplactic algorithm (Theorem \ref{thm:crystal-algorithm}) has a natural Phase 2(a)/2(b) dichotomy in Phase 2.  In Phase 2(a) the computations of $F_i$ involve only type 2F or 4F critical substrings, and in Phase 2(b) they involve only 1F or 3F critical substrings.  We state this precisely below.

\begin{corollary} \label{prop:2a2b-coplactic}
Let $S$ be a tableau appearing just after Phase 1 of the coplactic algorithm for computing $\esh(\ybox, T_0)$ for some $T_0$, and let $s$ be the first Phase 2 index.  Then there exists $k$ so that the type of the final $F_i$-critical substring of $$F_{i-1}\circ \cdots \circ F_{s+1}\circ F_s (S)$$ is 2F or 4F iff $i < k$, and 1F or 3F iff $i \ge k$.
\end{corollary}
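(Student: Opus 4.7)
The plan is to deduce the corollary from the case-by-case analysis already carried out in the proof of Theorem~\ref{thm:phase2-algorithm}. That proof established a direct correspondence: at each Phase~2 index $i$, if the final $F_i$-critical substring of $F_{i-1}\circ \cdots \circ F_s(S)$ has type~4F or~2F, then the switching algorithm performs its substantive work at index $i$ within Phase 2(a) (a single inverse hop for 4F, or a sequence of inverse hops across $i'$ ending with the special Phase 2(a)-hop for 2F); whereas if the type is~3F or~1F, then Phase 2(a) at index $i$ escapes immediately to Phase 2(b) via the ``$\ybox$ precedes all $i$ and $i'$ entries'' check, and the substantive work happens in Phase 2(b) (inverse hops across $i$, with a final hop across $(i{+}1)'$ in the 1F case).

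Given this correspondence, the corollary is equivalent to the following monotonicity statement: once the algorithm performs its substantive work in Phase 2(b) at some index $i = k$, the substantive work at every subsequent index $j \ge k$ also happens in Phase 2(b). To prove this I would proceed by induction on $i$. Assuming the critical substring at index $i$ is type 1F or 3F, I need to show that at index $i+1$, the Phase 2 dispatch either enters Phase 2(b) directly or enters Phase 2(a) but immediately triggers one of its escape-to-Phase-2(b) checks. The key observation is that after the Phase 2(b) actions at index $i$, the position of $\ybox$ in the updated tableau is such that $\ybox$ precedes all $(i{+}1)$ and $(i{+}1)'$ entries in reading order (either before or after any initial inverse hops across $(i{+}1)'$ in Phase 2(a) at index $i+1$), forcing the escape to Phase 2(b).

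The main technical obstacle is verifying this positional claim. For the type~3F case, where $w_a = i$ simply becomes $(i{+}1)$ and $\ybox$ ends up precisely at the position formerly held by this $i$, the claim is relatively direct, since all other $(i{+}1)$ and $(i{+}1)'$ entries come from the $(i{+}1), (i{+}1)'$-part of the original word, and the Phase 2(b) actions placed $\ybox$ at the start of these in reading order. For the type~1F case, the substring $i(i')^*(i{+}1)'$ transforms to $(i{+}1)'(i')^*(i{+}1)$, and the final hop across $(i{+}1)'$ in Phase 2(b) is precisely what repositions $\ybox$ before the new $(i{+}1)$ entries. In both cases, the detailed argument closely parallels the walk analysis in the proof of Lemma~\ref{lem:removable-pairs} (Removable Pairs), using the lattice-walk bookkeeping to track how the $F_i$ transformation alters the $(i{+}1), (i{+}2)$-walk and rules out the combinatorial configurations that would produce a 2F or 4F critical substring at index $i+1$. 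This inductive step, together with the straightforward existence of the threshold $k$ (take $k$ to be the first index where a 1F or 3F substring appears, or $\ell(\beta)+1$ if no such index exists), completes the proof.
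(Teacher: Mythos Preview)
Your reduction to the Phase 2(a)/2(b) correspondence from the proof of Theorem~\ref{thm:phase2-algorithm} is exactly right, and the monotonicity reformulation is the correct target.  However, the route you propose to establish monotonicity is substantially more complicated than what is needed, and the specific positional claim you identify as the ``key observation'' is not the right lever.

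The paper's argument is a one-line observation about the switching algorithm itself.  The dispatch at the top of Phase~2 is determined solely by the reading-order direction of the \emph{most recent} switch: if $\ybox$ last switched with an entry earlier in reading order, enter 2(a); otherwise enter 2(b).  Now inspect the two sub-phases.  Every switch performed in Phase~2(a) (inverse hops across $i'$, the special hop across $i$) moves $\ybox$ \emph{backward} in reading order, while every switch performed in Phase~2(b) (inverse hops across $i$, the hop across $(i{+}1)'$) moves $\ybox$ \emph{forward}.  The proof of Theorem~\ref{thm:phase2-algorithm} shows that whenever the type is 1F or 3F, Phase~2(b) performs at least one switch (the inverse hop to $w_a$).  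Hence after a 1F/3F index, the most recent switch was forward, so the dispatch at the next index skips directly to 2(b), and by the same correspondence the type there must again be 1F or 3F.  No positional analysis and no walk bookkeeping are required.

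Your positional claim (``$\ybox$ precedes all $(i{+}1)$ and $(i{+}1)'$ entries'') is in fact false as stated in the 1F case: the transformation $i(i')^*(i{+}1)' \to (i{+}1)'(i')^*(i{+}1)$ places a new $(i{+}1)'$ at the position $w_a$, which is \emph{earlier} in reading order than $\ybox$ at $w_b$.  Your hedge (``either before or after any initial inverse hops across $(i{+}1)'$'') might salvage it, but verifying that would require exactly the walk analysis you anticipate, and this is all unnecessary once you notice that the dispatch never actually enters 2(a) in this situation.
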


  We now prove Theorem \ref{thm:phase12-ballotness} and \ref{thm:pause-esh}(3) for Phase 2.  

\begin{thm}\label{thm:phase2-ballotness}
Properties (1)--(3) from Theorems \ref{thm:phase12-ballotness} and \ref{thm:pause-esh} hold after each  Phase 2 switch.
\end{thm}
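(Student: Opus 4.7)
The plan is to leverage the previously established Theorem~\ref{thm:phase2-algorithm} together with the Removable Pairs machinery (Lemmas~\ref{lem:removable-pairs} and~\ref{lem:removable-pairs-special}), and then verify properties (1)--(3) by tracking the position of $\ybox$ switch-by-switch within each index $i \geq s$. Theorem~\ref{thm:phase2-algorithm} guarantees that at the end of the block of switches of index $i$, the resulting tableau coincides with $S_i = F_i \circ \cdots \circ F_s(S)$, where $S$ is the tableau at the start of Phase~2. Since each $F_i$ is coplactic and well-defined on shifted semistandard tableaux (Proposition~\ref{prop:unprimed-properties}), and agrees with the base case when $\alpha = \emptyset$, properties (1) and (2) hold at all of these ``milestone'' tableaux. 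Property (3) at the milestones is read off from the proof of Theorem~\ref{thm:phase2-algorithm}: the entry $t_i$ of $T_i$ which replaces $\ybox$ is precisely the $F_i$-image of an endpoint of the final $F_i$-critical substring, and a case analysis on the critical substring type (1F/2F/3F/4F) shows that in each case $\ybox$ sits in the correct outer (or inner) co-corner to lie in $Z_i$ (or $Z_{i'}$).

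It remains to verify (1)--(3) at the \emph{intermediate} switches within a single index $i$. The approach is to identify each intermediate position of $\ybox$ during index $i$ with the operation of deleting a single $i$ or $i'$ entry from the reading word of the ambient tableau. By the Lower and Upper Break Point Lemmas, the $i$- and $i'$-entries that the $\ybox$ passes through during index $i$ form (up to one exceptional isolated entry) an interval in standardization order between the upper $i$-break point of $S_{i-1}$ (or of $S$, via Lemma~\ref{lem:removable-pairs-special}, when $i = s$) and the lower $i$-break point of $S_i$. The Removable Pairs lemma then says that each such entry is simultaneously $i$- and $(i{+}1)$-removable. Consequently, replacing any entry in this interval by $\ybox$ yields a tableau whose reading word, with $\ybox$ omitted, is still ballot, which gives (1). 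Semistandardness (2) and the co-corner structure (3) are then verified by going through the four sub-cases of the Phase~2 loop (2(a) with or without the inline hop, and 2(b) with or without the trailing hop) and checking directly that every switch moves $\ybox$ between adjacent boxes with compatible values.

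The main obstacle is the Phase~2(a) hop and the associated change of representative. This is the unique forward-in-reading-order step in Phase~2: $\ybox$ hops across the entry $w_a = i$ which is the first letter of the final (type~2F) $F_i$-critical substring, and simultaneously $\first(i)$ is reinterpreted as $i'$. The plan here is to reuse the case analysis in the proof of Theorem~\ref{thm:phase2-algorithm}: the net effect on the reading word is to move a single $i$ past some $(i{+}1)$-entries, which preserves ballotness by Corollary~\ref{cor:ballotness-preserving}, and does not alter the $(i{+}1,i{+}2)$-subword at all. For property (3) at this step, the hop converts $\ybox$ from an outer co-corner of $\{1',1,\ldots,i-1,i'\}$ to an outer co-corner of $\{1',1,\ldots,i\}$ after the subsequent reassignment of $\first(i)$, matching the requirement for $Z_{i'}$ and then $Z_i$. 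Once all four sub-cases are dispatched, the theorem follows by induction on the number of switches performed.
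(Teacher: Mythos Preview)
Your overall strategy matches the paper's: property~(1) comes from the Removable Pairs Lemmas~\ref{lem:removable-pairs} and~\ref{lem:removable-pairs-special}, and the milestone tableaux are identified with $S_i$ via Theorem~\ref{thm:phase2-algorithm}. However, your treatment of properties~(2) and~(3) at the intermediate steps has real gaps.

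For property~(2) you propose ``checking directly that every switch moves $\ybox$ between adjacent boxes with compatible values.'' This is simply false: Phase~2 switches are in general \emph{not} between adjacent boxes --- the non-adjacent switches are precisely what generate the genomic tableaux in Section~\ref{sec:K-theory}. The paper's argument is different, and in fact is already implicit in the sentence you wrote just before: at any step while $\ybox$ is passing through the $i'/i$-entries (or just after a hop), replacing $\ybox$ by $i'$, $i$, $(i+1)'$, or $i+1$ as appropriate recovers one of the fixed semistandard tableaux $S_{i-1}$ or $S_i$ on the nose (because the switch exchanged $\ybox$ with an entry of that very value). Deleting one box from a semistandard tableau trivially leaves a semistandard filling, which is (2). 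No adjacency is needed or available.

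For property~(3) at the pause points $Z_{i'}$ and $Z_i$, the case analysis on the critical-substring type that you sketch is not enough on its own. You are missing the argument that actually forces $\ybox$ to be a co-corner: if, at the end of the Phase~2(a) inverse hops, there were an $i'$ directly below $\ybox$, then switching with it would not change the \emph{column} reading word, hence would not affect ballotness --- contradicting that no further valid inverse hop of index $i'$ was possible. The symmetric argument (row reading word, entry $i$ to the right) handles Phase~2(b). A separate short check is then needed after each hop step. The paper carries these out explicitly.

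One minor point: Corollary~\ref{cor:ballotness-preserving} does not cover the Phase~2(a) hop, since that corollary moves a $1$ to the \emph{end} of the word, whereas the hop only moves an $i$ past a block of $(i+1)$'s. The paper handles this step by appealing directly to Stembridge's conditions (Lemma~\ref{lem:stembridge}); the verification already appears inside the proof of Theorem~\ref{thm:phase2-algorithm}, so you can just cite it.
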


\begin{proof} 
   For (1), by Lemma \ref{lem:removable-pairs}, any Phase 2 step in the $i$-ribbon where $i>s$ (where $s$ is the transition point from Phase 1 to Phase 2) has the property that the $\ybox$ is removable, so the reading word is ballot.  By Lemma \ref{lem:removable-pairs-special}, the same is true for the moves in Phase 2 across the $s$-ribbon.  Thus (1) holds.
   
   We now show (2).  If the $\ybox$ has just completed the moves past $i'$ or $i$, and then the possible hop, in Phase 2(a) or 2(b), then replacing $\ybox$ by $(i+1)'$ or $(i+1)$ respectively gives the result of applying $F_i$ to a tableau.  This is semistandard, so removing the $\ybox$ leaves a semistandard tableau as well, hence (2) is satisfied.  Then, any subsequent switch with an $i+1'$ or $i+1$ in Phase 2 can also be thought of as the same tableau where we replace the $\ybox$ by $i+1$ or $(i+1)'$ as appropriate.  Finally, for the Phase 2 steps just after Phase 1 ends, note that if we replace $\ybox$ by $s'$ after the last Phase 1 move, then it is a semistandard tableau that we may apply $F_s$ to, and the same argument holds.
 
 We now show (3).  Suppose we are in Phase 2(a), the $\ybox$ has completed all inverse hop steps of index $i'$, and next step is not exceptional (or we are in Phase 2(b) and the $\ybox$ has completed all inverse hop steps of index $i$).  Then replacing $\ybox$ with $i'$ (respectively $i$) gives the tableau before the application of $F_i$, which is semistandard.  Moreover, if there were an $i'$ directly below (resp.\ $i$ directly to the right of) the $\ybox$, then switching $\ybox$ with this entry would not change the column (resp.\ row) reading word, and hence not affect ballotness, contradicting the definition of the Phase 2(a) and Phase 2(b) stopping points. Hence $\ybox$ is an outer co-corner of the tableau formed by the letters less than or equal to $i'$ (resp.\ $i$) and an inner co-corner of the remaining letters.  A similar argument applies if the next step is exceptional.
 
 Finally, we consider the tableaux after the special Phase 2 hop steps.  Suppose the $\ybox$ has just completed a Phase 2(a)-hop step of index $i$.  If this step was null, that is, if the box does not switch with any $i$, there cannot be an $i$ directly left of or below $\ybox$ (otherwise switching would not change the change the row or column reading word and the hop would occur), so (3) holds.   So assume a switch occured, from a substring $i(i+1)^\ast\ybox$ to $\ybox (i+1)^\ast i$.  Then the application of $F_i$ was type 2F where we interpret $\ybox$ as $i'$ in both settings.  Moreover, prior to the hop, the $\ybox$ may be replaced with $i'$ to obtain the (semistandard) tableau before the $F_i$ application; thus there is no $i$ directly to the right of the first $i$ in this string.  It follows that  the new position of $\ybox$ is an inner co-corner of the letters larger than $i$ and an outer co-corner of the letters less than or equal to $i$.  If instead we have just completed a Phase 2(b)-hop step of index $i+1'$, then the application of $F_i$ was type 1F, and a similar analysis shows that the new position of $\ybox$ satisfies (3).
\end{proof}

\subsubsection{Proof of Theorem \ref{thm:resume-esh}: Pausing and resuming}
\label{sec:pause-and-resume}

We now prove Theorem \ref{thm:resume-esh}.  To do so, we first show that the hop steps in Phase 2 can alternatively be computed by checking for valid switches rather than the existence of particular substrings.

\begin{proposition}\label{prop:teleport-valid}
For any index $i$, the Phase 2(a) hop of index $i$ occurs in the switching algorithm if and only if this switch is valid.  Similarly, the Phase 2(b) hop of index $i+1'$ occurs if and only if this switch is valid.

\end{proposition}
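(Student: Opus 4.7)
The forward direction of each claim is immediate from Theorem~\ref{thm:phase12-ballotness}(1): whenever the algorithm performs the prescribed switch because the associated substring pattern is present, the resulting tableau has a ballot reading word, hence the switch is valid. Only the reverse direction---valid switch implies the pattern---requires work.

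My plan for the Phase 2(a) case is to pass through the coplactic--switching correspondence. At the hop check at index $i$, replacing $\ybox$ by $i'$ recovers the intermediate coplactic tableau $T' = S_{i-1}$ from the proof of Theorem~\ref{thm:phase2-algorithm}. The substring pattern $i(i+1)^\ast\ybox$ is precisely the condition that the final $F_i$-critical substring of $T'$ has type 2F, so by Corollary~\ref{prop:2a2b-coplactic} (which rules out types 1F and 3F in Phase 2(a)) it suffices to show: if the critical substring of $T'$ has type 4F, then no swap of $\ybox$ with a preceding $i$ produces a ballot word. In type 4F, $\ybox = i'$ is a singleton critical substring at $x = 0$ in the $i,i+1$-walk. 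If no preceding $i$ satisfies the non-intervention condition of Definition~\ref{defn:switch} there is nothing to check; otherwise, let $i^\ast$ denote the nearest preceding $i$. I would then carry out a direct walk computation---separately on the $i,i+1$- and $i{-}1,i$-subwords, in the style of the proof of Theorem~\ref{thm:phase2-ballotness}---to show that the swap of $\ybox$ with $i^\ast$ must move the endpoint of at least one of these two walks off the $x$-axis, violating ballotness.

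The Phase 2(b) case is handled in parallel: by Corollary~\ref{prop:2a2b-coplactic}, within Phase 2(b) the only possibilities for the final $F_i$-critical substring are types 1F and 3F, and the pattern $\ldots\ybox(i')^\ast(i{+}1)'$ is exactly the type 1F condition. In the type 3F case the critical substring is a singleton $i$ on the $x$-axis, and an analogous walk argument---local, and reducible via Lemma~\ref{lem:bounded-error} to the same kind of identities already verified in Theorem~\ref{thm:phase2-ballotness}---shows that any candidate swap of $\ybox$ forward past an $(i{+}1)'$ breaks ballotness of either the $i,i+1$- or the $i{-}1,i$-walk. The main obstacle I anticipate is organizing the case split cleanly: the intervening entries between $\ybox$ and the candidate $i^\ast$ (or candidate $(i{+}1)'$, in Phase 2(b)) can be a mix of $i'$, $(i{+}1)'$, and $i{+}1$, with various axis relationships, and each combination must be handled uniformly through the walk identities of Section~\ref{sec:proofs}.
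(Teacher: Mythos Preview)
Your forward direction is correct and matches the paper. For the converse, your route through the critical-substring classification is genuinely different from the paper's, and the key step is left undone.

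The paper does not use walks or Corollary~\ref{prop:2a2b-coplactic} here at all; it argues directly with Stembridge's criterion (Lemma~\ref{lem:stembridge}). In the Phase~2(a) case, assuming the swap of $\ybox$ with the nearest preceding $i=t$ is valid, one rules out each possible intervening symbol in the $i,i{+}1$-subword between $t$ and $\ybox$. No intervening $i$ exists by choice of $t$. If an $i'$ intervenes, let $s$ be the rightmost such $i'$; since the inverse hops across $i'$ have already terminated, the swap of $\ybox$ with $s$ violates ballotness, and tracing which of conditions S1 or S2 fails shows that the swap with $t$ would violate the same condition --- a contradiction. The $(i{+}1)'$ case is handled similarly. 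Phase~2(b) is argued in parallel. The essential leverage is that the stopping condition of the inverse hops already encodes a specific Stembridge-condition failure at $s$, which can be transferred to $t$; no walk analysis is needed.

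Your reduction to the type~4F case (resp.\ type~3F for Phase~2(b)) is plausible in outline: in Phase~2(a) the type is 2F or 4F, and in type~2F the $\ybox$ sits at the last letter $w_b$ of the critical substring, so the pattern $i(i{+}1)^\ast\ybox$ is visibly present. But the ``direct walk computation'' you defer \emph{is} the entire content of the converse, and you have not carried out any of it. To show the swap with a preceding $i$ breaks ballotness in type~4F, you must control both the $i,i{+}1$- and the $i{-}1,i$-walks simultaneously under a move that pushes an $i$ later past an arbitrary mix of $i'$, $(i{+}1)'$, and $(i{+}1)$ entries. The axis interactions here are exactly the ``case split'' you flag as the main obstacle, and nothing in Theorem~\ref{thm:phase2-ballotness} or Lemma~\ref{lem:bounded-error} handles this for you: those results establish ballotness after prescribed steps, not non-ballotness after a hypothetical swap. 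As written, your proposal is an outline with the hard case missing.
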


\begin{proof}
 \textbf{Phase 2(a):} Let $T$ be a tableau that arises in Phase 2(a) just after the inverse hops past $i'$, for which the next step is still in Phase 2(a).  If the hop of $\ybox$ past an $i$ then occurs, this switch is valid by Theorem \ref{thm:phase2-ballotness}.  
 
 Conversely, suppose the hop switch of $\ybox$ with the previous $i$ in reading order (call this entry $t$) is valid.  We will show that there are no $i'$, $i$, or $i+1'$ entries between $t$ and $\ybox$ in $T$, so that the substring of the $i,i+1$-reading word from $t$ to $\ybox$ has the form $i(i+1)^\ast\ybox$, and therefore a hop does occur.  By the definition of $t$, there is no $i$ entry between $t$ and $\ybox$.  
 
 Assume for contradiction that there is an $i'$ between $t$ and $\ybox$, and let $s=i'$ be the rightmost $i'$ preceding $\ybox$ (where $\first(i)$ is treated as $i'$).  Then $s$ is between $t$ and $\ybox$, and switching $\ybox$ with $s$ violates ballotness since $T$ is at the end of a Phase 2(a) move.  Thus one of the two Stembridge ballotness conditions (S1 or S2) stated in Lemma \ref{lem:stembridge} is violated by this switch.  Note that the effect of the switch on the $i,i+1$ reading word is to move $s=i'$ later in the word past some number of $i+1$ or $i+1'$ entries, which cannot violate condition S1 since the total numbers of $i$ and $i+1$ entries after each $i+1'$ or $i+1$ entry is unchanged.  Thus it violates S2, and in particular there must be an $i+1'$ entry between $s$ and $\ybox$, so that when we switch $s$ with $\ybox$ we would have $m_{i}(j)<m_{i+1}(j)$ where $j=n+k$ is the index for which there are $k$ letters before $\ybox$ in the reading word of $T$.  But then we must have $m_i(j)=m_{i+1}(j)$ in $T$, so switching $\ybox$ with $t=i$ would also violate condition S2, a contradiction.  It follows that there is no $i'$ between $t$ and $\ybox$.  
 
 If $s$ does not exist, then in fact the next step of $T$ would be an exceptional Phase 2(a) step rather than a hop, a contradiction.  Thus $s=i'$ is strictly left of $t$ in reading order.  Now, if there were an $(i+1)'$ between $t$ and $\ybox$, switching $\ybox$ with $s$ makes the word not ballot and so by an argument similar to that above, switching $\ybox$ with $t'$ would also make the word not ballot by violating condition 2 at the new position of $t'$.  Thus there is no $(i+1)'$ between $t'$ and $\ybox$, as desired.  
 
 \textbf{Phase 2(b):} Let $T$ be a tableau that arises in Phase 2(b) just after the inverse hops past $i$.  If the hop of $\ybox$ past the next $(i+1)'$ occurs, this switch is valid by Theorem \ref{thm:phase2-ballotness}.  
 
 Conversely, suppose the hop past the next $(i+1)'$ (call this entry $t$) is valid.   We wish to show that there is no $i$, $i+1'$, or $i+1$ between $\ybox$ and $t$ in reading order, so that the substring of the reading word is of the form $\ybox (i')^\ast (i+1')$.  There is no $i+1'$ between them by the definition of $t$.  
 
 Assume for contradiction that there is an $i$ between $\ybox$ and $t$, and let $s=i$ be the leftmost $i$ to the right of $\ybox$.  We know that switching $\ybox$ with $s$ yields a tableau which is not ballot for $i,i+1$ since $T$ is at the end of a Phase 2(b) move.  This can violate condition S1 of Lemma \ref{lem:stembridge}, if there is an $i+1$ between $\ybox$ and $s$ and the suffix after the $\ybox$ has the same number of $i+1$ letters as $i$, so that the switch would make $m_i(j)>m_{i+1}(j)$ for some $j$.  It can alternatively violate condition S1 if the new position of $s$, say $k$, satisfies $m_i(j+k)=m_{i+1}(j+k)$ after the switch.  In either case, switching $\ybox$ with $t$ would make $t$ violate the same condition (S1 or S2 respectively).  Thus there is no $i$ between $\ybox$ and $t$.  
 
 If there is no $i$ after $\ybox$ at all then there is no $i+1$ after it either by ballotness, so we may assume that $s=i$ exists and occurs after $t$ in reading order.  If switching $\ybox$ with $s$ violates condition S2 at the index of $s$, it violates it at $t$ too, so in fact switching with $s$ violates condition S1 instead.  If it violates S1 by making some suffix have more $i+1$ entries than $i$ entries, then switching $t$ with $\ybox$ would violate condition 1 at the new position of $t$, a contradiction.  Thus it violates S1 by having the suffix after either $t$ or after a later $(i+1)'$ between $t$ and $s$ have the same number of $i$ and $i+1$ entries.   But then switching $\ybox$ with $s$ makes the suffix after $t$ have the same number of $i$ entries as $i+1$ entries, and since switching with $t$ is ballot, there cannot be an $i+1$ between $\ybox$ and $t$, as desired.
\end{proof}

\begin{proof}[Proof of Theorem~\ref{thm:resume-esh}]
Let $\partesh^*_t$ denote the sequence of switches defined by the statement of
the Theorem.  We must show that $\partesh^*_t$ agrees with $\partesh_t$.

First, suppose $t=i'$ for some $i$.
Suppose $T$ arises partway through the switching algorithm, after completing
all steps of index $i-1$.
Then the steps performed by $\partesh_{i'}$, could be either the $i'$-step of Phase 1, the first part of Phase 2(a) (excluding any exceptional steps at the end, 
and the hop), or a Phase 2(b)-hop.
  
Since $t=i'$ there cannot be an exceptional step of index $i$, and we 
cannot be in Case 1 of $\partesh^*_t$.

If the next step is in Phase 1, then $\partesh_t$ performs a hop of index $i'$.
Since Case 2 of $\partesh^*_t$ is to do this if possible,
$\partesh^*_{i'}$ agrees with $\partesh_{i'}$.
  
  If the next step is Phase 2(a), note that by the Removable Pairs Lemma (\ref{lem:removable-pairs}) we cannot switch $\ybox$ with the next $i'$ in reading order, i.e. there is no hop of index $i$.  Thus $\partesh^*_{i'}$ is in Case 3, which performs all steps of index $i'$ in Phase 2(a). 
  
  Finally, if the next step is Phase 2(b)-hop, if the hop switch occurs then $\partesh^*_t$ makes the same switch and gives the correct output.  Otherwise, by Proposition \ref{prop:teleport-valid}, there is no valid hop of index $i'$, so $\partesh^*_{i'}$ is in Case 3.  We now only need to check that if the hop does not occur, $\partesh^*_{i'}$ does not perform any inverse hops.  By the nature of a null hop step in Phase 2(b), if we replace $\ybox$ with $i-1$ in $T$ then it is a type 3F critical substring for the operator $F_{i-1}$.  The $i-1,i$-walk is therefore on the $x$-axis just before $\ybox$, and replacing $\ybox$ with $i$ gives a non-ballot total walk since it is the result of $F_{i-1}$ applied to the tableau.  Therefore replacing $\ybox$ with $i'$ also gives a non-ballot total walk, since $i'$ and $i$ are both upwards arrows when starting on the $x$-axis.  By the Upper Break Point Lemma (\ref{lem:upper-break-point}), the previous $i'$ before $\ybox$ is not removable in either case, and therefore $\ybox$ cannot switch with the previous $i'$.
  
  Now suppose $t=i$.  In this case, the steps performed by $\partesh_i$ are could be either the $i$-step of Phase 1, the first part of Phase 2(b) (excluding the hop), an exceptional Phase 2(a) step followed by the first part of Phase 2(b), or a Phase 2(a)-hop step. 
  
If the next step is in Phase 1, then the exceptional step does not occur because the previous step was to switch $\ybox$ with a later $i'$, so $\partesh^*_i$ is not in Case 1.  Therefore, $\partesh^*_i$ is in Case 2, and agrees with $\partesh_i$.
  
  If the next step is Phase 2(b), $\ybox$ cannot make the exceptional move nor a hop of index $i$ by the Removable Pairs Lemma (Lemma \ref{lem:removable-pairs}) and so $\partesh^*_i$ is in Case 3, and agrees with the result of the index $i$ steps in Phase 2(b).  
Similarly, if the next step is an exceptional Phase 2(a) step, $\partesh^*_i$ is in Case 1, and again agrees with $\partesh_i$.
  
  Finally, suppose the next step is Phase 2(a)-hop.   If the hop occurs, this switch is valid and the algorithm agrees with $\partesh^*_i$ (Case 2).  If it does not occur, then by Proposition \ref{prop:teleport-valid}, switching with the previous $i$ is not valid, so $\partesh^*_i$ is in Case 3.  We now only need to check that if the hop does not occur, $\partesh^*_i$ does not perform any inverse hops.  Since $T$ is the result of the last of the $i'$ Phase 2(a) switches by assumption, if we replace $\ybox$ by $i'$ in $T$, we get a tableau with ballot $i,i+1$-subword whose final $F_i$-critical substring is type 4F at $\ybox=i'$.  Then by the Lower Break Point Lemma (\ref{lem:break-point}), the next $i$ after $\ybox=i'$ is not removable.  Since the walk of the reading word is on the $y$-axis at the critical substring, we may replace $\ybox$ with $i$ instead and obtain the same walk, and so the next $i$ after $\ybox$ is still not removable in this Case.  Hence we cannot have switched $\ybox$ with the next $i$ to obtain a ballot word, and the conclusion follows.  
\end{proof}


\section{Connection to K-theory of the orthogonal Grassmannian}\label{sec:K-theory}

The structure sheaves $\mathcal{O}_\lambda$ of Schubert varieties in $\OGn$ form an additive basis for the K-theory ring $\K(\OGn)$, and they have a product formula
\[[\mathcal{O}_\mu] \cdot [\mathcal{O}_\nu] = \sum_{|\lambda| \geq |\mu| + |\nu|} (-1)^{|\lambda| - |\mu| - |\nu|}k_{\mu \nu}^\lambda [\mathcal{O}_\lambda],\]
for certain nonnegative integer coefficients $k_{\mu \nu}^\lambda$. These coefficients enumerate certain shifted tableaux called \emph{(shifted) ballot genomic tableaux\/,} defined by Pechenik and Yong in \cite{bib:Pechenik}.  We recall this definition here.

\begin{definition}[\cite{bib:Pechenik}]
For a skew shifted tableau with entries $i_j$ or $i'_j$ for $i\in \{1,2,\ldots\}$ where $j$ is any symbol, we call $i$ the \emph{gene family} of an entry $i_j$ or $i'_j$ and $j$ the \emph{gene}. Two such tableaux are \textit{equivalent} if they are the same up to a relabeling of the genes $j$.  An equivalence class $T$ of these tableaux is a \newword{semistandard genomic tableau} if:
\begin{itemize}
\item The tableau $T_{ss}$ obtained by forgetting the gene subscripts is semistandard and in canonical form.
\item Each gene $j$ consists only of letters $i_j$ and $i'_j$ for some $i$, and its letters are consecutive in standardization order.
\item For every primed entry $i'_j$, there is an unprimed letter $i_k$ preceding it in reading order from the same family $i$ but different gene $k\neq j$.
\item No two squares of the same gene are horizontally or vertically adjacent.
\end{itemize}
The \newword{K-theoretic content} of $T$ is $(c_1, \ldots, c_r)$ where $c_i$ is the number of genes in the $i$-th family. Finally, $T$ is \newword{ballot} if it is semistandard and has the following property:
\begin{itemize}
\item[$(*)$] Let $T'$ be any genomic tableau obtained by deleting, within each gene family of $T$, all but one of every gene. Let $T'_{ss}$ be the tableau obtained by deleting the corresponding entries of $T_{ss}$. Then the reading word of $T'_{ss}$ is ballot for all $T'$ obtained in this way.
\end{itemize}
\end{definition}
\noindent Write $\K(\lambda/\mu;\nu)$ for the set of ballot genomic tableaux of shape $\lambda/\mu$ and K-theoretic content $\nu$.
\begin{thm}[\cite{bib:Pechenik}]
We have $k_{\mu  \nu}^\lambda = |\K(\lambda/\mu;\nu)|$.
\end{thm}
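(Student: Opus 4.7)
The plan is to follow the standard playbook for K-theoretic Littlewood--Richardson rules: develop a K-theoretic analog of shifted jeu de taquin, reduce to the Pieri case, and then bootstrap via associativity. This parallels Buch's approach in the ordinary Grassmannian setting and is the strategy taken by Pechenik--Yong in the cited paper.

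First, I would define a \emph{genomic jeu de taquin} on semistandard shifted genomic tableaux, generalizing the Worley--Sagan slides. The key feature is that a single slide should act on equivalence classes modulo gene relabeling, preserving K-theoretic content and ballotness. Occasionally, a slide should cause two squares of the same gene to merge (decreasing $|\lambda|$ by one), accounting for the sign $(-1)^{|\lambda|-|\mu|-|\nu|}$ in the K-class expansion. The main technical claim here is confluence: \emph{genomic rectification}, the result of repeatedly sliding inner boxes out, should be well-defined up to genomic equivalence, independently of the order of slides. This can be proved by an induction on shape using a Haiman-style dual equivalence argument adapted to the genomic setting, mirroring the shifted dual equivalence reviewed in Section~\ref{sec:tableaux}.

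Second, I would verify the Pieri case: when $\nu=(r)$ is a single row, show directly that $|\K(\lambda/\mu;(r))|$ matches the known K-theoretic Pieri rule for $\OGn$ (Buch--Ravikumar; Clifford--Thomas--Yong). Here the genes form ``K-theoretic horizontal strips'' in which a gene family may occupy multiple non-adjacent cells, and the ballotness condition $(*)$ matches the Pieri enumeration after deleting all but one cell per gene. Once the Pieri case is established, the full statement follows by induction on $|\nu|$: writing $[\mathcal{O}_\nu]$ as a signed combination of products of Pieri classes (using associativity and the Pieri rule itself to invert), the coefficients $k_{\mu\nu}^\lambda$ are uniquely determined, and a bijective argument decomposing a tableau in $\K(\lambda/\mu;\nu)$ along its bottom row shows the genomic counts satisfy the same recursion.

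The main obstacle will be the confluence of genomic jeu de taquin in the shifted setting. The interaction of primed entries with the diagonal in shifted tableaux already makes ordinary shifted jeu de taquin delicate, and the gene structure introduces a further layer of combinatorics: slides must interact correctly with the requirement that primed entries $i'_j$ be preceded by some unprimed $i_k$ with $k\neq j$, and that genes remain consecutive in standardization order. Verifying that ballotness is preserved under every slide, and that the resulting rectification is independent of slide order, is where essentially all of the difficulty lies; once that machinery is in place, the Pieri-plus-associativity argument is routine.
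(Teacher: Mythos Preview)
This theorem is not proved in the paper at all: it is stated with attribution to Pechenik--Yong \cite{bib:Pechenik} and used as a black box, with no argument given. The paper's ``proof'' is simply the citation. Your proposal is therefore not so much a different route as an attempt to supply content the paper deliberately outsources.

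As a sketch of how the cited result is actually established, your outline is in the right spirit (a K-theoretic jeu de taquin with confluence, reduction to a known Pieri rule such as Buch--Ravikumar, and a bootstrap), and this is broadly the Pechenik--Yong strategy. One caution: the shifted genomic theory is genuinely more delicate than you suggest. In particular, confluence of shifted genomic jeu de taquin is not a routine adaptation of Haiman-style dual equivalence; the interaction of the diagonal, the priming convention, and the gene/standardization constraints you mention is exactly where the real work in \cite{bib:Pechenik} lies, and ``mirroring the shifted dual equivalence reviewed in Section~\ref{sec:tableaux}'' undersells the difficulty. Your proposal correctly identifies this as the main obstacle, but treating it as something one would simply ``verify'' is optimistic. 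For the purposes of the present paper, though, none of this matters: the authors take the theorem as given.
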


We are most concerned with the case of partitions $\alpha, \beta, \gamma$ with $|\alpha| + |\beta| + |\gamma| = k(n-k)-1$. In this case there will only be one repeated gene, in one gene family. Let $\K(\gamma^c/\alpha;\beta)(i)$ be the set of increasing tableaux in which $i$ is the repeated gene family. For convenience, we state the following simpler characterization of this set:
\begin{lemma} \label{lem:genomic-criterion}
Let $T$ be a shifted semistandard tableau in canonical form of shape $\gamma^c/\alpha$ and content equal to $\beta$ plus a single extra $i$ or $i'$. Let $\{\ybox_1, \ybox_2\}$ be two squares of $T$, such that:
\begin{itemize}
\item[(i)] The squares $\ybox_1,\ybox_2$ are non-adjacent, consecutive in standardization order, and each contains $i$ or $i'$.
\item[(ii)] If $\ybox_1$ contains $i'$ and $\ybox_2$ contains $i$ (so that $\ybox_2$ is the first $i$ in reading order) then there is an $i$ between $\ybox_2$ and $\ybox_1$ in reading order.
\item[(iii)] For $k = 1,2,$ the word obtained by deleting $\ybox_k$ from the reading word of $T$ and canonicalizing is ballot.
\end{itemize}
There is a unique ballot genomic tableau $T' \in \K(\gamma^c/\alpha; \beta)(i)$ corresponding to the data $(T,\{\ybox_1,\ybox_2\})$. Conversely, each $T'$ corresponds to a unique $(T,\{\ybox_1,\ybox_2\})$.
\end{lemma}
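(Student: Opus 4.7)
The plan is to build an explicit bijection between pairs $(T,\{\ybox_1,\ybox_2\})$ satisfying (i)--(iii) and ballot genomic tableaux $T'\in\K(\gamma^c/\alpha;\beta)(i)$. The key observation is that since the K-theoretic content of $T'$ is $\beta$ while the underlying letter content of $T'_{ss}$ is $\beta$ together with one extra $i$ or $i'$, exactly one gene of $T'$ occupies two cells, and that gene necessarily lies in family $i$. Hence all the nontrivial combinatorial data in $T'$ is encoded by the position of the doubled gene.

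I would define the forward map by assigning $\ybox_1$ and $\ybox_2$ a common gene $j_0$ in family $i$, and giving every other cell of $T$ its own distinct gene (the particular labels are immaterial up to permutation). The reverse map extracts from $T'$ the unique two-cell gene as $\{\ybox_1,\ybox_2\}$, ordered by $\ltst$, and then forgets the gene subscripts to recover $T$. These maps are inverse to each other by construction, so the entire content of the argument lies in checking that conditions (i)--(iii) translate exactly into the Pechenik--Yong axioms for a ballot genomic tableau.

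I would then verify the axioms in turn. Semistandardness of $T'_{ss}$ and canonical form transfer directly from the assumption on $T$. The axiom that each gene's cells carry values consecutive in standardization order, and are pairwise non-adjacent, is vacuous for singleton genes and for $j_0$ is precisely condition (i). The ballotness axiom $(\ast)$, that every tableau obtained by retaining one cell per gene is ballot, is trivial for singleton genes; only the doubled gene $j_0$ presents a genuine choice, between keeping $\ybox_1$ and keeping $\ybox_2$, and this is exactly condition (iii).

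The main obstacle is the primed-precedence axiom: every primed $i'_j$ in $T'$ must be preceded in reading order by some unprimed $i_k$ with $k\neq j$. When both $\ybox_1$ and $\ybox_2$ contain $i'$, the cell $\first(i,T)$ lies in a singleton gene strictly earlier in reading order than every other $i$-family cell, so the axiom holds automatically for every primed entry and condition (ii) is vacuous. The delicate case is when $\ybox_1$ contains $i'$ and $\ybox_2$ contains $i$: then $\ybox_2=\first(i,T)$ is the unique unprimed $i$-cell and shares gene $j_0$ with $\ybox_1$, so $\ybox_2$ cannot serve as the witness for $\ybox_1=i'_{j_0}$; we therefore need another $i$-family cell to appear between $\ybox_2$ and $\ybox_1$ in reading order, which, lying in a singleton gene $\neq j_0$, provides an unprimed witness. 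This is exactly condition (ii), and all remaining primed $i'_j$ with $j\neq j_0$ are witnessed by $\ybox_2$ itself. Thus (i)--(iii) correspond precisely to the Pechenik--Yong axioms, completing the bijection.
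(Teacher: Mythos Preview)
Your proposal is correct and follows essentially the same approach as the paper's own proof: construct $T'$ by putting $\ybox_1,\ybox_2$ into a common gene and all other cells into singleton genes, then verify that conditions (i)--(iii) correspond exactly to the semistandard-genomic and ballot axioms. The paper's proof compresses this into two sentences; your version supplies the detailed case analysis, particularly for the primed-precedence axiom.

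Two minor points you might tighten. First, you do not explicitly treat the case where both $\ybox_1$ and $\ybox_2$ are unprimed; it is the easiest case (any primed $i'_j$ is witnessed by $\first(i,T)$, whose gene---whether $j_0$ or a singleton---is distinct from the singleton gene $j$), but it should be mentioned for completeness. Second, in the delicate case $\ybox_1=i'$, $\ybox_2=i$, you assert that the intermediate $i$-family cell required by (ii) ``provides an unprimed witness'' without saying why it must be unprimed: the reason is that $\ybox_1$, being the last $i'$ in standardization order, is the \emph{first} $i'$ in reading order, so every $i$-family cell strictly preceding it in reading order is necessarily unprimed. With these additions the argument is complete.
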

\begin{proof}
The gene families of $T'$ are the entries of $T$. For $m \ne i$, the $m$-th gene family of $T'$ has all distinct genes, one for each entry $m$ in $T$. For the $i$-th gene family, the the squares $\ybox_1, \ybox_2$ are in the same gene $j$ and the others are in distinct genes.  Conditions (i) and (ii) are equivalent to $T'$ being genomic; ballotness of $T'$ is then equivalent to (iii).  
\end{proof}

\subsection{Generating genomic tableaux}
We now establish connections between local evacuation-shuffling and genomic tableaux. Consider a step of index $t$ in the switching algorithm for $\esh(\ybox, T)$. Call the position of $\ybox$ before and after the switch $\ybox_1$ and $\ybox_2$.  By Theorem \ref{thm:phase12-ballotness}, the tableau's reading word is ballot both before and after the switch. If we replace both $\ybox_1$ and $\ybox_2$ by $t$'s, then they are consecutive in standardization order by the nature of the algorithm. Putting the result in canonical form, we obtain a ballot genomic tableau, with $\{\ybox_1, \ybox_2\}$ as the unique repeated gene, as long as the two squares are non-adjacent. 
If the step is a hop (including the possibility of an exceptional step) we will say that $\ybox$ \newword{traversed} this genomic tableau \newword{in reverse standardization order}; otherwise, for an inverse hop, we will say that $\ybox$ traversed the tableau \newword{in standardization order}.

\begin{thm}\label{thm:k-theory-correspondence}
  The assignment above gives a two-to-one correspondence between non-adjacent steps of the switching algorithm (starting from all possible tableau in $\LRyb$) and the set $\K(\gamma^c/\alpha;\beta)$. Moreover, each genomic tableau is traversed once in standardization order and once in reverse standardization order.
\end{thm}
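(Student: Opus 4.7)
The approach is to verify well-definedness by checking the conditions of Lemma~\ref{lem:genomic-criterion}, and then to construct the inverse map using the pause-and-resume theorems of Section~\ref{sec:reverse-algorithm}.

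For well-definedness, I would consider a non-adjacent step of index $t$ in which $\ybox$ moves from $\ybox_1$ to $\ybox_2$, and let $T_{\text{before}}, T_{\text{after}}$ denote the tableaux immediately before and after the switch. By Theorem~\ref{thm:phase12-ballotness}, the reading words of both (omitting $\ybox$) are ballot, which yields condition (iii) of Lemma~\ref{lem:genomic-criterion}; the non-adjacency hypothesis gives condition (i), and consecutivity in standardization order follows from the fact that a single switch inserts an entry of family $t$ directly adjacent (in standardization order) to the existing family-$t$ entry. The remaining condition (ii) requires exhibiting an $i$-entry strictly between $\ybox_2$ and $\ybox_1$ in reading order precisely when the canonical form of the genomic tableau has $\ybox_2=i$ (first in reading order) and $\ybox_1=i'$. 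I would verify this by case analysis matching the algorithmic substring descriptions: for a Phase 2(a) hop acting on pattern $i(i{+}1)^*\ybox$, the required $i$ is the leading one of the substring; for a Phase 2(b) hop, a symmetric argument applies at the $(i{+}1)'$. In the remaining cases (ordinary inverse hops and exceptional switches), either the entries at $\ybox_1,\ybox_2$ end up both unprimed in canonical form (making (ii) vacuous), or a different-gene $i$ earlier in the tableau already witnesses (ii).

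For the two-to-one part, given $T'\in\K(\gamma^c/\alpha;\beta)$ with unique repeated gene $\{\ybox_1,\ybox_2\}$ in family $i$, I would construct two tableaux $T_{(1)},T_{(2)}$ by deleting $\ybox_k$ (replacing with $\ybox$) for $k=1,2$. Condition (iii) of Lemma~\ref{lem:genomic-criterion} ensures both are ballot, hence each lies in some paused state $Z_s$ (the set defined preceding Theorem~\ref{thm:pause-esh}), with $s$ determined by the primed/unprimed structure of the family-$i$ genes of $T'$ together with the inner/outer co-corner position of the $\ybox$. I would then match each of Cases 1--3 of Theorem~\ref{thm:resume-esh} to the local configuration of $T'$ at $\{\ybox_1,\ybox_2\}$ to verify that applying the algorithm at $T_{(1)}$ produces the switch $\ybox_1\mapsto\ybox_2$, while applying it at $T_{(2)}$ (or rather, the next step in the factorization~\eqref{eqn:factorization} after $T_{(2)}$) produces the reverse switch $\ybox_2\mapsto\ybox_1$. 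Since these switches are mutually inverse, one is a hop and the other is an inverse hop, so $T'$ is traversed once in standardization order and once in reverse, as claimed.

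To finish, I would invoke the bijectivity of the composition $Z_0\to Z_{1'}\to Z_1\to\cdots$ of diagram~\eqref{eqn:factorization}: each of $T_{(1)},T_{(2)}$ is reached by a uniquely determined starting tableau in $\LRyb$, which identifies the two preimage steps unambiguously; moreover, any step mapping to $T'$ must arise from a before-tableau of the form $T_{(k)}$, so there are no further preimages. The main obstacle will be the condition (ii) case analysis in the well-definedness step, where matching the intricate Phase 2 substring patterns to the local structure of the genomic tableau requires careful bookkeeping, especially for hops involving $\first(i,\cdot)$; a secondary technical point is correctly identifying the paused-state index $s$ for each $T_{(k)}$ from the primed/unprimed pattern of the genes of $T'$, which is necessary to apply Theorem~\ref{thm:resume-esh} in the right case.
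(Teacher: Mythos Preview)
Your overall strategy matches the paper's: use the pause-and-resume bijections in diagram~\eqref{eqn:factorization}, identify the two tableaux obtained by replacing each gene square with $\ybox$, and appeal to Theorems~\ref{thm:resume-esh} and~\ref{thm:pause-esh}. However, there is a genuine gap in the two-to-one direction.

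You assert that \emph{both} $T_{(1)}$ and $T_{(2)}$ lie in some paused state $Z_s$. This is false in general. Suppose the repeated gene consists of two unprimed $i$'s with $\ybox_1 \ltst \ybox_2$. Then $T_{(2)}$ (with $\ybox$ at the position of $\ybox_2$) does lie in $Z_{i'}$, and the first step of $\partesh_i(T_{(2)})$ is the hop $T_{(2)} \leadsto T_{(1)}$ (or the corresponding exceptional step), giving the reverse-standardization-order traversal. But $T_{(1)}$ (with $\ybox$ at $\ybox_1$) need not lie in any $Z_s$: there may be $i$-entries both before and after $\ybox$ in reading order, so $\ybox$ is neither an inner nor an outer co-corner of the $i$-strip. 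Ballotness alone does not force membership in a $Z_s$; the co-corner condition is separate.

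The paper's fix is to start from $T_{(1)}$ and perform \emph{further} inverse hops of index $i$ until no more are possible, reaching some $T'_{(1)}$, which is then shown to lie in $Z_i$. One then applies the \emph{reverse} resume algorithm (Theorem~\ref{thm:reverse-resume-esh}) at $T'_{(1)}$: since no inverse hop is available there, $\partesh_i^{-1}(T'_{(1)})$ is in Case~C and retraces the entire sequence of inverse hops, one of which is the step $T_{(1)} \leadsto T_{(2)}$ that traverses the genomic tableau in standardization order. Uniqueness then follows from the bijectivity in Theorem~\ref{thm:pause-esh}, as you indicate. Your well-definedness discussion is more careful than the paper's (which treats it as implicit in the setup), and your instinct to check condition~(ii) of Lemma~\ref{lem:genomic-criterion} is sound, though the case analysis you sketch conflates indices in places; in practice the dangerous case is when the two gene squares are $\first(i)$ and the next $i'$, which your exceptional-switch clause should handle.
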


\begin{example}
  Suppose  $\alpha=(4,2)$, $\gamma^c=(5,3,1)$, and $\beta=(2)$.  The six genomic tableaux in $\K(\gamma^c/\alpha,\beta)$ are as follows, where the repeated gene is underlined:
  \begin{center}
  (a) {\small \begin{ytableau} 
    \none & \none & \underline{1} \\
    \none & \underline{1} \\
    1
  \end{ytableau}
  } \hspace{1cm}
  (b) {\small \begin{ytableau}
    \none & \none & 1 \\
    \none & \underline{1} \\
    \underline{1}
  \end{ytableau}
  } \hspace{1cm}
  (c) {\small \begin{ytableau}
    \none & \none & 1' \\
    \none & \underline{1} \\
    \underline{1}
  \end{ytableau}
  }  \\ \vspace{0.5cm}
  (d) {\small \begin{ytableau}
    \none & \none & \underline{1'} \\
    \none & 1 \\
    \underline{1}
  \end{ytableau}
  } \hspace{1cm}
  (e) {\small \begin{ytableau}
    \none & \none & \underline{1} \\
    \none & 1' \\
    \underline{1}
  \end{ytableau}
  } \hspace{1cm}
  (f) {\small \begin{ytableau}
    \none & \none & \underline{1'} \\
    \none & \underline{1'} \\
    1
  \end{ytableau}
  }
  \end{center}
  
There are six tableaux in $\LRyb$.  Applying the switching algorithm beginning
with each produces the following steps:

\begin{center}
{\setlength\tabcolsep{.2in}
\begin{tabular}{cp{2.4in}}
\small 
\begin{ytableau}
  \none & \none & 1 \\
  \none & 1 \\
  \times 
\end{ytableau}
\raisebox{-0.5cm}{$\xrightarrow{\text{Phase 2(b)}}$}
\begin{ytableau}
  \none & \none & 1 \\
  \none & \times \\
  1 
\end{ytableau}
\raisebox{-0.5cm}{$\xrightarrow{\text{Phase 2(b)}}$}
\begin{ytableau}
  \none & \none & \times \\
  \none & 1 \\
  1 
\end{ytableau}
& traversing (b) and (a) in standardization order;
\\[5ex]
\small 
\begin{ytableau}
  \none & \none & 1' \\
  \none & 1 \\
  \times 
\end{ytableau}
\raisebox{-0.5cm}{$\xrightarrow{\text{Phase 1}}$}
\begin{ytableau}
  \none & \none & \times \\
  \none & 1 \\
  1 
\end{ytableau}
\raisebox{-0.5cm}{$\xrightarrow{\text{Phase 1}}$}
\begin{ytableau}
  \none & \none & 1 \\
  \none & \times \\
  1 
\end{ytableau}
& traversing (d) and (a) in reverse standardization order;
\\[5ex]
\small 
\begin{ytableau}
  \none & \none & 1 \\
  \none & \times \\
  1'
\end{ytableau}
\raisebox{-0.5cm}{$\xrightarrow{\substack{\text{Exceptional} \\ \text{Phase 2(a)}}}$}
\begin{ytableau}
  \none & \none & 1 \\
  \none & 1' \\
  \times 
\end{ytableau}
\raisebox{-0.5cm}{$\xrightarrow{\text{Phase 2(b)}}$}
\begin{ytableau}
  \none & \none & \times \\
  \none & 1' \\
  1 
\end{ytableau}
& traversing (b) in reverse standardization order and (e) in standardization order;
\\[5ex]
\small 
\begin{ytableau}
  \none & \none & 1' \\
  \none & \times \\
  1 
\end{ytableau}
\raisebox{-0.5cm}{$\xrightarrow{\text{Phase 1}}$}
\begin{ytableau}
  \none & \none & \times \\
  \none & 1' \\
  1 
\end{ytableau}
\raisebox{-0.5cm}{$\xrightarrow{\text{Phase 1}}$}
\begin{ytableau}
  \none & \none & 1 \\
  \none & 1 \\
  \times 
\end{ytableau}
& traversing (e) and (f) in reverse standardization order;
\\[5ex]
\small 
\begin{ytableau}
  \none & \none & \times \\
  \none & 1 \\
  1'
\end{ytableau}
\raisebox{-0.5cm}{$\xrightarrow{\text{Phase 2(a)}}$}
\begin{ytableau}
  \none & \none & 1' \\
  \none & 1 \\
  \times 
\end{ytableau}
\raisebox{-0.5cm}{$\xrightarrow{\text{Phase 2(b)}}$}
\begin{ytableau}
  \none & \none &  1'\\
  \none & \times \\
  1 
\end{ytableau}
& traversing (d) and (c) in standardization order;
\\[5ex]
\small 
\begin{ytableau}
  \none & \none & \times \\
  \none & 1' \\
  1'
\end{ytableau}
\raisebox{-0.5cm}{$\xrightarrow{\text{Phase 2(a)}}$}
\begin{ytableau}
  \none & \none & 1' \\
  \none & \times \\
  1'
\end{ytableau}
\raisebox{-0.5cm}{$\xrightarrow{\substack{\text{Exceptional} \\ \text{Phase 2(a)}}}$}
\begin{ytableau}
  \none & \none & 1' \\
  \none & 1' \\
  \times 
\end{ytableau}
& traversing (f) in standardization order and (c) in 
reverse standardization order.
\end{tabular}
}
\end{center}
Note that since the exceptional steps have index $1$ instead of $1'$, they traverse genomic tableaux in which the repeated gene has two $1$s.  Also note that (d), which has both a $1$ and $1'$ in the repeated gene, is always traversed by a step of index $1'$.  Here, $\ybox_1$ and $\ybox_2$ are first both treated as $1'$, and we obtain (d) when we put the result in canonical form.
\end{example}

\begin{proof}
Let $(T, \{\ybox_1, \ybox_2\})$ be a genomic tableau 
in $\K(\gamma^c/\alpha;\beta)$.
First, suppose that both entries of the repeated gene $\{\ybox_1, \ybox_2\}$ 
are equal to $i$, with $\ybox_1$ preceding
$\ybox_2$ in standardization order.  For $j=1,2$, let $T_j$ be the tableau 
obtained by replacing $\ybox_j$ by $\ybox$ in $T_{ss}$.  Both $T_1$ and
$T_2$ are semistandard and ballot, omitting $\ybox$.

It is clear that $T_2 \in Z_{i'}(\alpha,\beta,\gamma)$, and 
 $T_2 \leadsto T_1$ is a hop.  Therefore, by Theorem~\ref{thm:resume-esh},
the first step of $\partesh_i(T_2)$ is either the hop $T_2 \leadsto T_1$
or an exceptional step
(which occurs in the case where $\ybox_1$ and $\ybox_2$ are the first 
two letters in the $i,i'$-reading word).  In either case this first
step of $\partesh_i(T_2)$ traverses
$(T, \{\ybox_1, \ybox_2\})$ in reverse standardization order, and $T_2$ is the
only tableau in $Z_{i'}$ with this property.  
By Theorem~\ref{thm:pause-esh}, this implies that there 
is a unique tableau in $\LRyb$
for which the switching algorithm traverses $(T, \{\ybox_1, \ybox_2\})$
in standardization order.

Let $T'_1$ be the tableau obtained starting from $T_1$ and performing
as many inverse hops of index $i$ as possible.  Note that this sequence
of inverse hops must have length at least one, since $T_1 \leadsto T_2$ is an
inverse hop.  We have $T'_1 \in Z_i(\alpha,\beta,\gamma)$, since otherwise 
there is an $i$ immediately right of $\ybox$ in $T'_1$; but then these are
adjacent in the reading word, so it possible to switch them, 
contradicting the maximality of the sequence of inverse hops.  
Since it is not possible
to perform an inverse hop or an inverse exceptional switch starting at $T'_1$, Theorem~\ref{thm:reverse-resume-esh}, tells us that the $\partesh_i^{-1}(T'_1)$ begins in Case C, by reversing this sequence of inverse hops. Therefore, some step of $\partesh_i^{-1}(T'_1)$ 
traverses $(T, \{\ybox_1, \ybox_2\})$ in standardization onder, and 
$T'_2$ is the only tableau in $Z_i$ with this property.  
By Theorem~\ref{thm:pause-esh}, this implies that there 
is a unique tableau in $\LRyb$
for which the switching algorithm traverses $(T, \{\ybox_1, \ybox_2\})$
in reverse standardization order.

If ther repeated gene has at least one $i'$, then the argument is 
essentially identical, with $\partesh_{i'}: Z_{i-1} \to Z_{i'}$ in place of
$\partesh_i : Z_{i'} \to Z_i$, and using the column reading word $T'_2$ to 
prove that $T'_2 \in Z_{i'}$.
\end{proof}

\subsection{Geometric consequences}

Using the connection with genomic tableaux developed above, we can now deduce several facts about \emph{complex} geometry of Schubert curves.  

\begin{definition}
  Let $\eta(S)$ be the number of real connected components of the Schubert curve $S$ (which is equal to the number of orbits of $\omega$), let $h^0(S)=\dim_{\mathbb{C}}H^0(\mathcal{O}_S)$ be the number of complex connected components, let $\iota(S)$ be the number of irreducible components, and let $\chi(\mathcal{O}_S)$ be its holomorphic Euler characteristic.
\end{definition}

We first prove two basic geometric results relating these quantities.  The type A versions of these facts were shown in \cite{bib:Levinson}, and the proofs are similar, but we include sketches of the proofs here for the reader's convenience.

\begin{lemma}\label{lem:chi-formula}
  We have $$\chi(\mathcal{O}_S)=|\LRyb|-|\K(\gamma^c/\alpha;\beta)|.$$
\end{lemma}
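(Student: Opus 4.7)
The plan is to compute $[\mathcal{O}_S]\in\K(\OGn)$ in the Schubert basis of structure sheaves, then push forward to a point via the Euler characteristic.

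First I would identify
\[[\mathcal{O}_S] \;=\; [\mathcal{O}_\alpha]\cdot[\mathcal{O}_\beta]\cdot[\mathcal{O}_\gamma] \;\in\; \K(\OGn).\]
Since the defining intersection is proper of the expected dimension (Theorem~\ref{thm:eisenbud-harris}), $S$ is reduced (Lemma~\ref{lem:geometry-schubert-curves-type-B}), and Schubert varieties in $\OGn$ are Cohen--Macaulay, the higher Tor sheaves between the three Schubert structure sheaves vanish and the triple product in $\K$ coincides with $[\mathcal{O}_S]$.

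Next I would expand this product in the Schubert basis. Since $|\alpha|+|\beta|+|\gamma|=\dim(\OGn)-1$, the K-theoretic product formula together with the Brion sign $(-1)^{|\lambda|-|\alpha|-|\beta|-|\gamma|}$ implies that only two types of terms contribute: the codimension-one (``curve'') classes $[\mathcal{O}_\lambda]$ with $|\lambda|=\dim(\OGn)-1$, and the top class $[\mathcal{O}_\stair]$. In the cohomological degree $|\lambda|=\dim(\OGn)-1$ the K-theoretic and cohomological structure constants agree, so this coefficient is the shifted Littlewood--Richardson number $f^\lambda_{\alpha\beta\gamma}$. Pechenik--Yong's shifted K-theoretic Littlewood--Richardson rule identifies the top-class coefficient with $|\K(\gamma^c/\alpha;\beta)|$, and the accompanying Brion sign is $(-1)^{|\stair|-|\alpha|-|\beta|-|\gamma|}=-1$. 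Thus
\[[\mathcal{O}_S] \;=\; \sum_{|\lambda|=\dim(\OGn)-1} f^\lambda_{\alpha\beta\gamma}\,[\mathcal{O}_\lambda] \;-\; |\K(\gamma^c/\alpha;\beta)|\cdot[\mathcal{O}_\stair].\]
Summing the curve contributions, each strict $\lambda$ with $|\lambda|=\dim(\OGn)-1$ extends uniquely to $\stair$ by a single box, so
\[\sum_{|\lambda|=\dim(\OGn)-1} f^\lambda_{\alpha\beta\gamma} \;=\; f^{\stair}_{\alpha,\ybox,\beta,\gamma} \;=\; |\LRyb|.\]

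Finally I would apply the proper pushforward $\pi_*:\K(\OGn)\to\K(\mathrm{pt})=\mathbb{Z}$, which sends $[\mathcal{F}]$ to $\chi(\mathcal{F})$. Since $\OGn$ is minuscule, every Schubert variety has rational singularities (Ramanathan), so $\chi(\mathcal{O}_\lambda)=1$ for all strict $\lambda$. Combining with the expansion above yields
\[\chi(\mathcal{O}_S) \;=\; \sum_{|\lambda|=\dim(\OGn)-1} f^\lambda_{\alpha\beta\gamma} \;-\; |\K(\gamma^c/\alpha;\beta)| \;=\; |\LRyb|-|\K(\gamma^c/\alpha;\beta)|,\]
as desired.

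The two main subtle points are (a) the product identification $[\mathcal{O}_S]=[\mathcal{O}_\alpha][\mathcal{O}_\beta][\mathcal{O}_\gamma]$, which is a standard consequence of proper intersection for Cohen--Macaulay subvarieties, and (b) the identification of the top-class coefficient with $|\K(\gamma^c/\alpha;\beta)|$, which follows from the two-factor Pechenik--Yong rule combined with Schubert duality at the top class. Once these are in hand, the rest of the argument is a clean application of the Schubert basis expansion and the pushforward $\chi$.
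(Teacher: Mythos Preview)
Your proof is correct and follows essentially the same approach as the paper: identify $[\mathcal{O}_S]$ with the triple product in $\K(\OGn)$, expand in the Schubert basis, and push forward using $\chi(\mathcal{O}_\lambda)=1$. One small simplification you may not have noticed: the staircase $\stair$ has a \emph{unique} removable box (only the last part can be decreased while keeping the partition strict), so your sum over $|\lambda|=\dim(\OGn)-1$ has exactly one term, and the Pieri step collapsing it to $|\LRyb|$ is immediate. Your justifications of the two ``subtle points'' (Tor vanishing via Cohen--Macaulayness, and $\chi(\mathcal{O}_\lambda)=1$ via rational singularities) are more explicit than what the paper records, which simply writes $\chi(\mathcal{O}_S)=\chi([\mathcal{O}_\alpha][\mathcal{O}_\beta][\mathcal{O}_\gamma])$ and observes that $X_{\stair/\tinybox}\cong\mathbb{P}^1$.
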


\begin{proof}
  Since $\chi$ is additive on the K-theory ring $\K(\OG)$, we have 
  \begin{eqnarray*}
  \chi(\mathcal{O}_S)&=&\chi([\mathcal{O}_\alpha]\cdot [\mathcal{O}_\beta]\cdot[\mathcal{O}_\gamma]) \\
  &=& \chi\left(-k^{\stair}_{\alpha\beta\gamma} [\mathcal{O}_{\stair}]+k^{\stair/\tinybox}_{\alpha\beta\gamma}[\mathcal{O}_{\stair/\tinybox}]\right) \\
  &=& -k^{\stair}_{\alpha\beta\gamma}+k^{\stair/\tinybox}_{\alpha\beta\gamma}
  \end{eqnarray*}
  since the Schubert curve $X_{\stair/\tinybox}$ is a copy of $\mathbb{P}^1$ and so $\chi(\mathcal{O}_\stair)=\chi(\mathcal{O}_{\stair/\tinybox})=1$.  Furthermore, since $|\alpha|+|\beta|+|\gamma|=|\stair/\tinybox|$ the coefficient $k^{\stair/\tinybox}_{\alpha\beta\gamma}$ is equal to the corresponding ordinary Littlewood-Richardson coefficient and is enumerated by $|\LRyb|$.  Finally, the other coefficient, $k^{\stair}_{\alpha\beta\gamma}$, counts chains of ballot genomic tableaux completely filling $\stair$ of contents $\alpha,\beta,\gamma$, which is equivalently enumerated by $|\K(\gamma^c/\alpha;\beta)|$. 
\end{proof}

\begin{lemma}\label{lem:inequality}
  We have $\chi(\mathcal{O}_S)\le h^0(S)\le \iota(S)\le \eta(S)$.
\end{lemma}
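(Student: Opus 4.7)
The plan is to establish the three inequalities separately, proceeding from left to right, with the third being where the combinatorial–geometric content of this paper (smoothness of $S(\mathbb{R})$ via Lemma~\ref{lem:geometry-schubert-curves-type-B}) actually enters.

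For $\chi(\mathcal{O}_S) \leq h^0(S)$, I will simply note that $S$ is one-dimensional, so $\chi(\mathcal{O}_S) = h^0(\mathcal{O}_S) - h^1(\mathcal{O}_S)$, and $h^1(\mathcal{O}_S) \geq 0$ gives the bound. Since $S$ is reduced and projective over $\mathbb{C}$, each complex connected component contributes exactly $1$ to $\dim_{\mathbb{C}} H^0(\mathcal{O}_S)$, so $h^0(\mathcal{O}_S) = h^0(S)$.

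For $h^0(S) \leq \iota(S)$, every connected component of $S$ is the union of the irreducible components it contains, and each irreducible component lies in a unique connected component. Thus the partition of irreducible components by connected component has at least $h^0(S)$ nonempty blocks, giving $h^0(S) \leq \iota(S)$.

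The main obstacle is $\iota(S) \leq \eta(S)$, which is the only inequality that uses the geometry developed in Section~\ref{sec:geometry}. My plan is to construct an injection from the set of complex irreducible components of $S$ to the set of real connected components of $S(\mathbb{R})$. First I would show that every complex irreducible component $C$ of $S$ has nonempty real locus: the restricted Wronski map $C \to \mathbb{P}^1$ is either constant (and then the fiber is real by Theorem~\ref{thm:MTV-purbhoo}) or surjective, in which case a point $p \in \mathbb{RP}^1$ lifts under $\Wr|_C$ to a point of $C$, which must be real since $\Wr^{-1}(\mathbb{RP}^1) \subset S$ consists entirely of real points by Lemma~\ref{lem:geometry-schubert-curves-type-B}. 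Next, I would rule out complex-conjugate pairs: if $C$ and $\bar C$ were distinct conjugate irreducible components, then $C(\mathbb{R}) \subset C \cap \bar C$ would consist of singular points of $S$, contradicting the smoothness of $S(\mathbb{R})$. Hence every irreducible component is defined over $\mathbb{R}$ and has nonempty real locus. Finally, smoothness of $S(\mathbb{R})$ also implies that no real point of $S$ lies on two distinct complex irreducible components, so the real loci $C(\mathbb{R})$ for distinct irreducible components $C$ are pairwise disjoint subsets of $S(\mathbb{R})$. Choosing, for each $C$, any real connected component of $S(\mathbb{R})$ meeting $C(\mathbb{R})$ gives the desired injection, yielding $\iota(S) \leq \eta(S)$.

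The key technical input throughout is Lemma~\ref{lem:geometry-schubert-curves-type-B}: reducedness gives the Euler-characteristic decomposition, smoothness of $S(\mathbb{R})$ rules out both conjugate pairs of components and multi-component incidences at real points, and reality of fibers over $\mathbb{RP}^1$ produces real points on every irreducible component. Everything else is formal.
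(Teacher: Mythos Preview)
Your proof is correct and follows essentially the same approach as the paper. The only difference is that the paper uses flatness of $S \to \mathbb{P}^1$ (from Lemma~\ref{lem:geometry-schubert-curves-type-B}) to conclude directly that every irreducible component dominates $\mathbb{P}^1$, so your ``constant'' case never arises; conversely, you are more explicit than the paper about why smoothness of $S(\mathbb{R})$ yields the injection from irreducible components to real connected components, which the paper leaves implicit.
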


\begin{proof}
  The first inequality follows from the fact that $S$ is one-dimensional so $\chi(\mathcal{O}_S)=h^0(S)-h^1(S)$.  We clearly have $h^0(S)\le \iota(S)$.    Finally, since the map $S\to \mathbb{P}^1$ is flat (Lemma \ref{lem:geometry-schubert-curves-type-B}), every irreducible component of $S$ dominates the $\mathbb{P}^1$.  Since the fibers in $S$ over $\mathbb{RP}^1\subset \mathbb{P}^1$ consist entirely of real points, it follows that every irreducible component of $S$ contains a real point.  Thus we have $\iota(S)\le \eta(S)$.
\end{proof}

\subsubsection{Schubert curves with trivial monodromy}

We now use these connections along with the bijections above to analyze the case when $\omega$ is the identity map.

\begin{thm}\label{thm:identity}
  The monodromy operator $\omega$ acts on $\LRyb$ as the identity if and only if $|\K(\gamma^c/\alpha;\beta)|=0$.  Furthermore, if $\omega$ is the identity map then the complex Schubert curve $S$ is isomorphic to a disjoint union of copies of $\mathbb{P}^1$, and the map $S\to \mathbb{P}^1$ is locally an isomorphism.
\end{thm}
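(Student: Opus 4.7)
The plan is to first establish the combinatorial equivalence $\omega = \mathrm{id} \iff |\K(\gamma^c/\alpha;\beta)| = 0$, and then derive the geometric statement from it. For the direction $|\K|=0 \Rightarrow \omega = \mathrm{id}$, I would combine Lemmas \ref{lem:chi-formula} and \ref{lem:inequality}. Assuming $|\K|=0$ gives $\chi(\mathcal{O}_S) = |\LRyb|$, while the $\omega$-orbits partition $\LRyb$, so $\eta(S) \leq |\LRyb|$. The chain $|\LRyb| = \chi \leq h^0 \leq \iota \leq \eta \leq |\LRyb|$ then forces $\eta = |\LRyb|$, meaning every $\omega$-orbit is a singleton, i.e.\ $\omega$ acts as the identity.

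For the converse $\omega = \mathrm{id} \Rightarrow |\K|=0$, I would reformulate via Theorem \ref{thm:k-theory-correspondence}: since non-adjacent steps of the switching algorithm correspond $2$-to-$1$ with $\K(\gamma^c/\alpha;\beta)$, it suffices to show that $\omega = \mathrm{id}$ forces every step (over all starting tableaux) to be adjacent. The approach uses the diagram \eqref{eqn:factorization}: both $\esh$ and $\sh^{-1}$ factor through the same paused states $Z_t$, $Z_{t'}$ as $\esh = \partesh_{\ell(\beta)} \circ \cdots \circ \partesh_{1'}$ and $\sh^{-1} = \partsh_{\ell(\beta)}^{-1} \circ \cdots \circ \partsh_{1'}^{-1}$. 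Assuming $\esh = \sh^{-1}$, I would argue inductively on $t$ that $\partesh_t = \partsh_t^{-1}$: the uniqueness of paused states (Theorem \ref{thm:pause-esh}) pins down the intermediate tableaux unambiguously, so if the earlier pieces agree the global equality forces the next piece to agree as well. Since $\partsh_t^{-1}$ is ordinary jeu de taquin through the $t$-ribbon and uses only adjacent slides, comparing with the three-case recipe in Theorem \ref{thm:resume-esh} shows that $\partesh_t = \partsh_t^{-1}$ is possible only when no non-adjacent switch of index $t$ ever occurs; summing over $t$ then yields $|\K|=0$. The main obstacle is precisely this inductive step, because the bare inequalities of Lemma \ref{lem:inequality} recover only $|\K| \geq 0$; the converse genuinely requires the combinatorial content of the pause-and-resume structure together with the explicit case analysis of Theorem \ref{thm:resume-esh}.

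For the geometric conclusion, given $\omega = \mathrm{id}$ (and hence $|\K|=0$), we have $\chi(\mathcal{O}_S) = \eta(S) = |\LRyb|$, so Lemma \ref{lem:inequality} collapses to $\chi = h^0 = \iota = \eta = |\LRyb|$. Every irreducible component $C$ of $S$ is therefore also a connected component, with $\chi(\mathcal{O}_C) = 1$ and hence arithmetic genus zero; a reduced irreducible projective curve of arithmetic genus $0$ is isomorphic to $\mathbb{P}^1$, so each $C \cong \mathbb{P}^1$. Finally, the finite flat map $S \to \mathbb{P}^1$ has total degree $|\LRyb|$ distributed over the $|\LRyb|$ components, so each $C \to \mathbb{P}^1$ has degree $1$; a finite degree-$1$ self-map of $\mathbb{P}^1$ is an isomorphism, yielding $S \cong \bigsqcup_{|\LRyb|} \mathbb{P}^1$ with the map $S \to \mathbb{P}^1$ being locally an isomorphism.
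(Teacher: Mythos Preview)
Your argument for the implication $|\K|=0 \Rightarrow \omega = \mathrm{id}$ is correct and is a pleasant alternative to the paper's approach.  The paper argues this direction purely combinatorially: if every step of the switching algorithm is adjacent, then since the intermediate tableaux are all semistandard (Theorem~\ref{thm:phase12-ballotness}), the subsequent application of $\sh$ simply undoes each slide.  Your route via the squeeze $|\LRyb| = \chi(\mathcal{O}_S) \leq \eta(S) \leq |\LRyb|$ is cleaner and avoids thinking about the switching algorithm at all, at the cost of invoking the geometry.  Your treatment of the geometric conclusion also matches the paper's, and your degree-counting remark about why each component maps isomorphically to $\mathbb{P}^1$ is a nice explicit justification of the ``locally an isomorphism'' clause.

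The converse direction, however, has a genuine gap.  You propose to deduce from $\esh = \sh^{-1}$ (as maps $Z_0 \to Z_{\ell(\beta)}$) that $\partesh_t = \partsh_t^{-1}$ for each $t$, by induction.  But equality of two compositions of bijections through the same chain of sets does \emph{not} force the individual factors to agree: knowing that the composites $\partesh_{\ell(\beta)} \circ \cdots \circ \partesh_{1'}$ and $\partsh_{\ell(\beta)}^{-1} \circ \cdots \circ \partsh_{1'}^{-1}$ coincide, even together with the inductive hypothesis that the first $k$ factors agree, only tells you that the remaining tails agree as composites, not that $\partesh_{t}$ and $\partsh_t^{-1}$ agree individually.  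Theorem~\ref{thm:pause-esh} guarantees that each paused state is a well-defined element of $Z_t$, but it does not say that this element is determined by the final output in $Z_{\ell(\beta)}$, which is what your ``uniqueness of paused states'' appeal would need.

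The paper's argument for this direction is more direct and does not try to factor through the index decomposition.  It argues by contradiction: if some $(\ybox,T)$ has a non-adjacent step in its switching-algorithm computation of $\esh$, look at the first index $t$ at which one occurs.  Within that $i$- or $i'$-strip, all switches go in a single direction (either all in reading order or all in reverse), so a non-adjacent switch displaces some entry of that strip to a new square and no later switch puts a $t$ back in the vacated square.  Since $\sh$ moves $\ybox$ only through adjacent squares, it cannot restore that entry to its original position, so $\omega(T) \neq T$.  This is the missing idea you would need in place of the inductive factorization.
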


\begin{proof}
  Suppose $\omega$ acts as the identity map.  Let $T\in \LRyb$.  Then $\omega(T)=\sh(\esh(T))=T$, so the switching algorithm applied to $T$ is reversed by JDT.  
  
  Assume for contradiction that there is a non-adjacent step in the switching algorithm computation of $\esh(T)$, that is, a step in which $\ybox$ switches with an entry in a square that is not adjacent to it.  Let $t$ be the first such entry that it switches with; Notice that in any phase of the algorithm, any given $i'$ strip or $i$ strip has all its switches with $\ybox$ in only one direction (reading order or reverse reading order). Thus some entry in the $i'$-strip or $i$-strip containing $t$ is switched with $\ybox$ and not replaced by a letter of the same type.  Since JDT only involves adjacent move, it cannot move this entry back to its original position, and $T$ is not fixed by $\omega$, a contradiction. It follows that there are no non-adjacent steps in the computation and so, by Theorem \ref{thm:k-theory-correspondence}, we have $|\K(\gamma^c/\alpha;\beta)|=0$.   
  
  Conversely, if $|\K(\gamma^c/\alpha;\beta)|=0$, there are no non-adjacent steps in the computation, and so the JDT slides undo every step of the algorithm since the tableau at every step of the algorithm is semistandard.  Hence $\omega=\mathrm{id}$ if and only if $|\K(\gamma^c/\alpha;\beta)|=0$.
  
  Now, suppose $\omega$ is the identity map.  Then we have $\chi(\mathcal{O}_S)=|\LRyb|-0=|\LRyb|$ by Lemma \ref{lem:chi-formula}.  Note that in this case there are also exactly $|\LRyb|$ real connected components of the Schubert curve $S$.   By Lemma \ref{lem:inequality}, it follows that $$\chi(\mathcal{O}_S)=h^0(S)=\iota(S)=\eta(S)$$ and so there are exactly $|\LRyb|$ complex connected components, each of them irreducible. Furthermore, $\dim_{\mathbb{C}}H^1(S)=h^0(S)-\chi(\mathcal{O}_S)=0$ and so $S$ has genus $0$, implying that each component is isomorphic to $\mathbb{P}^1$.
\end{proof}

\subsubsection{Curves with arbitrarily many complex components}

We can now give a family of Schubert curves having arbitrarily many complex connected components.

\begin{proposition}\label{prop:components-example}
Let $t\in \mathbb{Z}_+$ and let $\alpha=(2t+1,2t-1,\ldots,7,5,3)$, $\beta=(t+2)$, and $\gamma^c=(2t+2,2t,\ldots,6,4,2,1)$.  Then the Schubert curve $S$ has exactly $2^{t}$ connected components.
\end{proposition}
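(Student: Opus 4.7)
The plan is to apply Theorem~\ref{thm:identity}: I will show that $|\K(\gamma^c/\alpha;\beta)|=0$ and $|\LRyb|=2^t$. Together these yield that $\omega$ acts as the identity on $\LRyb$, whence $S$ decomposes as a disjoint union of $|\LRyb|=2^t$ copies of $\mathbb{P}^1$, giving exactly $2^t$ complex connected components.

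First I would describe $\gamma^c/\alpha$ explicitly. A direct row-by-row computation shows it consists of $t$ isolated single boxes at positions $(i, 2t+3-i)$ for $1 \le i \le t$, together with a three-box L-shape at the bottom with cells $(t+1,t+1),(t+1,t+2),(t+2,t+2)$. The total size is $t+3=|\beta|+1$, consistent with being a Schubert curve. The $t$ diagonal boxes are pairwise non-adjacent and each is non-adjacent to the L-shape, so all semistandard interactions are confined to the L-shape.

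The key combinatorial observation is that the L-shape cannot be filled semistandardly using only letters from $\{1',1\}$. Writing the L-cells as $(a,b,c)=(T_{t+1,t+1}, T_{t+1,t+2}, T_{t+2,t+2})$, semistandardness requires $a\le b\le c$, while the row-repeat rule forbids $a=b=1'$ and the column-repeat rule forbids $b=c=1$. Enumerating the eight triples in $\{1',1\}^3$ rules out every candidate. Since any element of $\K(\gamma^c/\alpha;\beta)$ has all entries in family $1$ (as $\beta=(t+2)$) and contains no $\ybox$, this immediately yields $\K(\gamma^c/\alpha;\beta)=\varnothing$.

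The same non-fillability forces $\ybox$ to lie in the L-shape for any $(\ybox,T)\in \LRyb$. Among the L-cells, only $(t+1,t+1)$ is an inner co-corner of $\alpha$: adding $(t+1,t+2)$ to $\alpha$ would leave a gap in row $t+1$, and adding $(t+2,t+2)$ would give $\alpha_{t+2}=1$ with $\alpha_{t+1}=0$, violating strictness. With $\ybox$ fixed at $(t+1,t+1)$, canonical form forces $T_{t+2,t+2}=1$ as the first letter in reading order, and the column rule on column $t+2$ then forces $T_{t+1,t+2}=1'$. The $t$ isolated diagonal cells are unconstrained by semistandardness or canonical form and may each independently be $1$ or $1'$, giving $|\LRyb|=2^t$. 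Combined with the vanishing of $|\K|$, Theorem~\ref{thm:identity} finishes the argument. The only potential ``obstacle'' is the routine bookkeeping to verify the shape of $\gamma^c/\alpha$ and the L-shape non-fillability; no new technique beyond Theorem~\ref{thm:identity} is required.
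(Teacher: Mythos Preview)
Your proof is correct and follows essentially the same approach as the paper: both compute the skew shape explicitly, pin down $\ybox$ at $(t+1,t+1)$, determine the two forced entries in the bottom rows, count the $2^t$ free choices for the isolated diagonal cells, and then invoke Theorem~\ref{thm:identity}. The only cosmetic difference is that the paper asserts $\omega=\mathrm{id}$ directly (as ``easy to see'') and then applies Theorem~\ref{thm:identity}, whereas you verify $|\K(\gamma^c/\alpha;\beta)|=0$ via the L-shape obstruction and use the converse direction of the same theorem; these are equivalent by that theorem, and your argument is if anything slightly more explicit.
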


\begin{proof}
  Note that any valid semistandard filling of shape $\gamma^c/\alpha$ with content $\beta$ has $\ybox$ as the leftmost entry in the second row.  The entries in the bottom two rows are determined, and every other entry may be either $1$ or $1'$. (See Figure \ref{fig:components}.) Thus $|\LRyb|=2^t$, and it is easy to see that $\omega=\mathrm{id}$.  The conclusion follows from Theorem \ref{thm:identity}.
\end{proof}

\begin{figure}
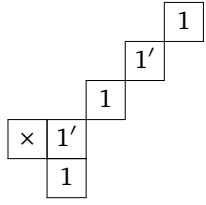

 \begin{center}
$\begin{ytableau}
  \none & \none &  \none & \none & 1 \\
  \none &  \none & \none & 1' \\
  \none &  \none & 1 \\
  \times & 1' \\
  \none  & 1 
\end{ytableau}$
 \end{center}
 \caption{\label{fig:components}A shape and content giving rise to a Schubert curve $S$ with many complex components.}
\end{figure}

\subsubsection{Combinatorial proof of a geometric inequality}

Recall that $\eta(S)$ is the number of real connected components of the Schubert curve $S$; this is equal to $|\mathrm{Orb}(\omega)|$ where $\omega$ is the corresponding monodromy operator and $\mathrm{Orb}$ is the set of orbits of the operator $\omega$. Lemma \ref{lem:chi-formula} allows us to rewrite Lemma \ref{lem:inequality} as the following inequality:
\begin{equation}\label{eqn:inequality}
  |\K(\gamma^c/\alpha;\beta)|\ge |\LRyb|-|\mathrm{Orb}(\omega_{\alpha,\beta,\gamma})|.
\end{equation}

In \cite{bib:GillespieLevinson}, we found a combinatorial proof of this inequality in type A using the bijections between steps of the $\esh$ algorithm and genomic tableaux.  It also relied heavily on a decomposition similar to \eqref{eqn:factorization}.  We now provide a similar proof in the type B setting, omitting some of the details since they closely follow the analysis in \cite{bib:GillespieLevinson}.

\begin{definition}
  Using the notation of \eqref{eqn:factorization}, define 
$\composedpsh_{i'}=\partsh_{1'}\circ \partsh_{1}\circ  \partsh_{2'} \cdots \circ \partsh_{i-1}\circ \partsh_{i'}$ 
and $\composedpsh_{i}= \composedpsh_{i'} \circ \partsh_{i}$ for all $i$.  Then define $\omega_{i'}:Z_0 \to Z_0$ by 
  \[
  \omega_{i'}=\composedpsh_{i-1}\circ (\partsh_{i'} \circ \partesh_{i'}) \circ \composedpsh_{i-1}^{-1}  \hspace{0.5cm} \text{and} \hspace{0.5cm}
  \omega_{i}=\composedpsh_{i'} \circ (\partsh_i \circ \partesh_i) \circ \composedpsh_{i'}^{-1}
  \]
  (where we take $\composedpsh_0$ to be the identity map in the case $i=1$). \end{definition}
  
  Note that we have \begin{equation}\label{eqn:factor-omega}\omega=\omega_{m+1'}\circ \omega_{m}\circ \omega_{m'}\circ \cdots \circ \omega_{1}\circ \omega_{1'}\end{equation} where $m$ is as in \eqref{eqn:factorization}.  We also have that the set of all genomic tableaux arising in the orbits of $\omega$ via the two bijections of Theorem \ref{thm:k-theory-correspondence} coincide with the set of all genomic tableaux arising in each $\omega_{i'}$ and $\omega_{i}$ orbits, at the $\partesh_{i'}$ and $\partesh_{i}$ steps respectively.  
  
  We now analyze the orbits of the $\omega_{i'}$ and $\omega_i$ permutations.  To do so it suffices to understand the corresponding ``loops'' $\partsh_{i'}\circ \partesh_{i'}$ and $\partsh_i \circ \partesh_i$.
  
  \begin{proposition}\label{prop:mini-orbits}
    Let $O_{i'}$ and $O_i$ be orbits (cycles) of the permutations $\partsh_{i'}\circ \partesh_{i'}$ and $\partsh_i \circ \partesh_i$ respectively.  Then the computation of $O_{i'}$, respectively $O_i$, generates exactly $|O_{i'}|-1$, respectively $|O_{i}|-1$, genomic tableaux in each direction as in Theorem \ref{thm:k-theory-correspondence}, via the maps $\partesh_{i'}$ and $\partesh_i$ respectively.
  \end{proposition}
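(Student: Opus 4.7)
The plan is to analyze one cycle of each of the permutations $\partsh_{i'} \circ \partesh_{i'}$ and $\partsh_i \circ \partesh_i$, tracking the genomic tableaux that arise (via Theorem~\ref{thm:k-theory-correspondence}) from the non-adjacent switches, and showing these must number exactly $|O|-1$ in each direction. I will describe the argument for $O_{i'}$; the case of $O_i$ is analogous, modulo additional bookkeeping for exceptional switches (which first need to be isolated, since by Definition~\ref{defn:switchindex} they are grouped with index $i$, not $i'$).

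Let $O_{i'} = \{T_0, T_1, \ldots, T_{\ell-1}\} \subset Z_{i-1}$ with $T_{j+1} = \partsh_{i'} \circ \partesh_{i'}(T_j)$ cyclically. By Theorem~\ref{thm:resume-esh}, each application of $\partesh_{i'}$ falls into Case~2 (a single hop) or Case~3 (a sequence of inverse hops); the exceptional Case~1 cannot occur for primed index. A \emph{non-adjacent} switch in $\partesh_{i'}(T_j)$ contributes one genomic tableau: in reverse standardization order if the switch is a hop (Case~2), and in standardization order if it is an inverse hop (Case~3). The step $\partsh_{i'}$, being ordinary JDT through the $i'$-ribbon, consists entirely of adjacent switches and contributes no genomic tableaux.

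The first key step is to show that the adjacent switches performed by $\partesh_{i'}$ are precisely reversed by the subsequent $\partsh_{i'}$. Concretely, I would verify that an adjacent hop or inverse hop of index $i'$ has the same combinatorial effect on the tableau as a single JDT slide through an $i'$-entry, so that a $\partesh_{i'}$ application consisting only of adjacent switches would be undone by $\partsh_{i'}$ (making $T_j$ a fixed point and $\ell = 1$, consistent with $|O_{i'}|-1 = 0$). Consequently the non-adjacent switches are responsible for all non-trivial motion around the cycle.

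The second and main step is the precise count. Going around the orbit once, list the non-adjacent switches in order and regard them as a closed path in the graph on $Z_{i-1}$ whose edges record non-adjacent hops or inverse hops of index $i'$ (as specified by Theorem~\ref{thm:resume-esh}). Since at each $T_j$ the next move of $\partesh_{i'}$ is \emph{uniquely determined} (by the Case~2/Case~3 dichotomy), this path is a simple directed cycle through the $\ell$ elements of $O_{i'}$, with $\ell$ directed edges in total. The forward/reverse direction of each edge (standardization vs.\ reverse standardization) is determined by whether $\partesh_{i'}$ is in Case~2 or Case~3 at the corresponding tableau. The key bijective observation is then that each vertex $T_j$ sits between one incoming and one outgoing non-adjacent switch, and that \emph{within a single} $\partesh_{i'}(T_j)$, all non-adjacent switches contribute to exactly one direction, but across the cycle the sequence alternates (or at least balances) so that the count in each direction equals $\ell - 1$ rather than $\ell$: one of the $\ell$ transitions, namely the one that closes the cycle, is forced by the others, removing one degree of freedom in each direction.

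The main obstacle will be the precise balancing argument in the last step: showing that the $\ell$ cyclically ordered transitions yield exactly $\ell-1$ distinct genomic tableaux in each direction rather than some other count. Here I would mimic the analogous type~A argument of \cite[proof of Proposition 7.8]{bib:GillespieLevinson}, using Theorem~\ref{thm:reverse-resume-esh} to match each forward-direction genomic tableau in the orbit with a reverse-direction genomic tableau in the same orbit, and using the uniqueness of the $\partesh_{i'}$-determined path to rule out repeats. For the $\partsh_i \circ \partesh_i$ case, the same framework applies once exceptional switches (which are inverse hops grouped under index $i$) are carefully incorporated into Case~1, ensuring that the corresponding genomic tableau (which has two $i$'s in the repeated gene) is counted correctly in the reverse direction.
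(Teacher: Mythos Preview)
Your proposal has a genuine gap in the ``second and main step.'' The claim that the non-adjacent switches form ``a simple directed cycle through the $\ell$ elements of $O_{i'}$, with $\ell$ directed edges in total'' is incorrect: the total number of non-adjacent switches around the orbit is $2(\ell-1)$, not $\ell$. More importantly, these switches are not distributed one per application of $\partesh_{i'}$. The actual structure (which the paper establishes using the Removable Pairs Lemma~\ref{lem:removable-pairs}) is highly asymmetric: the $i'$-entries form a chain of vertical columns, and in $\ell-1$ of the orbit steps $\partesh_{i'}$ is in Case~2 and performs a \emph{single} non-adjacent hop to the next column (one genomic tableau in reverse standardization order each), while in exactly one orbit step $\partesh_{i'}$ is in Case~3 and performs a long sequence of inverse hops back through all the columns, contributing $\ell-1$ non-adjacent switches (genomic tableaux in standardization order). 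Your abstract ``cycle-closing removes one degree of freedom in each direction'' heuristic does not capture this, and I do not see how to make it precise without identifying this concrete column-by-column pattern.

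Your ``first key step'' is also not quite right as stated. It is not the case that $\partsh_{i'}$ simply undoes the adjacent switches of $\partesh_{i'}$ while leaving the non-adjacent ones. In the Case~2 steps, for instance, $\partesh_{i'}$ is a single (non-adjacent) hop landing $\ybox$ at the top of the next $i'$-column, and then $\partsh_{i'}$ slides $\ybox$ down through that column via adjacent moves; the JDT slides are not reversing anything from $\partesh_{i'}$. The correct picture is the geometric one in Figure~\ref{fig:omega-i-prime}: $\partesh_{i'}$ and $\partsh_{i'}$ together advance $\ybox$ by one column, and this is what you need to verify directly from Theorem~\ref{thm:resume-esh} and the Removable Pairs Lemma. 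The $O_i$ case then follows the same template on horizontal $i$-strips, with an additional sub-case (illustrated in Figure~\ref{fig:omega-i-exceptional}) when an exceptional switch is present.
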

  
  \begin{proof}
  (Sketch.)  It is not hard to verify, using the Removable Pairs Lemma (\ref{lem:removable-pairs}) and the definition of $\partesh_{i'}$, that the orbits $O_{i'}$ always have the pattern of moving $\ybox$ up one column of $i'$ entries at a time before jumping back down to the starting column, as illustrated in Figure \ref{fig:omega-i-prime}.  Each non-adjacent step of $\ybox$ to the next column of $i'$ entries generates one genomic tableau in reverse standardization order, and only one step of the orbit has no such step.  Similarly the special step of the orbit generates $|O_{i'}|-1$ genomic tableaux in standardization order.
    
    The orbits $O_i$ can either look similar to the loops $O_{i'}$ (but on the horizontal $i$-strips rather than the vertical $i'$-strips), or if an exceptional move is involved in some $\partesh_i$ step then they can have the form of the example in Figure \ref{fig:omega-i-exceptional}.  In either case, the result is again easily verified.
  \end{proof}

\begin{figure}
\begin{center}
\small 
\begin{ytableau}
 \none & \none & \none & \none & 1' & 1 \\
 \none & \none & \none & \none & 1' \\
 \none & \none & \none & 1' \\
 \none & \none & \none & 1' \\
 \none & \none & \times \\
 \none & 1' & 2' \\
 1 & 2' \\
 2
\end{ytableau} \hspace{0.5cm}
\begin{ytableau}
 \none & \none & \none & \none & \times & 1 \\
 \none & \none & \none & \none & 1' \\
 \none & \none & \none & 1' \\
 \none & \none & \none & 1' \\
 \none & \none & 1' \\
 \none & 1' & 2' \\
 1 & 2' \\
 2
\end{ytableau} \hspace{0.5cm}
\begin{ytableau}
 \none & \none & \none & \none & 1' & 1 \\
 \none & \none & \none & \none & 1' \\
 \none & \none & \none & \times  \\
 \none & \none & \none & 1' \\
 \none & \none & 1' \\
 \none & 1' & 2' \\
 1 & 2' \\
 2
\end{ytableau}

\end{center}

\caption{\label{fig:omega-i-prime} The elements, in order, of an $\omega_{i'}=\partsh_{1'} \circ \partesh_{1'}$ orbit.
}
\end{figure}

\begin{figure}
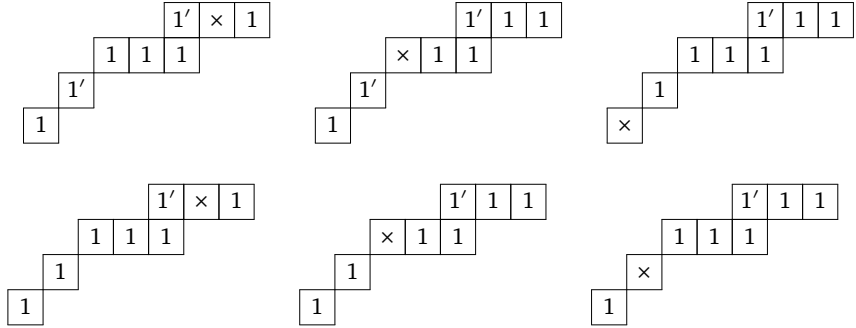

\begin{center}
\small
\begin{ytableau}
 \none & \none & \none & \none & 1' & \times & 1 \\
 \none & \none & 1 & 1 & 1 \\
 \none & 1' \\
 1 
\end{ytableau}\hspace{0.5cm}
\begin{ytableau}
 \none & \none & \none & \none & 1' & 1 & 1 \\
 \none & \none & \times & 1 & 1 \\
 \none & 1' \\
 1 
\end{ytableau}\hspace{0.5cm}
\begin{ytableau}
 \none & \none & \none & \none & 1' & 1 & 1 \\
 \none & \none & 1 & 1 & 1 \\
 \none & 1 \\
 \times 
\end{ytableau} \vspace{0.5cm}

\begin{ytableau}
 \none & \none & \none & \none & 1' & \times & 1 \\
 \none & \none & 1 & 1 & 1 \\
 \none & 1 \\
 1 
\end{ytableau}\hspace{0.5cm}
\begin{ytableau}
 \none & \none & \none & \none & 1' & 1 & 1 \\
 \none & \none & \times & 1 & 1 \\
 \none & 1 \\
 1 
\end{ytableau}\hspace{0.5cm}
\begin{ytableau}
 \none & \none & \none & \none & 1' & 1 & 1 \\
 \none & \none & 1 & 1 & 1 \\
 \none & \times \\
 1 
\end{ytableau}\hspace{0.5cm}

\end{center}

\caption{\label{fig:omega-i-exceptional} An $\partsh_1 \circ \partesh_1$-orbit involving an exceptional move.  The exceptional move occurs on the step from the last element to the first element shown above.}
\end{figure}

Write $\mathrm{refl}(\pi)$ for the \textbf{reflection length} of a permutation $\pi$, the minimum number of transpositions required to generate the permutation. Then since the reflection length of any orbit $\mathcal{O}$ of $\omega$ is equal to $|\mathcal{O}|-1$, by equation \eqref{eqn:factor-omega} we have $$|\LRyb|-|\mathrm{Orb}(\omega)|=\mathrm{refl}(\omega)\le  \sum_i \mathrm{refl}(\omega_i)+\mathrm{refl}(\omega_{i'}).$$
 The right hand side of the above equation is equal to $|\K(\gamma^c/\alpha;\beta)|$ by Proposition \ref{prop:mini-orbits}, and \eqref{eqn:inequality} now follows.

\section{Conjectures}

We have seen that many of the geometric and combinatorial properties of type A Schubert curves have a natural type B analog.  Based on further results and conjectures in type A \cite{bib:GillespieLevinson}, we also make the following conjectures in this setting.

\subsection{Orbit-by-orbit inequality}

In type A, we conjectured an ``orbit-by-orbit'' version of the inequality \eqref{eqn:inequality} as follows.  Note that the right hand side, $|\LRyb|-|\mathrm{Orb}(\omega_{\alpha,\beta,\gamma})|$, can be written as the summation $$\sum_{\mathcal{O}\in \mathrm{Orb}(\omega)} \left(|\mathcal{O}|-1\right).$$
We conjecture in particular that for each orbit $\mathcal{O}$ of $\omega$, the number of genomic tableaux that are traversed in standardization order, or in reverse standardization order, in this orbit is at least $|\mathcal{O}|-1$.  Computer calculations indicate that this refined inequality may be true in type B as well.  We state this precisely as follows.

\begin{conjecture}
 Let $\mathcal{O}\in \mathrm{Orb}(\omega)$.  There are at least $|\mathcal{O}|-1$ genomic tableaux traversed in standardization order and at least $|\mathcal{O}|-1$ traversed in reverse standardization order in the computation of $\mathcal{O}$ using the switching algorithm.
\end{conjecture}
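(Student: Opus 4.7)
The plan is to strengthen Proposition~\ref{prop:mini-orbits} to identify, for each $\omega_t$-orbit $O$ on $Z_0$ of size $k$, a distinguished \emph{standardization source} $e_{std}(O) \in O$ such that all $k-1$ standardization-order genomic tableaux associated to $O$ are generated during the $\partesh_t$ phase of the $\esh$ computation starting at $e_{std}(O)$. In the primed case (Figure~\ref{fig:omega-i-prime}), the sketch of Proposition~\ref{prop:mini-orbits} identifies a unique ``special step'' in the orbit producing all $|O|-1$ standardization-order genomic tableaux at once, and we take $e_{std}(O)$ to be the element from which that step originates. The unprimed case with an exceptional move (Figure~\ref{fig:omega-i-exceptional}) admits an analogous description, with the cascade of inverse hops following the exceptional switch playing the role of the long run. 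The reverse-standardization source $e_{rev}(O)$ is defined symmetrically.

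With this refinement, the number $N_{std}(\mathcal{O})$ of standardization-order genomic tableaux traversed in an $\omega$-orbit $\mathcal{O}$ acquires the clean form
\[
N_{std}(\mathcal{O}) \;=\; \sum_{O \,:\, e_{std}(O) \in \mathcal{O}} (|O| - 1),
\]
where the sum ranges over all $\omega_t$-orbits $O$ across all indices $t$ appearing in the factorization~\eqref{eqn:factor-omega}. The conjecture in the standardization direction thus reduces to the distributional inequality $N_{std}(\mathcal{O}) \geq |\mathcal{O}| - 1$, with the reverse-direction statement following analogously via $e_{rev}$. A natural first attempt is to encode these as a certificate graph $H_{std}$ on $Z_0$ in which each $\omega_t$-orbit $O$ contributes a star of $|O|-1$ edges centered at $e_{std}(O)$, and then to argue that the restriction of this graph to each $\omega$-orbit contains a spanning tree.

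The main obstacle will be the distributional inequality itself. Unlike the global inequality~\eqref{eqn:inequality}, which follows from subadditivity of reflection length applied to~\eqref{eqn:factor-omega}, its orbit-by-orbit refinement requires the distinguished elements $e_{std}(O)$ to cluster sufficiently inside each $\omega$-orbit $\mathcal{O}$. Because $\omega$-orbits are generally not unions of $\omega_t$-orbits---the intermediate images $(\omega_{t-1} \circ \cdots \circ \omega_{1'})(\mathcal{O})$ typically differ from $\mathcal{O}$ as subsets of $Z_0$---a naive graph-walking argument between $\omega_t$-orbits exits $\mathcal{O}$ and does not certify enough $e_{std}(O) \in \mathcal{O}$. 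Overcoming this will likely require tracking the intermediate trajectories $Y_t(\mathcal{O}) := (\omega_{t-1} \circ \cdots \circ \omega_{1'})(\mathcal{O}) \subseteq Z_0$ through the factorization, and establishing identities for the cardinalities $|O \cap Y_t(\mathcal{O})|$ that force sufficiently many distinguished elements into $\mathcal{O}$. An induction on $|\mathcal{O}|$ combined with a careful analysis of how exceptional moves propagate across the factorization may be feasible; alternatively, a direct construction within $\mathcal{O}$ using the switching dynamics may succeed. Extensive computer verification supports the conjecture, but a rigorous proof appears to demand combinatorial insights beyond those developed in this paper.
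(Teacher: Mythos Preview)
This statement is a \emph{conjecture} in the paper, not a theorem; the paper offers no proof and presents it as an open problem supported by computer evidence. Your submission is likewise not a proof but a strategy outline, and you explicitly acknowledge this in your final paragraph. So there is no paper proof to compare against, and your proposal does not close the gap.

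That said, your framing is reasonable and consistent with the paper's own analysis. The refinement of Proposition~\ref{prop:mini-orbits}---localizing all $|O|-1$ standardization-order genomic tableaux of a $\omega_t$-orbit $O$ to a single distinguished source $e_{std}(O)$---is indeed supported by the sketch there, which identifies a ``special step'' generating all such tableaux at once. Your reduction to the distributional inequality $N_{std}(\mathcal{O}) \geq |\mathcal{O}|-1$ is correct, and you correctly identify the obstruction: the factorization $\omega = \omega_{m+1'} \circ \cdots \circ \omega_{1'}$ is a product in the symmetric group on $Z_0$, but the $\omega$-orbits do not decompose as unions of $\omega_t$-orbits, so reflection-length subadditivity (which yields the global inequality~\eqref{eqn:inequality}) cannot be applied orbit-by-orbit. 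Your proposed certificate graph $H_{std}$ and the idea of tracking the intermediate sets $Y_t(\mathcal{O})$ are plausible, but as you note, neither yields a proof without a further structural insight about how the distinguished sources distribute across $\omega$-orbits. In short: your proposal accurately diagnoses both the natural approach and why it stalls, which is exactly the state of affairs the paper leaves the conjecture in.
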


\subsection{High-genus Schubert curves}

In type A, the authors exhibit a family of Schubert curves in in $\Gr$, determined by three partitions $\alpha,\beta,\gamma$, with arbitrarily high arithmetic genus as follows.  If $\omega$ has only one orbit then there is only one real connected component and so $S$ is connected and integral (since it is reduced by Lemma \ref{lem:geometry-schubert-curves-type-B}).  Then we have 
\begin{eqnarray*}
\chi(\mathcal{O}_S)&=&|\LRyb|-|\K(\gamma^c/\alpha;\beta)| \\ 
  &=& \dim_{\mathbb{C}}H^0(\mathcal{O}_S)-\dim_{\mathbb{C}} H^1(\mathcal{O}_S) \\
  &=& 1-g_a(S).
\end{eqnarray*}

Thus $g_a(S)=|\K(\gamma^c/\alpha;\beta)|-|\LRyb|+1$.  It follows that, to find a family of irreducible Schubert curves with arbitrarily high arithmetic genus, it suffices to find a family of partitions $\alpha,\beta,\gamma$ such that $\omega$ has a single orbit and the difference $|\K(\gamma^c/\alpha;\beta)|-|\LRyb|$ grows arbitrarily large. We found such a family in type A using the combinatorial connection between orbits and genomic tableaux. Since the above computations go through verbatim in type B, and we have a similar bijection to K-theory tableaux in this setting, we make the following conjecture.

\begin{conjecture}
  The Schubert curves in $\OG$ attain arbitrarily high arithmetic genus $g_a(S)$.
\end{conjecture}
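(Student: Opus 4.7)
The plan is to mimic the type A strategy laid out in the excerpt: exhibit an explicit family of triples $(\alpha,\beta,\gamma)$ of shifted partitions for which (i) the monodromy operator $\omega_{\alpha,\beta,\gamma}$ on $\LRyb$ has a single orbit, and (ii) the excess $|\K(\gamma^c/\alpha;\beta)|-|\LRyb|$ grows without bound. Under these two conditions, the Schubert curve $S(\alpha,\beta,\gamma)$ is connected and, being reduced by Lemma~\ref{lem:geometry-schubert-curves-type-B}, integral; so the genus formula
\[
g_a(S) \;=\; |\K(\gamma^c/\alpha;\beta)| - |\LRyb| + 1
\]
derived in the excerpt applies and the unboundedness is immediate.

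First I would search for candidate families by direct shifted analogy with the type A construction of \cite{bib:GillespieLevinson}. A natural starting point is to fix $\beta$ of a simple form (e.g.\ a single row, or a short hook) and let $\alpha$ and $\gamma^c$ be staircase-like shapes scaling with a parameter $N$, so that the skew shape $\gamma^c/\alpha$ is a thickened ribbon living high along the shifted staircase. In such shapes, the reading words of tableaux in $\LRyb$ are highly constrained, which makes the switching algorithm of Theorem~\ref{thm:step-by-step-algorithm} easier to analyze: most $\esh^{(2)}$ steps will be non-adjacent (generating many genomic tableaux under Theorem~\ref{thm:k-theory-correspondence}), while the orbit structure is controlled by a small number of ``threads'' along the ribbon. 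I would verify single-orbit behavior on small cases via Sage, then prove transitivity of $\omega$ in the family by analyzing $\omega = \sh \circ \esh$ directly on ribbon-type tableaux, where both $\sh$ and $\esh$ reduce to explicit local moves.

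The genomic count side should be easier. Using Lemma~\ref{lem:genomic-criterion}, pairs of non-adjacent boxes $\{\ybox_1,\ybox_2\}$ of the same gene family inside a ribbon are abundant and essentially independent, so $|\K(\gamma^c/\alpha;\beta)|$ grows at least linearly (likely polynomially) in $N$, while $|\LRyb|$ can be held to a small polynomial or even bounded in $N$ with the right choice of $\beta$. Alternatively, one can argue indirectly via the orbit-by-orbit K-theoretic inequality of Proposition~\ref{prop:mini-orbits} and the decomposition \eqref{eqn:factorization}: each loop of $\partsh_i \circ \partesh_i$ of size $\ell$ contributes $\ell-1$ genomic tableaux in each direction, so a single long orbit for $\omega$ forces a large genomic count as well, reducing the problem entirely to exhibiting a single long orbit.

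The main obstacle will be step (i), verifying that $\omega$ has a single orbit across the whole family. Unlike the type A setting, the shifted Phase 2 does \emph{not} admit the simple decomposition exploited in \cite{bib:GillespieLevinson}, by Proposition~\ref{prop:phase2-indecomposable}; so a uniform structural proof of transitivity requires tracking the switching algorithm through both Phase 2(a) and Phase 2(b), including exceptional steps, along a whole orbit. I would attempt this via the index decomposition of Definition~\ref{def:index-decomposition}: each $\partesh_{i'}$ and $\partesh_i$ is an explicit bijection between the paused sets $Z_{i-1}, Z_{i'}, Z_i$ (Theorems~\ref{thm:pause-esh}, \ref{thm:resume-esh}), so one can try to factor $\omega$ into a product of simple cycles whose combined orbit structure is tractable. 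If a fully uniform proof resists, a fallback is to restrict to a subfamily where $\beta$ has length one (the Pieri case), in which Lemma~\ref{lem:partial-esh} collapses $\esh$ to a single-ribbon computation and the orbit structure of $\omega$ becomes essentially combinatorial on a one-dimensional lattice walk; then one only needs to arrange for the walk's monodromy to be a long cycle, which should be achievable by tuning $\alpha$ and $\gamma$.
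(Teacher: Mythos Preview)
The statement you are attempting to prove is presented in the paper as a \emph{conjecture}, not a theorem: the paper gives no proof and explicitly leaves it open. So there is nothing in the paper to compare your proposal against. Your plan is a reasonable outline of how one would attack the problem, and it is essentially the same strategy the paper points to (mimic the type A construction: find a family with a single $\omega$-orbit and with $|\K(\gamma^c/\alpha;\beta)|-|\LRyb|$ unbounded). The paper stops short of carrying this out precisely because of the difficulty you yourself identify in step (i): Proposition~\ref{prop:phase2-indecomposable} blocks the decomposition that makes the type A single-orbit argument work, and no replacement is known.

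There is one genuine gap in your reasoning. Your ``alternative'' argument via Proposition~\ref{prop:mini-orbits} and \eqref{eqn:factorization} does not do what you claim. The inequality established there is
\[
|\K(\gamma^c/\alpha;\beta)| \;\ge\; |\LRyb| - |\mathrm{Orb}(\omega)|,
\]
so a single orbit only yields $|\K(\gamma^c/\alpha;\beta)| \ge |\LRyb|-1$, i.e.\ $g_a(S)\ge 0$. This is the wrong direction: you need $|\K(\gamma^c/\alpha;\beta)| - |\LRyb|$ to be large, and no amount of orbit length alone forces that. A long $\omega$-orbit can perfectly well have $|\K|=|\LRyb|-1$ (a genus-zero curve). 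So the problem does not reduce ``entirely to exhibiting a single long orbit''; you must separately control the genomic count from above $|\LRyb|$, and you must do both simultaneously in a single family. That is the actual content of the conjecture, and your proposal does not yet supply it.
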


\subsection{Number of steps in the switching algorithm}

We conclude with a surprising combinatorial conjecture about the length of the switching algorithm:

\begin{conjecture}
Let $(\ybox,T)\in \LRyb$.  The number of steps (switches) in the switching algorithm for computing $\esh(\ybox,T)$ is equal to $$2s+\beta_s-1$$ where $s$ is the index of the first Phase 2 step. In particular, there are exactly as many steps as in the promotion step of the rectification algorithm described in Section \ref{sec:esh}. \end{conjecture}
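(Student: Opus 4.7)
The plan is to decompose the switch count according to the algorithm's two phases. Phase~1 is immediate from Theorem~\ref{thm:step-by-step-algorithm}: each iteration at index $i$ performs exactly two switches (one hop across an $i'$, one hop across an $i$), so running from $i = 1$ through $i = s-1$ gives $2(s-1)$ switches. The substance of the proof is therefore to show that Phase~2 contributes exactly $\beta_s + 1$ switches.

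For Phase~2, I would use the correspondence developed in the proof of Theorem~\ref{thm:phase2-algorithm} between the switching algorithm and the coplactic algorithm of Theorem~\ref{thm:crystal-algorithm}. The coplactic Phase~2 applies $F_s, F_{s+1}, \ldots, F_r$ (with $r = \ell(\beta)$) in order, and each $F_i$ corresponds to a block of switches in the switching algorithm whose length is determined by the final $F_i$-critical substring type: types 3F and 4F contribute one switch each, while types 1F and 2F contribute $m+1$ switches each, where $m$ is the length of the middle portion $(i')^*$ or $(i{+}1)^*$ of the critical substring. By Corollary~\ref{prop:2a2b-coplactic}, these types transition cleanly from $\{\textrm{2F},\textrm{4F}\}$ to $\{\textrm{1F},\textrm{3F}\}$ at a single threshold, which makes the tally tractable. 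I would then verify that the sum of these block lengths over $i = s, \ldots, r$ is $\beta_s + 1$.

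The cleanest way to carry out the summation is via the ``in particular'' clause of the conjecture, comparing to the rectified picture of Lemma~\ref{lem:straight-promotion}. After rectifying $\widetilde{T} = \ybox \sqcup T$, the box $\ybox$ sits at an inner corner of the highest-weight tableau of shape $\beta$, and the promotion step slides it along a transparent path: two-slide east-south pairs cover the first $s-1$ rows (matching Phase~1), then a horizontal traversal of row $s$ (which has $\beta_s$ cells) accounts for the remaining $\beta_s + 1$ slides once one is careful about the starting and terminal positions relative to row $s$. Matching each coplactic $F_i$-block in Phase~2 to a corresponding segment of this rectified slide path then yields the desired count.

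The main obstacle is establishing the bijective correspondence between switching algorithm steps and rectified promotion slides rigorously, since the switching algorithm operates on the un-rectified tableau using only local moves and representative changes, while the rectified promotion involves honest jeu de taquin on a highest-weight tableau. Making the match precise requires a careful induction on $i$ patterned after the proofs of Theorems~\ref{thm:phase1-algorithm} and~\ref{thm:phase2-algorithm}, tracking how each coplactic operator translates between the two viewpoints and controlling what happens at the Phase~2(a)/Phase~2(b) boundary identified in Corollary~\ref{prop:2a2b-coplactic}.
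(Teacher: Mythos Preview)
The statement you are addressing is a \emph{conjecture} in the paper, not a theorem: the authors provide no proof and write explicitly that ``it would be interesting to see a proof of this conjecture and any accompanying tableau decompositions.'' So there is no paper proof to compare against; the question is whether your outline closes the gap.

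Your Phase~1 count of $2(s-1)$ switches is correct and follows directly from Theorem~\ref{thm:step-by-step-algorithm}. The substance is Phase~2, and here your proposal has a genuine gap. The claim that the block of switches corresponding to a single $F_i$ has length determined solely by the critical-substring type (one for 3F/4F, $m+1$ for 1F/2F) is not what the proof of Theorem~\ref{thm:phase2-algorithm} establishes. In that proof, $\ybox$ inverse-hops from its current position $t_{i-1}$ through every $i$ or $i'$ entry between $t_{i-1}$ and the lower break point; the number of such entries depends on the full $i,i{+}1$-subword of the unrectified tableau, not only on the length of the critical substring. So the per-block counts you assert are not justified as stated, and summing them does not obviously yield $\beta_s+1$.

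More fundamentally, the paper already flags the obstruction to exactly the strategy you describe. Just after Theorem~\ref{thm:phase12-ballotness} the authors note that ``the individual steps of the switching algorithm are not coplactic operations in any meaningful sense,'' and in the paragraph following the conjecture they observe that the type~A proof worked via a coplactic decomposition which Proposition~\ref{prop:phase2-indecomposable} shows cannot exist in type~B. Your plan of matching each $F_i$-block of switches to a segment of the rectified promotion path is precisely the kind of coplacticity statement for the switch count that is missing; you correctly name this as the ``main obstacle,'' but the proposal does not supply a mechanism to overcome it. The induction you sketch, patterned on Theorems~\ref{thm:phase1-algorithm} and~\ref{thm:phase2-algorithm}, tracks \emph{positions} (which are governed by break points and hence by the coplactic operators), not \emph{distances in standardization order} (which are what determine switch counts and are not coplactic invariants). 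Absent a new idea relating those distances across rectification, the argument does not go through.
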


In Type A, the analogous statement is an easy consequence of the proof of the switching algorithm, due to a natural coplactic decomposition into switches past horizontal and vertical strips \cite{bib:GillespieLevinson}. No such decomposition exists for Phase 2 in type B (see Proposition \ref{prop:phase2-indecomposable}), yet this conjecture indicates, remarkably, that the length of the evacuation shuffle path of the $\ybox$ is coplactic. Indeed, the conjecture suggests that the path itself somehow corresponds to the promotion path through the rectified tableau. We have verified computationally that the conjecture holds for all tableaux with $|\alpha| + |\beta| + |\gamma| \leq 10$ (approximately 913000 words). It would be interesting to see a proof of this conjecture and any accompanying tableau decompositions.

\end{document}